\numberwithin{equation}{section}
\newcommand{\figref}[1]{Fig.~\ref{#1}}
\newcommand{\defref}[1]{Def.~\ref{#1}}
\newcommand{\teqref}[1]{Eq.~\eqref{#1}}
\newcommand{\secref}[1]{Sec.~\ref{#1}}
\newcommand{\thmref}[1]{Thm.~\ref{#1}}
\newcommand{\propref}[1]{Prop.~\ref{#1}}
\newcommand{\lemref}[1]{Lem.~\ref{#1}}
\newcommand{\probref}[1]{Problem~\ref{#1}}
\newcommand{\remref}[1]{Remark~\ref{#1}}
\def\etal{\emph{et al.}}
\def\ie{\emph{i.e.}}
\def\eg{\emph{e.g.}}
\newcommand\blfootnote[1]{%
  \begingroup
  \renewcommand\thefootnote{}\footnote{#1}%
  \addtocounter{footnote}{-1}%
  \endgroup
}
\theoremstyle{definition}
\newtheorem{theorem}{Theorem}
\newtheorem{lemma}{Lemma}[section]
\newtheorem{corollary}[lemma]{Corollary}
\newtheorem{proposition}[lemma]{Proposition}
\newtheorem{propdef}[lemma]{Proposition\,\&\,Definition}
\newtheorem{definition}[lemma]{Definition}
\newtheorem{problem}[lemma]{Problem}
\theoremstyle{remark}
\newtheorem{remark}{Remark}[section]
\newcommand{\RR}{\mathbb{R}}
\newcommand{\HH}{\mathbb{H}}
\renewcommand{\SS}{\mathbb{S}}
\newcommand{\polyhedra}{\mathcal{C}}
\newcommand{\surf}{\Sigma}
\newcommand{\toposurf}{\mathcal{S}}
\DeclareMathOperator{\SO}{SO}
\DeclareMathOperator{\Diff}{D}
\DeclareMathOperator{\laplacian}{\Delta}
\newcommand{\diff}[2]{\mathrm{d}_{#1}{#2}}
\newcommand{\tdiff}[1]{\frac{\diff{}{}}{\diff{}{#1}}}
\DeclareMathOperator{\acosh}{acosh}
\DeclareMathOperator{\cl}{cl}
\DeclareMathOperator{\dist}{dist}
\DeclareMathOperator{\vol}{Vol}
\DeclareMathOperator{\area}{area}
\DeclareMathOperator{\lin}{span}
\DeclareMathOperator{\HE}{\mathcal{H}}
\newcommand{\ip}[2]{\langle #1, #2 \rangle}
\newcommand{\ee}{\mathrm{e}}
\newcommand{\tri}{\mathcal{T}}
\newcommand{\verts}{V}
\newcommand{\edges}{E}
\newcommand{\faces}{F}
\newcommand{\corner}[2]{\vphantom{\phi}_{#2}^{#1}}
\newcommand{\len}{\ell}
\newcommand{\cotw}{w}
\begin{document}
\thispagestyle{plain}
\title{
    Decorated discrete conformal equivalence\\
    in non-Euclidean geometries
}
\newcommand{\shorttitle}{%
    Decorated discrete conformal equivalence in non-Euclidean geometries
}

\author{%
   \textsc{Alexander~I.~Bobenko}\\[0.1cm]
   \textsc{Carl~O.~R.~Lutz}
}
\newcommand{\shortauthors}{%
   A.\,I.\,Bobenko and C.\,O.\,R.\,Lutz
}

\date{}

\maketitle

\begin{abstract}
    We introduce decorated piecewise hyperbolic and spherical surfaces and
    discuss their discrete conformal equivalence. A decoration is a
    choice of circle about each vertex of the surface. Our decorated surfaces
    are closely related to inversive distance circle packings, canonical
    tessellations of hyperbolic surfaces, and hyperbolic polyhedra.

    We prove the corresponding uniformization theorem. Furthermore, we show
    that on can deform continuously between decorated piecewise hyperbolic,
    Euclidean, and spherical surfaces sharing the same fundamental discrete
    conformal invariant. Therefore, there is one master theory of discrete
    conformal equivalence in different background geometries. Our approach
    is based on a variational principle, which also provides a way to
    compute the discrete uniformization and geometric transitions.\\

    \noindent
    \textit{MSC (2020).} Primary 52C26; Secondary 57M50, 53A35, 53C45.

    \noindent
    \textit{Key words and phrases.} Discrete conformality, discrete uniformization,
    circle patterns, hyperbolic metrics, spherical metrics, and variational principle.
    \blfootnote{\textit{Date}: \today}
\end{abstract}


\pagestyle{main}

\section{Introduction}
\label{sec:introduction}
The Poincar\'e--Koebe uniformization theorem of Riemann surfaces
is undoubtedly one of the main results of 19th century mathematics.
The impact of the new ideas and techniques developed for
its poof reached far into the 20th century (for more information on the history
of this theorem see, \eg, \cite{Abikoff1981, DeSaint-Gervais2016}).

At the beginning of the 21th century theories of discrete analytic functions
and conformal maps began were developed. They come in two major flavors: one is build
on circle packings; the other on deforming the edge-lengths of
triangulated piecewise Euclidean surfaces by scale factors associated to the
vertices. Circle packings were already investigated
by \textsc{P.~Koebe} \cite{Koebe1936}. However modern interest started with
\textsc{W.~Thurston's} idea to approximate the classical Riemann map using
circle patterns which was later proved by \textsc{B.~Rodin} and \textsc{D.~Sullivan}
\cite{RS1987}. The vertex-scaling approach first appeared in the context of
relativity theory \cite{RW1984}. Its Riemannian geometric equivalent was introduced by
\textsc{F.~Luo} \cite{Luo2004}. Both the notion of a circle and a triangle are not
limited to Euclidean geometry but are also natural objects in spherical or hyperbolic
space. This leads to definitions of circle packings and piecewise surfaces in different
\emph{background geometries} and corresponding discrete uniformization theorems
mimicking their smooth counterpart \cite{BS1990, GLS+2018, GGL+2018}.

The aim of this article is threefold: one motivation is develop the discrete
conformal theory of decorated piecewise hyperbolic and spherical surfaces. Amongst
others we will present a
variational principle for the corresponding discrete mapping problem,
explain the connections to decorated Teichm\"uller theory and hyperbolic polyhedra,
and prove the corresponding uniformization theorem. Another motivation is to
provide a bridge between the two different approaches to discrete conformal analysis.
Our notion of decorated discrete conformal equivalence incorporates both
\emph{inversive distance circle packings} as well as the \emph{vertex-scaling approach}
as special cases. Last but not least, we wish to explain how the definitions in
different background geometries are related. Indeed, instead of three separate
theories there is only one \emph{master theory} of discrete conformal equivalence. We
will show that one can continuously deform decorated surfaces between different
background geometries within one discrete conformal class based on our functional.

\subsection{Statements, related work, and open problems}
\paragraph{Hyperbolic background geometry.}
Informally speaking, a triangulated piecewise hyperbolic surface is a topological
surface $\toposurf_g$ glued edge-to-edge from hyperbolic triangles
(see \secref{sec:basic_definitions}). It is determined by a triangulation
$\tri$ with vertex set $\verts\subset\toposurf_g$
and \emph{edge-lengths} $\len_{ij}$ on the edges $ij$ of $\tri$.
Two discrete hyperbolic metrics $(\tri, \len)$ and $(\tri, \tilde{\len})$
are said to be \emph{discrete conformally equivalent} if there are
\emph{logarithmic scale factors} $u_i$ on the vertices $\verts$ such that
\begin{equation}\label{eq:classical_hyperbolic_dce}
    \sinh\frac{\tilde{\len}_{ij}}{2}
    \;=\;
    \ee^{\nicefrac{(u_i+u_j)}{2}}
    \sinh\frac{\len_{ij}}{2}
\end{equation}
for all edges $ij$ \cite[Sec.~6]{BPS2015}. One can derive convex variational principles
and uniqueness results for the corresponding discrete mapping problems. The reason
for this is the intimate connection of discrete conformal equivalence to decorated
Teichm\"uller spaces of hyperbolic cusp surfaces and hyperbolic polyhedra.

But, in general, the existence of solutions cannot be guarantee
if the combinatorics are fixed. Therefore, one extends the notion of discrete conformal
equivalence to variable combinatorics. At the heart of this extension lies
the aforementioned interpretation in terms of hyperbolic geometry.
It can be formulated using a sequence of Delaunay triangulations \cite{GGL+2018}.
Furthermore, it only depends on the piecewise hyperbolic metric $\dist_{\toposurf_g}$
on the marked surface $(\toposurf_g, \verts)$, \ie, the complete hyperbolic path-metric
on $\toposurf_g$ determined by $(\tri, \len)$.

\textsc{X.~Gu} \etal\ proved the corresponding prescribed cone-angle theorem
\cite{GGL+2018}. It states: let $\dist_{\toposurf_g}$ be a piecewise hyperbolic metric
on the marked surface $(\toposurf_g, \verts)$ and $\Theta\in\RR_{>0}^{\verts}$
satisfying the \emph{hyperbolic Gau{\ss}--Bonnet condition}
\begin{equation}\label{eq:hyperbolic_gauss_bonnet_condition}
   \frac{1}{2\pi}\sum\Theta_i
   \;<\;
   2g-2\,+\,|\verts|.
\end{equation}
There exists a unique second piecewise hyperbolic metric
$\widetilde{\dist}_{\toposurf_g}$ on $(\toposurf_g, \verts)$ which is
discrete conformally equivalent to $\dist_{\toposurf_g}$ and has the desired
angle sums $\Theta_i$ at the vertices.

A decoration of a piecewise hyperbolic metric is a choice of circle about each vertex.
They are determined by the piecewise hyperbolic metric and radii $r_i\geq0$ at
the vertices. The \emph{undecorated case} corresponds to the special choice
$r_i\equiv0$. We discuss two ways to approach the discrete conformal equivalence
of such surfaces (\secref{sec:ddce}). One is a direct generalization of the
vertex-scaling approach \eqref{eq:classical_hyperbolic_dce}. The other is based on
M\"obius-transformation associated to the faces of the triangulation.
Discrete conformal equivalence leaves invariant the \emph{inversive distance}
induced on the edges of $\tri$ (see \secref{sec:connections_hyperbolic_polyhedra})
showing the close connection to \textsc{P.~Bowers}' and \textsc{K.~Stephenson}'s
\emph{inversive distance circle patterns} \cite{BS2004, BH2003}.

The uniqueness of inversive distance circle patterns, and thus decorated discrete
hyperbolic metrics, was extensively studied \cite{Guo2011a, Luo2011, Xu2018}.
Local rigidity results were also obtained in \cite{GT2017} using
\textsc{D.~Glickenstein's} approach via \emph{duality structures}.
Yet, as in the undecorated case, the dependence on a fixed triangulation poses a strong
obstruction to general existence results. Weighted Delaunay tessellations are the
analogue of Delaunay tessellations for decorated surfaces
(\secref{sec:weighted_delaunay_tessellations}). They always exist and are uniquely
determined by the decorated piecewise hyperbolic metric if we consider
hyperideal decorations, \ie, choices of circles at the vertices such
that no pair intersects. Moreover, each decorated discrete hyperbolic metric induces
a complete hyperbolic surface $\surf_g$ with cusps and complete
ends of infinite area (\secref{sec:connections_hyperbolic_polyhedra}).
We call $\surf_g$ the \emph{fundamental discrete conformal invariant} of the decorated
surface if $\surf_g$ is induced by its weighted Delaunay tessellation. Relating
weighted Delaunay tessellations to canonical tessellations of hyperbolic surfaces
(\secref{sec:canonical_tessellations}), we see that two decorated
piecewise hyperbolic surfaces are discrete conformally equivalent if and only
if their fundamental discrete conformal invariant coincide
(\secref{sec:convex_polyhedra}). This leads to the main result about piecewise
hyperbolic surfaces of the paper.

\begin{theorem}[hyperbolic prescribed cone-angle problem]
    \label{theorem:realisation_hyperbolic}
    Let $(\dist_{\toposurf_g}, r)$ be a hyperideally decorated piecewise hyperbolic
    metric on the marked genus $g\geq0$ surface $(\toposurf_g, \verts)$. Denote by
    $\surf_g$ its fundamental discrete conformal invariant, \ie, the complete hyperbolic
    surface induced by any triangular refinement of its unique weighted Delaunay
    tessellation.
    \begin{enumerate}[label={\arabic*.}, wide=0.8\parindent, topsep=1pt, itemsep=0pt]
        \item\emph{(existence \& uniqueness)}
            There exists a unique decorated piecewise hyperbolic metric discrete
            conformally equivalent to $(\dist_{\toposurf_g}, r)$ realizing
            $\Theta\in\RR_{>0}^{\verts}$ if and only if $\Theta$ satisfies
            the hyperbolic Gau{\ss}--Bonnet condition
            \eqref{eq:hyperbolic_gauss_bonnet_condition}.
        \item\emph{(variational principle)} The logarithmic scale factors, which give
            to the change of metric, correspond to the maximum point of the strictly
            concave discrete Hilbert--Einstein functional $\HE_{\surf_g,\Theta}^{-}$
            (see \secref{sec:local_variational_principle}).
    \end{enumerate}
\end{theorem}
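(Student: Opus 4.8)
The plan is to recast both assertions as statements about a single smooth functional on the space of logarithmic scale factors and to extract existence, uniqueness, and the necessity of the Gauß--Bonnet bound from its concavity together with its behaviour at infinity. Because discrete conformal equivalence is completely determined by the fundamental discrete conformal invariant $\surf_g$, the decorated piecewise hyperbolic metrics conformally equivalent to $(\dist_{\toposurf_g}, r)$ are parametrised by a vector $u \in \RR^{\verts}$ of scale factors; as $u$ varies the underlying weighted Delaunay tessellation may change by flips, but $\surf_g$ is held fixed. I would first dispatch the \emph{necessity} half of Part~1 directly: any realising metric is glued from hyperbolic triangles of positive area, and summing the per-face areas $\pi - (\text{angle sum of the face})$ against the Euler relation for a triangulation of $\toposurf_g$ expresses the total area as $2\pi(2g-2+|\verts|) - \sum_i \Theta_i$. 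Positivity of hyperbolic area is then exactly \eqref{eq:hyperbolic_gauss_bonnet_condition}, so no metric realising a $\Theta$ that violates the bound can exist.

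For the converse and for the variational principle I would work with the discrete Hilbert--Einstein functional $\HE^-_{\surf_g,\Theta}$ of \secref{sec:local_variational_principle}, whose construction is arranged so that its gradient records the discrepancy between realised and prescribed cone angles, $\partial_{u_i}\HE^-_{\surf_g,\Theta} = \theta_i(u) - \Theta_i$, where $\theta_i(u)$ is the angle sum produced at vertex $i$ by the weighted Delaunay tessellation of the scaled metric. Thus $u$ is a critical point if and only if the associated decorated metric realises $\Theta$, which reduces Part~1 to locating critical points and Part~2 to identifying them as maxima. The structural heart of the argument is the Schläfli-type symmetry $\partial_{u_j}\theta_i = \partial_{u_i}\theta_j$ together with \emph{negative definiteness} of the Jacobian $(\partial_{u_j}\theta_i)$, which I would establish triangle by triangle from the hyperbolic cosine rules, invoking the hyperideal hypothesis (pairwise disjoint decoration circles) to keep the second variation of each triangle's contribution definite. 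This makes $\HE^-_{\surf_g,\Theta}$ strictly concave and delivers the uniqueness claimed in Part~1.

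Existence then reduces to \emph{coercivity}: I would show that $\HE^-_{\surf_g,\Theta}$ tends to $-\infty$ along every ray in $\RR^{\verts}$ precisely when $\Theta$ satisfies the Gauß--Bonnet bound, so that the strictly concave functional attains its supremum at an interior point; strict concavity upgrades this to a \emph{unique} maximiser, and the vanishing of the gradient there produces the sought metric with angle sums $\Theta$. The sharpness of \eqref{eq:hyperbolic_gauss_bonnet_condition} in this direction mirrors its necessity above: the bound is exactly what places the target vector $\Theta$ in the interior of the image of the angle map $u \mapsto \theta(u)$.

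The principal obstacle I anticipate lies not in the triangle-wise convexity estimate but in the \emph{global} regularity of $\HE^-_{\surf_g,\Theta}$ across the combinatorial transitions of the weighted Delaunay tessellation. One must verify that the functional is $C^1$, indeed $C^2$, where the tessellation undergoes a flip, so that the piecewise-defined angle functions assemble into a single smooth concave function on all of $\RR^{\verts}$; here the weighted Delaunay and canonical-tessellation machinery of the earlier sections does the essential work, guaranteeing a well-defined, flip-invariant functional and a domain that is genuinely the whole of $\RR^{\verts}$ rather than a fixed-combinatorics chamber. The secondary difficulty is the coercivity estimate itself, which demands controlling how the triangles degenerate as $u$ escapes to infinity and confirming that the Gauß--Bonnet threshold is the exact break-even point between boundedness and unboundedness of $\HE^-_{\surf_g,\Theta}$ from above.
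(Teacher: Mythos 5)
Your plan has the right skeleton (concave functional, gradient = angle defect, uniqueness from strict concavity, necessity from an area count, existence from behaviour at infinity), and your necessity argument is exactly the paper's. But there is a genuine gap at the foundation: you work throughout in the logarithmic scale factors $u\in\RR^{\verts}$ and posit a potential with $\partial_{u_i}\HE^-_{\surf_g,\Theta}=\theta_i(u)-\Theta_i$, justified by the symmetry $\partial_{u_j}\theta_i=\partial_{u_i}\theta_j$. In the \emph{decorated} setting this symmetry fails, and no such potential exists in the $u$-coordinates. The closed $1$-form is $\sum_i(\Theta_i-\theta_i)\,\diff{}{h_i}$ in the \emph{height} coordinates of the hyperideal prisms, where closedness comes from Schl\"afli's differential formula applied to the truncated volume (\propref{prop:schlaflis_differential_formula}, \teqref{eq:he_functional_local}, \lemref{lemma:he_functional_total_diff}). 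Under the diagonal change of variables \eqref{eq:log_scale_factor_to_heights_hyperbolic} one has $\partial\theta_i/\partial u_j=(\partial\theta_i/\partial h_j)\,h_j'(u_j)$ with non-constant factors $h_j'$ that differ between hyperideal and ideal vertices, so $\sum_i(\Theta_i-\theta_i)\,\diff{}{u_i}$ is closed only in the undecorated limit (all vertices ideal, where $u_i=\tilde h_i-h_i$ and one recovers \eqref{eq:classical_hyperbolic_dce}); note also that Guo's symmetric coordinates $\log\tanh(\nicefrac{r_i}{2})$ coincide with $-h_i$ by \eqref{eq:radii_height_relationship_hyperbolic}. Thus the reparametrization by heights is not a convenience but the step that makes your functional exist at all — and once you pass to heights, the domain is not all of $\RR^{\verts}$ but $\polyhedra^{-}(\surf_g)$ (\propref{prop:space_polyhedra_characterization}), which has a genuine finite boundary that your "coercive on all of $\RR^{\verts}$" architecture never confronts.

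Two further steps are misattributed or overclaimed. First, concavity is not obtained "triangle by triangle" from hyperideality: the Hessian is $-(\laplacian^-_{\len,r}+\Diff^-_{\len,r})$ (\lemref{lemma:local_hessian_formula}), and its definiteness rests on the nonnegativity of the decorated cotan-weights $\cotw^-_{ij}$, i.e.\ precisely on the weighted Delaunay condition (\propref{prop:local_concavity_hyperbolic_he}); for a fixed non-Delaunay triangulation individual weights $\cotw^-_{ij}$ can be negative, and disjointness of the vertex-circles does not repair the sign. In the paper, Delaunay is therefore the source of concavity, not merely of $C^2$-gluing across flips as in your account. Second, the claim that $\HE^-_{\surf_g,\Theta}$ tends to $-\infty$ along \emph{every} ray is false near the low-height part of the boundary of $\polyhedra^{-}(\surf_g)$: there every term of \eqref{eq:he_functional_local} stays bounded, so the functional has finite limits. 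The paper instead shows that $\theta_i\to0<\Theta_i$ as $h_i\to0$ (hyperideal) or $h_i\to-\infty$ (ideal) (\lemref{lemma:bound_on_hyperideal_heights}, \lemref{lemma:bound_on_ideal_heights}), so by the gradient formula the supremum is not approached there (\lemref{lemma:hyperbolic_he_hight_bound}); genuine divergence to $-\infty$ holds only along the scaling fibers $h^t=(\Omega_R^-)^{-1}\big(t\,\Omega_R^-(h^1)\big)$, where the triangles shrink, the angles converge to Euclidean ones satisfying the Gau{\ss}--Bonnet \emph{equality} (\lemref{lemma:he_angle_limit}), and the strict hyperbolic inequality for $\Theta$ forces $\big(\sum_i(\Theta_i-\theta_i^t)\big)H^t\to-\infty$ (\lemref{lemma:hyperbolic_he_fiber_limit}), with finiteness of the canonical-triangulation chambers (\propref{prop:properties_of_weightings}) controlling the combinatorial transitions. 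Until the missing height reparametrization and the two-sided boundary analysis are supplied, your proposal does not yield a proof.
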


The discrete Hilbert--Einstein functional $\HE_{\surf_g,\Theta}^{-}$ is a
twice continuously differentiable function. It can be explicitly expressed using
the dilogarithm function (see \remref{remark:formula_for_hyperbolic_volume}). This
provides an effective method to compute discrete conformally equivalent metrics
with prescribed cone-angles. An important special case of this
\thmref{theorem:realisation_hyperbolic} is the analogue
of the classical Poincar\'e--Koebe uniformization theorem for decorated discrete
metrics.

\begin{corollary}[discrete uniformization of decorated piecewise hyperbolic surfaces]
    \label{thm:discrete_uniformization_hyperbolic}
    For each hyperideally decorated piecewise hyperbolic metric
    $(\dist_{\toposurf_g}, r)$ on the marked genus $g\geq2$ surface
    $(\toposurf_g, \verts)$ there is a unique discrete conformally equivalent
    decorated metric realizing the uniform angle distribution
    $\Theta_i\equiv2\pi$.
\end{corollary}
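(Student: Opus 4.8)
The plan is to derive this statement as an immediate specialization of \thmref{theorem:realisation_hyperbolic} to the uniform target distribution $\Theta_i \equiv 2\pi$. Because $(\dist_{\toposurf_g}, r)$ is already assumed to be a hyperideally decorated piecewise hyperbolic metric on the marked surface $(\toposurf_g, \verts)$, all structural hypotheses of the theorem are in place. The only thing left to check is that the uniform choice $\Theta_i \equiv 2\pi$ lies in $\RR_{>0}^{\verts}$ and satisfies the hyperbolic Gau{\ss}--Bonnet condition \eqref{eq:hyperbolic_gauss_bonnet_condition}; the first is clear since $2\pi > 0$.

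For the Gau{\ss}--Bonnet check I would substitute $\Theta_i = 2\pi$ into the left-hand side of \eqref{eq:hyperbolic_gauss_bonnet_condition}, obtaining
\[
   \frac{1}{2\pi}\sum_i \Theta_i \;=\; \frac{1}{2\pi}\,\bigl(2\pi\,|\verts|\bigr) \;=\; |\verts|.
\]
The inequality \eqref{eq:hyperbolic_gauss_bonnet_condition} then reads $|\verts| < 2g - 2 + |\verts|$, which collapses to $0 < 2g-2$, \ie\ $g > 1$. Under the standing hypothesis $g \geq 2$ this holds strictly, so the condition is met.

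Invoking part~1 of \thmref{theorem:realisation_hyperbolic} now produces a unique decorated piecewise hyperbolic metric discrete conformally equivalent to $(\dist_{\toposurf_g}, r)$ with angle sum $2\pi$ at every vertex; since no vertex carries a cone angle different from $2\pi$, this metric is nonsingular, which is precisely the discrete counterpart of the Poincar\'e--Koebe uniformization. There is no genuine obstacle here: the whole argument reduces to the arithmetic observation that the uniform distribution sits strictly below the Gau{\ss}--Bonnet threshold exactly in the hyperbolic range $g \geq 2$, which also explains why this is the natural genus restriction for a smooth hyperbolic uniformizing metric.
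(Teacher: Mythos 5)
Your proposal is correct and follows exactly the route the paper intends: \thmref{theorem:realisation_hyperbolic} is specialized to $\Theta_i\equiv2\pi$, and the only substantive check is that $\frac{1}{2\pi}\sum_i\Theta_i=|\verts|$ satisfies the hyperbolic Gau{\ss}--Bonnet condition \eqref{eq:hyperbolic_gauss_bonnet_condition} precisely when $2g-2>0$, \ie\ $g\geq2$, which you carry out correctly.
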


\paragraph{Spherical background geometry.}
Triangulated piecewise spherical surfaces and their decorations can essentially be
defined in the same way as their hyperbolic counterpart. The analog of
\eqref{eq:classical_hyperbolic_dce} in the spherical case is given by
\begin{equation}\label{eq:classical_spherical_dce}
    \sin\frac{\tilde{\len}_{ij}}{2}
    \;=\;
    \ee^{\nicefrac{(u_i+u_j)}{2}}
    \sin\frac{\len_{ij}}{2}
\end{equation}
for all edges $ij$ \cite{BSS2016}. We generalize
\eqref{eq:classical_spherical_dce} to decorations and also discuss the
equivalent approach via M\"obius-transformations.

In \secref{sec:weighted_delaunay_tessellations} and Appendix \ref{sec:spherical_wDt}
we will prove that hyperideally decorated piecewise spherical surfaces admit
unique weighted Delaunay tessellations. This allows us to introduce their
fundamental discrete conformal invariants $\surf_g$ which again characterize discrete
conformally equivalent surfaces. Decorated piecewise spherical surfaces are closely
connected to the geometry of \emph{hyperideal polyhedral cones} over $\surf_g$. These
are collections of hyperideal pyramids, \ie, hyperbolic pyramids with all vertices of the
base triangle lying \enquote{outside} of hyperbolic $3$-space. The spherical link
of their apex defines a decorated piecewise spherical surface
(\secref{sec:hyperbolic_polyhedra_spherical}). Furthermore, the convexity
of hyperideal polyhedral cones is equivalent to the weighted Delaunay property
(\secref{sec:convex_polyhedra}). Thus, the discrete uniformization of decorated
piecewise spherical surfaces is related to the rigidity of hyperideal polyhedra
studied by \textsc{I.~Rivin} \cite{Rivin1994a} and \textsc{J.-M.~Schlenker}
\cite{Schlenker1998}. This leads to our main theorem in the spherical case.

\begin{theorem}[discrete uniformization of decorated piecewise spherical surfaces]
    \label{thm:discrete_uniformization_spherical}
    For each hyperideally decorated piecewise spherical metric
    $(\dist_{\toposurf_0}, r)$ on a marked genus $0$ surface
    $(\toposurf_0, \verts)$ there is a unique
    discrete conformally equivalent decorated metric, up to M\"obius-transformations,
    realizing the uniform angle distribution
    $\Theta_i\equiv2\pi$.
\end{theorem}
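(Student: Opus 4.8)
The plan is to reduce the statement to a realization-and-rigidity problem for convex hyperideal polyhedra and then invoke the theory of \textsc{I.~Rivin} \cite{Rivin1994a} and \textsc{J.-M.~Schlenker} \cite{Schlenker1998}. First I would observe that realizing the uniform angle distribution $\Theta_i\equiv2\pi$ forces the underlying piecewise spherical metric to have no cone singularities at the vertices; since $(\toposurf_0,\verts)$ has genus $0$, a smooth spherical metric exists only there (by Gau{\ss}--Bonnet, a closed surface carries a curvature $+1$ metric iff $g=0$), and the resulting surface is the round sphere $\SS^2$. The decoration then becomes a configuration of $|\verts|$ circles on $\SS^2$, one about each marked point. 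Because the fundamental discrete conformal invariant $\surf_0$ characterizes the discrete conformal class (\secref{sec:convex_polyhedra}), the theorem is equivalent to the assertion that there is a circle configuration on $\SS^2$, unique up to M\"obius-transformations, whose decorated round-sphere metric has $\surf_0$ as its fundamental discrete conformal invariant.

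Next I would pass to hyperbolic $3$-space via the correspondence of \secref{sec:hyperbolic_polyhedra_spherical} and \secref{sec:convex_polyhedra}. Identifying $\SS^2$ with the ideal boundary $\partial\HH^3$, each decoration circle bounds a totally geodesic half-space and is dual to a hyperideal vertex in $\HH^3$. The weighted Delaunay tessellation of the decorated surface is then realized as the convex hyperideal polyhedral cone over $\surf_0$, and the condition $\Theta_i\equiv2\pi$ is precisely the statement that the apex of this cone is a smooth point of $\HH^3$; equivalently, the cone closes up to a genuine convex hyperideal polyhedron whose intrinsic (truncated) geometry encodes $\surf_0$. In this picture $\surf_0$ plays the role of the prescribed induced metric, and convexity is guaranteed by the weighted Delaunay property.

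Existence and uniqueness then follow from the realization and rigidity theory for hyperideal polyhedra: such a polyhedron is determined up to isometries of $\HH^3$ by its induced metric together with the cell structure carried by $\surf_0$, while existence is the corresponding surjectivity statement. Since isometries of $\HH^3$ restrict to M\"obius-transformations of $\partial\HH^3=\SS^2$, this yields exactly the uniqueness \enquote{up to M\"obius-transformations} claimed. In keeping with the variational spirit of \thmref{theorem:realisation_hyperbolic}, one can alternatively produce the uniform-angle configuration as the critical point of a spherical analogue $\HE_{\surf_0,\Theta}^{+}$ of the discrete Hilbert--Einstein functional, whose Euler--Lagrange equations encode the cone angles $\Theta_i$.

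The hard part will be existence, that is, showing that the uniform distribution $\Theta_i\equiv2\pi$ is actually attained. Unlike the hyperbolic \thmref{theorem:realisation_hyperbolic}, where global strict concavity of $\HE_{\surf_g,\Theta}^{-}$ together with the Gau{\ss}--Bonnet condition \eqref{eq:hyperbolic_gauss_bonnet_condition} pins down attainability, the spherical functional need not be globally concave, so one must separately rule out degenerations of the decoration --- circles colliding, becoming mutually tangent, or escaping --- and keep the hyperideal polyhedral cone non-degenerate and convex throughout. I expect the hyperideality hypothesis (pairwise non-intersecting circles) and the weighted Delaunay machinery of \secref{sec:weighted_delaunay_tessellations} to do the decisive work here, confining the configuration to the hyperideal regime where the Rivin--Schlenker correspondence applies; the genus $0$ restriction is what makes the round sphere the unique cone-free target and matches the topological ball structure of a convex hyperideal polyhedron.
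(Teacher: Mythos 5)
Your proposal is correct and takes essentially the same route as the paper: reduce to the fundamental discrete conformal invariant $\surf_0$ via the weighted Delaunay machinery (\lemref{lemma:uniqueness_wDt}, \lemref{lemma:existence_wDt}), realize $\surf_0$ as a convex hyperideal polyhedron by the Rivin--Schlenker theorem (\propref{prop:metrics_on_hyperideal_polyhedra}), read off the decorated spherical metric with $\Theta_i\equiv2\pi$ on the ideal boundary, and obtain uniqueness up to M\"obius transformations from the rigidity up to isometries of $\HH^3$. Your closing concern about existence being the hard part is unnecessary on this route --- existence is already contained in the Rivin--Schlenker realization statement, so no degeneration analysis or variational argument is needed (the non-concavity of $\HE_{\surf_0,\Theta}^{+}$ only obstructs the alternative variational proof you mention, which the paper likewise avoids).
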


Recently, \textsc{I.~Izmestiev} \etal\ \cite{IPW2023} solved the prescribed cone-angle
problem for undecorated convex piecewise spherical metrics on the topological sphere,
\ie, piecewise spherical metrics with all cone-angles less than $2\pi$. They used a
dimension-reduction technique similar to the one discussed in \cite{BI2008} to show
that the non-degeneracy of the spherical discrete Hilbert--Einstein functional
$\HE_{\surf_g,\Theta}^{+}$ (see \secref{sec:local_variational_principle}).
It seams possible to generalize their arguments to the decorated case.

\paragraph{Alexandrov type realizations.}
Decorated piecewise hyperbolic surfaces and their Delaunay tessellations are also
related to hyperbolic polyhedra (\secref{sec:hyperbolic_polyhedra_hyperbolic}).
A discrete subgroup $G$ of isometries of hyperbolic $3$-space is \emph{Fuchsian} if
it acts freely cocompactly on a totally geodesic plane and a hyperideal polyhedron
$P$ is \emph{invariant} under $G$ if $G(P) = P$. Furthermore, the polyhedron
\emph{realizes} a hyperbolic surface $\surf_g$ if $\partial P/G$ is isometric to
$\surf_g$. Hence, our \thmref{theorem:realisation_hyperbolic} provides
a new variational proof of the following

\begin{corollary}
    Let $(\toposurf_g, \verts)$ be a marked surface of genus $g\geq2$. Each
    complete hyperbolic metric on $(\toposurf_g, \verts)$ with cusps and
    complete ends of infinite area at $\verts$ can be realized as a unique
    convex hyperideal polyhedron, up to hyperbolic congruence, invariant under the
    action of a Fuchsian group.
\end{corollary}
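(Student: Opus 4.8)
The plan is to obtain the Corollary as a geometric reformulation of \thmref{theorem:realisation_hyperbolic}, using the dictionary between decorated piecewise hyperbolic metrics and convex hyperideal polyhedra set up in \secref{sec:hyperbolic_polyhedra_hyperbolic} and \secref{sec:convex_polyhedra}. The guiding observation is that a complete hyperbolic metric $\surf_g$ on $(\toposurf_g, \verts)$ with cusps and complete ends of infinite area at $\verts$ is exactly the sort of object that appears as a fundamental discrete conformal invariant. Indeed, by \secref{sec:connections_hyperbolic_polyhedra} and \secref{sec:canonical_tessellations}, the canonical weighted Delaunay tessellation of $\surf_g$ turns it into a hyperideally decorated piecewise hyperbolic metric $(\dist_{\toposurf_g}, r)$ on $(\toposurf_g, \verts)$ whose fundamental discrete conformal invariant is $\surf_g$ itself.

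First, I would apply \thmref{theorem:realisation_hyperbolic} to $(\dist_{\toposurf_g}, r)$ with the uniform cone-angle distribution $\Theta_i \equiv 2\pi$. For $g \geq 2$ this $\Theta$ meets the hyperbolic Gau{\ss}--Bonnet condition \eqref{eq:hyperbolic_gauss_bonnet_condition}, since $\frac{1}{2\pi}\sum_i \Theta_i = |\verts| < 2g - 2 + |\verts|$. The theorem thus produces a unique decorated piecewise hyperbolic metric in the discrete conformal class of $(\dist_{\toposurf_g}, r)$ all of whose vertex angle sums equal $2\pi$. Because discrete conformal equivalence preserves the fundamental discrete conformal invariant, this distinguished metric still has $\surf_g$ as its invariant and is therefore canonically determined by $\surf_g$ alone.

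The geometric core is to read this distinguished metric off a polyhedron. By \secref{sec:hyperbolic_polyhedra_hyperbolic}, a hyperideally decorated piecewise hyperbolic metric together with its weighted Delaunay tessellation is the apex link of a hyperideal polyhedral cone over $\surf_g$, and by \secref{sec:convex_polyhedra} the weighted Delaunay property is equivalent to the convexity of the corresponding polyhedron. The normalization $\Theta_i \equiv 2\pi$ says that the piecewise hyperbolic metric on $\toposurf_g$ has no cone singularities; its developing map therefore descends to a genuine closed hyperbolic surface of genus $g$ whose holonomy is a Fuchsian group $G$ acting freely and cocompactly on a totally geodesic plane $\HH^2 \subset \HH^3$. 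The convex hyperideal polyhedron then lifts to a $G$-invariant convex polyhedron $P$ with $\partial P / G$ isometric to $\surf_g$, which is the asserted realization.

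Uniqueness up to hyperbolic congruence I would get by running the correspondence backwards: any $G$-invariant convex hyperideal polyhedron realizing $\surf_g$ induces, through its (necessarily weighted Delaunay) face structure, a decorated piecewise hyperbolic metric with $\Theta_i \equiv 2\pi$ and fundamental invariant $\surf_g$, and such a metric is unique in its discrete conformal class by the uniqueness clause of \thmref{theorem:realisation_hyperbolic}; two realizations then differ only by the ambient isometry identifying their developing maps. The step I expect to be the main obstacle is the geometric translation in the third paragraph: verifying that $\Theta_i \equiv 2\pi$ corresponds precisely to free cocompact Fuchsian invariance and that convexity matches the weighted Delaunay property. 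Both rest on the detailed correspondences of \secref{sec:hyperbolic_polyhedra_hyperbolic} and \secref{sec:convex_polyhedra}; once these are in hand, the Corollary follows directly from the uniformization result.
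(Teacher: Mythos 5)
Your proposal is correct and is precisely the argument the paper intends (it states the corollary as an immediate consequence of \thmref{theorem:realisation_hyperbolic} without writing out a proof): apply the theorem with $\Theta_i\equiv2\pi$, which satisfies \eqref{eq:hyperbolic_gauss_bonnet_condition} exactly when $g\geq2$, and translate via the hyperideal-prism/polyhedral-end correspondence of \secref{sec:hyperbolic_polyhedra_hyperbolic} together with the equivalence of convexity, the weighted Delaunay property, and canonicity from \lemref{lemma:local_delaunay_convexity_equivalence}. Your filling-in of the details --- that any such $\surf_g$ arises as a fundamental invariant by choosing weights/heights in $\polyhedra^{-}(\surf_g)$, and that $\Theta_i\equiv2\pi$ yields a smooth closed hyperbolic link surface whose holonomy is the Fuchsian group leaving the developed polyhedron invariant --- matches the paper's framework and is sound.
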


A proof of this theorem first appeared in \textsc{F.~Fillastre}'s work
\cite{Fillastre2008}, where he uses the \emph{method of continuity}, by
\textsc{A.~Alexandrov} \cite{Alexandrov2005}. Note that this method is
not constructive. The special case of realizing hyperbolic cusp surfaces using
the discrete Hilbert--Einstein functional $\HE_{\surf_g,\Theta}^{-}$ was
discussed by \textsc{R.~Prosanov} \cite{Prosanov2020a}.

\paragraph{Geometric transitions.}
The definition of discrete conformal equivalence for different background geometries
raises the question of their relationship. A \emph{geometric transition} is a
continuous deformation of the geometric structure on a manifold. They play an
important role, \eg, in the proof of the orbifold theorem \cite{CHK2000}, or the
recent characterization of polyhedra inscribed in quadrics \cite{DMS2020},
extending \textsc{I.~Rivin's} characterization of polyhedra inscribed in the sphere.

In \secref{sec:geometric_transitions} we will show that the spaces of
decorated piecewise hyperbolic and spherical metrics with the same
fundamental invariant $\surf_g$ are incident to the same \emph{ideal boundary}, thus
connected. Their common ideal boundary corresponds to decorated piecewise Euclidean
surfaces with the fundamental invariant $\surf_g$. Furthermore, the discrete
Hilbert--Einstein functional $\HE_{\surf_g, \Theta}^{\pm}$ extends continuously
through the ideal boundary, providing an explicit means to construct geometric
transitions.

\paragraph{Circle packings.}
In \cite{BS2004} \textsc{P.~Bowers} and \textsc{K.~Stephenson} introduced inversive
distance circle packings and asked for their existence and uniqueness properties.
For Euclidean and hyperbolic background geometry, \textsc{R.~Guo} showed their
local \cite{Guo2011a}, and \textsc{F.~Luo} their global rigidity \cite{Luo2011}.
But fixed combinatorics pose an obstruction to existence. If all inversive
distances are greater than $1$, we can associate a complete hyperbolic surface
with ends to the circle packing (see
\lemref{lemma:lambda_length_to_inversive_distance_spherical} and
\lemref{lemma:lambda_length_to_inversive_distance_hyperbolic}). Hence,
\thmref{theorem:realisation_hyperbolic} and \thmref{theorem:geometric_transitions}
yield

\begin{corollary}
    Denote by $I\colon\edges_{\tri}\to(1,\infty)$ inversive distances on
    the edges $\edges_{\tri}$ of a triangulated marked surface
    $(\toposurf_g, \verts, \tri)$. Let $\Theta\in\RR^{\verts}$ either satisfying the
    hyperbolic \eqref{eq:hyperbolic_gauss_bonnet_condition}, or Euclidean
    \eqref{eq:euclidean_gauss_bonnet_condition} Gau{\ss}--Bonnet condition.
    There is a unique decorated piecewise hyperbolic, respectively Euclidean,
    metric on $(\toposurf_g, \verts)$ realizing $\Theta$ such that its
    fundamental discrete conformal invariant $\surf_g$ coincides with the
    hyperbolic surface induced by $I$. It can be obtained by maximizing the
    discrete Hilbert--Einstein functional $\HE_{\surf_g, \Theta}^{-}$.
\end{corollary}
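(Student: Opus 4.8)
The plan is to read off both statements from \thmref{theorem:realisation_hyperbolic} and \thmref{theorem:geometric_transitions} once the inversive distances have been converted into a fundamental discrete conformal invariant. First I would invoke \lemref{lemma:lambda_length_to_inversive_distance_hyperbolic}: since $I_{ij}>1$ on every edge, the circles at adjacent vertices are pairwise disjoint, the decoration is hyperideal, and $I$ determines the $\lambda$-lengths (equivalently, the truncated edge lengths between hyperideal vertices) on the triangulation $\tri$. Gluing the corresponding hyperideal triangles along edges of matching length yields a single complete hyperbolic surface $\surf_g$ with cusps and complete ends of infinite area at $\verts$ --- this is the hyperbolic surface \enquote{induced by $I$}, realized by the triangulation $\tri$, and therefore of exactly the type admitted as a fundamental discrete conformal invariant in \thmref{theorem:realisation_hyperbolic}.

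For the hyperbolic assertion I would then apply \thmref{theorem:realisation_hyperbolic} directly to this $\surf_g$. If $\Theta$ satisfies the hyperbolic Gau{\ss}--Bonnet condition \eqref{eq:hyperbolic_gauss_bonnet_condition}, the theorem produces a unique decorated piecewise hyperbolic metric in the discrete conformal class of $\surf_g$ with angle sums $\Theta$, given by the maximum point of the strictly concave functional $\HE_{\surf_g,\Theta}^{-}$. Since every decorated piecewise hyperbolic metric whose fundamental invariant equals $\surf_g$ lies in this single discrete conformal class, existence and uniqueness are immediate.

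For the Euclidean assertion I would invoke \thmref{theorem:geometric_transitions}. By that result the decorated piecewise Euclidean metrics sharing the fundamental invariant $\surf_g$ constitute the common ideal boundary of the hyperbolic (and spherical) family, across which $\HE_{\surf_g,\Theta}^{\pm}$ extends continuously. Maximizing this extension subject to the Euclidean Gau{\ss}--Bonnet condition \eqref{eq:euclidean_gauss_bonnet_condition} then singles out the unique decorated piecewise Euclidean metric with fundamental invariant $\surf_g$ and angle sums $\Theta$.

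The only step carrying genuine geometric content is the first: checking that the purely combinatorial datum $I\in(1,\infty)^{\edges_{\tri}}$ really assembles into an honest complete hyperbolic surface of the admissible type, so that the hypotheses of \thmref{theorem:realisation_hyperbolic} are literally satisfied. This is what \lemref{lemma:lambda_length_to_inversive_distance_hyperbolic} provides; because each hyperideal triangle is realizable from its three (positive) truncated edge lengths and adjacent triangles automatically agree along their shared edge, no further compatibility condition is needed and $\surf_g$ exists unconditionally for $I>1$. Everything after this first step is a bookkeeping application of the two theorems, so I expect the construction of $\surf_g$ --- rather than any estimate or convexity argument --- to be where all the care is required.
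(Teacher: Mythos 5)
Your proposal is correct and follows essentially the same route as the paper, which derives this corollary in one step from \lemref{lemma:lambda_length_to_inversive_distance_hyperbolic} (the relation $I_{ij}^{-}=\cosh\lambda_{ij}$, so $I_{ij}>1$ yields $\lambda$-lengths $\lambda_{ij}=\acosh I_{ij}>0$ assembling into the complete hyperbolic surface $\surf_g$ induced by $I$) together with \thmref{theorem:realisation_hyperbolic} for the hyperbolic case and \thmref{theorem:geometric_transitions} for the Euclidean case. Your added observation that the gluing step is unconditional --- any three positive truncated lengths determine a hyperideal triangle (a right-angled hexagon), with no triangle-inequality-type constraint --- correctly identifies the only point of genuine geometric content, exactly as the paper intends.
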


The situation for inversive distance circle packings in the sphere is more complicated.
This is related to the non-concavity of the discrete Hilbert--Einstein functional
$\HE_{\surf_g,\Theta}^{+}$. Moreover, \textsc{J.~Ma} and \textsc{J.-M.~Schlenker}
showed that they are, in general, non-rigid \cite{MS2012}. Recently,
\textsc{J.~Bowers} \etal\ introduced subclasses of such spherical
circle packings which do not suffer this problem \cite{BBP2019}.

\paragraph{Discrete curvature problems.}
Characterizing which functions can arise as Gaussian curvatures of a
Riemannian metric on a given surface within some given conformal class
is a classical problem, solved by \textsc{J.~Kazdan} and \textsc{F.~Warner}
\cite{KW1974,KW1975}. In the literature the angle-defects
$\kappa_i \coloneq 2\pi-\theta_i$ of a discrete metric are often interpreted as
its discrete curvature. Here, $\theta_i$ denotes the cone-angle at the vertex $i$.
In \cite[App.~B]{BPS2015} it was discussed that $\kappa_i$
is rather the discrete analog of the curvature form than the Gaussian curvature.
This corresponds to the fact that the angle-defects $\kappa_i$ are scale-invariant
while the smooth Gaussian curvature is not. Thus, it is natural to ask whether there
are alternatives for the notion of discrete curvature.

In \cite{DalPozKourimska2023} \textsc{H.~Dal\,Poz\,Kou\v{r}imsk\'a} studied the
quotients of angle-defects by a choice of \emph{dual areas} associated to
vertices, \ie, the areas of Voronoi cells dual to the Delaunay tessellations.
As weighted Delaunay tessellations also possess dual decompositions, \ie, weighted
Voronoi decompositions, this notion
directly translates to decorated surfaces \cite{XZ2023}. Another option are
\textsc{H.~Ge}'s and \textsc{X.~Xu}'s \emph{$\alpha$-curvatures}. They
are given by the quotient of the angle-defects by some power of radii associated
to the vertices. Initially defined for circle packings \cite{GX2021},
$\alpha$-curvatures have recently been extended to decorated surfaces
\cite{XZ2023a}. Both notions support existence and, to some extend, uniqueness
results of Kazdan--Warner type. They are closely related to
\thmref{theorem:realisation_hyperbolic} and \cite[Thm.~A]{BL2023}.

\newpage

\section{Decorated discrete conformal equivalence and hyperbolic polyhedra}
\label{sec:ddce_and_polyhedra}
\subsection{Decorated piecewise spherical and piecewise hyperbolic surfaces}
\label{sec:basic_definitions}
By a \emph{marked surface} $(\toposurf_g, \verts)$ we mean a closed oriented
$2$-dimensional manifold $\toposurf_g$ of genus $g$ with a finite set
$\verts\subset\toposurf_g$, the \emph{marking}. A \emph{tessellation} of the marked
surface $(\toposurf_g, \verts)$ is a cell-complex $\tri$ which is homeomorphic to
$\toposurf_g$ with $0$-cells given by $\verts$. The tessellation $\tri$ is called a
\emph{triangulation} if all $2$-cells are incident to exactly three $1$-cells.
We also refer to the $0$-cells as \emph{vertices}, the $1$-cells as \emph{edges}, and
the $2$-cells as \emph{faces}. The set of edges and faces is denoted by
$\edges_{\tri}$ and $\faces_{\tri}$, respectively. If the tessellation is clear from
the context, we sometimes simplify this to $\edges$ and $\faces$.

There is no need that a triangulation $\tri$ is
a simplicial complex, \ie, self-glueings of a single triangle or glueing two
triangles along multiple edges is allowed. Indeed, it is essential for our
constructions in \secref{sec:weighted_delaunay_tessellations} and
\secref{sec:canonical_tessellations} that we work with the general
class of triangulations. Still, we adopt the following notation: we denote by $ij$
the edge with vertices $i$ and $j$, by $ijk$ the face incident to the vertices $i$,
$j$, and $k$, and by $\corner{i}{jk}$ the corner at vertex $i$ in the triangle $ijk$.
This notation is simple but only unambiguous if $\tri$ is a simplicial complex.
Nonetheless, we think that the risk of confusion is small compared to the gain of
conciseness of our expositions.

\begin{remark}
   Most of our considerations will start with a single triangle or
   a quadrilateral given by two adjacent triangles. After prescribing a discrete
   metric (see below), these can always be realized in the (metric) $2$-sphere or
   hyperbolic plane, respectively. There our notation is unambiguous. The
   quantities we obtain in this way can then be projected back to the surface.
   Thus, the problem with the adopted notation amounts \enquote{merely} to an exercise
   in bookkeeping.
\end{remark}

A \emph{discrete hyperbolic metric} $(\tri, \len)$ of the marked surface
$(\toposurf_g, \verts)$ is a triangulation $\tri$ together with a positive
function $\len\colon\edges_{\tri}\to\RR_{>0}$ such that the triangle
inequalities are satisfied for each triangle $ijk\in\faces_{\tri}$, \ie,
\begin{equation}\label{eq:triangle_inequalities}
    \len_{ij}\,+\,\len_{jk} \;>\; \len_{ki}
\end{equation}
for all cyclic permutations of $(i, j, k)$. Similarly, a
\emph{discrete spherical metric} is determined by a function
$\len\colon\edges_{\tri}\to(0,\pi)$ satisfying the triangle inequalities
\eqref{eq:triangle_inequalities} for each triangle. Additionally, we require for
a discrete spherical metric that the perimeter of each triangle $ijk\in\faces_{\tri}$
is smaller than $2\pi$, \ie,
\begin{equation}\label{eq:perimeter_condition}
    \len_{ij}\,+\,\len_{jk}\,+\,\len_{ki} \;<\; 2\pi.
\end{equation}

If we are given a discrete spherical/hyperbolic metric $\len$, each triangle
$ijk\in\faces_{\tri}$ can be uniquely realized, up to isometries, as a
convex hyperbolic or spherical triangle, respectively. Here, a triangle in the
$2$-sphere is said to be convex if it is contained in an open half-sphere.
In the hyperbolic plane any triangle is convex. The identifications of these
triangles along edges are isometries. Thus, prescribing $(\tri, \len)$ endows the
marked surface $(\toposurf_g, \verts)$ with
a complete path metric $\dist_{\toposurf_g}$ (see, \eg, \cite{CHK2000}). We call
$\dist_{\toposurf_g}$ a \emph{piecewise spherical/hyperbolic metric}. With respect
to this metric each point of $\toposurf_g\setminus\verts$ is locally isometric to
the hyperbolic plane or $2$-sphere, respectively. At the vertices $\verts$ the metric
$\dist_{\toposurf_g}$ may have cone-like singularities
$\theta\colon\verts\to\RR_{>0}$. Indeed, we can compute
the angle $\theta_{jk}^i$ at each corner abutting the vertex $i\in\verts$, \eg, using
the spherical/hyperbolic law of cosines. Then the \emph{cone-angle} $\theta_i$ at $i$
is given by sum of these angles $\theta_{jk}^i$. These cone-angles may be different
than $2\pi$.

\begin{remark}
    In this article we only consider piecewise spherical/hyperbolic metrics
    which can be obtained from a discrete spherical/hyperbolic metric.
    For piecewise hyperbolic metrics this is always the case. Yet, in the spherical
    case there are piecewise spherical metrics with no discrete counterpart. For
    example, we can obtain the standard (smooth) metric on the $2$-sphere by gluing
    a convex spherical triangle and its complement in the $2$-sphere. To this
    construction corresponds no discrete spherical metric on a genus $0$ surface
    $\toposurf_0$ with three marked points.
\end{remark}

\subsection{Local geometry in the spherical case}
\label{sec:local_spherical_case}
Consider a single convex spherical triangle $ijk$ determined by the discrete metric
$(\tri, \len)$. It can be realized in the (metric) $2$-sphere
\begin{equation}
    \SS^2
    \;\coloneq\;
    \left\{x\in\RR^3 \;:\; 1=\|x\|^2=x_0^2+x_1^2+x_2^2\right\}.
\end{equation}
A \emph{decoration} of this triangle is a choice of circle about each vertex.
We refer to the circles as \emph{vertex-circles}. They are determined by their radii
$r_i\in[0,\nicefrac{\pi}{2})$ at the vertices.

The conformal automorphisms of the $2$-sphere are \emph{M\"obius transformations}.
They can be identified with the elements of $\SO^{+}(3,1)$, \ie, the
identity component of the isometry group of the Minkowski metric
\begin{equation}
    \ip{X}{Y}_{3,1} \;=\; x_0y_0+x_1y_1+x_2y_2-x_3y_3.
\end{equation}
M\"obius transformations act bijectively on the set of points $\SS^2$
and the set of spherical circles and geodesics (great circles). Indeed, given a
spherical circle with center $p\in\SS^2$ and radius $r\in[0,\nicefrac{\pi}{2})$ we
can identify it with
\begin{equation}\label{eq:spherical_moebius_lift}
    C \,=\, (p, \cos r) \in \RR^{3,1}.
\end{equation}
Its Minkowski norm $\|C\|_{3,1}^2 = 1-\cos^2(r)$ is $>0$ for spherical
circles and $=0$ for points. Thus, we obtain an identification, up to scale, of
circles and points with elements of $\{\|X\|_{3,1}^2>0\}$ and
$\{\|X\|_{3,1}^2=0\}$, respectively. For more information on M\"obius
geometry and this identification we refer the reader to
\cite{Cecil1992, Hertrich-Jeromin2003}.

We say that two decorated spherical triangles are \emph{M\"obius-equivalent} if there
is a M\"obius transformation mapping the vertex-circles of the first triangle
to the corresponding vertex-circles of the second triangle. Not every pair of
decorated spherical triangles is M\"obius-equivalent because
a pair of circles cannot be mapped arbitrarily to two other circles by a M\"obius
transformation. For a pair of spherical circles
\begin{equation}\label{eq:spherical_inversive_distance}
    I_{ij}^{+}
    \;\coloneq\;
    \frac{\cos r_i \cos r_j - \cos\len_{ij}}{\sin r_i \sin r_j}
\end{equation}
is a M\"obius-geometric invariant. It is the spherical version of the
\emph{inversive distance} (see the following lemma and, \eg, \cite{Coxeter1966a}).

\begin{lemma}\label{lemma:spherical_moebius_and_inversive_distance}
   Consider two decorated spherical triangles given by $(\len, r)$ and
   $(\tilde{\len}, \tilde{r})$, respectively. Suppose that $r_i>0$ and
   $\tilde{r}_i>0$ for all $i\in\{1, 2, 3\}$. They are M\"obius-equivalent
   if and only if the spherical inversive distances
   \eqref{eq:spherical_inversive_distance}
   for corresponding edges coincide.
\end{lemma}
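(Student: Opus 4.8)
The plan is to reformulate both M\"obius-equivalence and the inversive distance in terms of the linear-algebraic data of the lift vectors \eqref{eq:spherical_moebius_lift} and then to invoke Witt's extension theorem. Writing $C_i = (p_i, \cos r_i)$ and $\tilde{C}_i = (\tilde{p}_i, \cos\tilde{r}_i)$ for the lifts of the two triangles, a direct computation using $\ip{p_i}{p_j} = \cos\len_{ij}$ (Euclidean product of unit vectors) gives
\begin{equation*}
    \ip{C_i}{C_j}_{3,1} = \cos\len_{ij} - \cos r_i\cos r_j = -\sin r_i\sin r_j\,I_{ij}^{+},
\end{equation*}
so that after normalising $\hat{C}_i \coloneq C_i/\|C_i\|_{3,1}$ (legitimate since $\|C_i\|_{3,1} = \sin r_i > 0$) the Minkowski Gram matrix has diagonal entries $\ip{\hat{C}_i}{\hat{C}_i}_{3,1} = 1$ and off-diagonal entries $\ip{\hat{C}_i}{\hat{C}_j}_{3,1} = -I_{ij}^{+}$. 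Because $\SO^{+}(3,1)$ preserves $\ip{\cdot}{\cdot}_{3,1}$, and the lift of the image of a circle under a M\"obius transformation $g$ agrees with $g\cdot C_i$ up to a positive scale, these normalised inner products—hence the inversive distances \eqref{eq:spherical_inversive_distance}—are M\"obius invariants. This settles the forward implication and reduces the converse to showing that the Gram matrix of $(\hat{C}_1, \hat{C}_2, \hat{C}_3)$ is a \emph{complete} invariant.

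For the converse, assume $I_{ij}^{+} = \tilde{I}_{ij}^{+}$ on every edge, so that $(\hat{C}_i)$ and $(\hat{\tilde{C}}_i)$ have identical Gram matrices. Since the triangle is nondegenerate, its vertices $p_1, p_2, p_3$ do not lie on a common great circle and are therefore linearly independent in $\RR^3$; hence the lifts $C_i$ are linearly independent in $\RR^{3,1}$ and span a $3$-dimensional subspace $U$ (and likewise $\tilde{U}$). The assignment $\hat{C}_i \mapsto \hat{\tilde{C}}_i$ extends by linearity to an isometry $U \to \tilde{U}$ of the restricted Minkowski forms, which by Witt's extension theorem extends to some $g \in \mathrm{O}(3,1)$. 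By construction $g\cdot C_i = (\sin r_i/\sin\tilde{r}_i)\,\tilde{C}_i$ is a positive multiple of $\tilde{C}_i$, so $g$ already maps each vertex-circle of the first triangle onto the corresponding one of the second.

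It remains to arrange $g \in \SO^{+}(3,1)$, which I expect to be the only genuinely delicate point. The complement $U^{\perp}$ is one-dimensional; evaluating the Gram matrix in the degenerate limit $r_i \to 0$, where $C_i$ tends to the future null vector $(p_i, 1)$, shows that the induced form on $U$ has signature $(2,1)$, so that $U^{\perp}$ is spacelike—and stays so, by continuity, for all hyperideal radii. Consequently the reflection $R$ in $U^{\perp}$ fixes all three circles $\tilde{C}_i$, is orthochronous, and has determinant $-1$; composing $g$ with $R$ if necessary therefore leaves the images of the vertex-circles untouched while allowing us to normalise $\det g = +1$. To pin down the orthochronous component I would use that every $C_i$ has positive time-coordinate $\cos r_i > 0$, equivalently $\ip{C_i}{e_3}_{3,1} = -\cos r_i < 0$ for the future timelike vector $e_3 = (0,0,0,1)$: the common future side singled out by the decoration forces $g$ to preserve the time-orientation of $\RR^{3,1}$.

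The hard part is precisely this component bookkeeping inside $\mathrm{O}(3,1)$, since a careless appeal to Witt's theorem only yields equivalence up to the full conformal group, which also contains orientation-reversing maps and the antipodal involution $-\mathrm{id}$; the latter would replace each vertex-circle of radius $r_i < \nicefrac{\pi}{2}$ by its complementary disc and must be excluded. Once the correct component is identified, the resulting $g \in \SO^{+}(3,1)$ is an honest M\"obius transformation carrying the vertex-circles of the first decorated triangle to those of the second, establishing the M\"obius-equivalence and completing the proof.
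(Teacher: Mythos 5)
Your lifting of the circles to $\RR^{3,1}$, the computation of the normalized Gram matrix, and the identification of its off-diagonal entries with $-I_{ij}^{+}$ reproduce exactly the paper's proof; the paper stops there, treating the passage from equal Gram data back to an element of $\SO^{+}(3,1)$ as immediate, so your Witt-plus-component analysis is an attempt to make explicit precisely the step the paper leaves implicit — and you are right that this is where the delicacy sits. However, the two claims you use to close that step both have genuine gaps. First, the signature claim is false in the stated generality of the lemma: the hypothesis is only $r_i>0$, not hyperideality, and for strongly overlapping circles the span $U$ of the lifts is positive definite. Concretely, for the octant triangle ($\len_{ij}=\nicefrac{\pi}{2}$) with equal radii $r$, the Gram matrix of the $C_i$ has eigenvalues $1$, $1$, and $\sin^{2}r-2\cos^{2}r$, hence is positive definite as soon as $\tan^{2}r>2$. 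There $U^{\perp}$ is timelike, your reflection $R$ is \emph{antichronous} of determinant $-1$, and normalizing the determinant destroys the time orientation: with the positive-scale (disc-preserving) interpretation that your argument and the paper's equations $\tilde{C}_i=\ee^{u_i}gC_i$, $\ee^{u_i}>0$, both use, a generic such triple and its mirror image have equal inversive distances but are related by no element of $\SO^{+}(3,1)$. Even restricted to hyperideal decorations, "stays so by continuity" is circular — it presupposes that the Gram form never degenerates along the deformation, which is the point at issue. The correct argument there is direct: when all $I_{ij}^{+}>1$ the normalized Gram matrix has determinant $1-I_{12}^2-I_{23}^2-I_{31}^2-2I_{12}I_{23}I_{31}<0$, and since the ambient signature is $(3,1)$ the only possibility is $(2,1)$; equivalently, $U^{\perp}$ is spanned by the lift of the face-circle $C_{ijk}$, which is spacelike.

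Second, your time-orientation argument is not a proof: knowing $\ip{C_i}{e_3}_{3,1}<0$ and $\ip{\tilde{C}_i}{e_3}_{3,1}<0$ says nothing about whether $g$ is orthochronous, because $g$ does not fix $e_3$ — spacelike vectors with positive time coordinate exist on both sides of any antichronous map. What actually excludes the antichronous case, in the hyperideal regime where the rest of your argument lives, is hyperideality itself: if the Witt extension $g$ were antichronous, then $-g$ would be an orthochronous map carrying the three pairwise disjoint vertex-discs onto the \emph{complements} of the discs bounded by the $\tilde{C}_i$, i.e.\ onto three pairwise disjoint spherical discs each of radius exceeding $\nicefrac{\pi}{2}$; two such discs each have area greater than $2\pi$ and must intersect — a contradiction. (A minor but symptomatic slip: $-\mathrm{id}$ is not an element of the M\"obius group at all; it is antichronous and does not act on the future null rays, while the antipodal map corresponds to $(x,t)\mapsto(-x,t)$, which is orthochronous of determinant $-1$.) So: same skeleton as the paper, a correctly identified delicate point that the paper itself glosses over, but the proposed resolution fails at exactly the two places where it matters — the signature of $U$ and the chronology normalization — and as stated covers at most the hyperideal case rather than the lemma's full hypothesis.
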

\begin{proof}
    Let $C_i,\tilde{C}_i\in\RR^{3,1}$ be the representatives for the vertex-circles
    according to \eqref{eq:spherical_moebius_lift}. Finding a M\"obius transformation
    mapping the first decorated triangle to the second is equivalent to
    finding an element of $\SO^{+}(3,1)$ that maps $C_i$ to $\tilde{C}_i$ up to scale.
    That is, we are looking for $u_i\in\RR$ such that
    \begin{equation}
       \big\|\tilde{C}_i\big\|_{3,1}^2
       \;=\;
       \big\|\ee^{u_i}C_i\big\|_{3,1}^2
       \qquad\text{and}\qquad
       \ip{\tilde{C}_i}{\tilde{C}_j}_{3,1}
       \;=\;
       \ip{\ee^{u_i}C_i}{\ee^{u_j}C_j}_{3,1}.
    \end{equation}
    From elementary spherical geometry we see that
    \begin{equation}
       \|C_i\|_{3,1}^2
       \;=\;
       \sin^2 r_i
       \qquad\text{and}\qquad
       \ip{C_i}{C_j}_{3,1}
       \;=\;
       \cos\len_{ij}- \cos r_i \cos r_j.
    \end{equation}
    Hence, the first equality gives
    $\ee^{u_i}=\nicefrac{\|\tilde{C}_i\|_{3,1}}{\|C_i\|_{3,1}}$
    and the second equality shows that the compatibility conditions are
    \begin{equation}\label{eq:inversive_distances_via_inner_product}
       -I_{ij}^{+}
       \;=\;
       \frac{\ip{C_i}{C_j}_{3,1}}
       {\|C_i\|_{3,1}\|C_j\|_{3,1}}
       \;=\;
       \frac{\ip{\tilde{C}_i}{\tilde{C}_j}_{3,1}}
       {\|\tilde{C}_i\|_{3,1}\|\tilde{C}_j\|_{3,1}}
       \;=\;
       -\tilde{I}_{ij}^{+}.\qedhere
    \end{equation}
\end{proof}

The $u_i$ introduced in the previous proof are called
\emph{(discrete) logarithmic scale factors}. To extend the previous
result to decorated spherical triangles with vanishing vertex-radii we
consider $\epsilon$-families $(\len^{\epsilon}, r^{\epsilon})$ of decorated
triangles. We assume that the length and radii depend smoothly on the real
parameter $\epsilon$. A decorated spherical triangle with an
\emph{infinitesimal circle at the vertex $i$}
is an $\epsilon$-family $(\len^{\epsilon}, r^{\epsilon})$ of decorated
spherical triangles such that
\begin{equation}
   r_i^{\epsilon} \;=\; \epsilon R_i \,+\, \mathrm{o}(\epsilon).
\end{equation}
Here, $R_i>0$ and $\mathrm{o}$ is a function satisfying
$\lim_{\epsilon\to0}\nicefrac{\mathrm{o}(\epsilon)}{\epsilon}=0$.

\begin{lemma}\label{lemma:local_moebius_to_factors_spherical}
    Let $(\len, r)$ and $(\tilde{\len}, \tilde{r})$ be two decorated spherical
    triangles. They are M\"obius-equivalent if and only if
    there are $u_i\in\RR$ for $i\in\{1,2,3\}$ such that
    \begin{align}
        \sin\tilde{r}_i
        &\;=\; \ee^{u_i}\sin r_i,\\
        \cos\tilde{\len}_{ij}
        &\;=\; \ee^{u_i+u_j}
                  \big(\!
                  \cos \len_{ij}
                  - \cos r_i\cos r_j
                  \big)\\
        &\phantom{xxxxx}
                  +\sqrt{\big(1-\ee^{2u_i}\sin^2r_i\big)
                      \big(1-\ee^{2u_j}\sin^2r_j\big)}
    \end{align}
    for all edges $ij$. In particular, the $u_i$ can be computed via
    \begin{equation}\label{eq:radii_to_log_factors_spherical}
        \ee^{u_i}
        \;=\;
        \begin{cases}
           \sin\tilde{r}_i/\sin r_i &,\text{if }r_i\neq0,\\
           |M'(p_i)| &,\text{if }r_i=0.
        \end{cases}
    \end{equation}
    Here, $M'$ is the derivative of $M$, and $p_i\in\SS^2$ represents
    the corresponding vertex of the first triangle.
\end{lemma}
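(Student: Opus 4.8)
The plan is to transport the entire statement to the linear action of the M\"obius transformation on the Minkowski lifts $C_i=(p_i,\cos r_i)\in\RR^{3,1}$ from \lemref{lemma:spherical_moebius_and_inversive_distance}, so that the two displayed identities become the bookkeeping of Minkowski norms and inner products, and to reach the vanishing radii through the infinitesimal-circle families introduced above. Throughout I use the relations $\|C_i\|_{3,1}^2=\sin^2 r_i$ and $\ip{C_i}{C_j}_{3,1}=\cos\len_{ij}-\cos r_i\cos r_j$, which remain valid at $r_i=0$; there $C_i$ degenerates to the null lift of the point $p_i$.

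Suppose first that $M\in\SO^{+}(3,1)$ realises the M\"obius-equivalence. As $M$ carries the centre $p_i$ to the centre $\tilde{p}_i$, it maps $C_i$ to a positive multiple of $\tilde{C}_i$, say $MC_i=\nu_i\tilde{C}_i$ with $\nu_i>0$; set $\ee^{u_i}\coloneq\nu_i^{-1}$, so that $M(\ee^{u_i}C_i)=\tilde{C}_i$. Since $M$ preserves $\ip{\cdot}{\cdot}_{3,1}$, equating the norms of $\ee^{u_i}C_i$ and $\tilde{C}_i$ yields $\sin^2\tilde{r}_i=\ee^{2u_i}\sin^2 r_i$, and equating the mixed inner products yields $\cos\tilde{\len}_{ij}-\cos\tilde{r}_i\cos\tilde{r}_j=\ee^{u_i+u_j}(\cos\len_{ij}-\cos r_i\cos r_j)$. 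As $\tilde{r}_i\in[0,\nicefrac{\pi}{2})$ forces $\cos\tilde{r}_i=+\sqrt{1-\ee^{2u_i}\sin^2 r_i}$, these are precisely the three displayed identities. Conversely, if scale factors $u_i$ satisfy them, then the Gram matrices of $(\ee^{u_i}C_i)_i$ and $(\tilde{C}_i)_i$ agree, so $\ee^{u_i}C_i\mapsto\tilde{C}_i$ extends to a Minkowski isometry; correcting it on the one-dimensional orthogonal complement to fix orientation and time-orientation lands it in $\SO^{+}(3,1)$ and gives the M\"obius-equivalence.

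It remains to identify $\ee^{u_i}$ in closed form. For $r_i\neq0$ the first identity gives $\ee^{u_i}=\sin\tilde{r}_i/\sin r_i$ at once. The genuinely delicate case is $r_i=0$: then the first identity only records $\tilde{r}_i=0$ and leaves $\ee^{u_i}=\nu_i^{-1}$ undetermined, because the scaling of the null lift $C_i$ is invisible to norms. This is the main obstacle, and I would clear it with the infinitesimal-circle families. Decorating the vertex $i$ of the first triangle with a circle of radius $r_i^{\epsilon}=\epsilon R_i$ and pushing it forward by $M$ produces on the second triangle a family $\tilde{r}_i^{\epsilon}=\epsilon\tilde{R}_i+\mathrm{o}(\epsilon)$; applying the positive-radius formula for each $\epsilon>0$ and using continuity of the linear map $M$ at the null lift gives $\ee^{u_i}=\nu_i^{-1}=\lim_{\epsilon\to0}\sin\tilde{r}_i^{\epsilon}/\sin r_i^{\epsilon}=\tilde{R}_i/R_i$. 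Conformality of $M$ at $p_i$ says exactly that an infinitesimal circle of radius $\epsilon R_i$ about $p_i$ is sent to one of radius $|M'(p_i)|\,\epsilon R_i+\mathrm{o}(\epsilon)$ about $\tilde{p}_i$, so $\tilde{R}_i/R_i=|M'(p_i)|$, as claimed. The only thing left to check is that this limit is independent of the chosen family, which is immediate since $|M'(p_i)|$ depends on $M$ and $p_i$ alone.
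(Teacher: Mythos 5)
Your proof is correct and follows essentially the same route as the paper: Minkowski lifts, comparison of norms and mixed inner products to extract the $u_i$ (with the positivity $\cos\tilde r_i=\sqrt{1-\ee^{2u_i}\sin^2 r_i}$ from $\tilde r_i\in[0,\nicefrac{\pi}{2})$), and the infinitesimal-circle/conformality argument $\lim_{\epsilon\to0}\nicefrac{\tilde r_i^{\epsilon}}{r_i^{\epsilon}}=|M'(p_i)|$ for vertices with $r_i=0$. The only minor difference is how $\ee^{u_i}$ is pinned down at a null vertex: the paper records the closed-form triple ratio \eqref{eq:triangle_conformal_factors} of mixed inner products, whereas you identify $\ee^{u_i}$ as the inverse scaling factor $\nu_i^{-1}$ of the null lift under $M$ and recover it by continuity along the $\epsilon$-family --- equivalent bookkeeping leading to the same limit.
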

\begin{proof}
    This lemma follows from the considerations in
    the proof of \lemref{lemma:spherical_moebius_and_inversive_distance}.
    The only problem is that, in general, we cannot compute the logarithmic scale
    factors as quotients of Minkowski norms. As an alternative we can use
    \begin{equation}
        \label{eq:triangle_conformal_factors}
        \ee^{2u_i}
        \;=\;
        \frac{\ip{\tilde{C}_i}{\tilde{C}_j}_{3,1}}{\ip{C_i}{C_j}_{3,1}}
        \frac{\ip{C_j}{C_k}_{3,1}}{\ip{\tilde{C}_j}{\tilde{C}_k}_{3,1}}
        \frac{\ip{\tilde{C}_k}{\tilde{C}_i}_{3,1}}{\ip{C_k}{C_i}_{3,1}}
    \end{equation}
    which follows from the conditions on the mixed inner products.

    It is only left to show that $u_i=\log|M'(p_i)|$ if $r_i=0$. Consider
    an infinitesimal circle $(\len^{\epsilon}, r^{\epsilon})$ at the vertex $i$.
    Applying $M$ to this $\epsilon$-family, we obtain an $\epsilon$-family
    $(\tilde{\len}^{\epsilon}, \tilde{r}^{\epsilon})$ at $i$ with
    $(\tilde{\len}^0, \tilde{r}^0) = (\tilde{\len}, \tilde{r})$.
    Since $M$ is a conformal map we have
   \[
      \lim_{\epsilon\to0}\, \frac{\tilde{r}_i^{\epsilon}}{r_i^{\epsilon}}
      \;=\;
      |M'(p_i)|.
      \qedhere
   \]
\end{proof}

\subsection{Local geometry in the hyperbolic case}
In this section we are considering a single hyperbolic triangle $ijk$
determined by the discrete metric $(\tri, \len)$. It can be realized in
the hyperbolic plane which we represent by
\begin{equation}
    \HH^2
    \;\coloneq\;
    \left\{x\in\RR^{2,1} \;:\; 1=\|x\|_{2,1}^2=x_1^2+x_2^2-x_3^2\right\},
\end{equation}
using the hyperboloid model.
As in the spherical case a \emph{decoration} of this triangle is a choice of circle
about each vertex. This time they are determined by radii $r_i\in\RR_{\geq0}$.

M\"obius transformations act also on hyperbolic circles since there are models
of the hyperbolic plane embedded into $\SS^2$, \eg, the upper half-sphere model.
A hyperbolic circle with center $p\in\HH^2$ and radius $r\in\RR_{\geq0}$ can be
lifted to Minkowski $4$-space using
\begin{equation}\label{eq:hyperbolic_moebius_lift}
    C \,=\, (\cosh r, p)\in\RR^{3,1}.
\end{equation}
Again, the Minkowski norm $\|C\|_{3,1}^2 = \cosh^2(r) - 1$ is $>0$ for hyperbolic
circles and $=0$ for points. This relation is preserved by the action
of $\SO^{+}(3,1)$. More information can be found in \cite{Cecil1992,Lutz2023}.

M\"obius-equivalence of decorated hyperbolic triangles is defined as for
their spherical counterparts. The hyperbolic version of the inversive distance
is
\begin{equation}\label{eq:hyperbolic_inversive_distance}
    I_{ij}^{-}
    \;\coloneq\;
    \frac{\cosh\len_{ij} - \cosh r_i \cosh r_j}{\sinh r_i \sinh r_j}.
\end{equation}
It is also a M\"obius-geometric invariant.

\begin{lemma}\label{lemma:hyperbolic_moebius_and_inversive_distance}
    Consider two decorated hyperbolic triangles given by $(\len, r)$ and
    $(\tilde{\len}, \tilde{r})$, respectively. Suppose that $r_i>0$ and
    $\tilde{r}_i>0$ for all $i\in\{1, 2, 3\}$. They are M\"obius-equivalent
    if and only if the hyperbolic inversive distances
    \eqref{eq:hyperbolic_inversive_distance}
    for corresponding edges coincide.
\end{lemma}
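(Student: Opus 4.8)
The plan is to mirror the proof of \lemref{lemma:spherical_moebius_and_inversive_distance}, replacing the spherical lift \eqref{eq:spherical_moebius_lift} by the hyperbolic one \eqref{eq:hyperbolic_moebius_lift}. First I would pick representatives $C_i,\tilde{C}_i\in\RR^{3,1}$ of the two families of vertex-circles according to \eqref{eq:hyperbolic_moebius_lift}. Since $\SO^{+}(3,1)$ acts on $\RR^{3,1}$ by Minkowski isometries and a circle is recovered from its lift only up to positive scale, finding a M\"obius transformation carrying the first decorated triangle onto the second amounts to finding real numbers $u_i$ with
\begin{equation}
   \big\|\tilde{C}_i\big\|_{3,1}^2 = \big\|\ee^{u_i}C_i\big\|_{3,1}^2
   \qquad\text{and}\qquad
   \ip{\tilde{C}_i}{\tilde{C}_j}_{3,1} = \ip{\ee^{u_i}C_i}{\ee^{u_j}C_j}_{3,1}
\end{equation}
for all $i$ and all edges $ij$; that is, the two scaled configurations must share the same Gram matrix.

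Next I would evaluate these Minkowski products from elementary hyperbolic geometry, exactly as in the spherical argument. Writing $C_i=(\cosh r_i, p_i)$ with $p_i\in\HH^2$ the centre of the $i$-th vertex-circle, the norm is the one already recorded after \eqref{eq:hyperbolic_moebius_lift}, while the cross terms are governed by the hyperboloid distance relation $\ip{p_i}{p_j}_{2,1}=-\cosh\len_{ij}$, so that
\begin{equation}
   \|C_i\|_{3,1}^2 = \cosh^2 r_i - 1 = \sinh^2 r_i
   \qquad\text{and}\qquad
   \ip{C_i}{C_j}_{3,1} = \cosh r_i\cosh r_j - \cosh\len_{ij}.
\end{equation}
The hypotheses $r_i>0$ and $\tilde{r}_i>0$ guarantee $\|C_i\|_{3,1},\|\tilde{C}_i\|_{3,1}>0$, so the norm equations have the unique positive solution $\ee^{u_i}=\sinh\tilde{r}_i/\sinh r_i$.

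Finally, dividing the inner-product equation by the product of the two norms eliminates the $u_i$ and turns the remaining compatibility condition into
\begin{equation}
   -I_{ij}^{-}
   = \frac{\ip{C_i}{C_j}_{3,1}}{\|C_i\|_{3,1}\|C_j\|_{3,1}}
   = \frac{\ip{\tilde{C}_i}{\tilde{C}_j}_{3,1}}{\|\tilde{C}_i\|_{3,1}\|\tilde{C}_j\|_{3,1}}
   = -\tilde{I}_{ij}^{-},
\end{equation}
by the definition \eqref{eq:hyperbolic_inversive_distance} of the hyperbolic inversive distance. Hence the decorated triangles are M\"obius-equivalent precisely when $I_{ij}^{-}=\tilde{I}_{ij}^{-}$ on corresponding edges.

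The step needing the most care is the reduction in the first paragraph: I must argue that equality of Gram matrices is not merely necessary but also sufficient to produce the transformation \emph{inside the identity component} $\SO^{+}(3,1)$. I would handle this with Witt's theorem applied to the (generically $3$-dimensional) span of the lifts, which yields a Minkowski isometry matching the two configurations, and then check that the correct component is reached using the shared orientation and time-orientation inherited from realizing both triangles on the same sheet of $\HH^2$. The positivity of all radii is exactly what permits extracting the positive scale factors $\ee^{u_i}$ above; the degenerate case $r_i=0$ is precisely what is postponed to \lemref{lemma:local_moebius_to_factors_spherical} and its hyperbolic counterpart.
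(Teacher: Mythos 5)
Your proposal is correct and takes essentially the same route as the paper, whose proof simply repeats the argument of \lemref{lemma:spherical_moebius_and_inversive_distance} with the hyperbolic lift \eqref{eq:hyperbolic_moebius_lift}, arriving at exactly the norms $\|C_i\|_{3,1}^2=\sinh^2 r_i$ and products $\ip{C_i}{C_j}_{3,1}=\cosh r_i\cosh r_j-\cosh\len_{ij}$ you computed, and hence at the same Gram-matrix reduction and the identity $-I_{ij}^{-}=\ip{C_i}{C_j}_{3,1}/(\|C_i\|_{3,1}\|C_j\|_{3,1})$. Your final paragraph (Witt's theorem plus the orientation and time-orientation check to land in $\SO^{+}(3,1)$) makes explicit a sufficiency step the paper leaves implicit, which is a refinement of, not a departure from, its argument.
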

\begin{proof}
    This is proved in the same way as
    \lemref{lemma:hyperbolic_moebius_and_inversive_distance}. This time
    we are using \eqref{eq:hyperbolic_moebius_lift} to obtain representatives
    of the circles in $\RR^{3,1}$. In particular,
    \begin{equation}
       \|C_i\|_{3,1}^2
       \;=\;
       \sinh^2 r_i
       \qquad\text{and}\qquad
       \ip{C_i}{C_j}_{3,1}
       \;=\;
       \cosh r_i \cosh r_j - \cosh\len_{ij}.\qedhere
    \end{equation}
\end{proof}

We can define logarithmic scale factors and infinitesimal circles for
decorated hyperbolic triangles similar to the spherical case.
Hence, we can again extend our considerations to decorated hyperbolic
triangles with vanishing vertex-radii.

\begin{lemma}\label{lemma:local_moebius_to_factors_hyperbolic}
   Given two decorated hyperbolic triangles $(\len, r)$ and
   $(\tilde{\len}, \tilde{r})$. They are M\"obius-equivalent if and only if
   there are $u_i\in\RR$ for $i\in\{1,2,3\}$ such that
   \begin{align}
      \sinh\tilde{r}_i
      &\;=\; \ee^{u_i}\sinh r_i,\\
      \cosh\tilde{\len}_{ij}
      &\;=\; \ee^{u_i+u_j}
                \big(\!
                \cosh \len_{ij}
                - \cosh r_i\cosh r_j
                \big)\\
      &\phantom{xxxxx}
              +\sqrt{\big(1+\ee^{2u_i}\sinh^2r_i\big)
                    \big(1+\ee^{2u_j}\sinh^2r_j\big)}
   \end{align}
   for all edges $ij$. In particular, the $u_i$ can be computed via
   \begin{equation}\label{eq:radii_to_log_factors_hyperbolic}
      \ee^{u_i}
      \;=\;
      \begin{cases}
         \sinh\tilde{r}_i/\sinh r_i &,\text{if }r_i\neq0,\\
         |M'(p_i)| &,\text{if }r_i=0.
      \end{cases}
   \end{equation}
   Here, $M'$ is the derivative of $M$, and $p_i\in\HH^2$ represents
   the corresponding vertex of the first triangle.
\end{lemma}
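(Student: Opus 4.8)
The plan is to mirror the argument for the spherical case (\lemref{lemma:local_moebius_to_factors_spherical}) line by line, replacing the spherical lift \eqref{eq:spherical_moebius_lift} with its hyperbolic counterpart \eqref{eq:hyperbolic_moebius_lift} and feeding in the inner-product data already recorded in the proof of \lemref{lemma:hyperbolic_moebius_and_inversive_distance}. First I would choose representatives $C_i, \tilde{C}_i\in\RR^{3,1}$ of the vertex-circles via \eqref{eq:hyperbolic_moebius_lift}. By \lemref{lemma:hyperbolic_moebius_and_inversive_distance}, exhibiting a M\"obius transformation carrying the first decorated triangle onto the second is equivalent to finding $u_i\in\RR$ with $\|\tilde{C}_i\|_{3,1}^2 = \ee^{2u_i}\|C_i\|_{3,1}^2$ and $\ip{\tilde{C}_i}{\tilde{C}_j}_{3,1} = \ee^{u_i+u_j}\ip{C_i}{C_j}_{3,1}$ for all $i,j$.

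The norm condition, together with $\|C_i\|_{3,1}^2 = \sinh^2 r_i$, gives $\sinh\tilde{r}_i = \ee^{u_i}\sinh r_i$, which is the first displayed identity and, when $r_i\neq0$, the first branch of \eqref{eq:radii_to_log_factors_hyperbolic}. Substituting $\cosh\tilde{r}_i = \sqrt{1+\sinh^2\tilde{r}_i} = \sqrt{1+\ee^{2u_i}\sinh^2 r_i}$ into the inner-product relation $\cosh\tilde{r}_i\cosh\tilde{r}_j - \cosh\tilde{\len}_{ij} = \ee^{u_i+u_j}(\cosh r_i\cosh r_j - \cosh\len_{ij})$ and solving for $\cosh\tilde{\len}_{ij}$ yields the second displayed identity. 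Reading these two steps backwards recovers the required norm and inner-product conditions, so the identities are indeed equivalent to M\"obius-equivalence.

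The only delicate point, exactly as in the spherical proof, is computing $u_i$ when $r_i=0$: then $\|C_i\|_{3,1}=0$ and the norm quotient is unavailable. I would again trade the norms for mixed inner products, writing $\ee^{2u_i}$ as the product of three edge-ratios $\ip{\tilde{C}_i}{\tilde{C}_j}_{3,1}/\ip{C_i}{C_j}_{3,1}$ around the triangle precisely as in \eqref{eq:triangle_conformal_factors}; since that formula is stated purely in terms of Minkowski inner products it carries over verbatim and stays well-defined at a point. To identify this value with $|M'(p_i)|$, I would pass to an $\epsilon$-family $(\len^{\epsilon}, r^{\epsilon})$ carrying an infinitesimal circle at $i$, apply the M\"obius map $M$ realizing the equivalence, and invoke conformality of $M$ to get $\lim_{\epsilon\to0}\tilde{r}_i^{\epsilon}/r_i^{\epsilon} = |M'(p_i)|$; combined with the first identity this gives $\ee^{u_i} = |M'(p_i)|$. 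I expect this limiting argument to be the main obstacle, but it becomes routine once the hyperbolic lift is embedded into $\SS^2$ through the upper half-sphere model, so that the spherical reasoning applies unchanged.
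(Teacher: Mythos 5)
Your proposal is correct and follows essentially the same route as the paper, whose proof of \lemref{lemma:local_moebius_to_factors_hyperbolic} is literally \enquote{the same as for \lemref{lemma:local_moebius_to_factors_spherical}}: you substitute the lift \eqref{eq:hyperbolic_moebius_lift} with $\|C_i\|_{3,1}^2=\sinh^2 r_i$ and $\ip{C_i}{C_j}_{3,1}=\cosh r_i\cosh r_j-\cosh\len_{ij}$, use the product of mixed inner-product ratios as in \eqref{eq:triangle_conformal_factors} for vanishing radii, and close with the $\epsilon$-family conformality argument. You have merely written out explicitly the details the paper leaves implicit.
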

\begin{proof}
    The proof is the same as for \lemref{lemma:local_moebius_to_factors_spherical}.
\end{proof}

\subsection{Decorated discrete conformal equivalence}
\label{sec:ddce}
The notion of decoration discussed in the previous sections extends to
piecewise spherical/hyperbolic metrics: a \emph{decoration} of
$(\toposurf_g, \verts, \dist_{\toposurf_g})$ with geodesic triangulation $\tri$
is a choice of decoration of each triangle such that it is consistent along
edges of pairs of neighbouring faces. A decoration is determined by the radii
\begin{equation}
    r\colon\verts\to\begin{cases}
        [0, \nicefrac{\pi}{2}) &, \text{ spherical case,}\\
        \RR_{\geq0} &, \text{ hyperbolic case}.
    \end{cases}
\end{equation}
Thus, it only depends on the metric $\dist_{\toposurf_g}$ and not on the
triangulation $\tri$. We call a decoration \emph{hyperideal} if no pair of
vertex-circles intersects. This is equivalent to
\begin{equation}\label{eq:hyperideal_decoration_condition}
    r_i+r_j \,<\, \dist_{\toposurf_g}(i, j)
\end{equation}
for all $(i,j)\in\verts^2$. Here, $\dist_{\toposurf_g}(i, i)$ is the length of
a smallest geodesic loop starting and ending in $i$. In the following sections
we will only consider hyperideal decorations. So we might drop the
\enquote{hyperideal}. We call the pair $(\dist_{\toposurf_g}, r)$ a
\emph{decorated piecewise hyperbolic/spherical metric}. It is determined by the
\emph{decorated discrete hyperbolic/spherical metric} $(\tri, \len, r)$
if a triangulation is prescribed.

\begin{definition}[DCE via M\"obius transformations]
    \label{def:dce_via_moebius}
    Let $\tri$ be a triangulation of the marked surface $(\toposurf_g, \verts)$.
    Two decorated discrete hyperbolic/spherical metrics $(\tri, \len, r)$ and
    $(\tri, \tilde{\len}, \tilde{r})$ are called
    \emph{discretely conformally equivalent (DCE)} if
    \begin{enumerate}[label=(\roman*)]
        \item
            for each face $ijk\in\faces_{\tri}$ the decorated triangles
            $(\len|_{ijk},r|_{ijk})$ and $(\tilde{\len}|_{ijk}, \tilde{r}|_{ijk})$
            are M\"obius-equivalent, and
        \item
            for each $i\in\verts$ with $r_i=0$ and adjacent faces
            $ijk, ijl\in\faces_{\tri}$ the M\"obius transformations satisfy
            \[
                |M_{ijk}'(p_i)| \;=\; |M_{ijl}'(p_i)|.
            \]
    \end{enumerate}
\end{definition}

The definitions of logarithmic scale factors and infinitesimal circles naturally
extend to decorated piecewise hyperbolic/spherical metrics. Hence,
\lemref{lemma:local_moebius_to_factors_spherical} and
\lemref{lemma:local_moebius_to_factors_hyperbolic}
lead to the following alternative definitions of discrete conformal equivalence.

\begin{propdef}[DCE via scale factors --- spherical case]
   \label{prop:conformal_change_spherical}
   Let $\tri$ be a triangulation of the marked surface $(\toposurf_g, \verts)$.
   Two decorated discrete spherical metrics $(\tri, \len, r)$ and
   $(\tri, \tilde{\len}, \tilde{r})$ are discrete conformally equivalent if and
   only if there is a $u\in\RR^{\verts}$ such that
   \begin{equation}\label{eq:conformal_change_formulas_spherical}
      \begin{aligned}
      \sin\tilde{r}_i
      &\;=\; \ee^{u_i}\sin r_i\\
      \cos\tilde{\len}_{ij}
      &\;=\; \ee^{u_i+u_j}
                \big(\!
                \cos\len_{ij}
                - \cos r_i\cos r_j
                \big)\\
      &\phantom{xxxxx}
                +\sqrt{\big(1-\ee^{2u_i}\sin^2r_i\big)
                    \big(1-\ee^{2u_j}\sin^2r_j\big)}
      \end{aligned}
   \end{equation}
   for all $ij\in\edges_{\tri}$.
\end{propdef}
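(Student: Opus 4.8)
The plan is to reduce the global claim to the single-triangle characterization of \lemref{lemma:local_moebius_to_factors_spherical}, so that the only genuinely new content is the gluing of the per-triangle logarithmic scale factors into one function on $\verts$. I would prove the two implications separately. For the direction asserting that a global $u\in\RR^{\verts}$ satisfying \eqref{eq:conformal_change_formulas_spherical} produces a discrete conformal equivalence, I would restrict $u$ to each face $ijk\in\faces_{\tri}$: since \eqref{eq:conformal_change_formulas_spherical} then holds for the three edges of the triangle with the scale factors $u_i, u_j, u_k$, \lemref{lemma:local_moebius_to_factors_spherical} furnishes a M\"obius transformation $M_{ijk}$ realizing the M\"obius-equivalence of $(\len|_{ijk}, r|_{ijk})$ and $(\tilde{\len}|_{ijk}, \tilde{r}|_{ijk})$, establishing condition (i) of \defref{def:dce_via_moebius}. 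Condition (ii) is then immediate: whenever $r_i=0$, formula \eqref{eq:radii_to_log_factors_spherical} forces $\ee^{u_i}=|M_{ijk}'(p_i)|$ for \emph{every} incident face $ijk$, and since $u_i$ is a single number independent of the face, all these derivatives coincide.

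For the converse I would start from a discrete conformal equivalence and apply \lemref{lemma:local_moebius_to_factors_spherical} face-by-face, obtaining for each $ijk\in\faces_{\tri}$ local scale factors $u^{ijk}_i, u^{ijk}_j, u^{ijk}_k$ that satisfy \eqref{eq:conformal_change_formulas_spherical} on the edges of that triangle. The crux is to show that $u^{ijk}_i$ depends only on $i$ and not on the incident face, for then setting $u_i$ equal to this common value defines an element of $\RR^{\verts}$ for which all edge equations hold at once. Here I would split into the two cases of \eqref{eq:radii_to_log_factors_spherical}: if $r_i>0$ then $\ee^{u^{ijk}_i}=\sin\tilde{r}_i/\sin r_i$ is patently a function of $i$ alone, so independence is automatic; if $r_i=0$ then $\ee^{u^{ijk}_i}=|M_{ijk}'(p_i)|$, and independence across \emph{adjacent} faces is exactly condition (ii) of \defref{def:dce_via_moebius}.

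The one point needing care — and the main, though mild, obstacle — is precisely this $r_i=0$ case, since (ii) only asserts equality of $|M'(p_i)|$ for pairs of faces sharing an edge at $i$. To upgrade this to equality over all faces incident to $i$, I would invoke the combinatorial structure of the triangulation near a vertex of a closed surface: the faces in the star of $i$ form a connected cyclic fan in which consecutive triangles share an edge through $i$, so repeated application of (ii) around the fan forces all the values $|M_{ijk}'(p_i)|$ to agree. Once the local scale factors are known to glue into a vertex function $u$, the system \eqref{eq:conformal_change_formulas_spherical} holds for this $u$ on every edge by construction, which completes the proof.
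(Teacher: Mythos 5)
Your proposal is correct and follows essentially the same route as the paper, which states \propref{prop:conformal_change_spherical} as a direct consequence of \lemref{lemma:local_moebius_to_factors_spherical} applied face-by-face together with condition (ii) of \defref{def:dce_via_moebius}. Your only addition is to spell out the gluing step — that $\sin\tilde{r}_i/\sin r_i$ is face-independent when $r_i>0$, and that for $r_i=0$ the pairwise equalities of (ii) propagate around the connected cyclic fan of the star of $i$ — which is exactly the detail the paper leaves implicit.
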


\begin{remark}
    From standard trigonometry we know that $1-\cos(x) = 2\sin^2(\nicefrac{x}{2})$.
    Thus, if all vertex-circles vanish, \ie, $r_i\equiv0$, the equations
    \eqref{eq:conformal_change_formulas_spherical} give the
    \enquote{classical} formulation of discrete conformal
    equivalence in the spherical case \eqref{eq:classical_spherical_dce}.
\end{remark}

\begin{propdef}[DCE via scale factors --- hyperbolic case]
   \label{prop:conformal_change_hyperbolic}
   Let $\tri$ be a triangulation of the marked surface $(\toposurf_g, \verts)$.
   Two decorated discrete hyperbolic metrics $(\tri, \len, r)$ and
   $(\tri, \tilde{\len}, \tilde{r})$ are discrete conformally equivalent if and only
   if there is a $u\in\RR^{\verts}$ such that
   \begin{equation}\label{eq:conformal_change_formulas_hyperbolic}
      \begin{aligned}
      \sinh\tilde{r}_i
      &\;=\; \ee^{u_i}\sinh r_i\\
      \cosh\tilde{\len}_{ij}
      &\;=\; \ee^{u_i+u_j}
                \big(\!
                \cosh\len_{ij}
                - \cosh r_i\cosh r_j
                \big)\\
      &\phantom{xxxxx}
              +\sqrt{\big(1+\ee^{2u_i}\sinh^2r_i\big)
                    \big(1+\ee^{2u_j}\sinh^2r_j\big)}
      \end{aligned}
   \end{equation}
   for all $ij\in\edges_{\tri}$.
\end{propdef}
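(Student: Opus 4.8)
The plan is to obtain this global statement by gluing together the purely local Lemma~\ref{lemma:local_moebius_to_factors_hyperbolic} over the faces of the fixed triangulation $\tri$. The content of the proposition is that the corner-wise logarithmic scale factors furnished by that lemma on each individual triangle assemble into a single globally defined $u\in\RR^{\verts}$, and conversely that any such global $u$ satisfying the edge relations \eqref{eq:conformal_change_formulas_hyperbolic} realizes the face-wise Möbius-equivalence demanded by Definition~\ref{def:dce_via_moebius}. Both implications thus reduce to a bookkeeping argument about compatibility of per-face data at shared vertices.

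For the \enquote{if} direction I would fix a global $u\in\RR^{\verts}$ satisfying \eqref{eq:conformal_change_formulas_hyperbolic} on every edge. Restricting $u$ to the three vertices of an arbitrary face $ijk\in\faces_{\tri}$, the three edge relations are precisely the hypotheses of Lemma~\ref{lemma:local_moebius_to_factors_hyperbolic}, so that face is Möbius-equivalent; this yields condition~(i) of Definition~\ref{def:dce_via_moebius}. For condition~(ii), the same lemma identifies $\ee^{u_i}=|M_{ijk}'(p_i)|$ at any vertex $i$ with $r_i=0$ and every incident face $ijk$; since $u_i$ is a single global number, the derivatives of the face-wise Möbius transformations automatically coincide at $p_i$.

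For the \enquote{only if} direction I would start from a DCE pair, i.e.\ assume conditions~(i) and~(ii). Condition~(i) together with Lemma~\ref{lemma:local_moebius_to_factors_hyperbolic} produces, on each face separately, scale factors at its three corners satisfying the edge relations. The only real point is well-definedness: the value assigned to a vertex $i$ must not depend on the incident face chosen. When $r_i>0$ this is immediate, since \eqref{eq:radii_to_log_factors_hyperbolic} gives $\ee^{u_i}=\sinh\tilde{\len}\text{-free quantity}=\sinh\tilde{r}_i/\sinh r_i$, which is intrinsic to the vertex. Once $u$ is shown to be well-defined on all of $\verts$, the edge relations, holding on each face, hold on all of $\edges_{\tri}$, which is the claim.

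The step I expect to be the main obstacle is exactly the consistency at vertices with vanishing radius. There the local factor is only pinned down as $\ee^{u_i}=|M'(p_i)|$, the conformal distortion of the ambient Möbius map, which could a priori differ from face to face. This is precisely what condition~(ii) rules out for two faces sharing an edge; propagating the equality around the fan of triangles incident to $i$---a connected cycle, since $\tri$ triangulates the connected surface $\toposurf_g$---then forces a common value, so $u_i$ is well-defined. The spherical Proposition~\&~Definition~\ref{prop:conformal_change_spherical} is proved identically, invoking Lemma~\ref{lemma:local_moebius_to_factors_spherical} in place of Lemma~\ref{lemma:local_moebius_to_factors_hyperbolic}.
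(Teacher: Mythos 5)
Your proposal is correct and matches the paper's route: the paper presents this Proposition\,\&\,Definition as a direct consequence of \lemref{lemma:local_moebius_to_factors_hyperbolic} applied face by face, which is exactly your gluing argument, with your careful treatment of the only nontrivial point --- using formula \eqref{eq:radii_to_log_factors_hyperbolic} together with condition~(ii) of \defref{def:dce_via_moebius} to propagate a common value of $\ee^{u_i}=|M'(p_i)|$ around the (connected) fan of triangles at a vertex with $r_i=0$ --- spelling out the bookkeeping the paper leaves implicit. No gap.
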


\begin{remark}
    For hyperbolic functions we have the identity
    $\cosh(x) - 1 = 2\sinh^2(\nicefrac{x}{2})$. Hence, if all vertex-circles vanish,
    \ie, $r_i\equiv0$, the equations
    \eqref{eq:conformal_change_formulas_hyperbolic} yield the
    \enquote{classical} formulation of discrete conformal
    equivalence in the hyperbolic case \eqref{eq:classical_hyperbolic_dce}.
\end{remark}

\subsection{Connections to hyperbolic polyhedra}
\label{sec:connections_hyperbolic_polyhedra}
\subsubsection{The spherical case}
\label{sec:hyperbolic_polyhedra_spherical}
For a hyperideally decorated convex spherical triangle $ijk$ there is a unique
spherical circle $C_{ijk}$ which is simultaneously orthogonal to all vertex-circles
of the triangle. We call it the \emph{face-circle} of the decorated triangle $ijk$.
If we interpret the disk bounded
by $C_{ijk}$ as the hyperbolic plane, modeled in the Poincar\'e disk model, then
the triangle $ijk$ becomes a \emph{hyperideal triangle}. The vertices outside the
hyperbolic plane are called \emph{hyperideal} ($r_i>0$) and those on the
circle $C_{ijk}$ are \emph{ideal vertices} ($r_i=0$). Thus, for a hyperideally
decorated discrete spherical metric $(\tri, \len, r)$ we obtain a hyperbolic metric
on each (abstract) triangle. They fit together along the edges, giving a
complete hyperbolic surface with ends $\surf_g$ which is homeomorphic to
$\toposurf_g\setminus\verts$. This surface is equipped with a geodesic
triangulation combinatorially equivalent to $\tri$. The ends of finite area
(\emph{cusps}) and infinite area of $\surf_g$ correspond to ideal and hyperideal
vertices, respectively.

\begin{definition}[fundamental discrete conformal invariant --- spherical case]
   \label{def:fundamental_discrete_conformal_invariant_spherical}
   Let $(\tri, \len, r)$ be a hyperideally decorated discrete spherical metric.
   We call the complete hyperbolic surface $\surf_g$ constructed above its
   \emph{(spherical) fundamental discrete conformal invariant}.
\end{definition}

Let us show that $\surf_g$ is indeed an invariant under discrete conformal
equivalence. Towards this end interpret $\SS^2$ as the boundary at infinity
(\emph{ideal boundary}) of hyperbolic $3$-space in the Poincar\'e ball model, \ie,
$\HH^3_{\mathrm{B}} = \{v\in\RR^3:\|v\|<1\}$.
To each spherical circle and geodesic correspond a totally geodesic plane in
hyperbolic $3$-space (see, \eg, \cite[\S4.5]{Ratcliffe1994}). In the former case
it is represented by the intersection of $\HH^3_{\mathrm{B}}$ with the unique
sphere which intersects $\SS^2$ orthogonally
in this circle. In the latter case it is given by the intersection of
$\HH^3_{\mathrm{B}}$ with (euclidean) planes passing through the origin $0\in\RR^3$.
Now, the hyperbolic planes over the edges of $ijk$ together with the plane over
$C_{ijk}$ bound a \emph{semi-hyperideal tetrahedron} with one distinguished vertex
at $0$. We also call it a \emph{hyperideal pyramid} to emphasise that
we distinguished a vertex in hyperbolic $3$-space (see
\figref{fig:hyperideal_pyramid}). The \emph{base face} of this pyramid, \ie,
the face which is not incident to $0$, is a hyperideal triangle. It is related to
the previously considered hyperideal triangle via a sphere-inversion.
In fact, using an appropriate
normalization, this is the well-known relationship between the Poincar\'e disk model
and the upper half-sphere model of the hyperbolic plane via stereographic projection
(see, \eg, \cite[Sec.~7]{CFK+1997}). So both constructions induce the same hyperbolic
metric on the triangle $ijk$, and thus on $\toposurf_g\setminus\verts$.

\begin{figure}[t]
    \centering
    \includegraphics[width=0.6\textwidth]{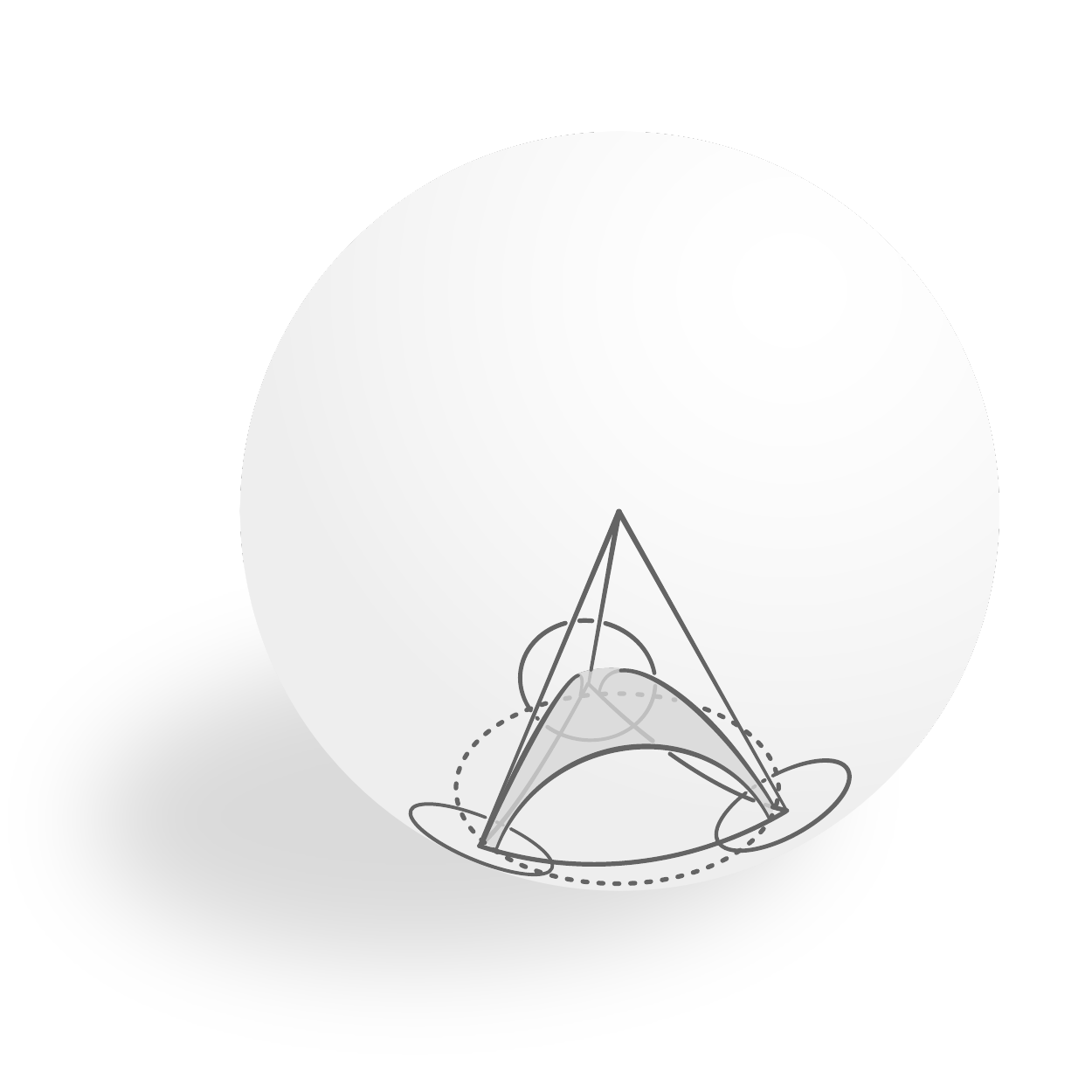}
    \caption{%
        Sketch of a hyperideal pyramid over a decorated convex spherical triangle.
        The hyperideal triangle giving the base face of the pyramid is shaded.
        Note that the dihedral angles at the edges incident with distinguished vertex
        $0$ coincide with the interior angles $\theta_{ij}^k$ of the spherical triangle.
    }
    \label{fig:hyperideal_pyramid}
\end{figure}

M\"obius-trasformations of $\SS^2$ are in one-to-one correspondence to the
isometries of hyperbolic $3$-space (see, \eg, \cite[Thm.~4.5.2]{Ratcliffe1994}).
Because discrete conformal equivalence is defined via M\"obius-transformations
acting on each triangle $ijk$ (\defref{def:dce_via_moebius}) we obtain

\begin{proposition}
    \label{prop:dce_via_invariant_spherical}
    Two hyperideally decorated discrete spherical metrics on the same
    triangulation $\tri$ of the
    marked surface $(\toposurf_g, \verts)$ are discrete conformally equivalent
    if and only if they induce the same fundamental discrete conformal invariant.
\end{proposition}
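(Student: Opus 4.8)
The plan is to lift everything from $\SS^2$ to $\HH^3$ and to read both sides of the equivalence off the totally geodesic pieces that assemble $\surf_g$. Recall the construction behind \defref{def:fundamental_discrete_conformal_invariant_spherical}: for each face $ijk$ the three vertex-circles determine a unique orthogonal face-circle $C_{ijk}$, over which sits a totally geodesic plane $P_{ijk}\subset\HH^3$ carrying a hyperideal triangle, and these triangles glue along edges to form $\surf_g$. The structural observation I would build on is that this construction is natural with respect to the $\SO^{+}(3,1)$-action: a M\"obius transformation $M$ of $\SS^2$ lifts to an isometry $\Phi$ of $\HH^3$ which maps circles to circles and preserves orthogonality, so it sends a triple of vertex-circles to another such triple, the face-circle (the unique circle orthogonal to all three) to the image face-circle, and therefore maps the hyperideal triangle over $C_{ijk}$ isometrically onto the hyperideal triangle over $M(C_{ijk})$, carrying the vertex-circle data (a truncation geodesic when $r_i>0$, a cusp with its horocyclic decoration when $r_i=0$) to its counterpart.

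For the forward implication I would assume the two decorated discrete spherical metrics are discretely conformally equivalent. By \defref{def:dce_via_moebius} each face $ijk$ carries a M\"obius transformation $M_{ijk}$ taking the first decoration to the second, and by the naturality above the induced isometry $\Phi_{ijk}$ identifies the hyperbolic triangle of $\surf_g$ over $C_{ijk}$ with that of $\widetilde{\surf}_g$ over $\tilde C_{ijk}$, together with its decoration. Thus corresponding triangles are isometric as decorated hyperbolic triangles. It then remains to check that these face-wise identifications respect the edge gluings, so that they assemble into a global isometry $\surf_g\cong\widetilde{\surf}_g$ compatible with the markings and the triangulation. This is where the inversive distance enters: the gluing of two triangles along an edge $ij$ is pinned down by the relative position of the two truncation geodesics (resp.\ horocycles) at $i$ and at $j$, data encoded by $I_{ij}^{+}$, which is a M\"obius invariant and hence unchanged under discrete conformal equivalence (\lemref{lemma:spherical_moebius_and_inversive_distance}); at an ideal vertex the matching of horocycles is precisely condition (ii) of \defref{def:dce_via_moebius}. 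Hence the fundamental invariants coincide.

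For the converse I would take an isometry $\Psi\colon\surf_g\to\widetilde{\surf}_g$ respecting the triangulation and restrict it on each face to an isometry between two totally geodesic hyperbolic triangles; this extends to an isometry $\Phi_{ijk}$ of $\HH^3$ carrying $P_{ijk}$ to $\tilde P_{ijk}$, and its associated M\"obius transformation $M_{ijk}$ sends $C_{ijk}$ to $\tilde C_{ijk}$ and each truncation geodesic to its counterpart. Since a vertex-circle is recovered as the unique circle orthogonal to the face-circle through the endpoints of its truncation geodesic, $M_{ijk}$ carries the vertex-circles of the first triangle to those of the second, giving the face-wise M\"obius equivalence, while compatibility of the horocyclic decoration at cusps yields condition (ii); so the metrics are discretely conformally equivalent. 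I expect the only genuinely delicate point to be the global gluing in the forward direction -- turning the face-wise isometries $\Phi_{ijk}$, which are a priori unrelated maps of $\HH^3$, into a single well-defined isometry of the surfaces -- and the argument above reduces exactly this to the M\"obius-invariance of the inversive distance together with the edge-consistency already built into the notion of a decoration.
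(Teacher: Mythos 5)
Your proposal is correct and takes essentially the same route as the paper, whose entire proof is the one-line observation that M\"obius transformations of $\SS^2$ correspond to isometries of hyperbolic $3$-space, so that the face-wise construction of $\surf_g$ is natural under the maps $M_{ijk}$ of \defref{def:dce_via_moebius}. The extra care you take with the edge-gluing consistency --- pinning the gluings via the M\"obius-invariant $I_{ij}^{+}$ at edges with hyperideal endpoints and via condition (ii) at ideal vertices --- is precisely the detail the paper leaves implicit, and your treatment of it is sound.
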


The fundamental discrete conformal invariant $\surf_g$ of a decorated discrete
spherical metric $(\tri, \len, r)$ is characterized by the \emph{$\lambda$-lengths}
of its geodesic triangulation $\tri$. They are given by the \emph{(truncated) lengths}
$\lambda\colon\edges_{\tri}\to\RR$ of its edges. If both vertices of an edge
$ij$ are hyperideal, then $\lambda_{ij}>0$ is the distance between the hyperbolic
planes associated to the vertices (see \figref{fig:notation_hyperideal_pyramid}, left).
Should one of the vertices be ideal instead, we equip it with an
\emph{auxiliary horosphere}. Now, the distance $\lambda_{ij}$ is measured relative to
this horosphere. It is positive if the horosphere is disjoint with the hyperbolic
plane, or horosphere, at the other vertex and negative otherwise (see
\figref{fig:notation_hyperideal_pyramid}, right). In the first case the $\lambda_{ij}$
is related to the inversive distance \eqref{eq:spherical_inversive_distance}
of the edge $ij$.

\begin{figure}[t]
    \centering
    \labellist
    \small\hair 2pt
    \pinlabel $i$ at 125 120
    \pinlabel $j$ at 480 120
    \pinlabel $h_i$ at 220 270
    \pinlabel $h_j$ at 380 270
    \pinlabel $\lambda_{ij}$ at 300 170
    \pinlabel $\len_{ij}$ at 300 -10
    \pinlabel $r_i$ at 35 135
    \pinlabel $r_j$ at 570 135
    \endlabellist
    \includegraphics[width=0.45\textwidth]{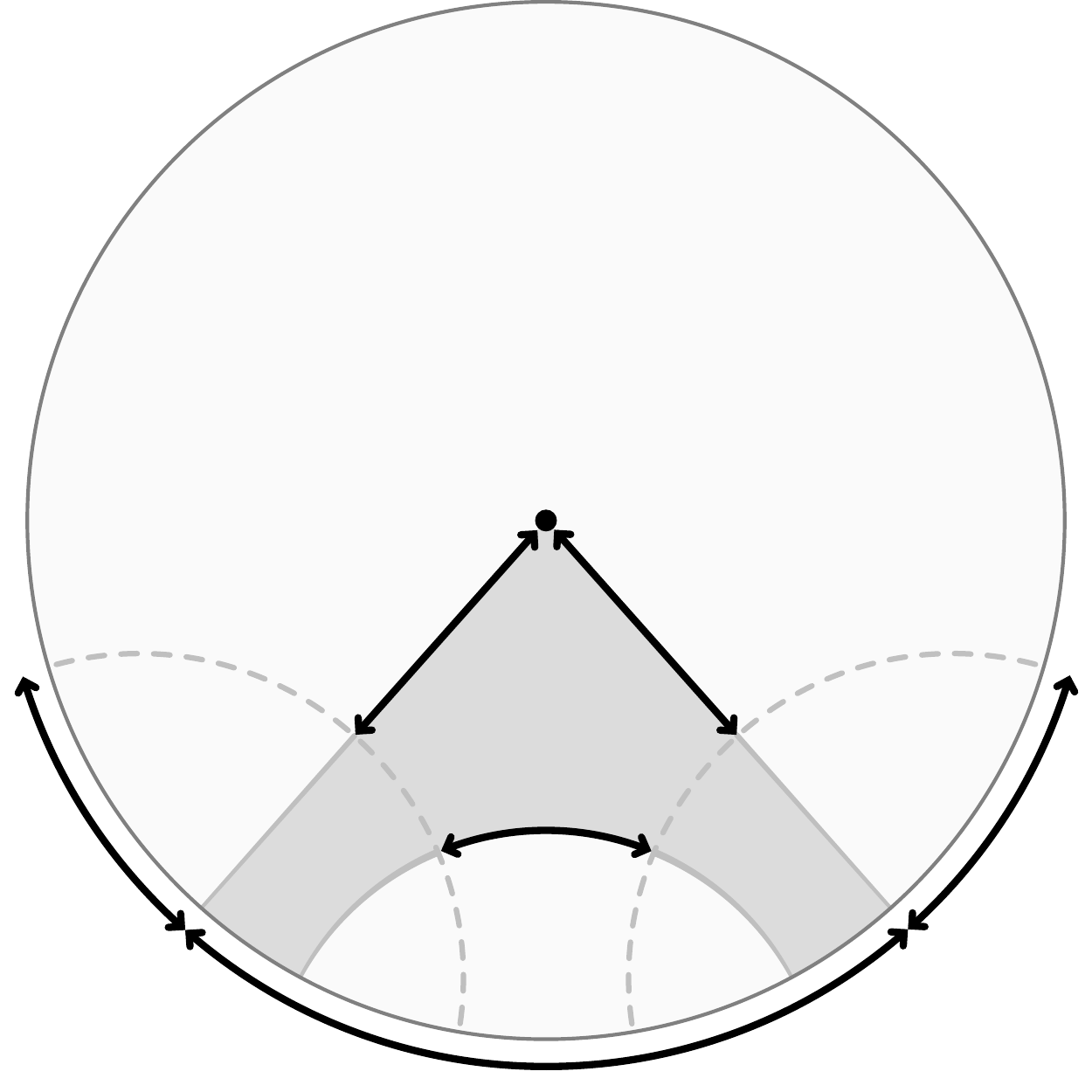}
    \quad\qquad
    \labellist
    \small\hair 2pt
    \pinlabel $>0$ at 235 137
    \pinlabel $<0$ at 350 80
    \endlabellist
    \includegraphics[width=0.45\textwidth]{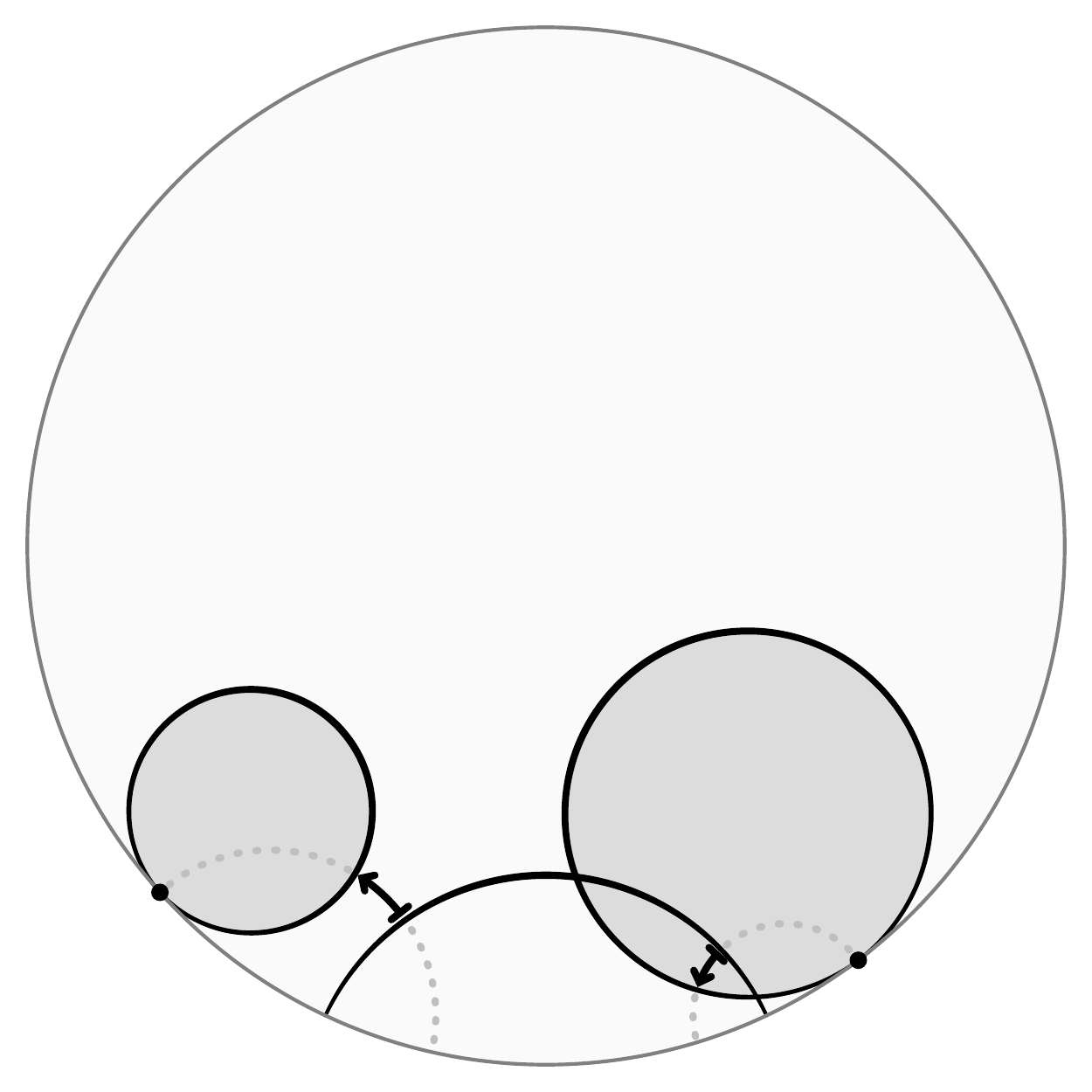}
    \caption{%
        \textsc{Left}: notation for hyperideal pyramids.
        \textsc{Right}: the hyperbolic distance between a horocycle and a hyperbolic
        line is defined to be positive if the horocycle and line do not intersect
        and negative otherwise. The higher dimensional analog is defined similarly.
    }
    \label{fig:notation_hyperideal_pyramid}
\end{figure}

\begin{lemma}\label{lemma:lambda_length_to_inversive_distance_spherical}
    Let $(\tri, \len, r)$ be a hyperideally decorated discrete spherical metric.
    Consider an edge $ij\in\edges_{\tri}$ and suppose that both $r_i>0$ and
    $r_j>0$. Then
    \begin{equation}\label{eq:lambda_length_to_inversive_distance_spherical}
        I_{ij}^{+} \;=\; \cosh\lambda_{ij}.
    \end{equation}
\end{lemma}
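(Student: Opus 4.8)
The plan is to compute $\lambda_{ij}$ in the Minkowski model of hyperbolic $3$-space, where the vertex-circles and the geodesic planes lying over them are represented by the very same vectors already used in \lemref{lemma:spherical_moebius_and_inversive_distance}, and then to invoke the standard distance formula between disjoint hyperbolic planes. To this end I realize hyperbolic $3$-space as the upper sheet of the hyperboloid $\{x\in\RR^{3,1}:\|x\|_{3,1}^2=-1,\,x_3>0\}$, so that its ideal boundary $\SS^2$ is the projectivized light cone, a boundary point $q\in\SS^2$ being represented by the null vector $(q,1)$. A unit spacelike vector $N$, $\|N\|_{3,1}^2=1$, then cuts out the totally geodesic plane $N^{\perp}\cap\HH^3$, whose ideal boundary is the circle $N^{\perp}\cap\SS^2$.

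First I would identify the plane over the vertex-circle of $i$ with the normalized lift $\hat{C}_i\coloneq C_i/\|C_i\|_{3,1}=C_i/\sin r_i$. Indeed, a boundary point $q$ satisfies $\ip{(q,1)}{C_i}_{3,1}=0$ precisely when its spherical distance to $p_i$ equals $r_i$, since $q$ and $p_i$ are unit vectors and $\ip{(q,1)}{C_i}_{3,1}$ equals their Euclidean inner product minus $\cos r_i$. Thus $\hat{C}_i^{\perp}\cap\SS^2$ is exactly the vertex-circle of radius $r_i$ about $p_i$, so $H_i\coloneq\hat{C}_i^{\perp}\cap\HH^3$ is the hyperbolic plane associated to $i$ in \secref{sec:hyperbolic_polyhedra_spherical}, and by definition $\lambda_{ij}$ is the hyperbolic distance between $H_i$ and $H_j$, which carry unit normals $\hat{C}_i$ and $\hat{C}_j$.

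Because the decoration is hyperideal, the vertex-circles of $i$ and $j$ are disjoint, so $H_i$ and $H_j$ are ultraparallel and possess a common perpendicular. For such a pair the distance is governed by the standard formula $\cosh\lambda_{ij}=|\ip{\hat{C}_i}{\hat{C}_j}_{3,1}|$ (see, \eg, \cite{Ratcliffe1994}). Substituting the Minkowski data $\|C_i\|_{3,1}^2=\sin^2 r_i$ and $\ip{C_i}{C_j}_{3,1}=\cos\len_{ij}-\cos r_i\cos r_j$ from the proof of \lemref{lemma:spherical_moebius_and_inversive_distance} gives
\[
    \ip{\hat{C}_i}{\hat{C}_j}_{3,1}
    \;=\;
    \frac{\cos\len_{ij}-\cos r_i\cos r_j}{\sin r_i\sin r_j}
    \;=\;
    -I_{ij}^{+},
\]
so $|\ip{\hat{C}_i}{\hat{C}_j}_{3,1}|=I_{ij}^{+}$. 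Here the hyperideal condition \eqref{eq:hyperideal_decoration_condition} makes the two circles external to one another, whence $I_{ij}^{+}>1>0$ and the absolute value is harmless; the negative sign of the inner product merely reflects that both lifts were oriented with positive last coordinate $\cos r_i>0$. Combining the two identities yields $\cosh\lambda_{ij}=I_{ij}^{+}$, as claimed. I expect the only delicate point to be the bookkeeping of conventions: checking that the circle-lift $C_i$ and the plane-normal coincide up to the positive scale $\sin r_i$, and that the hyperideal hypothesis indeed places us in the ultraparallel (rather than intersecting or asymptotic) case, which is what makes $\lambda_{ij}$ a genuine positive distance and selects the $\cosh$ branch.
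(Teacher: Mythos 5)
Your proof is correct, but it takes a genuinely different route from the paper's. The paper stays inside the hyperideal pyramid: it first derives the height--radius relation $\sin r_i = \nicefrac{1}{\cosh h_i}$ (\teqref{eq:radii_height_relationship_spherical}) from the Poincar\'e-disk/hyperboloid correspondence and then obtains \eqref{eq:lambda_length_to_inversive_distance_spherical} from Ratcliffe's trigonometric formulas for semi-hyperideal triangles, \ie, from two-dimensional hyperbolic trigonometry of the doubly truncated base triangle --- in effect the specialization $\epsilon_i=\epsilon_j=1$ of \eqref{eq:heights_lambda_lengths_relationship_spherical}. You instead bypass the heights altogether and use the Minkowski polarity between circles on $\SS^2$ and totally geodesic planes in $\HH^3$: the lift $C_i=(p_i,\cos r_i)$ of \eqref{eq:spherical_moebius_lift} is, after normalization, the unit spacelike normal of the plane over the vertex-circle, and the ultraparallel-distance formula $\cosh d = |\ip{N_i}{N_j}_{3,1}|$ converts the Minkowski data $\|C_i\|_{3,1}^2=\sin^2 r_i$ and $\ip{C_i}{C_j}_{3,1}=\cos\len_{ij}-\cos r_i\cos r_j$, already computed in the proof of \lemref{lemma:spherical_moebius_and_inversive_distance}, directly into the claim. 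Your bookkeeping is in order: $\lambda_{ij}$ is by the paper's definition the distance between the planes associated to the two hyperideal vertices, your boundary computation correctly identifies $\hat{C}_i^{\perp}\cap\SS^2$ with the vertex-circle (and a totally geodesic plane is determined by its ideal boundary), and the hyperideality condition \eqref{eq:hyperideal_decoration_condition} indeed gives $I_{ij}^{+}>1$ and ultraparallel planes, so the common perpendicular exists and the $\cosh$ branch is the right one. What each approach buys: yours is shorter, makes the M\"obius invariance of $\lambda_{ij}$ manifest (it is, up to sign, the normalized inner product of \eqref{eq:inversive_distances_via_inner_product}), and transfers verbatim to the hyperbolic counterpart \lemref{lemma:lambda_length_to_inversive_distance_hyperbolic}; the paper's detour through the heights is not wasted, however, since the relation \eqref{eq:radii_height_relationship_spherical} and the more general identity \eqref{eq:heights_lambda_lengths_relationship_spherical} (which also covers ideal vertices, $\epsilon=0$, where your normal-vector argument degenerates and horosphere choices enter) are reused later, \eg, in \eqref{eq:log_scale_factor_to_heights_spherical} and \propref{prop:possible_heights_spherical}.
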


Before we prove this lemma, let us introduce a bit of notation.
Denote by $h\colon\verts\to\RR$ the \emph{heights} of the hyperideal pyramids
associated to $(\tri, \len, r)$, \ie, the (truncated) lengths of the edges incident
to the distinguished vertex $0$. If the vertex is ideal, we again compute the
length with respect to an auxiliary horosphere. Note that we make the same
choice of horosphere to determine the $\lambda$-lengths and the heights. Now,
the identity \eqref{eq:lambda_length_to_inversive_distance_spherical}
follows from a direct computation.

\begin{proof}[Proof of Lemma \ref{lemma:lambda_length_to_inversive_distance_spherical}]
    Using the relationship between the Poincar\'e disk model and the hyperboloid
    model of the hyperbolic plane (see, \eg, \cite[\S4.5]{Ratcliffe1994})
    together with the explicit formula for distances in the hyperboloid model
    (see, \eg, \cite[Eq.~3.2.2]{Ratcliffe1994}) it is immediate that
    \begin{equation}\label{eq:radii_height_relationship_spherical}
        \sin r_i \;=\; \frac{1}{\cosh h_i}.
    \end{equation}
    Therefore, the desired identity
    \eqref{eq:lambda_length_to_inversive_distance_spherical} for the
    $\lambda$-length follows from the
    trigonometric formulas for semi-hyperideal triangles
    (see, \eg, \cite[Thm.~3.5.11]{Ratcliffe1994}).
\end{proof}

This relationship to semi-hyperideal triangles provides us with a means to compute the
$\lambda$-lengths, even if some vertices are ideal. In general we have
\begin{equation}\label{eq:heights_lambda_lengths_relationship_spherical}
    \begin{aligned}
        2(\ee^{\lambda_{ij}}+\epsilon_i\epsilon_j\ee^{-\lambda_{ij}})
        &=
        \big(\ee^{h_i}-\epsilon_i\ee^{-h_i}\big)
        \big(\ee^{h_j}-\epsilon_j\ee^{-h_j}\big)\\
        &\phantom{xxxxxxxxxxxx}
        -\cos\len_{ij}
        \big(\ee^{h_i}+\epsilon_i\ee^{-h_i}\big)
        \big(\ee^{h_j}+\epsilon_j\ee^{-h_j}\big).
    \end{aligned}
\end{equation}
Here, $\epsilon_x=1$ if $r_x>0$ and $\epsilon_x=0$ otherwise, $x\in\{i,j\}$.
If $\epsilon_i\epsilon_j=1$, this is a reformulation of
\eqref{eq:lambda_length_to_inversive_distance_spherical} which remains valid
in the general case (see, \eg, the derivations in \cite[Sec.~2.3]{Prosanov2020a}).

Heights of the hyperideal pyramids are not invariant under discrete
conformal change. Yet, they are closely related. In fact, if
$(\tri, \tilde{\len}, \tilde{r})$ is a decorated discrete spherical metric
discrete conformally equivalent to $(\tri, \len, r)$, it follows from
\eqref{eq:conformal_change_formulas_spherical} and
\eqref{eq:radii_height_relationship_spherical} that
\begin{equation}\label{eq:log_scale_factor_to_heights_spherical}
    \tanh h_i \;=\; \sqrt{1-\ee^{2u_i}\sin^2\tilde{r}_i}
\end{equation}
for hyperideal vertices $i\in\verts$. The corresponding equation for ideal vertices
is given by $u_i = \tilde{h}_i-h_i$. It clearly depends on the
choice of horospheres. Still, a change of auxiliary horosphere only
results in a constant offset of all lengths adjacent to the respective vertex.
These considerations lead to the following characterization of the set
$\polyhedra_{\tri}^{+}(\len, r)\subset\RR^{\verts}$ of possible heights under
discrete conformal change.

\begin{proposition}
    \label{prop:possible_heights_spherical}
    Let $(\tri, \len, r)$ be a hyperideally decorated discrete spherical metric
    and $\lambda\colon\edges_{\tri}\to\RR$ the $\lambda$-lengths corresponding
    to its fundamental discrete conformal invariant. Then
    $h\in\polyhedra_{\tri}^{+}(\len, r)$ if and only if
    \begin{enumerate}[label=(\roman*)]
        \item\label{item:hyperideal_heights}
            $h_i>0$, for all hyperideal $i$,
        \item\label{item:heights_triangle_inequality}
            $\lambda_{ij} < h_i+h_j$, for each edge $ij\in\edges_{\tri}$, and
        \item\label{item:spherical_link_condition}
            the new $\tilde{\len}\colon\edges_{\tri}\to(0,\pi)$ determined
            by $h$ and the $\lambda$-lengths via
            \eqref{eq:heights_lambda_lengths_relationship_spherical} satisfy
            the triangle inequalities \eqref{eq:triangle_inequalities} and
            perimeter condition \eqref{eq:perimeter_condition} for each
            triangle $ijk\in\faces_{\tri}$.
    \end{enumerate}
\end{proposition}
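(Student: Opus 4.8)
The plan is to identify $\polyhedra_{\tri}^{+}(\len, r)$ with the image, under the map sending a decorated metric to its vector of heights, of the set of hyperideally decorated discrete spherical metrics that are discrete conformally equivalent to $(\tri, \len, r)$, and then to read off the defining inequalities of such a metric in the coordinates $h$. By \propref{prop:dce_via_invariant_spherical} every metric in this class has the same fundamental invariant $\surf_g$, hence the prescribed $\lambda$-lengths $\lambda$; conversely, the pair $(h, \lambda)$ reconstructs the decorated metric through \eqref{eq:radii_height_relationship_spherical} and \eqref{eq:heights_lambda_lengths_relationship_spherical}. Thus the assertion reduces to showing that \ref{item:hyperideal_heights}--\ref{item:spherical_link_condition} are precisely the conditions under which the reconstructed pair $(\tilde{\len}, \tilde{r})$ is a bona fide hyperideally decorated discrete spherical metric.

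First I would make this reconstruction explicit. Relation \eqref{eq:radii_height_relationship_spherical} reads $\sin\tilde{r}_i = \nicefrac{1}{\cosh h_i}$, so a height determines the radius of a hyperideal vertex, and $\tilde{r}_i\in(0,\nicefrac{\pi}{2})$ is equivalent to $h_i>0$; this is condition \ref{item:hyperideal_heights}. (At an ideal vertex $\tilde{r}_i=0$ and $h_i\in\RR$ is unconstrained, being measured against the auxiliary horosphere.) In the purely hyperideal case $\epsilon_i=\epsilon_j=1$, solving \eqref{eq:heights_lambda_lengths_relationship_spherical} for the edge length gives
\[
    \cos\tilde{\len}_{ij}
    \;=\;
    \frac{\sinh h_i\,\sinh h_j-\cosh\lambda_{ij}}{\cosh h_i\,\cosh h_j},
\]
and the general form of \eqref{eq:heights_lambda_lengths_relationship_spherical} handles mixed and doubly-ideal edges. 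Since $\cosh(h_i-h_j)\ge 1$, one always has $\cos\tilde{\len}_{ij}<1$, so $\tilde{\len}_{ij}>0$ is automatic; whereas $\cos\tilde{\len}_{ij}>-1$ rearranges to $\cosh(h_i+h_j)>\cosh\lambda_{ij}$, which in the hyperideal case ($h_i,h_j,\lambda_{ij}>0$) is exactly condition \ref{item:heights_triangle_inequality}. Hence \ref{item:heights_triangle_inequality} says precisely that each reconstructed length lies in $(0,\pi)$, and with the lengths in range condition \ref{item:spherical_link_condition} is the remaining requirement that they satisfy \eqref{eq:triangle_inequalities} and \eqref{eq:perimeter_condition}, i.e. that $(\tri,\tilde{\len})$ is an actual discrete spherical metric.

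These equivalences give both implications simultaneously. If $h\in\polyhedra_{\tri}^{+}(\len, r)$, the underlying metric is valid and reading the three equivalences backwards yields \ref{item:hyperideal_heights}--\ref{item:spherical_link_condition}. Conversely, assuming \ref{item:hyperideal_heights}--\ref{item:spherical_link_condition}, the formulas define $\tilde{r}\colon\verts\to[0,\nicefrac{\pi}{2})$ and $\tilde{\len}\colon\edges_{\tri}\to(0,\pi)$ obeying \eqref{eq:triangle_inequalities} and \eqref{eq:perimeter_condition}, hence a discrete spherical metric; since the construction keeps the $\lambda$-lengths equal to $\lambda$, the inversive distances \eqref{eq:spherical_inversive_distance} are unchanged (\lemref{lemma:lambda_length_to_inversive_distance_spherical}), so \propref{prop:dce_via_invariant_spherical} certifies that $(\tri,\tilde{\len},\tilde{r})$ is discrete conformally equivalent to $(\tri,\len,r)$ and has height vector $h$. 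Because discrete conformal equivalence preserves inversive distance, the decoration stays hyperideal, so indeed $h\in\polyhedra_{\tri}^{+}(\len, r)$.

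The step I expect to be the main obstacle is the bookkeeping at ideal vertices. There the heights are defined only relative to a choice of auxiliary horosphere and may be negative, so the clean hyperideal identities must be replaced by the general relation \eqref{eq:heights_lambda_lengths_relationship_spherical}, whose left-hand side degenerates to $2\ee^{\lambda_{ij}}$ when $\epsilon_i\epsilon_j=0$. One must then verify, with attention to signs, that the equivalence between $\tilde{\len}_{ij}\in(0,\pi)$ and \ref{item:heights_triangle_inequality} persists in the mixed and doubly-ideal cases, that the reconstruction is independent of the horosphere choice (a common offset of the heights adjacent to an ideal vertex), and that hyperideality of the decoration is preserved along such edges. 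Carrying out this case analysis carefully, rather than the elementary algebra of the purely hyperideal case, is where the real work lies.
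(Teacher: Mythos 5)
Your proposal is correct, but it takes a genuinely different route from the paper. The paper's proof is a short synthetic argument in hyperbolic $3$-space: in the forward direction, (i) holds by the very definition of the heights of a hyperideal pyramid, (ii) is the triangle inequality for the (truncated) distance function in $\HH^3$ applied to the apex and the truncating planes/horospheres, and (iii) is read off the spherical link at the distinguished vertex $0$; conversely, one first builds the spherical link from (iii) and then raises the pyramid over it using (i) and (ii), with the relation \eqref{eq:heights_lambda_lengths_relationship_spherical} guaranteeing that the resulting $\lambda$-lengths are the prescribed ones. You never construct the pyramid: you instead invert \eqref{eq:radii_height_relationship_spherical} and \eqref{eq:heights_lambda_lengths_relationship_spherical} algebraically, observe that $\cos\tilde{\len}_{ij}<1$ is automatic while $\cos\tilde{\len}_{ij}>-1$ is exactly (ii), so that (i)--(iii) become precisely the solvability conditions for a decorated spherical metric with the given $\lambda$-lengths, and then certify discrete conformal equivalence through \propref{prop:dce_via_invariant_spherical}. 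Your route buys explicit formulas (and shows that only the lower bound $\cos\tilde{\len}_{ij}>-1$ is a genuine constraint); the case analysis you defer does go through uniformly, since in all three $\epsilon$-configurations of \eqref{eq:heights_lambda_lengths_relationship_spherical} the condition $\cos\tilde{\len}_{ij}>-1$ reduces to $\lambda_{ij}<h_i+h_j$. The paper's pyramid argument, by contrast, gets (ii) for free from the metric geometry of $\HH^3$ and is shorter. Two caveats on your write-up: first, since $\cosh$ is even, \eqref{eq:radii_height_relationship_spherical} alone does not force $h_i>0$ --- it only excludes $h_i=0$ --- so the positivity in (i) is the sign convention built into the definition of the truncated heights (this is the paper's \enquote{follows by definition}), and you should impose that branch choice rather than claim to derive it from $\tilde{r}_i\in(0,\nicefrac{\pi}{2})$. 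Second, your closing assertion that the decoration \enquote{stays hyperideal} is justified only along edges, where $I_{ij}^{+}=\cosh\lambda_{ij}>1$ is preserved by \lemref{lemma:lambda_length_to_inversive_distance_spherical}, not for non-adjacent vertex pairs in the sense of \eqref{eq:hyperideal_decoration_condition}; fortunately this stronger claim is not needed for membership in $\polyhedra_{\tri}^{+}(\len, r)$.
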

\begin{proof}
    For a hyperideal pyramid, item \ref{item:hyperideal_heights} follows by
    definition, item \ref{item:heights_triangle_inequality} is a consequence of
    the triangle inequality for the distance function in hyperbolic $3$-space, and
    item \ref{item:spherical_link_condition} follows from considering the link at
    the distinguished vertex $0$. Conversely, given $h$ satisfying these conditions,
    we can always construct a hyperideal pyramid: first constructing the spherical
    link at $0$, by virtue of \ref{item:spherical_link_condition}, and then raising
    the pyramid over it, using items \ref{item:hyperideal_heights} and
    \ref{item:heights_triangle_inequality}. The relationship to
    discrete conformal equivalence, explored in the previous paragraphs, guarantees
    that the resulting $\lambda$-lengths coincide with the original ones.
\end{proof}

The collection of hyperideal pyramids determined by $(\tri, \len, r)$ and
the heights $h\in\polyhedra_{\tri}^{+}(\len, r)$ is homeomorphic to a topological
cone over $\toposurf_g\setminus\verts$. Moreover, the hyperbolic metrics on the
pyramids induce a piecewise hyperbolic metric with conical singularities corresponding
the edges incident to the distinguished vertex $0$. With respect to this metric, the
boundary of the cone is isometric to the fundamental discrete conformal invariant
$\surf_g$. The cone-angles at the singularities coincide with the cone-angles
$\theta_i$ of the discrete metric (see \figref{fig:hyperideal_pyramid}).
Thus, we call the collection of hyperideal pyramids a
\emph{hyperideal polyhedral cone} over the triangulated hyperbolic surface
$(\surf_g, \tri)$ and denote it by $P_{\len, r}^{+}(h)$.

\subsubsection{The hyperbolic case}
\label{sec:hyperbolic_polyhedra_hyperbolic}
Similar to the spherical case, there is a unique face-circle $C_{ijk}$ for
each hyperideally decorated hyperbolic triangle $ijk$. Again, we can use $C_{ijk}$
to associate a hyperideal triangle to the decorated triangle: $r_i>0$ and $r_i=0$
corresponding to hyperideal and ideal vertices, respectively. In doing so, we
also obtain a complete hyperbolic surface $\surf_g$ with ends homeomorphic to
$\toposurf_g\setminus\verts$ for each hyperideally decorated discrete hyperbolic
metric $(\tri, \len, r)$.

\begin{definition}[fundamental discrete conformal invariant --- hyperbolic case]
   \label{def:fundamental_discrete_conformal_invariant_hyperbolic}
   Let $(\tri, \len, r)$ be a hyperideally decorated discrete hyperbolic metric.
   The complete hyperbolic surface $\surf_g$ considered above is called the
   \emph{(hyperbolic) fundamental discrete conformal invariant} of $(\tri, \len, r)$.
\end{definition}

As the name suggests $\surf_g$ is invariant under discrete conformal change.
To see this, let us consider the upper half-space model of hyperbolic $3$-space, \ie,
$\HH^3_{\mathrm{up}} = \{(x,y,z)\in\RR^3:z>0\}$. This time the extended plane
$\RR^2\cup\{\infty\}$ models the boundary at infinity. The totally geodesic
planes in $\HH^3_{\mathrm{up}}$ correspond to circles and lines in $\RR^2$.
For circles it is given by the intersection of $\HH^3_{\mathrm{up}}$ with
the unique sphere which intersects $\RR^2$ orthogonally in this circle. Similarly,
a half-plane can be associated to each line in $\RR^2$ (see, \eg,
\cite[\S4.6]{Ratcliffe1994}). Using the upper-half plane model, \ie,
$\HH^2_{\mathrm{up}} = \{(x,y,0)\in\RR^3:y>0\}$, we can embed the hyperbolic plane
into the ideal boundary of $\HH^3_{\mathrm{up}}$. Now, the totally geodesic
planes over the edges of a hyperideally decorated hyperbolic triangle $ijk$ together
with the plane over $C_{ijk}$ bound a \emph{hyperideal tetrahedron}
(see \figref{fig:hyperideal_prism}). This
tetrahedron has one distinguished hyperideal vertex, say $\hat{\infty}$,
corresponding to the totally geodesic plane associated to the ideal boundary of
$\HH^2_{\mathrm{up}}$. To emphasise that
we distinguished a hyperideal vertex, we also call the tetrahedron a
\emph{hyperideal prism}. The \emph{base face} of this prism, \ie,
the face which is not incident to $\hat{\infty}$, is a hyperideal triangle. It
is related to the previously considered hyperideal triangle via
the well-known relationship between the Poincar\'e disk model
and the upper half-sphere model of the hyperbolic plane
(see, \eg, \cite[Sec.~7]{CFK+1997}). So both constructions induce the same hyperbolic
metric on the triangle $ijk$, and thus on $\toposurf_g\setminus\verts$.

\begin{figure}[t]
    \centering
    \includegraphics[width=0.49\textwidth]{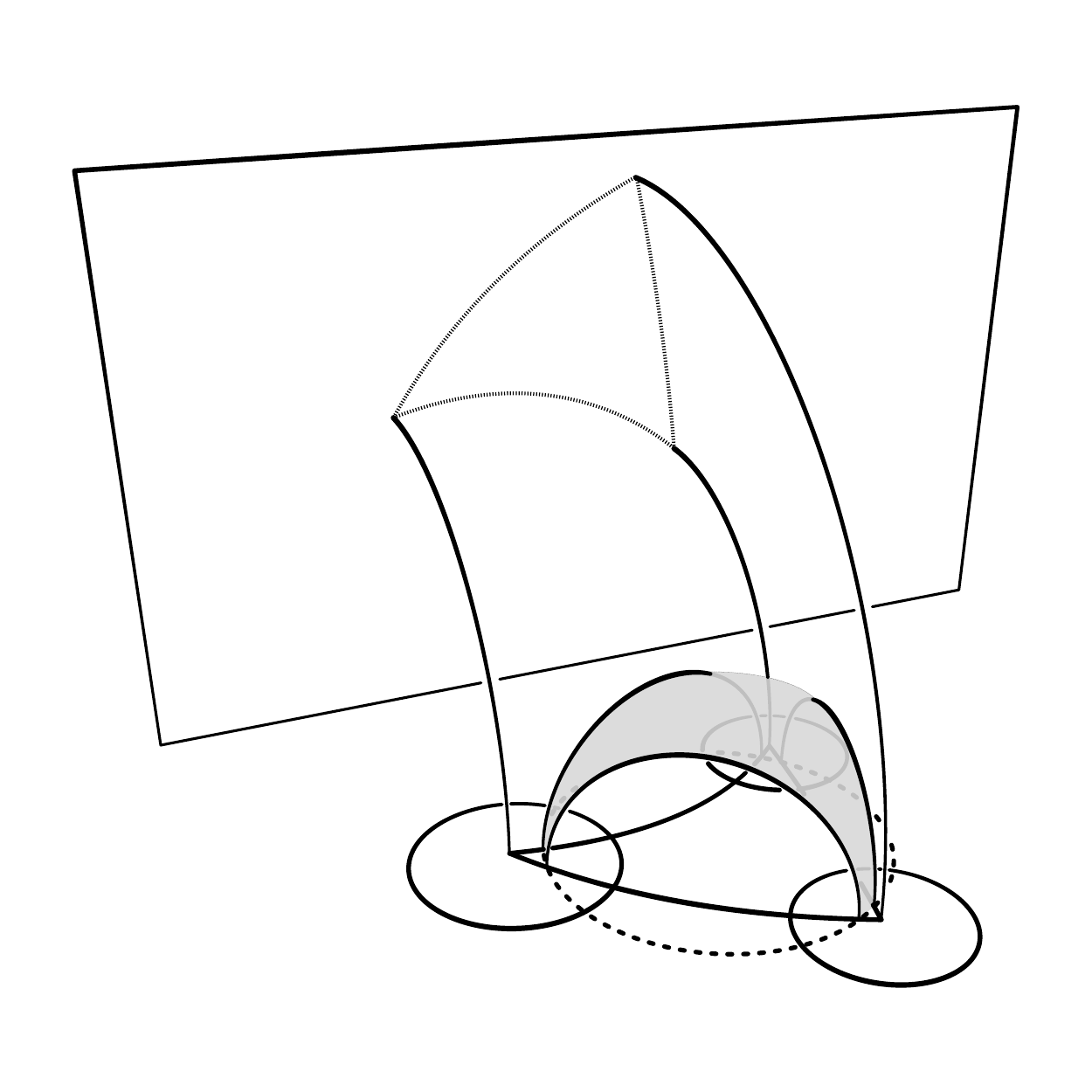}
    \caption{%
        Sketch of a hyperideal prism over a decorated hyperbolic triangle.
        The hyperideal triangle giving the base face of the prism is shaded.
        Note that the dihedral angles at the edges incident with distinguished
        hyperideal vertex $\hat{\infty}$ coincide with the interior
        angles $\theta_{ij}^k$ of the hyperbolic triangle.
    }
    \label{fig:hyperideal_prism}
\end{figure}

M\"obius-transformations of $\RR^2\cup\{\infty\}$ are in one-to-one correspondence
to the isometries of hyperbolic $3$-space. This is known as \emph{Poincar\'e-extension}
(see, \eg, \cite[\S~4.4]{Ratcliffe1994}).
Because discrete conformal equivalence is defined via M\"obius-transformations
acting on each triangle $ijk$ (\defref{def:dce_via_moebius}) we arrive at

\begin{proposition}
    \label{prop:dce_via_invariant_hyperbolic}
    Two hyperideally decorated discrete hyperbolic metrics on the same
    triangulation $\tri$ of the marked surface $(\toposurf_g, \verts)$ are
    discrete conformally equivalent if and only if they induce the same
    fundamental discrete conformal invariant.
\end{proposition}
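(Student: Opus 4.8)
The plan is to transcribe the argument just given for the spherical Proposition~\propref{prop:dce_via_invariant_spherical}, replacing the Poincar\'e ball model and the ideal boundary $\SS^2$ by the upper half-space model $\HH^3_{\mathrm{up}}$ and its ideal boundary $\RR^2\cup\{\infty\}$. The conceptual point is that the fundamental invariant $\surf_g$ is assembled triangle-by-triangle: each hyperideally decorated hyperbolic triangle $ijk$ determines, via its face-circle $C_{ijk}$, a hyperideal triangle and hence a hyperbolic metric on the abstract face, and these metrics are glued along the edges of $\tri$. I would first record that this induced face-metric is a \emph{M\"obius invariant} of the decorated triangle: a M\"obius transformation $M$ of $\RR^2\cup\{\infty\}$ extends by Poincar\'e-extension to an isometry $\hat{M}$ of $\HH^3_{\mathrm{up}}$, and since $M$ preserves orthogonality of circles it carries the face-circle of one decorated triangle to that of the other, so $\hat{M}$ restricts to an isometry of the two induced hyperideal triangles.

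For the forward direction, assume $(\tri,\len,r)$ and $(\tri,\tilde{\len},\tilde{r})$ are discrete conformally equivalent. Clause~(i) of \defref{def:dce_via_moebius} makes the two decorated triangles over each face $ijk$ M\"obius-equivalent, so by the previous paragraph the induced hyperbolic metric on every abstract face agrees. It remains to check that the edge-gluings match. For an edge with two hyperideal endpoints this is automatic, since the truncated edge-length $\lambda_{ij}$ is governed by the inversive distance \eqref{eq:hyperbolic_inversive_distance} — a M\"obius invariant by \lemref{lemma:hyperbolic_moebius_and_inversive_distance} — and hence coincides for both metrics; for an edge meeting an ideal vertex the gluing depends on the horocyclic decoration, and it is exactly clause~(ii) of \defref{def:dce_via_moebius}, the matching of the derivatives $|M'(p_i)|$ across the two faces sharing that vertex, which pins the horocycles down consistently. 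Thus the two triangulated hyperbolic surfaces are isometric, \ie\ they induce the same $\surf_g$. The converse runs backwards: if both decorated metrics induce the same $\surf_g$ together with the same geodesic triangulation, then on each face the two hyperideal triangles are isometric by an orientation-preserving isometry of $\HH^3_{\mathrm{up}}$; by the Poincar\'e-extension correspondence this isometry is the extension of a M\"obius transformation of the ideal boundary, which necessarily carries the vertex-circles of one decorated triangle onto those of the other and is compatible across edges at ideal vertices. This yields both clauses of \defref{def:dce_via_moebius}, so the metrics are discrete conformally equivalent.

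The only genuinely delicate step, as in the spherical case, is the bookkeeping at ideal vertices $r_i=0$. There M\"obius-equivalence of a single triangle no longer fixes a scale factor through vertex-radii, and one must argue that the horocyclic structures induced on the two faces abutting a cusp fit together into a global one; this is precisely what clause~(ii) of \defref{def:dce_via_moebius} and the derivative formula of \lemref{lemma:local_moebius_to_factors_hyperbolic} are designed to handle. Away from the cusps the proof is a direct transcription of the spherical argument, with the half-space model playing the role of the ball model throughout.
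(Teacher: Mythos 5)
Your proof is correct and takes essentially the same route as the paper, which deduces the proposition in two lines from the Poincar\'e-extension correspondence between M\"obius transformations of $\RR^2\cup\{\infty\}$ and isometries of hyperbolic $3$-space, combined with the per-face M\"obius formulation of \defref{def:dce_via_moebius}. Your write-up simply makes explicit what the paper leaves implicit --- the edge-gluing compatibility (which, as you note, is pinned down by M\"obius-invariant truncation data, of which the inversive distance/$\lambda$-length is the relevant measurement) and the role of clause~(ii) at ideal vertices --- without departing from the paper's argument.
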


As in the spherical case, we can introduce $\lambda$-lengths
$\lambda\colon\edges_{\tri}\to\RR$ and heights $h\colon\verts\to\RR$ for a
decorated discrete hyperbolic metric $(\tri, \len, r)$. The $\lambda$-lengths
describe the geometry of the fundamental discrete conformal invariant and the
heights are given by the (truncated) lengths of the remaining edges of the
hyperideal prisms, \ie, those incident with the distinguished hyperideal vertex
$\hat{\infty}$ (see \figref{fig:hyperideal_prism_notation}, left). If
both vertices of an edge $ij$ are hyperideal, $\lambda_{ij}$ is related to
the inversive distance  \eqref{eq:hyperbolic_inversive_distance} of the edge $ij$.

\begin{lemma}\label{lemma:lambda_length_to_inversive_distance_hyperbolic}
    Let $(\tri, \len, r)$ be a hyperideally decorated discrete hyperbolic metric.
    Consider an edge $ij\in\edges_{\tri}$ and suppose that both $r_i>0$ and
    $r_j>0$. Then
    \begin{equation}\label{eq:lambda_length_to_inversive_distance_hyperbolic}
        I_{ij}^{-} \;=\; \cosh\lambda_{ij}.
    \end{equation}
\end{lemma}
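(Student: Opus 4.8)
The plan is to mimic the structure of the spherical proof (Lemma~\ref{lemma:lambda_length_to_inversive_distance_spherical}), replacing the ``semi-hyperideal tetrahedron'' with the ``hyperideal prism'' and the Poincar\'e disk/upper half-sphere normalization with the upper half-space model $\HH^3_{\mathrm{up}}$ used in Section~\ref{sec:hyperbolic_polyhedra_hyperbolic}. First I would recall that in the hyperbolic case the decorated triangle sits in $\HH^2_{\mathrm{up}}$ embedded as the ideal boundary of $\HH^3_{\mathrm{up}}$, and the vertex-circles lift to totally geodesic planes bounding the hyperideal prism. The quantity $\lambda_{ij}$ is then, by definition, the signed distance along the edge $ij$ of the base face between the two hyperbolic planes associated to the hyperideal vertices $i$ and $j$. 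Because both $r_i>0$ and $r_j>0$, both these geodesic planes genuinely intersect the ideal boundary in vertex-circles, and the relevant distance is the length of the common perpendicular, so $\cosh\lambda_{ij}$ is exactly the Minkowski inner-product invariant of the two planes.

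The key computational step is to express $\cosh\lambda_{ij}$ in terms of $\len_{ij}$, $r_i$, $r_j$. I would represent each hyperbolic circle in $\RR^{3,1}$ by its lift $C_i=(\cosh r_i, p_i)$ from~\eqref{eq:hyperbolic_moebius_lift}, just as in Lemma~\ref{lemma:hyperbolic_moebius_and_inversive_distance}, where one already computes $\|C_i\|_{3,1}^2=\sinh^2 r_i$ and $\ip{C_i}{C_j}_{3,1}=\cosh r_i\cosh r_j-\cosh\len_{ij}$. The standard fact from Minkowski/M\"obius geometry is that for two ultraparallel totally geodesic planes the hyperbolic distance $\lambda_{ij}$ between them satisfies
\begin{equation}
   \cosh\lambda_{ij}
   \;=\;
   \frac{-\ip{C_i}{C_j}_{3,1}}{\|C_i\|_{3,1}\,\|C_j\|_{3,1}}
   \;=\;
   \frac{\cosh\len_{ij}-\cosh r_i\cosh r_j}{\sinh r_i\sinh r_j}
   \;=\;
   I_{ij}^{-}.
\end{equation}
This is the hyperbolic analogue of~\eqref{eq:inversive_distances_via_inner_product}, and it immediately yields~\eqref{eq:lambda_length_to_inversive_distance_hyperbolic}. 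Thus the lemma reduces to identifying the normalized inner product of the two plane-representatives with $\cosh$ of their common-perpendicular distance, which is precisely the content of the reference to the hyperboloid-model distance formula (\cite[\S4.5, Eq.~3.2.2]{Ratcliffe1994}) invoked in the spherical proof.

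I expect the main obstacle to be bookkeeping the correct sign and normalization, i.e.\ verifying that the edge-length $\lambda_{ij}$ appearing in the prism picture is genuinely realized as the distance between the two ultraparallel planes (and not, say, a signed or horocyclically-shifted variant), so that one obtains $\cosh$ rather than $\sinh$ or a sign-reversed expression. Since the hypothesis $r_i,r_j>0$ guarantees both vertices are hyperideal, both planes are honest ultraparallel planes and the common perpendicular has the expected length, so no horosphere correction enters here; the general (possibly ideal) case is deferred to~\eqref{eq:heights_lambda_lengths_relationship_spherical} and its hyperbolic counterpart. The plan is therefore to state the inner-product/distance identity, substitute the values of $\|C_i\|_{3,1}^2$ and $\ip{C_i}{C_j}_{3,1}$ from Lemma~\ref{lemma:hyperbolic_moebius_and_inversive_distance}, and read off~\eqref{eq:lambda_length_to_inversive_distance_hyperbolic} as a one-line computation, exactly parallel to the spherical proof.
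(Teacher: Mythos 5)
Your proposal is correct, but it reaches \eqref{eq:lambda_length_to_inversive_distance_hyperbolic} by a genuinely different route than the paper. The paper stays inside the prism picture of \secref{sec:hyperbolic_polyhedra_hyperbolic}: an explicit computation in the upper half-plane (normalized as in \figref{fig:hyperideal_prism_notation}) first yields the radius--height relation $\sinh r_i = \nicefrac{1}{\sinh h_i}$ of \eqref{eq:radii_height_relationship_hyperbolic}, and then the trigonometric identities for hyperideal (doubly truncated) triangles give, for $r_i, r_j>0$,
\begin{equation}
    \cosh\lambda_{ij}
    \;=\;
    \cosh\len_{ij}\,\sinh h_i\sinh h_j \,-\, \cosh h_i\cosh h_j,
\end{equation}
\ie\ \eqref{eq:heights_lambda_lengths_relationship_hyperbolic} with $\epsilon_i=\epsilon_j=1$; substituting the height relation eliminates $h_i, h_j$ and produces $I_{ij}^{-}$. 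You bypass the prism and the heights entirely: you reuse the Minkowski data $\|C_i\|_{3,1}^2=\sinh^2 r_i$ and $\ip{C_i}{C_j}_{3,1}=\cosh r_i\cosh r_j-\cosh\len_{ij}$ from the proof of \lemref{lemma:hyperbolic_moebius_and_inversive_distance}, together with the polarity between spacelike vectors and totally geodesic planes, so that the lemma becomes the standard distance formula for ultraparallel planes. This is sound, and your sign discussion is exactly the point that needs checking: hyperideality gives $\len_{ij}>r_i+r_j$, hence $\ip{C_i}{C_j}_{3,1}<-\sinh r_i\sinh r_j<0$, so the two planes are genuinely disjoint with a common perpendicular and the normalized inner product exceeds $1$, as it must for a $\cosh$. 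The one step you assert rather than verify --- that the plane-to-plane distance in hyperbolic $3$-space agrees with the truncated edge length seen in the base face --- is immediate in your own setup: both truncating planes are orthogonal to the plane over $C_{ijk}$ (the vertex-circles are orthogonal to the face-circle), so $\lin(C_i,C_j)$ lies in the orthogonal complement of the lift of $C_{ijk}$ and the common perpendicular is contained in the base plane. Comparing the two approaches: yours is shorter and makes the M\"obius invariance of $\cosh\lambda_{ij}$ manifest, essentially recycling the computation behind \eqref{eq:inversive_distances_via_inner_product}; the paper's computation is longer but yields the height relation \eqref{eq:radii_height_relationship_hyperbolic} and the general formula \eqref{eq:heights_lambda_lengths_relationship_hyperbolic} as by-products, both of which are needed later (for \eqref{eq:log_scale_factor_to_heights_hyperbolic} and for edges with ideal endpoints), whereas your inner-product identity degenerates when some $r_i=0$ and the horosphere-normalized case must then be handled separately, as you note.
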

\begin{proof}
    Using the explicit formula for the hyperbolic arc length element in the
    upper half-plane model (see, \eg, \cite[Thm.~4.6.6]{Ratcliffe1994}) and
    the normalization shown in \figref{fig:hyperideal_prism_notation}, right,
    a computation shows that $\cosh(h_i) = \nicefrac{1}{\cos(\alpha)}$.
    Furthermore, $\cos(\alpha)=\nicefrac{\sinh(h_i)}{\cosh(h_i)}$ leading to
    \begin{equation}\label{eq:radii_height_relationship_hyperbolic}
        \sinh r_i \;=\; \frac{1}{\sinh h_i}.
    \end{equation}
    Hence, the desired identity
    \eqref{eq:lambda_length_to_inversive_distance_hyperbolic} for the
    $\lambda$-length follows from the
    trigonometric formulas for hyperideal triangles
    (see, \eg, \cite[Thm.~3.5.13]{Ratcliffe1994}).
\end{proof}

\begin{figure}[t]
    \centering
    \labellist
    \small\hair 2pt
    \pinlabel $\hat{\infty}$ at -10 280
    \pinlabel $i$ at 137 80
    \pinlabel $j$ at 335 80
    \pinlabel $h_i$ at 85 182
    \pinlabel $h_j$ at 245 320
    \pinlabel $\lambda_{ij}$ at 240 165
    \pinlabel $\len_{ij}$ at 240 6
    \pinlabel $r_i$ at 120 6
    \pinlabel $r_j$ at 400 6
    \endlabellist
    \includegraphics[width=0.45\textwidth]{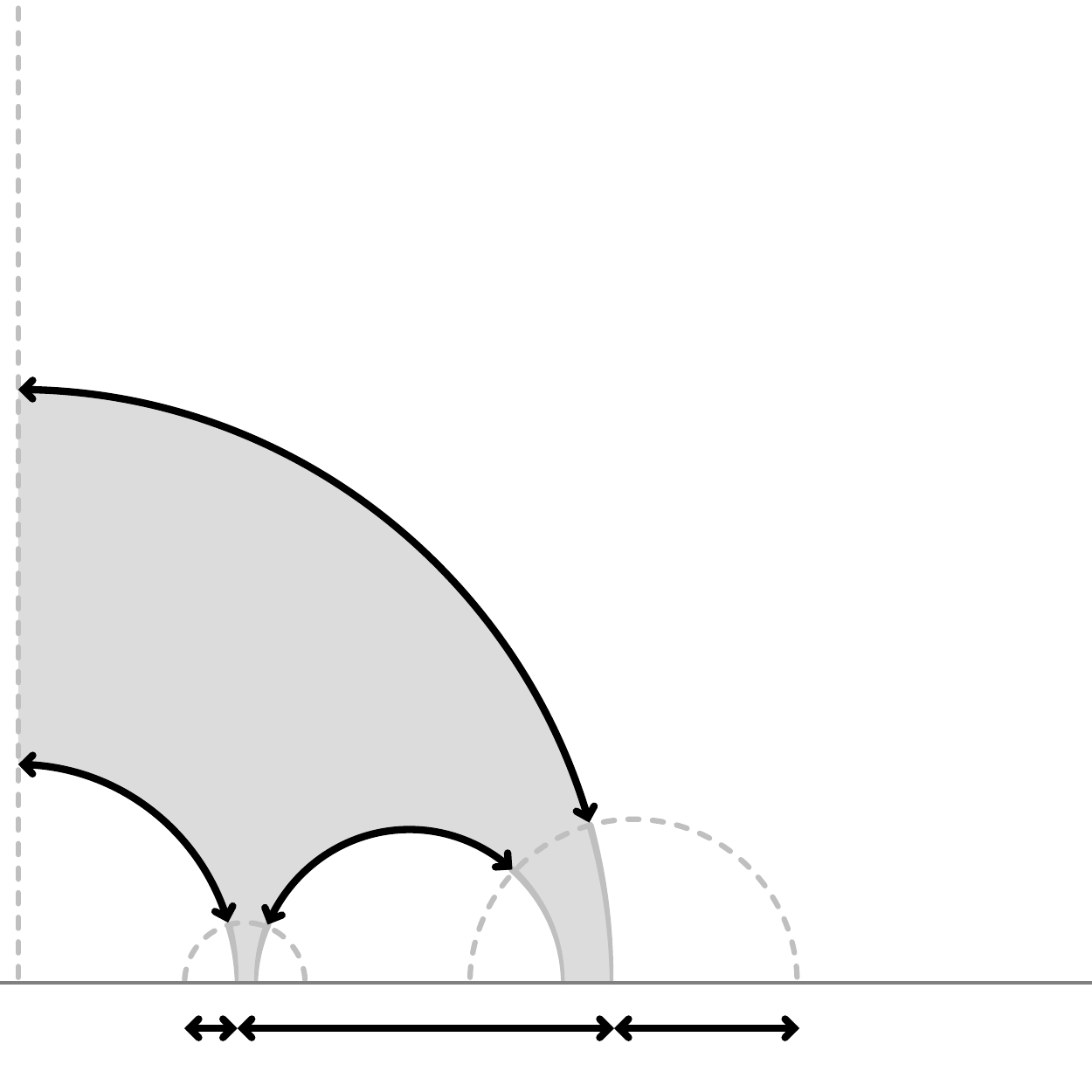}
    \qquad
    \labellist
    \small\hair 2pt
    \pinlabel $h_i$ at 310 310
    \pinlabel $1$ at 42 388
    \pinlabel $\ee^{-r_i}$ at 27 310
    \pinlabel $\ee^{r_i}$ at 36 490
    \pinlabel $\sinh(r_i)$ at 450 6
    \pinlabel $\cosh(r_i)$ at 250 6
    \pinlabel $\alpha$ at 120 147
    \pinlabel $\pi-\alpha$ at 563 170
    \endlabellist
    \includegraphics[width=0.45\textwidth]{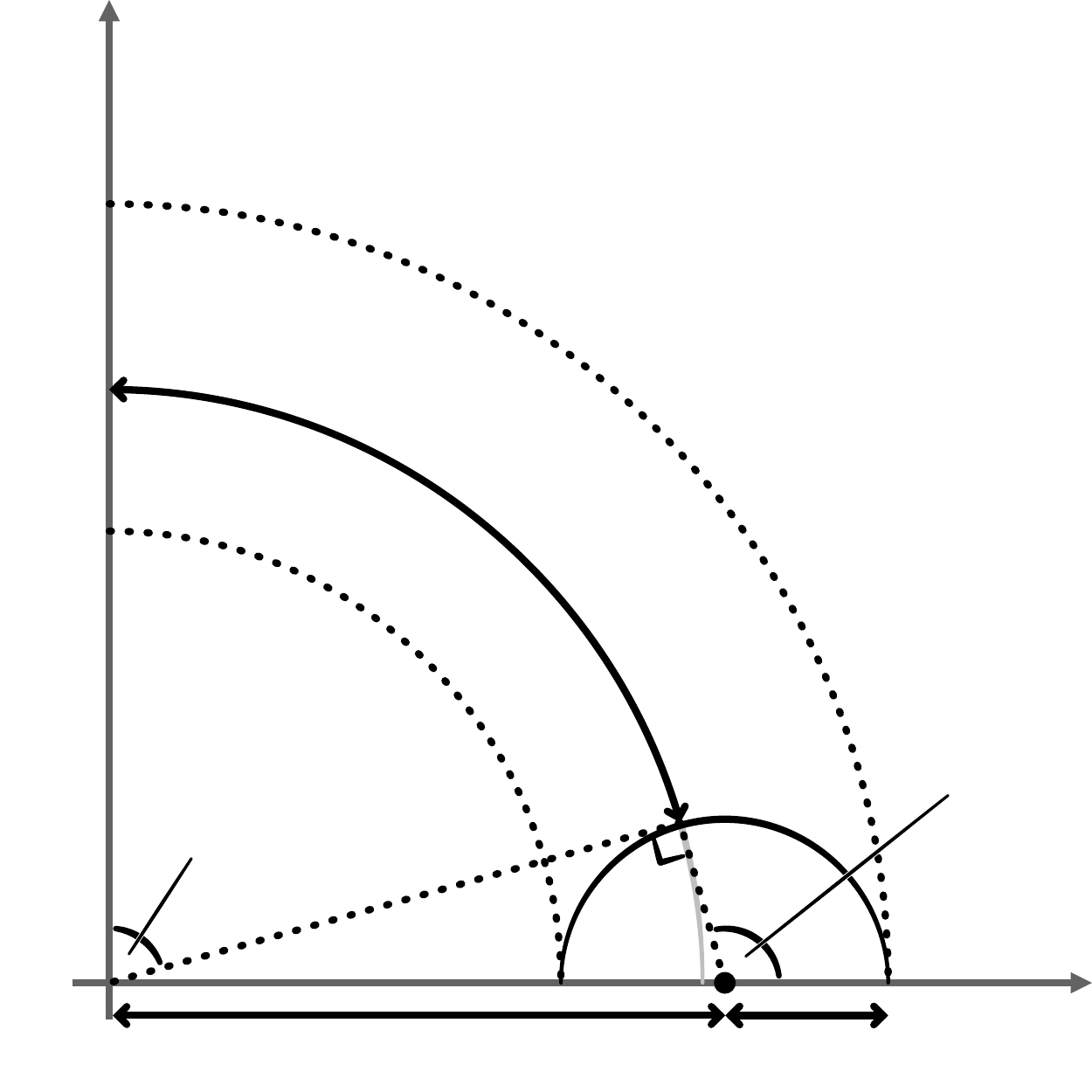}
    \caption{%
        \textsc{Left}: notation for hyperideal prism.
        \textsc{Right}: sketch of the construction used in
        \lemref{lemma:lambda_length_to_inversive_distance_hyperbolic}.
        It relates the euclidean parametrization of the upper half-plane to its
        hyperbolic metric. Auxiliary lines are dotted.
    }
    \label{fig:hyperideal_prism_notation}
\end{figure}

Again, this relationship to hyperideal triangles gives us
a way to compute the $\lambda$-lengths, even if some vertices are ideal
(compare to \eqref{eq:heights_lambda_lengths_relationship_spherical}):
\begin{equation}\label{eq:heights_lambda_lengths_relationship_hyperbolic}
    \begin{aligned}
        2(\ee^{\lambda_{ij}}+\epsilon_i\epsilon_j\ee^{-\lambda_{ij}})
        &\;=\;
        \cosh\len_{ij}
        \big(\ee^{h_i}-\epsilon_i\ee^{-h_i}\big)
        \big(\ee^{h_j}-\epsilon_j\ee^{-h_j}\big)\\
        &\phantom{xxxxxxxxxxxxxxx}
        -\big(\ee^{h_i}+\epsilon_i\ee^{-h_i}\big)
        \big(\ee^{h_j}+\epsilon_j\ee^{-h_j}\big).
    \end{aligned}
\end{equation}

The heights of the hyperideal prisms are not invariant under discrete conformal change.
But we get a similar relationship between the heights and the logarithmic scale
factors as in the spherical case. Let
$(\tri, \tilde{\len}, \tilde{r})$ be a decorated discrete hyperbolic metric
discrete conformally equivalent to $(\tri, \len, r)$. It follows from
\eqref{eq:conformal_change_formulas_hyperbolic} and
\eqref{eq:radii_height_relationship_hyperbolic} that
\begin{equation}\label{eq:log_scale_factor_to_heights_hyperbolic}
    \coth h_i \;=\; \sqrt{1+\ee^{2u_i}\sinh^2\tilde{r}_i}
\end{equation}
for hyperideal vertices $i\in\verts$. The corresponding equation for ideal vertices
is given by $u_i = \tilde{h}_i-h_i$. Thus, we again get a characterization of the set
$\polyhedra_{\tri}^{-}(\len, r)\subset\RR^{\verts}$ of possible heights under
discrete conformal change.

\begin{proposition}
    \label{prop:possible_heights_hyperbolic}
    Let $(\tri, \len, r)$ be a hyperideally decorated discrete hyperbolic metric
    and $\lambda\colon\edges_{\tri}\to\RR$ the $\lambda$-lengths corresponding
    to its fundamental discrete conformal invariant. Then
    $h\in\polyhedra_{\tri}^{-}(\len, r)$ if and only if
    \begin{enumerate}[label=(\roman*)]
        \item
            $h_i>0$, for all hyperideal $i$,
        \item
            the new $\tilde{\len}\colon\edges_{\tri}\to\RR_{>0}$ determined
            by $h$ and the $\lambda$-lengths via
            \eqref{eq:heights_lambda_lengths_relationship_hyperbolic} satisfy
            the triangle inequalities \eqref{eq:triangle_inequalities} for each
            triangle $ijk\in\faces_{\tri}$.
    \end{enumerate}
\end{proposition}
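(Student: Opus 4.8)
The plan is to adapt the argument of \propref{prop:possible_heights_spherical}, replacing the hyperideal pyramid over a finite apex $0$ by the hyperideal prism of \secref{sec:hyperbolic_polyhedra_hyperbolic}, whose distinguished apex $\hat{\infty}$ is now itself hyperideal. This single change is responsible for the present proposition listing only two conditions in place of the three of the spherical case: the perimeter condition \eqref{eq:perimeter_condition} and the inequality $\lambda_{ij}<h_i+h_j$ both disappear, as explained in the last paragraph below.

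For the forward direction, suppose $h\in\polyhedra_{\tri}^{-}(\len, r)$, so that $h$ is the collection of heights of an honest hyperideal prism produced by a discrete conformal change. Condition (i) is immediate from the definition of the heights as the truncated lengths of the edges incident to $\hat{\infty}$, which are strictly positive exactly at hyperideal vertices. For condition (ii), I would note that the numbers $\tilde{\len}$ produced by \eqref{eq:heights_lambda_lengths_relationship_hyperbolic} are the side lengths of the decorated hyperbolic triangle recovered from the prism as in \secref{sec:hyperbolic_polyhedra_hyperbolic}; equivalently, since the dihedral angles along the edges meeting $\hat{\infty}$ are the interior angles of that triangle, they are the side lengths of the hyperbolic link of $\hat{\infty}$. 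In either reading $\tilde{\len}$ are the sides of a genuine hyperbolic triangle, hence satisfy the triangle inequalities \eqref{eq:triangle_inequalities}.

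For the converse I would reconstruct the prism in two steps, exactly as in the spherical proof. First build the hyperbolic link triangle at $\hat{\infty}$ from the lengths $\tilde{\len}$; this is possible precisely because condition (ii) supplies the triangle inequalities, and --- crucially --- no perimeter bound is required, since the link is hyperbolic rather than spherical. Then raise the hyperideal prism over this link using the heights $h_i$, recovering the radii through $\sinh\tilde{r}_i=\nicefrac{1}{\sinh h_i}$ from \eqref{eq:radii_height_relationship_hyperbolic}; this is possible because condition (i) forces $h_i>0$. Since \eqref{eq:heights_lambda_lengths_relationship_hyperbolic} is a geometric identity valid for every hyperideal prism, solving it for $\cosh\lambda_{ij}$ shows that the base of the reconstructed prism carries exactly the prescribed $\lambda$-lengths; by \propref{prop:dce_via_invariant_hyperbolic} the resulting metric is then discrete conformally equivalent to $(\tri,\len,r)$, so $h\in\polyhedra_{\tri}^{-}(\len, r)$.

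The point deserving genuine care --- and the reason only two conditions survive --- is the absence of any upper constraint on $\tilde{\len}_{ij}$. Solving \eqref{eq:heights_lambda_lengths_relationship_hyperbolic} for $\tilde{\len}_{ij}$ when both endpoints are hyperideal gives
\[
    \cosh\tilde{\len}_{ij}\,\sinh h_i\,\sinh h_j
    \;=\;
    \cosh\lambda_{ij}+\cosh h_i\cosh h_j.
\]
Because $\cosh h_i\cosh h_j-\sinh h_i\sinh h_j=\cosh(h_i-h_j)\geq1$ and $\cosh\lambda_{ij}\geq1$, the right-hand side already exceeds $\sinh h_i\sinh h_j$ whenever $h_i,h_j>0$; hence $\cosh\tilde{\len}_{ij}>1$ is automatic and $\tilde{\len}_{ij}>0$ is well defined with no ceiling to respect. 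This is in sharp contrast to the spherical computation, where the analogous manipulation of \eqref{eq:heights_lambda_lengths_relationship_spherical} yields $\cos\len_{ij}>-1$, equivalently $\cosh(h_i+h_j)>\cosh\lambda_{ij}$, \ie, $\lambda_{ij}<h_i+h_j$ --- a genuine constraint needed there only to keep $\len_{ij}$ below $\pi$. Since hyperbolic triangles carry no perimeter bound and $\cosh\tilde{\len}_{ij}>1$ comes for free, the positivity of the heights together with the triangle inequalities for the link are the only surviving conditions, exactly as stated.
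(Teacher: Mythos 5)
Your proposal is correct and follows essentially the same route as the paper, whose proof of \propref{prop:possible_heights_hyperbolic} simply refers back to the spherical argument: (i) and (ii) read off from the prism (with condition (ii) coming from the link at $\hat{\infty}$), and the converse by first building the link triangle and then raising the prism over it, with \propref{prop:dce_via_invariant_hyperbolic} closing the loop. Your added computation showing that $\cosh\tilde{\len}_{ij}>1$ is automatic --- so that the spherical constraints $\lambda_{ij}<h_i+h_j$ and \eqref{eq:perimeter_condition} genuinely have no hyperbolic analogue --- is a correct and welcome elaboration of a point the paper leaves implicit (you verify it only when both endpoints are hyperideal, but the ideal cases of \eqref{eq:heights_lambda_lengths_relationship_hyperbolic} are equally immediate).
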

\begin{proof}
    The proof proceeds as for \propref{prop:possible_heights_spherical}.
\end{proof}

The collection of hyperideal prisms determined by $(\tri, \len, r)$ and
the heights $h\in\polyhedra_{\tri}^{-}(\len, r)$ is homeomorphic to the topological
cylinder $[0,1)\times(\toposurf_g\setminus\verts)$. It is endowed with a piecewise
hyperbolic metric with conical singularities corresponding the edges incident to the
distinguished hyperideal vertex $\hat{\infty}$. With respect to this metric, the
cylinder has one end of infinite volume corresponding to $\hat{\infty}$. Furthermore,
the boundary $\{0\}\times(\toposurf_g\setminus\verts)$ is isometric to the fundamental
discrete conformal invariant $\surf_g$. The cone-angles at the singularities
coincide with the cone-angles $\theta_i$ of the discrete metric
(see \figref{fig:hyperideal_prism}). Thus, we call the collection of hyperideal
prisms a \emph{hyperideal polyhedral end} over the triangulated hyperbolic surface
$(\surf_g, \tri)$ and denote it by $P_{\len, r}^{-}(h)$.

\section{The local variational principle for the discrete conformal mapping problem}
\label{sec:variational_principle}
\subsection{The discrete conformal mapping problem}
The notion of discrete conformal equivalence gives rise to the following
discrete conformal mapping problem:

\begin{problem}[prescribed angle sums]
   \label{problem:prescribed_angle_sums}
   \textbf{\emph{Given}}
   \begin{itemize}
       \item
           a triangulated marked surface $(\toposurf_g, \verts, \tri)$,
       \item
           a decorated discrete spherical/hyperbolic metric $(\tri, \len, r)$,
       \item
           and a desired angel sum $\Theta_i$ for each vertex $i\in\verts$.
   \end{itemize}
   \textbf{\emph{Find}} logarithmic scale factors $u\in\RR^{\verts}$ such that the
   discrete conformally changed discrete hyperbolic/spherical metric with respect
   to $u$ has angle sum $\Theta_i$ about each vertex $i\in\verts$.
\end{problem}

To find solutions to this problem one could try to directly evolve the
logarithmic scale factors according to some discrete curvature flow. Yet, it turns out
that it is favorable to consider the reparametrization by heights instead (see
\eqref{eq:log_scale_factor_to_heights_spherical} and
\eqref{eq:log_scale_factor_to_heights_hyperbolic}). Precisely, for a given
a decorated discrete spherical/hyperbolic metric $(\tri, \len, r)$, we consider
the evolution of $h\colon[0,T)\to\polyhedra_{\tri}^{\pm}(\len, r)$,
$T\in\RR_{>0}\cup\{\infty\}$, according to
\begin{equation}\label{eq:decorated_flow}
      \tdiff{t}h_i(t) \;=\; \Theta_i-\theta_i(t).
\end{equation}
The initial conditions are governed by $u_i(0)\equiv0$ and
\eqref{eq:log_scale_factor_to_heights_spherical} or
\eqref{eq:log_scale_factor_to_heights_hyperbolic}, respectively.
We call this the \emph{decorated $\Theta$-flow}. Here, $\theta_i(t)$ is the
angle-sum about the vertex $i\in\verts$ in the decorated discrete
spherical/hyperbolic metric induced by $h(t)$.

To better understand the local behavior of this flow, let us consider a single
hyperideally decorated spherical triangle $ijk$. Denote by $\alpha_{ij}^k$ the
interior intersection angle of the face-circle $C_{ijk}$ and the edge $ij$.
Moreover, let $r_{ij}$ be half of the distance between the two intersection-points
of $C_{ijk}$ with $ij$ (see \figref{fig:sketch_notation_decoration}). This is the
radius of the unique circle which intersects the edge $ij$ orthogonally in the same
points as $C_{ijk}$.

\begin{figure}[t]
    \centering
    \labellist
    \hair 2pt
    \pinlabel $i$ at 130 170
    \pinlabel $j$ at 470 170
    \pinlabel $k$ at 305 468
    \pinlabel $\alpha_{ij}^k$ at 305 50
    \pinlabel $d_{ki}^j$ at 350 265
    \pinlabel $d_{jk}^i$ at 250 265
    \pinlabel $r_{ki}$ at 226 365
    \pinlabel $r_{ki}$ at 172 275
    \pinlabel $d_{kj}$ at 375 375
    \pinlabel $d_{jk}$ at 440 255
    \endlabellist
    \includegraphics[width=0.49\textwidth]{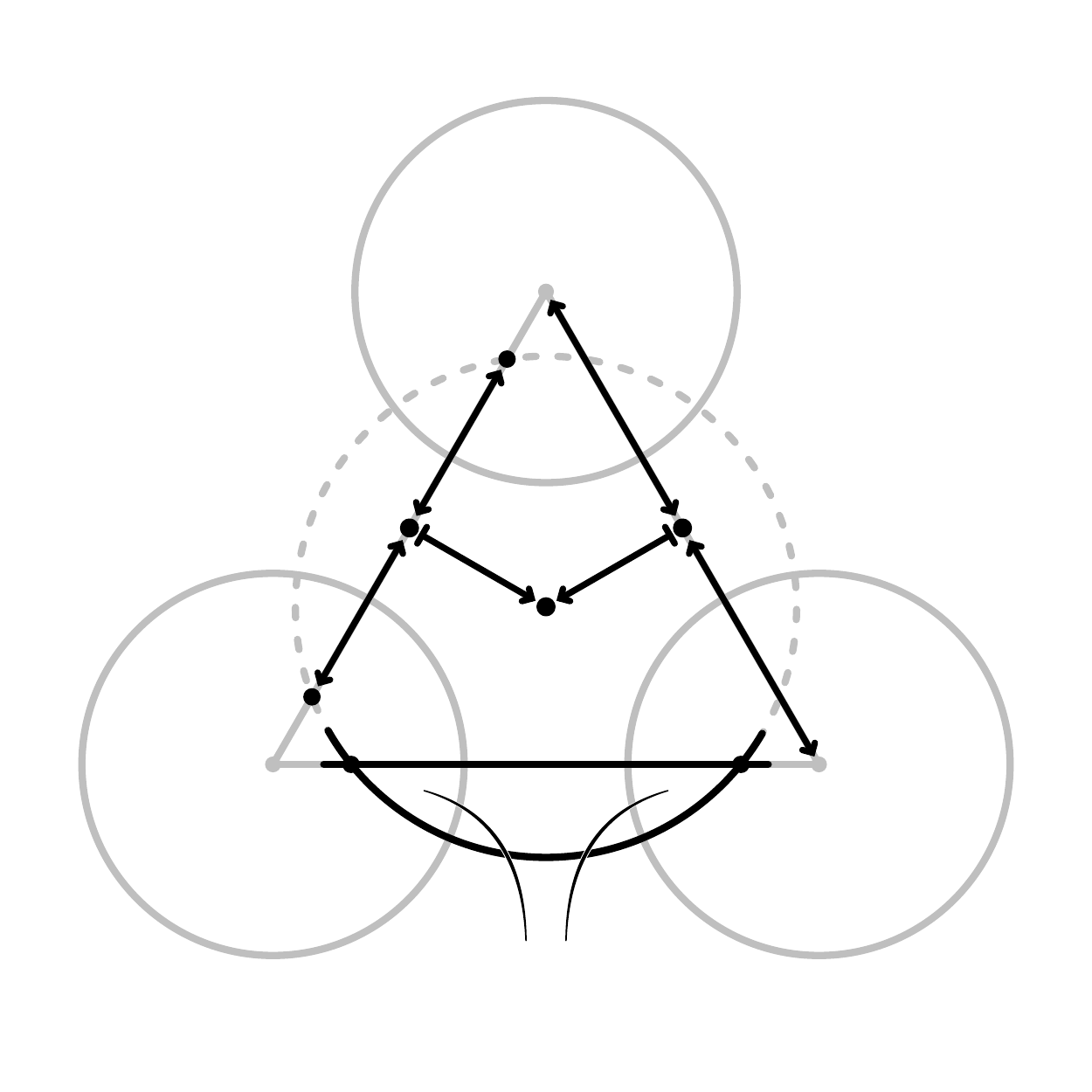}
    \caption{%
        Sketch of geometric quantities associated to a decorated triangle.
    }
    \label{fig:sketch_notation_decoration}
\end{figure}

\begin{lemma}\label{lemma:angle_derivative_spherical}
    Let $(\tri, \len, r)$ be a decorated discrete spherical metric. Under discrete
    conformal change the angles $\theta_{jk}^i$ in a triangle $ijk\in\faces_{\tri}$
    vary by
    \begin{equation}\label{eq:spherical_angle_derivative}
        \diff{}{\theta_{jk}^i}
        \;=\;
        \frac{\cot\alpha_{ij}^k\tan r_{ij}}{\sin\len_{ij}}\,
            (\diff{}{h_j}-\cos\len_{ij}\,\diff{}{h_i})
        \,+\, \frac{\cot\alpha_{ik}^j\tan r_{ik}}{\sin\len_{ik}}\,
            (\diff{}{h_k}-\cos\len_{ik}\,\diff{}{h_i}).
    \end{equation}
\end{lemma}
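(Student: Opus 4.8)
The plan is to localize the statement to the single spherical triangle $ijk$ and to differentiate along a path of discrete conformal changes, passing through the edge-lengths by the chain rule. Since $\theta_{jk}^i$ depends on the metric only through the three sides via the spherical law of cosines
\[
   \cos\theta_{jk}^i
   \;=\;
   \frac{\cos\len_{jk}-\cos\len_{ij}\cos\len_{ki}}
        {\sin\len_{ij}\sin\len_{ki}},
\]
I would first differentiate this relation and simplify the coefficients with the spherical five-part identities $\sin\len_{jk}\cos\theta_{ki}^j=\cos\len_{ki}\sin\len_{ij}-\sin\len_{ki}\cos\len_{ij}\cos\theta_{jk}^i$ (and its $k$-counterpart). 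This yields the purely trigonometric differential
\[
   \diff{}{\theta_{jk}^i}
   \;=\;
   \frac{\sin\len_{jk}}{\sin\len_{ki}\sin\len_{ij}\sin\theta_{jk}^i}
   \Big(
      \diff{}{\len_{jk}}
      -\cos\theta_{ij}^k\,\diff{}{\len_{ki}}
      -\cos\theta_{ki}^j\,\diff{}{\len_{ij}}
   \Big),
\]
valid for any variation of the side-lengths whatsoever.

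Next I would convert the variation of the sides into a variation of the heights, using that the discrete conformal change is exactly the variation that fixes the $\lambda$-lengths. Differentiating \eqref{eq:heights_lambda_lengths_relationship_spherical} for a hyperideal edge at fixed $\lambda_{ij}$, i.e. $\cosh\lambda_{ij}=\sinh h_i\sinh h_j-\cos\len_{ij}\cosh h_i\cosh h_j$, and using \eqref{eq:radii_height_relationship_spherical} in the form $\tanh h_i=\cos r_i$, $\operatorname{sech}h_i=\sin r_i$, gives the key per-edge relation
\[
   \sin\len_{ij}\,\diff{}{\len_{ij}}
   \;=\;
   (\cos\len_{ij}\cos r_i-\cos r_j)\,\diff{}{h_i}
   \,+\,
   (\cos\len_{ij}\cos r_j-\cos r_i)\,\diff{}{h_j},
\]
which expresses the conformal variation of each side purely through $\diff{}{h}$ at its two endpoints (ideal vertices being handled by the $\epsilon$-terms in \eqref{eq:heights_lambda_lengths_relationship_spherical}, or by continuity from $r_i>0$).

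Substituting the three instances of this relation into the cosine-law differential and collecting the coefficients of $\diff{}{h_i},\diff{}{h_j},\diff{}{h_k}$ is then bookkeeping. The point I would emphasise is that the contribution of the \emph{opposite} side $\len_{jk}$ (which carries $\diff{}{h_j}$ and $\diff{}{h_k}$) recombines with the contributions of the two incident sides $\len_{ij},\len_{ki}$ so that $\diff{}{h_i}$ survives only inside the combinations $\diff{}{h_j}-\cos\len_{ij}\,\diff{}{h_i}$ and $\diff{}{h_k}-\cos\len_{ik}\,\diff{}{h_i}$; equivalently, the $\diff{}{h_j}$ and $\diff{}{h_k}$ coefficients end up as pure single-edge weights $w_{ij},w_{ik}$ while the $\diff{}{h_i}$ coefficient is $-w_{ij}\cos\len_{ij}-w_{ik}\cos\len_{ik}$. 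This reorganization into edge-terms is what produces the shape of the asserted formula.

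The main obstacle is the final identification of the emerging weight $w_{ij}$ with $\nicefrac{\cot\alpha_{ij}^k\tan r_{ij}}{\sin\len_{ij}}$. For this I need the geometric descriptions of $\alpha_{ij}^k$ and $r_{ij}$ in terms of $\len_{ij},r_i,r_j$ (and the face-circle radius), which follow from the defining property that $C_{ijk}$ meets every vertex-circle orthogonally: orthogonality to the circles at $i$ and $j$ pins down where $C_{ijk}$ crosses the geodesic through $ij$, hence the half-chord $r_{ij}$ and the crossing angle $\alpha_{ij}^k$ (see \figref{fig:sketch_notation_decoration}). I expect the cleanest route is to drop the perpendicular from the center of $C_{ijk}$ to the edge $ij$ and read off $r_{ij}$ and $\alpha_{ij}^k$ from the resulting right spherical triangle (Napier's rules), then verify by a single such computation that the coefficient obtained in the previous step indeed collapses to $\cot\alpha_{ij}^k\tan r_{ij}$. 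The symmetry $w_{ij}=w_{ji}$ built into this expression is a useful consistency check and is exactly the feature that will later make the Hessian of $\HE_{\surf_g,\Theta}^{+}$ symmetric.
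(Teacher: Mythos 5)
Your strategy is sound and genuinely different from the paper's. The paper does not compute anything here: its proof of \lemref{lemma:angle_derivative_spherical} consists of quoting \cite[Thm.~9]{GT2017} and translating notation, with \eqref{eq:distances_by_cotan_spherical}, \eqref{eq:radii_relationship_spherical}, and the height reparametrization serving as the dictionary between Glickenstein--Thomas' data and the decorated quantities $\alpha_{ij}^k$, $r_{ij}$. Your proposal replaces the citation by a self-contained derivation, and its two computational pillars check out: the differentiated law of cosines is the classical relation obtained from the five-parts identities, and your per-edge formula $\sin\len_{ij}\,\diff{}{\len_{ij}}=(\cos\len_{ij}\cos r_i-\cos r_j)\,\diff{}{h_i}+(\cos\len_{ij}\cos r_j-\cos r_i)\,\diff{}{h_j}$ is exactly what differentiating \eqref{eq:heights_lambda_lengths_relationship_spherical} at fixed $\lambda_{ij}$ gives, using $\cos r_i=\tanh h_i$ from \eqref{eq:radii_height_relationship_spherical}; identifying the conformal directions with the fixed-$\lambda$ directions is legitimate by \propref{prop:possible_heights_spherical}. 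The Napier-rule identities you plan to extract at the foot of the perpendicular are precisely the paper's \eqref{eq:distances_by_cotan_spherical} and \eqref{eq:radii_relationship_spherical}, so the final identification is within reach; note also that $r_{ij}$ and $d_{ij}$ really are edge quantities, independent of $k$, because every circle orthogonal to the vertex-circles at $i$ and $j$ passes through the two limit points of their coaxial pencil, and these lie on the geodesic through $ij$. What your route buys is independence from \cite{GT2017} and a uniform treatment of ideal vertices by continuity in the $\epsilon$-terms; what the paper's route buys is brevity and the connection to Glickenstein's duality structures.

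One concrete warning for the execution: your proposed consistency check, the symmetry $w_{ij}=w_{ji}$, is blind to an overall sign, and the sign is exactly where this computation bites, because the height parametrization is orientation-reversing relative to the scale factors (increasing $h$ shrinks the vertex-circles and, by your per-edge relation, all edge-lengths). Test the end result on an equilateral decorated triangle under the uniform variation $\diff{}{h_i}=\diff{}{h_j}=\diff{}{h_k}$: your steps 1--2 give $\diff{}{\theta_{jk}^i}<0$, since the triangle shrinks towards its Euclidean limit (\emph{cf.}\ \lemref{lemma:he_angle_limit}) and the angles of a shrinking equilateral spherical triangle decrease; whereas the right-hand side of \eqref{eq:spherical_angle_derivative} evaluates to $2w(1-\cos\len)$ with $w=\nicefrac{\cot\alpha_{ij}^k\tan r_{ij}}{\sin\len_{ij}}$, which is positive, because by symmetry the center of the face-circle lies inside the triangle, so $d_{ij}^k>0$ and hence $\cot\alpha_{ij}^k>0$ by \eqref{eq:distances_by_cotan_spherical}. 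A faithful execution of your plan therefore produces \eqref{eq:spherical_angle_derivative} with the opposite overall sign under the orientation conventions as printed (numerically the magnitudes agree, so it is purely a sign issue, plausibly originating in the conversion $u_i\leftrightarrow-h_i$ recorded in the paper's notation table). This does not endanger the sequel: passing from the Jacobian of $\theta$ to the Hessian of $\HE_{\len,r,\Theta}^{\pm}$ contributes another minus sign, since $\diff{}{\HE_{\len,r,\Theta}^{\pm}}=\sum_i(\Theta_i-\theta_i)\,\diff{}{h_i}$, so the concavity asserted in \propref{prop:local_concavity_hyperbolic_he} is unaffected. But in your write-up you should report the sign your computation actually yields rather than force agreement with the display, and the equilateral uniform-variation test, not the weight symmetry, is the check that pins it down.
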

\begin{proof}
    From the spherical laws of sines and cosines (see, \eg,
    \cite[Thm.~2.5.2; Thm.~2.5.3]{Ratcliffe1994}) follows that the (oriented)
    distance $d_{ij}^k$ between the center of $C_{ijk}$ and the
    edge $ij$ can express as
    \begin{equation}\label{eq:distances_by_cotan_spherical}
        \tan d_{ij}^k \;=\; \cot\alpha_{ij}^k\,\sin r_{ij},
    \end{equation}
    where the orientation is chosen such that $d_{ij}^k$ is positive if the center
    lies on the same side of $ij$ as the triangle. Furthermore, the distance
    $d_{ij}$ between the vertex $i$ and
    the foot of the projection of the center of $C_{ijk}$ to
    $ij$ is given by
    \begin{equation}\label{eq:radii_relationship_spherical}
        \cos d_{ij} \;=\; \cos r_i\,\cos r_{ij}.
    \end{equation}

    Now this lemma is a reformulation of \cite[Thm.~9]{GT2017} using
    \eqref{eq:distances_by_cotan_spherical}, \eqref{eq:radii_relationship_spherical},
    and the reparametrization \eqref{eq:log_scale_factor_to_heights_spherical}
    via heights.
    In the following table we show how our notation is related to theirs.
    \begin{center}
        \begin{tabular}%
            {>{\centering\arraybackslash}m{4.6cm}|>{\centering\arraybackslash}m{4.6cm}}
            \textsc{Glickenstein} \& \textsc{Thomas} \cite{GT2017} & our notation\\
            \hline\hline
            $f_i$ & $u_i$\\
            $u_i$ & $-h_i$\\
            $h_{ij}$ & $d_{ij}^k$\\
            $\gamma_i$ & $\theta_{jk}^i$\\
            $\alpha_i$ & $\sin^2\tilde{r}_i$
        \end{tabular}
    \end{center}
\end{proof}

These considerations are not limited to discrete spherical metrics. We can define
$\alpha_{ij}$, $r_{ij}$, $d_{ij}^k$, and $d_{ij}$ for a hyperideally decorated
hyperbolic triangle in an analogous fashion.

\begin{lemma}\label{lemma:angle_derivative_hyperbolic}
    Let $(\tri, \len, r)$ be a decorated discrete hyperbolic metric. Under discrete
    conformal change the angles $\theta_{jk}^i$ in a triangle $ijk\in\faces_{\tri}$
    vary by
    \begin{equation}\label{eq:euclidean_angle_derivative}
        \diff{}{\theta_{jk}^i}
        \;=\;
        \frac{\cot\alpha_{ij}^k\tanh r_{ij}}{\sinh\len_{ij}}\,
            (\diff{}{h_j}-\cosh\len_{ij}\,\diff{}{h_i})
        \,+\, \frac{\cot\alpha_{ik}^j\tanh r_{ik}}{\sinh\len_{ik}}\,
            (\diff{}{h_k}-\cosh\len_{ik}\,\diff{}{h_i}).
    \end{equation}
\end{lemma}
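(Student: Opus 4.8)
The plan is to reproduce the proof of \lemref{lemma:angle_derivative_spherical} in hyperbolic geometry. That proof reformulated \cite[Thm.~9]{GT2017} — whose duality-structure framework applies uniformly to all three background geometries — by feeding it the two trigonometric identities \eqref{eq:distances_by_cotan_spherical} and \eqref{eq:radii_relationship_spherical} together with the height reparametrization \eqref{eq:radii_height_relationship_spherical}. Since the text preceding the lemma defines $\alpha_{ij}^k$, $r_{ij}$, $d_{ij}^k$, and $d_{ij}$ for a hyperideally decorated hyperbolic triangle exactly as in the spherical case, the whole argument transfers once the two geometric input identities are re-derived in $\HH^2$ and the reparametrization \eqref{eq:radii_height_relationship_hyperbolic}, \eqref{eq:log_scale_factor_to_heights_hyperbolic} is substituted for its spherical counterpart.

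First I would establish the hyperbolic analogues of \eqref{eq:distances_by_cotan_spherical} and \eqref{eq:radii_relationship_spherical}. Let $d_{ij}^k$ be the signed distance from the center of the face-circle $C_{ijk}$ to the edge $ij$, positive when the center and the triangle lie on the same side of $ij$. Applying the hyperbolic laws of sines and cosines (see, e.g., \cite[\S3.5]{Ratcliffe1994}) to the right triangle determined by this perpendicular gives
\begin{equation}
    \tanh d_{ij}^k \;=\; \cot\alpha_{ij}^k\,\sinh r_{ij}.
\end{equation}
Similarly, writing $d_{ij}$ for the distance from $i$ to the foot of the perpendicular from the center of $C_{ijk}$ onto $ij$, the hyperbolic Pythagorean theorem yields
\begin{equation}
    \cosh d_{ij} \;=\; \cosh r_i\,\cosh r_{ij}.
\end{equation}
Both identities are obtained verbatim from the spherical computation after replacing the circular functions by their hyperbolic counterparts.

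With these identities in hand, the formula \eqref{eq:euclidean_angle_derivative} follows from \cite[Thm.~9]{GT2017} under the same dictionary tabulated in the proof of \lemref{lemma:angle_derivative_spherical}, with $\sin,\cos,\tan$ replaced throughout by $\sinh,\cosh,\tanh$; in particular the vertex weight $\alpha_i$ of Glickenstein and Thomas now corresponds to $\sinh^2\tilde r_i$, the Minkowski norm of the hyperbolic vertex-circle \eqref{eq:hyperbolic_moebius_lift}, and the background-curvature parameter of their framework is set to $-1$. Since the derivatives in \eqref{eq:euclidean_angle_derivative} are taken with respect to the heights $h$ rather than the scale factors, one substitutes \eqref{eq:radii_height_relationship_hyperbolic} for \eqref{eq:radii_height_relationship_spherical} when passing from $u$ to $h$, with \eqref{eq:log_scale_factor_to_heights_hyperbolic} governing the induced infinitesimal change of the $u_i$.

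The step I expect to be the main obstacle is the bookkeeping of this reparametrization together with the sign conventions: one must match the hyperbolic weight $\sinh^2\tilde r_i$ and the orientation of $d_{ij}^k$ to the framework of \cite{GT2017}, and then verify that the chain rule from the $u$-variables to the $h$-variables reproduces precisely the coefficients $\cot\alpha_{ij}^k\tanh r_{ij}/\sinh\len_{ij}$ and the combination $\diff{}{h_j}-\cosh\len_{ij}\,\diff{}{h_i}$ appearing in \eqref{eq:euclidean_angle_derivative}. As a cross-check one could instead differentiate the hyperbolic law of cosines for $\theta_{jk}^i$ directly and substitute \eqref{eq:conformal_change_formulas_hyperbolic}; this is self-contained but considerably more laborious, which is exactly what the reduction to \cite{GT2017} is designed to avoid.
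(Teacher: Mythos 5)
Your proposal reproduces the paper's proof essentially verbatim: derive the hyperbolic identities $\tanh d_{ij}^k=\cot\alpha_{ij}^k\,\sinh r_{ij}$ and $\cosh d_{ij}=\cosh r_i\,\cosh r_{ij}$ from the hyperbolic laws of sines and cosines, then reformulate the Glickenstein--Thomas angle-variation formula under the same dictionary, using the reparametrization \eqref{eq:log_scale_factor_to_heights_hyperbolic} to pass to the height variables. The only correction is the citation: for hyperbolic background geometry the relevant result is \cite[Thm.~7]{GT2017}, not \cite[Thm.~9]{GT2017} --- the latter is the spherical statement used in \lemref{lemma:angle_derivative_spherical}.
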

\begin{proof}
    This time we can use the hyperbolic
    laws of sines and cosines (see, \eg, \cite[Thm.~3.5.2; Thm.~3.5.3]{Ratcliffe1994})
    to derive the relationships
    \begin{equation}\label{eq:distances_by_cotan_hyperbolic}
        \tanh d_{ij}^k \;=\; \cot\alpha_{ij}^k\,\sinh r_{ij}
    \end{equation}
    and
    \begin{equation}\label{eq:radii_relationship_hyperbolic}
        \cosh d_{ij} \;=\; \cosh r_i\,\cosh r_{ij}.
    \end{equation}
    Thus, we obtain the following result as a reformulation of \cite[Thm.~7]{GT2017}
    using \eqref{eq:distances_by_cotan_hyperbolic},
    \eqref{eq:radii_relationship_hyperbolic}, and the reparametrization
    \eqref{eq:log_scale_factor_to_heights_hyperbolic} via heights.
\end{proof}

\subsection{The local variational principle}
\label{sec:local_variational_principle}
The differential properties of the cone angles $\theta_i$ imply that the decorated
$\Theta$-flow \eqref{eq:decorated_flow} can be locally integrated
\cite[Thm.~3]{GT2017}. Indeed, the relationship to hyperbolic polyhedra discussed
in \secref{sec:connections_hyperbolic_polyhedra} provides us with a way to find an
explicit primitive. To this end we consider the volumes of these polyhedra. They are
intimately related to their dihedral angles \cite[Sec.~2]{Kellerhals1989}.

\begin{proposition}[Schl\"afli's differential formula]
   \label{prop:schlaflis_differential_formula}
   The differential of the volume $\vol$ on the space of (compact)
   convex hyperbolic polyhedra with fixed combinatorics is
   \begin{equation}\label{eq:schlaflis_differential_formula}
      \diff{}{\vol}
      \;=\;
      -\frac{1}{2}\sum_{ij}\lambda_{ij}\,\diff{}{\alpha_{ij}}.
   \end{equation}
   Here, we take the sum over all edges $ij$ of the polyhedron. For each edge
   $\lambda_{ij}$ denotes its length and $\alpha_{ij}$ the interior dihedral angle
   at it.
\end{proposition}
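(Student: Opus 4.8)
The plan is to prove the formula by the classical strategy of additivity under subdivision: reduce a general compact convex polyhedron $P$ to the case of a single hyperbolic simplex (more conveniently, an orthoscheme), where the identity can be checked by an explicit computation. The starting observation is that $\vol$ is additive under subdivisions of $P$ into finitely many convex cells $P_\alpha$ with disjoint interiors, so $\diff{}{\vol}(P)=\sum_\alpha\diff{}{\vol}(P_\alpha)$. If Schl\"afli's formula holds for each cell, it therefore suffices to understand how the cellwise contributions combine. (Since the statement concerns \emph{compact} polyhedra, there are no ideal or hyperideal vertices to worry about, and the classical formula applies directly.)

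First I would establish the cancellation of all contributions coming from edges interior to $P$. Summing the cellwise identities $\diff{}{\vol}(P_\alpha)=-\tfrac{1}{2}\sum_{e\subset P_\alpha}\lambda_e\,\diff{}{\theta^\alpha_e}$ and regrouping by the edges $e$ of the subdivision, note that the length $\lambda_e$ of a given geodesic segment is shared by all cells meeting along it. For an edge in the interior of $P$ the incident dihedral angles $\theta^\alpha_e$ sum to $2\pi$, and for an edge lying inside a $2$-face of $\partial P$ they sum to $\pi$; both totals are constant under the deformation, so their differential vanishes and the common factor $\lambda_e$ pulls out to give zero. For an edge lying along an edge $ij$ of $P$, the incident angles sum to the dihedral angle $\alpha_{ij}$ of $P$ itself, and since lengths add along subdivided edges these contributions assemble into exactly $-\tfrac{1}{2}\lambda_{ij}\,\diff{}{\alpha_{ij}}$. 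Thus the global formula follows once the cellwise (simplex) version is known.

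The remaining, and main, task is the base case of a single simplex. Here I would use an orthoscheme, whose hyperbolic volume is a closed expression in its essential dihedral angles via the Lobachevsky--Schl\"afli function, with its edge lengths likewise explicit trigonometric functions of those angles (see \cite{Kellerhals1989}). Differentiating this volume formula and comparing term by term against $-\tfrac{1}{2}\sum_e\lambda_e\,\diff{}{\alpha_e}$ verifies the identity for orthoschemes; an arbitrary simplex is then handled by decomposing it into orthoschemes and invoking the cancellation of the previous paragraph. I expect this explicit differentiation to be the principal obstacle, as it requires the precise trigonometry relating the three essential dihedral angles of a hyperbolic orthoscheme to all six of its edge lengths, together with the known derivative of the Lobachevsky function.

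Finally, a remark on smoothness and on an alternative route. To speak of $\diff{}{\vol}$ one needs $\vol$ to depend smoothly on the deformation within the fixed combinatorial type; this holds because a compact convex polyhedron is determined, smoothly and up to isometry, by its metric data, and the orthoscheme decomposition can be chosen combinatorially fixed and varying smoothly for small deformations. Alternatively, one may bypass simplicial decomposition and argue directly by transport of volume: write $\frac{d}{dt}\vol(P_t)=\int_{\partial P_t}\langle X,\nu\rangle\,dA$ for the boundary velocity field $X$ and outward normal $\nu$, and track the motion of each supporting hyperplane, so that the pure translation of the faces cancels in the sum and only the rotation about the edges survives to produce the edge terms. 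I would nonetheless prefer the reduction to orthoschemes, since there the bookkeeping is entirely combinatorial and the analytic content is isolated in a single, well-documented base case.
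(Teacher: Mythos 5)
The paper contains no proof of this proposition to compare against: it is quoted as a classical result, with the pointer \cite[Sec.~2]{Kellerhals1989} (and \cite{Milnor1994b} for the later extension to ideal vertices); the only argument the authors themselves supply in its vicinity is the remark that truncation makes the formula persist for hyperideal vertices, which lies outside the compact statement you were asked to prove. So your proposal must be measured against the classical literature, and there it is essentially the standard proof (going back to Schl\"afli and Kneser, and presented in Milnor's account): additivity of volume under subdivision, cancellation of the terms at interior edges (angle sums $2\pi$, resp.\ $\pi$ inside faces, are constant, and lengths add along subdivided edges of $P$), and an explicit base case for orthoschemes. That skeleton is correct, and your restriction to the compact case is appropriate to the statement.

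Two points in your sketch need repair before it is a proof. First, a circularity hazard in the base case: in \cite{Kellerhals1989} the closed-form volume of a hyperbolic orthoscheme in terms of the Lobachevsky function is \emph{derived by integrating} Schl\"afli's differential formula, so you cannot cite that formula and differentiate it to verify Schl\"afli; you must either use an independently established volume formula (Lobachevsky's original direct integration serves) or compute $\partial\vol/\partial\alpha$ for an orthoscheme directly. Second, your reduction step is more delicate than stated: decomposing a simplex (or polyhedron) into orthoschemes by dropping perpendiculars can place the feet outside the cells, so in general the decomposition consists of \emph{signed} orthoschemes, and the additivity and edge-cancellation bookkeeping must be carried out with orientations; moreover the combinatorics of the decomposition can change under deformation, so smooth dependence requires an argument, not just the rigidity remark you make (determination of the polyhedron by metric data is irrelevant here; what you need is that a fixed-combinatorics family can be parametrized so that a signed orthoscheme decomposition varies smoothly, at least locally). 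Your alternative boundary-transport sketch is also too quick as stated --- in hyperbolic space translating a supporting plane does not preserve face area, so the claimed cancellation of the translational part is not automatic --- but since you offer it only as a remark it does not affect the main route. With the two repairs above, your outline is a complete and correct proof, independent of the paper, which treats the proposition purely as an external ingredient.
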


The volumes of hyperbolic polyhedra remain finite if we allow them to have ideal
vertices. Moreover, Schl\"afli's differential formula still holds
\cite{Milnor1994b}. The edge-lengths are now considered with respect to some
auxiliary horospheres at the ideal vertices (see
\secref{sec:connections_hyperbolic_polyhedra}). Should the polyhedron have hyperideal
vertices, its volume is no longer finite. Instead, we consider its
\emph{truncated volume}, \ie, the volume obtained by truncating the
polyhedron with the hyperbolic planes corresponding to the hyperideal vertices.
Since these planes remain orthogonal to their adjacent faces if we vary the
hyperideal vertices, the terms in \teqref{eq:schlaflis_differential_formula}
corresponding to these angles vanish. Hence, Schl\"afli's differential formula
also applies to polyhedra with hyperideal vertices.

\begin{remark}
    \label{remark:formula_for_hyperbolic_volume}
    In general the volumes of hyperbolic polyhedra are hard to compute. Yet, in
    the case of hyperbolic tetrahedra there are explicit formulas \cite{Ushijima2006}.
    They express $\vol$ as a function of the dihedral angles $\alpha_{ij}$
    using the \emph{dilogarithm function}
    \begin{equation}
        \mathrm{Li}_2(x)
        \coloneq
        -\int_0^x \frac{\log(1-t)}{t}\,\diff{}{t}.
    \end{equation}
    These formulas can be further simplified if the vertices of the base face
    are ideal (see \cite[Sec.~5.2]{Vinberg1993} in the spherical case and
    \cite[Sec.~7]{Springborn2008} in the hyperbolic case).
\end{remark}

Let $(\tri, \len, r)$ be a decorated discrete spherical/hyperbolic metric.
For $\Theta\in\RR^{\verts}$ the
\emph{discrete Hilbert--Einstein functional (dHE-functional)}
$\HE_{\len, r, \Theta}^{\pm}\colon\polyhedra_{\tri}^{\pm}(\len, r)\to\RR$
is given by
\begin{equation}\label{eq:he_functional_local}
    \HE_{\len, r, \Theta}^{\pm}(h)
    \;\coloneqq\;
    -2\vol(P_{\len, r}^{\pm}(h))
    \,+\, \sum_{i\in\verts}(\Theta_i-\theta_i)h_i
    \,+\, \sum_{ij\in\edges_{\tri}}(\pi-\alpha_{ij})\lambda_{ij}.
\end{equation}
Here, $\alpha_{ij}\coloneq\alpha_{ij}^k+\alpha_{ij}^l$ for the two adjacent
triangles $ijk, ilj\in\faces_{\tri}$ incident to the edge $ij\in\edges_{\tri}$.

\begin{proposition}[local variational principle]
    \label{prop:local_variational_principle}
    The decorated $\Theta$-flow
    \eqref{eq:decorated_flow} is the gradient
    flow of $\HE_{\len, r, \Theta}^{\pm}$. In particular,
    $h\in\polyhedra_{\tri}^{\pm}(\len, r)$ is a solution of
    \probref{problem:prescribed_angle_sums} if and only if
    $h$ is a critical point of $\HE_{\len, r, \Theta}^{\pm}$.
\end{proposition}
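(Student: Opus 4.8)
\noindent
The plan is to differentiate $\HE_{\len,r,\Theta}^{\pm}$ with respect to the heights $h$ and show that its gradient is exactly the right-hand side of the decorated $\Theta$-flow \eqref{eq:decorated_flow}. The decisive ingredient is Schl\"afli's differential formula \propref{prop:schlaflis_differential_formula}, applied to the hyperideal polyhedral cone, respectively end, $P_{\len,r}^{\pm}(h)$ (in its extension to ideal and hyperideal vertices discussed after \propref{prop:schlaflis_differential_formula}), together with the crucial observation that the $\lambda$-lengths are invariants of the fundamental discrete conformal invariant $\surf_g$ and therefore remain \emph{constant} as $h$ ranges over $\polyhedra_{\tri}^{\pm}(\len, r)$.

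First I would catalogue the edges of $P_{\len,r}^{\pm}(h)$ together with their lengths and dihedral angles. There are three kinds. The edges incident to the distinguished vertex ($0$ in the spherical, $\hat{\infty}$ in the hyperbolic case) have truncated length $h_i$; gluing the pyramids, respectively prisms, around such an edge, the per-triangle corner angles $\theta_{jk}^i$ add up, so the dihedral angle there is the cone-angle $\theta_i$ (see the captions of \figref{fig:hyperideal_pyramid} and \figref{fig:hyperideal_prism}). The edges lying on the boundary surface $\surf_g$ have length $\lambda_{ij}$ and dihedral angle $\alpha_{ij}=\alpha_{ij}^k+\alpha_{ij}^l$. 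Finally, at every hyperideal vertex the truncating hyperbolic plane meets the adjacent faces orthogonally, so the associated truncation edges drop out of Schl\"afli's formula (as noted after \propref{prop:schlaflis_differential_formula}). Hence
\[
    \diff{}{\vol}\big(P_{\len,r}^{\pm}(h)\big)
    \;=\;
    -\frac{1}{2}\Big(
        \sum_{i\in\verts} h_i\,\diff{}{\theta_i}
        + \sum_{ij\in\edges_{\tri}} \lambda_{ij}\,\diff{}{\alpha_{ij}}
    \Big).
\]

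Differentiating \eqref{eq:he_functional_local} and using that $\Theta_i$, $\pi$, and each $\lambda_{ij}$ are constant along the flow, I obtain
\[
    \diff{}{\HE_{\len,r,\Theta}^{\pm}}
    \;=\;
    -2\,\diff{}{\vol}
    + \sum_{i\in\verts}(\Theta_i-\theta_i)\,\diff{}{h_i}
    - \sum_{i\in\verts} h_i\,\diff{}{\theta_i}
    - \sum_{ij\in\edges_{\tri}} \lambda_{ij}\,\diff{}{\alpha_{ij}}.
\]
Substituting the Schl\"afli identity for $-2\,\diff{}{\vol}$ cancels both the $h_i\,\diff{}{\theta_i}$ and the $\lambda_{ij}\,\diff{}{\alpha_{ij}}$ terms, leaving $\diff{}{\HE_{\len,r,\Theta}^{\pm}}=\sum_{i\in\verts}(\Theta_i-\theta_i)\,\diff{}{h_i}$. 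Thus $\partial\HE_{\len,r,\Theta}^{\pm}/\partial h_i=\Theta_i-\theta_i$, which is precisely the assertion that \eqref{eq:decorated_flow} is the gradient flow of $\HE_{\len,r,\Theta}^{\pm}$; its critical points are exactly the heights $h$ with $\theta_i\equiv\Theta_i$, \ie, the solutions of \probref{problem:prescribed_angle_sums}.

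The step I expect to require the most care is the first one: justifying that Schl\"afli's formula applies verbatim to the non-compact, truncated cone/end and that every edge is accounted for with the correct length and dihedral angle. In particular, one must check that the per-pyramid corner angles genuinely glue to the cone-angle $\theta_i$ along the height edges, that the boundary dihedral angle decomposes as $\alpha_{ij}^k+\alpha_{ij}^l$, and that the ideal-vertex contributions stay finite (via the auxiliary horospheres, with respect to which $h_i$ and $\lambda_{ij}$ are measured) while the hyperideal-vertex truncation terms vanish by orthogonality. Once this bookkeeping is secured, the remaining computation is the short cancellation displayed above.
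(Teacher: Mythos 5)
Your proposal is correct and is precisely the argument the paper intends: its own proof is the one-line remark \enquote{This is a consequence of Schl\"afli's differential formula}, and your computation --- cataloguing the edges of $P_{\len,r}^{\pm}(h)$ (height edges with length $h_i$ and dihedral angle $\theta_i$, boundary edges with fixed length $\lambda_{ij}$ and angle $\alpha_{ij}=\alpha_{ij}^k+\alpha_{ij}^l$, truncation edges vanishing by orthogonality), then cancelling against $-2\,\diff{}{\vol}$ to get $\diff{}{\HE_{\len,r,\Theta}^{\pm}}=\sum_{i\in\verts}(\Theta_i-\theta_i)\,\diff{}{h_i}$ --- is exactly the expansion of that remark, consistent with \lemref{lemma:he_functional_total_diff}. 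You also correctly flag the only delicate points (constancy of the $\lambda$-lengths under conformal change and the treatment of ideal/hyperideal vertices via horospheres and orthogonal truncation), so nothing is missing.
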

\begin{proof}
    This is a consequence of Schl\"afli's differential formula.
\end{proof}

Let us introduce the \emph{decorated cotan-weights}
$\cotw\colon\edges_{\tri}\to\RR$ by
\begin{equation}\label{eq:cotan_weights}
    \cotw_{ij}^{\pm}
    \;\coloneqq\;
    \begin{cases}
        \frac{\big(\cot\alpha_{ij}^k\,+\,\cot\alpha_{ij}^l\big)\tan r_{ij}}
            {\sin\len_{ij}} &
        \text{, spherical case,}\\
        \frac{\big(\cot\alpha_{ij}^k\,+\,\cot\alpha_{ij}^l\big)\tanh r_{ij}}
            {\sinh\len_{ij}} &
        \text{, hyperbolic case.}
    \end{cases}
\end{equation}
Here, $ijk$ and $ilj$ are the adjacent triangles with common edge $ij$. Using these
weights we define
\begin{equation}
    \laplacian_{\len,r}^{\pm}
    \coloneq
    \sum_{ij\in\edges_{\tri}}\cotw_{ij}^{\pm}\,\big(\diff{}{h_j}-\diff{}{h_i}\big)^2
\end{equation}
and
\begin{equation}
    \Diff_{\len,r}^{\pm}
    \coloneq
    \begin{cases}
        \sum_{i\in\verts}
        \sum_{j:ij\in\edges_{\tri}}\cotw_{ij}^{+}(\cos(\len_{ij})-1)
        \,\diff{}{h_i^2}
        &\text{, spherical case,}\\
        \sum_{i\in\verts}
        \sum_{j:ij\in\edges_{\tri}}\cotw_{ij}^{-}(\cosh(\len_{ij})-1)
        \,\diff{}{h_i^2}
        &\text{, hyperbolic case.}
    \end{cases}
\end{equation}

\begin{lemma}\label{lemma:local_hessian_formula}
    Let $(\tri, \len, r)$ be a decorated discrete spherical/hyperbolic metric
    and $\Theta\in\RR^{\verts}$. The dHE-functional $\HE_{\len, r, \Theta}^{\pm}$
    is analytic in the interior of $\polyhedra_{\tri}^{\pm}(\len, r)$.
    Its Hessian is given by the quadratic form
    \begin{equation}\label{eq:explicit_formula_jacobian}
        \sum_{i\in\verts}\diff{}{\theta_i}\diff{}{h_i}
        \;=\;
        -\big(\laplacian_{\len,r}^{\pm} \,+\, \Diff_{\len, r}^{\pm}\big).
    \end{equation}
\end{lemma}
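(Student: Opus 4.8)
The plan is to compute the Hessian of $\HE_{\len, r, \Theta}^{\pm}$ directly from its definition \eqref{eq:he_functional_local}, reducing everything to the derivatives of the cone-angles already obtained in \lemref{lemma:angle_derivative_spherical} and \lemref{lemma:angle_derivative_hyperbolic}. First I would observe that, by Schl\"afli's differential formula (\propref{prop:schlaflis_differential_formula}) together with the variational principle (\propref{prop:local_variational_principle}), the gradient of $\HE_{\len, r, \Theta}^{\pm}$ in the height-coordinates $h$ is $\diff{}{\HE^{\pm}} = \sum_i (\Theta_i - \theta_i)\,\diff{}{h_i}$. Indeed, differentiating \eqref{eq:he_functional_local} and applying Schl\"afli's formula $\diff{}{\vol} = -\frac{1}{2}\sum_{ij}\lambda_{ij}\,\diff{}{\alpha_{ij}}$ cancels the volume term against the $(\pi - \alpha_{ij})\lambda_{ij}$ contribution, up to the $\theta_i$-terms; since the $\lambda_{ij}$ are discrete conformal invariants (they are determined by the fundamental invariant $\surf_g$, not by $h$), they are constant along the flow and do not contribute to second derivatives. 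Consequently the Hessian is simply $-\sum_i \diff{}{\theta_i}\,\diff{}{h_i}$, and analyticity follows because each $\theta_i$ is an analytic function of $h$ on the interior of $\polyhedra_{\tri}^{\pm}(\len, r)$ via the trigonometric formulas relating heights, $\lambda$-lengths, and edge-lengths.

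The core computation is then to expand $\sum_i \diff{}{\theta_i}\diff{}{h_i}$ as a quadratic form. Writing $\theta_i = \sum_{jk} \theta_{jk}^i$ as the sum of corner angles, I would substitute the angle-variation formulas \eqref{eq:spherical_angle_derivative} and \eqref{eq:euclidean_angle_derivative}. Each corner contributes a term of the shape $\frac{\cot\alpha_{ij}^k \tan r_{ij}}{\sin\len_{ij}}(\diff{}{h_j} - \cos\len_{ij}\,\diff{}{h_i})$ (and its hyperbolic analog). The plan is to collect, for each edge $ij$, the two contributions coming from its two adjacent triangles $ijk$ and $ilj$; their angle-terms combine into the symmetric decorated cotan-weight $\cotw_{ij}^{\pm}$ of \eqref{eq:cotan_weights} because $r_{ij}$ and $\len_{ij}$ are shared by both faces while the face-angles $\alpha_{ij}^k$ and $\alpha_{ij}^l$ add. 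Regrouping the $\diff{}{h_j} - \cos\len_{ij}\,\diff{}{h_i}$ combinations symmetrically over the edge then splits naturally into a pure difference part $\cotw_{ij}^{\pm}(\diff{}{h_j} - \diff{}{h_i})^2$, summing to $\laplacian_{\len, r}^{\pm}$, and a diagonal remainder. The diagonal remainder arises from the asymmetry $\cos\len_{ij} \neq 1$: after the symmetric part is extracted, what is left attaches to $\diff{}{h_i}^2$ with coefficient $\cotw_{ij}^{\pm}(\cos\len_{ij} - 1)$ (resp.\ $\cosh\len_{ij} - 1$), which is precisely $\Diff_{\len, r}^{\pm}$.

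The main obstacle I anticipate is the bookkeeping in this regrouping step. Writing $a_{ij} \coloneq \diff{}{h_j} - \cos\len_{ij}\,\diff{}{h_i}$, one has to verify carefully that summing $\cotw_{ij}^{\pm}\bigl(\diff{}{h_i}\,a_{ij}\bigr)$ symmetrically over the two orientations of each edge (\ie\ adding the $i$-centered and $j$-centered contributions) reproduces exactly $\cotw_{ij}^{\pm}\bigl[(\diff{}{h_j} - \diff{}{h_i})^2 + (\cos\len_{ij}-1)(\diff{}{h_i}^2 + \diff{}{h_j}^2)\bigr]$. The algebraic identity underlying this is $\diff{}{h_i}(\diff{}{h_j} - \cos\len_{ij}\,\diff{}{h_i}) + \diff{}{h_j}(\diff{}{h_i} - \cos\len_{ij}\,\diff{}{h_j}) = -(\diff{}{h_j} - \diff{}{h_i})^2 + (\cos\len_{ij}-1)(\diff{}{h_i}^2 + \diff{}{h_j}^2)$, which must be checked against the sign conventions in \eqref{eq:explicit_formula_jacobian}. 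Once this elementary identity is in hand, the spherical and hyperbolic cases proceed in parallel with $\sin, \cos, \tan$ replaced by $\sinh, \cosh, \tanh$, so I would carry out the spherical case in full and remark that the hyperbolic one is verbatim. Analyticity is then immediate since $\cotw_{ij}^{\pm}$ and the $\cos\len_{ij}$ (resp.\ $\cosh\len_{ij}$) terms are analytic functions of the heights on the open region $\polyhedra_{\tri}^{\pm}(\len, r)$.
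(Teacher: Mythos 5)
Your overall route is exactly the paper's: the paper's proof likewise gets the first derivative from Schl\"afli's formula (so that the $\Theta$-flow is the gradient flow, with the $\lambda$-lengths fixed under conformal change), and then obtains \eqref{eq:explicit_formula_jacobian} by \enquote{summing over all corners, using \lemref{lemma:angle_derivative_spherical} or \lemref{lemma:angle_derivative_hyperbolic}, then regrouping by edges} --- precisely your plan, including the observation that the two faces adjacent to $ij$ share $r_{ij}$ and $\len_{ij}$ so their contributions combine into $\cotw_{ij}^{\pm}$. Your analyticity argument is also fine. However, the one piece of concrete algebra your proposal commits to is wrong. Writing $c=\cos\len_{ij}$ (resp.\ $\cosh\len_{ij}$), the edge-wise sum is
\begin{equation}
    \diff{}{h_i}\big(\diff{}{h_j}-c\,\diff{}{h_i}\big)
    +\diff{}{h_j}\big(\diff{}{h_i}-c\,\diff{}{h_j}\big)
    \;=\;
    2\,\diff{}{h_i}\diff{}{h_j}-c\big(\diff{}{h_i}^2+\diff{}{h_j}^2\big)
    \;=\;
    -\big(\diff{}{h_j}-\diff{}{h_i}\big)^2
    -(c-1)\big(\diff{}{h_i}^2+\diff{}{h_j}^2\big),
\end{equation}
so the difference part \emph{and} the diagonal part carry the \emph{same} sign. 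Your stated identity, $-(\diff{}{h_j}-\diff{}{h_i})^2+(c-1)(\diff{}{h_i}^2+\diff{}{h_j}^2)$, holds only when $c=1$, \ie\ $\len_{ij}=0$; it also contradicts your own second paragraph, which describes the outcome with both parts of one sign. With your identity you would land on $-\laplacian_{\len,r}^{\pm}+\Diff_{\len,r}^{\pm}$ instead of $-\big(\laplacian_{\len,r}^{\pm}+\Diff_{\len,r}^{\pm}\big)$, and this is not cosmetic: in the hyperbolic Delaunay case the form $-\laplacian^{-}+\Diff^{-}$ is indefinite in general, which would destroy \propref{prop:local_concavity_hyperbolic_he}. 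With the corrected identity the regrouping closes as claimed, once you note that $\Diff_{\len,r}^{\pm}$ unfolds to $\sum_{ij\in\edges_{\tri}}\cotw_{ij}^{\pm}(c_{ij}-1)\big(\diff{}{h_i}^2+\diff{}{h_j}^2\big)$, each edge contributing to both endpoint sums.

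A second point you flagged but did not resolve, and must: you posit that the Hessian equals $-\sum_i\diff{}{\theta_i}\diff{}{h_i}$, whereas the lemma --- and its use in \propref{prop:local_concavity_hyperbolic_he}, where nonnegative weights are supposed to yield strict \emph{concavity} of $\HE_{\len,r,\Theta}^{-}$ --- identifies the Hessian with $+\sum_i\diff{}{\theta_i}\diff{}{h_i}=-\big(\laplacian^{\pm}+\Diff^{\pm}\big)$, as does the paper's proof. Even after correcting the edge identity, your sign convention would produce $+\big(\laplacian^{\pm}+\Diff^{\pm}\big)$, the negative of the stated result. Settling this requires tracking the orientation conventions feeding the angle-variation lemmas (the sign of $d_{ij}^k$ and the Glickenstein--Thomas dictionary, in which their $u_i$ corresponds to $-h_i$); it cannot simply be asserted from the gradient formula, since the overall sign is exactly what the concavity statement downstream hinges on.
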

\begin{proof}
    The Hessian of $\HE_{\len, r, \Theta}^{\pm}$ is given by
    $\sum_{i\in\verts}\diff{}{\theta_i}\diff{}{h_i}$ since the $\Theta$-flow
    \eqref{eq:decorated_flow} is its gradient flow. Now formula
    \eqref{eq:explicit_formula_jacobian} follows from a straightforward,
    but tedious, computation: first summing over all corners, using
    \lemref{lemma:angle_derivative_spherical} or
    \lemref{lemma:angle_derivative_hyperbolic}, respectively, then regrouping
    by edges.
\end{proof}

\begin{remark}
    The operator $\laplacian_{\len,r}$ is the quadratic form belonging
    to a \emph{graph Laplacian} on $\tri$. Both its construction and the
    formula of its weights are reminiscent of the \emph{discrete cotan-Laplacian}
    for piecewise euclidean surfaces \cite{PP1993} and its direct generalizations
    (see, \eg, \cite{BS2007, Glickenstein2011}).
\end{remark}

\subsection{Weighted Delaunay tessellations}
\label{sec:weighted_delaunay_tessellations}
The properties of the dHE-functional closely depend on the underlying triangulation
of the discrete metric. We say that the triangulation $\tri$ is a
\emph{weighted Delaunay triangulation} with respect to the decorated discrete
spherical/hyperbolic metric $(\tri, \len, r)$ if
\begin{equation}\label{eq:local_delaunay_condition_weights}
    \cotw_{ij}^{\pm}\;\geq\;0
\end{equation}
for all edges $ij\in\edges_{\tri}$. Using the geometry induced on the marked
surface $(\toposurf_g, \verts)$ by $(\tri, \len, r)$, \ie, the
piecewise spherical/hyperbolic metric $\dist_{\toposurf_g}$, this definition
has the following equivalent geometric interpretations.

\begin{lemma}
    \label{lemma:local_characterization_wDt}
    Fix a decorated discrete spherical/hyperbolic metric $(\tri, \len, r)$ and
    consider two adjacent triangles $ijk$ and $ilj$ with common edge $ij$.
    The following statements are equivalent:
    \begin{enumerate}[label=(\roman*)]
        \item\label{item:local_delaunay_weights}
            the edge $ij$ is \emph{local Delaunay}, \ie, $\cotw_{ij}^{\pm}\geq0$,
        \item\label{item:local_delaunay_distance}
            the center of the face-circle $C_{ijk}$ \enquote{lies to the left} of the
            center of the face-circle $C_{ilj}$, \ie,
            \begin{equation}\label{eq:local_delaunay_condition_distances}
                d_{ij}^k \,+\, d_{ij}^l \;\geq\; 0,
            \end{equation}
        \item\label{item:local_delaunay_angles}
            the face-circles $C_{ijk}$ and $C_{ilj}$ intersect with an angle less
            or equal to $\pi$, \ie,
            \begin{equation}
                \alpha_{ij}^k \,+\, \alpha_{ij}^l \;\leq\; \pi.
            \end{equation}
    \end{enumerate}
\end{lemma}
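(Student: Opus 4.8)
The plan is to reduce all three statements to a single inequality, the sign of $\cot\alpha_{ij}^k+\cot\alpha_{ij}^l$. Throughout I would first record that the edge-circle radius $r_{ij}$ and the length $\len_{ij}$ are attributes of the edge $ij$ alone: both face-circles $C_{ijk}$ and $C_{ilj}$ are orthogonal to the vertex-circles at $i$ and $j$, so they meet the geodesic $ij$ in the same two points, whence $r_{ij}$ is unambiguous. In particular $\tan r_{ij},\sin\len_{ij}>0$ in the spherical case and $\tanh r_{ij},\sinh\len_{ij}>0$ in the hyperbolic case, so reading off the definition \eqref{eq:cotan_weights} gives
\[
   \operatorname{sign}\cotw_{ij}^{\pm}
   \;=\;
   \operatorname{sign}\big(\cot\alpha_{ij}^k+\cot\alpha_{ij}^l\big).
\]
It then suffices to prove \ref{item:local_delaunay_weights}$\Leftrightarrow$\ref{item:local_delaunay_angles} and \ref{item:local_delaunay_weights}$\Leftrightarrow$\ref{item:local_delaunay_distance}.

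For the first equivalence I would use the elementary identity
\[
   \cot\alpha_{ij}^k+\cot\alpha_{ij}^l
   \;=\;
   \frac{\sin(\alpha_{ij}^k+\alpha_{ij}^l)}
        {\sin\alpha_{ij}^k\,\sin\alpha_{ij}^l}.
\]
Since the intersection angles satisfy $\alpha_{ij}^k,\alpha_{ij}^l\in(0,\pi)$, the denominator is strictly positive, so the left-hand side is $\geq0$ precisely when $\sin(\alpha_{ij}^k+\alpha_{ij}^l)\geq0$. As $\alpha_{ij}^k+\alpha_{ij}^l\in(0,2\pi)$, this happens exactly when $\alpha_{ij}^k+\alpha_{ij}^l\leq\pi$, which is statement \ref{item:local_delaunay_angles}.

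For the second equivalence I would invoke the distance relations \eqref{eq:distances_by_cotan_spherical} and \eqref{eq:distances_by_cotan_hyperbolic}. Applied to both triangles with the common value $r_{ij}$ they give $\cot\alpha_{ij}^k+\cot\alpha_{ij}^l=(\tan d_{ij}^k+\tan d_{ij}^l)/\sin r_{ij}$ in the spherical case and $(\tanh d_{ij}^k+\tanh d_{ij}^l)/\sinh r_{ij}$ in the hyperbolic case. Because the prefactor is positive, \ref{item:local_delaunay_weights} is equivalent to $\tan d_{ij}^k+\tan d_{ij}^l\geq0$, resp.\ $\tanh d_{ij}^k+\tanh d_{ij}^l\geq0$. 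Using the addition formulas
\[
   \tan a+\tan b=\frac{\sin(a+b)}{\cos a\cos b},
   \qquad
   \tanh a+\tanh b=\frac{\sinh(a+b)}{\cosh a\cosh b},
\]
and noting that a signed distance to a geodesic lies in $(-\tfrac{\pi}{2},\tfrac{\pi}{2})$ on the sphere (so both cosines are positive) while $\cosh$ is always positive, the sign of each sum equals the sign of $\sin(d_{ij}^k+d_{ij}^l)$, resp.\ $\sinh(d_{ij}^k+d_{ij}^l)$. Since $d_{ij}^k+d_{ij}^l\in(-\pi,\pi)$ in the spherical case, and $\sinh$ is increasing through the origin, each is $\geq0$ exactly when $d_{ij}^k+d_{ij}^l\geq0$, which is \ref{item:local_delaunay_distance}.

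The one point demanding care, and the only real obstacle, is the bookkeeping of orientations together with the claim that $r_{ij}$ is genuinely an edge invariant. The signed distances $d_{ij}^k$ and $d_{ij}^l$ are each defined positive toward their own triangle, and these triangles lie on opposite sides of $ij$; I would have to check that with this convention the relations \eqref{eq:distances_by_cotan_spherical} and \eqref{eq:distances_by_cotan_hyperbolic} hold with the \emph{same} $r_{ij}$, so that $d_{ij}^k+d_{ij}^l\geq0$ really encodes the Delaunay configuration ``center of $C_{ijk}$ to the left of center of $C_{ilj}$''. Once the sign conventions are fixed consistently and $r_{ij}$ is recognized as depending only on the vertex-circles at $i$ and $j$, the three chains of equivalences close as above.
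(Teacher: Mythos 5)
Your proposal is correct and takes essentially the same route as the paper, whose proof is the one-line remark that the lemma ``follows by direct computation from \eqref{eq:distances_by_cotan_spherical}, \eqref{eq:distances_by_cotan_hyperbolic}, and \eqref{eq:cotan_weights}'' --- exactly the ingredients you use. You have simply written out that computation in full (the sine/hyperbolic-sine addition formulas, the positivity of the prefactors, and the sign conventions making $r_{ij}$ an edge invariant), all of which checks out.
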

\begin{proof}
    This follows by direct computation from
    \eqref{eq:distances_by_cotan_spherical},
    \eqref{eq:distances_by_cotan_hyperbolic}, and
    \eqref{eq:cotan_weights}.
\end{proof}

This lemma suggests that weighted Delaunay triangulations only depend on
$(\dist_{\toposurf_g}, r)$. But they are not necessarily unique. Thus, we
define the \emph{weighted Delaunay tessellation} of $(\toposurf_g, \verts)$
with respect to $(\dist_{\toposurf_g}, r)$: let $\tri$ be a weighted Delaunay
triangulation with respect to a decorated discrete metric inducing
$(\dist_{\toposurf_g}, r)$. The corresponding weighted Delaunay tessellation is
the decomposition of $(\toposurf_g, \verts)$ obtained by removing all edges
with $\cotw_{ij}=0$.

\begin{lemma}\label{lemma:uniqueness_wDt}
    Let $(\dist_{\toposurf_g}, r)$ be a decorated piecewise
    spherical/hyperbolic metric on the marked surface $(\toposurf_g, \verts)$.
    If the weighted Delaunay tessellation defined above exists, then
    \begin{enumerate}[label=(\roman*)]
        \item
            it is indeed a tessellation of the marked surface $(\toposurf_g, \verts)$
            in the sense of \secref{sec:basic_definitions},
        \item
            it is uniquely determined by $(\dist_{\toposurf_g}, r)$, \ie,
            it does not depend on the discrete metric we used to define it.
    \end{enumerate}
\end{lemma}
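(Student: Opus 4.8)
The plan is to first identify the geometric meaning of a vanishing weight, use it to see that the cells of the decomposition are convex polygons, and then deduce uniqueness from the canonicity of an associated convex hull, treating the spherical and hyperbolic cases uniformly through the lifts $C_i\in\RR^{3,1}$. The key local fact is that an edge $ij$ satisfies $\cotw_{ij}^{\pm}=0$ precisely when the two adjacent face-circles coincide, $C_{ijk}=C_{ilj}$: by \lemref{lemma:local_characterization_wDt} a vanishing weight is equivalent to $\alpha_{ij}^k+\alpha_{ij}^l=\pi$, and since both face-circles are orthogonal to the vertex-circles at $i$ and $j$ they pass through the two limit points of the (non-intersecting) pencil generated by these vertex-circles---points lying on the geodesic $ij$---so that crossing $ij$ there at supplementary angles forces $C_{ijk}=C_{ilj}$. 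Hence removing a zero-weight edge merges two triangles whose four vertex-circles are simultaneously orthogonal to a single circle.

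For assertion~(i) I would argue that after deleting all zero-weight edges every remaining face is a union of triangles whose vertex-circles are orthogonal to one common face-circle $C$. Interpreting the disk bounded by $C$ as a copy of the hyperbolic plane---exactly as in the construction of the fundamental discrete conformal invariant in \secref{sec:connections_hyperbolic_polyhedra}---turns the vertex-circles into hyperideal or ideal points and realises the face as a convex hyperideal/ideal polygon. In particular each face is a topological disk with a well-defined cyclic boundary, so the decomposition is a genuine CW-structure on $\toposurf_g$ with $0$-cells $\verts$, \ie, a tessellation in the sense of \secref{sec:basic_definitions}.

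For assertion~(ii) I would identify this tessellation with the projection of a convex hull depending only on $(\dist_{\toposurf_g},r)$. Developing the universal cover into the ideal boundary and lifting each vertex-circle to its representative $C_i\in\RR^{3,1}$ via \teqref{eq:spherical_moebius_lift} or \teqref{eq:hyperbolic_moebius_lift}, the weighted Delaunay triangles are exactly the radial projections of the faces of the convex hull of $\{C_i\}$ in the appropriate Minkowski-geometric sense; by the local fact above, an edge survives the deletion of zero-weight edges precisely when it projects from a genuine edge of this hull rather than an interior diagonal of a flat hull-face. As the convex hull of $\{C_i\}$ is manifestly independent of any auxiliary triangulation, so is its set of edges, and therefore the resulting tessellation is the same for every weighted Delaunay triangulation refining $(\dist_{\toposurf_g},r)$.

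The main obstacle I anticipate is making this convex-hull picture precise globally on a surface of positive genus: the lifts $C_i$ live in the universal cover, so one must check that the equivariant convex hull is well-defined and that its radial projection descends to $\toposurf_g$, treating hyperideal ($\|C_i\|_{3,1}^2>0$) and ideal ($\|C_i\|_{3,1}^2=0$) vertices on the same footing. A more elementary route for~(ii) sidesteps the hull: any two weighted Delaunay triangulations are joined by a sequence of edge flips through weighted Delaunay triangulations, within this locus only zero-weight edges can be flipped---flipping a strictly positive-weight edge would violate the local Delaunay condition of \lemref{lemma:local_characterization_wDt}---and a zero-weight flip merely exchanges one diagonal of a common face-circle for another edge of zero weight, so the set of positively-weighted edges is invariant and coincides with the tessellation. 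The obstacle there is instead to establish the flip-connectivity of weighted Delaunay triangulations.
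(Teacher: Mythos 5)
Your Möbius-geometric local fact is correct and is indeed the right mechanism: since both face-circles $C_{ijk}$ and $C_{ilj}$ are orthogonal to the (disjoint) vertex-circles at $i$ and $j$, they belong to the elliptic pencil through the two limit points on the geodesic through $i,j$, so $\cotw_{ij}^{\pm}=0$, \ie\ $\alpha_{ij}^k+\alpha_{ij}^l=\pi$, forces $C_{ijk}=C_{ilj}$ (the paper uses this equivalence implicitly, \eg\ in \lemref{lemma:well_definedness_he_functional}). Your sketch for assertion (i) is sound modulo details. The genuine gap is in your main route for (ii): \enquote{developing the universal cover into the ideal boundary} is not available for a decorated piecewise spherical/hyperbolic metric. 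The cone singularities at $\verts$ obstruct any global developing map; a development is only defined on (the universal cover of) $\toposurf_g\setminus\verts$, its holonomy contains rotations by the cone angles $\theta_i$ and is in general \emph{indiscrete}. Consequently the family of lifts $C_i\in\RR^{3,1}$ is not a discrete orbit, and the \enquote{equivariant convex hull} whose edges you want to project back to $\toposurf_g$ is simply not a well-defined, locally finite object. An Epstein--Penner-type hull construction requires an honest geometric structure with discrete (Fuchsian) holonomy, which the cone surface does not have.

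This is precisely why the paper proves the lemma by a transfer rather than a direct hull argument: by \lemref{lemma:local_delaunay_convexity_equivalence}, the local Delaunay condition on the decorated cone surface is equivalent to the local canonicity condition on the fundamental discrete conformal invariant $\surf_g$ --- a \emph{complete} hyperbolic surface without cone points, with genuinely Fuchsian holonomy --- and there both assertions follow from \propref{prop:canonical_local_to_global} (together with \propref{prop:properties_of_weightings}), \ie\ from the canonical-tessellation theory of \cite{Lutz2023, BL2023}, which is where a rigorous version of your hull/proper-disk picture lives. Note also that your proposed \enquote{more elementary} fallback does not sidestep this: flip-connectivity of weighted Delaunay triangulations within the weighted Delaunay locus, together with the statement that only zero-weight edges can be flipped there, is essentially the second assertion of \propref{prop:canonical_local_to_global}, so you would be assuming the nontrivial cited content rather than avoiding it. To repair your argument, first transport the decoration to $\surf_g$ as in \secref{sec:connections_hyperbolic_polyhedra}; the lifts then live over the universal cover of $\surf_g$, where your convex-hull reasoning can be made precise.
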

\begin{proof}
    This follows from the connection of weighted Delaunay tessellations to
    canonical tessellations of complete hyperbolic surfaces established in
    \propref{prop:canonical_local_to_global} and
    \lemref{lemma:local_delaunay_convexity_equivalence}.
\end{proof}

A weighted Delaunay triangulation with respect to $(\dist_{\toposurf_g}, r)$
can now be recovered by taking any triangular refinement of its weighted Delaunay
tessellation. The following lemma addresses the existence of these tessellations.

\begin{lemma}\label{lemma:existence_wDt}
    For a hyperideally decorated piecewise spherical/hyperbolic metric
    $(\dist_{\toposurf_g}, r)$ on the marked surface $(\toposurf_g, \verts)$
    there exists a weighted Delaunay tessellation.
\end{lemma}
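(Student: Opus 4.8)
The plan is to produce the weighted Delaunay tessellation as the radial projection of a convex hull in Minkowski $3$-space, exploiting a structural coincidence: in \emph{both} the spherical and the hyperbolic case the fundamental discrete conformal invariant is the same kind of object, namely a complete hyperbolic surface $\surf_g$ whose ends are cusps (at the ideal vertices $r_i=0$) and funnels of infinite area (at the hyperideal vertices $r_i>0$). By \lemref{lemma:local_characterization_wDt} the defining inequality $\cotw_{ij}^{\pm}\ge0$ of a weighted Delaunay triangulation is equivalent to the purely intrinsic condition $\alpha_{ij}^k+\alpha_{ij}^l\le\pi$ on the face-circles; since the face-circles are intrinsic to $\surf_g$, constructing a weighted Delaunay tessellation is the same as constructing the \emph{canonical (Epstein--Penner type) decomposition} of $\surf_g$. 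This reduces both background geometries to a single problem.

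Concretely, I would model $\HH^2$ as the hyperboloid in $\RR^{2,1}$ and pass to the universal cover with holonomy $\Gamma\le\SO^{+}(2,1)$. Each vertex of the developed image is lifted to a vector in $\RR^{2,1}$: a cusp to a point on the forward light cone fixed by an auxiliary horocycle, and a hyperideal vertex to the spacelike unit vector polar to the geodesic truncating line bounding its funnel. This assignment is $\Gamma$-equivariant by construction. I then take the closed convex hull $\mathcal{K}$ of all lifts; equivariance forces $\Gamma$ to preserve $\mathcal{K}$, so the spacelike part of $\partial\mathcal{K}$, projected radially to $\HH^2$ and pushed down to $\surf_g$, is a $\Gamma$-invariant cell decomposition. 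The supporting plane of each top-dimensional face is exactly the face-circle $C_{ijk}$, and convexity of $\mathcal{K}$ across an edge is precisely the inequality $\alpha_{ij}^k+\alpha_{ij}^l\le\pi$; merging coplanar faces and refining as in \secref{sec:basic_definitions} yields a tessellation satisfying \eqref{eq:local_delaunay_condition_weights}.

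The hyperideal hypothesis $r_i+r_j<\dist_{\toposurf_g}(i,j)$ from \eqref{eq:hyperideal_decoration_condition} is decisive: it forces the vertex-circles to be pairwise disjoint, hence the truncating geodesic planes to be pairwise ultraparallel, which keeps the associated spacelike vectors in convex position and prevents $\mathcal{K}$ from collapsing onto a lower-dimensional face. I expect the principal obstacle to be analytic rather than combinatorial, namely that the convex hull is \emph{well-defined and locally finite} in the presence of funnels. Unlike the classical Epstein--Penner setting, where every lift lies on the light cone, here the spacelike (de Sitter) vectors attached to the funnels take us outside the cusped theory, so one must verify that $\partial\mathcal{K}$ stays uniformly spacelike, that its faces project to compact cells, and that $\Gamma$ leaves only finitely many orbits of faces, making the quotient decomposition genuinely finite. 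Properness of the $\Gamma$-action together with the uniform gap supplied by \eqref{eq:hyperideal_decoration_condition} should deliver these finiteness statements; once they hold, the identification of the quotient decomposition with the weighted Delaunay tessellation via \lemref{lemma:local_characterization_wDt} is immediate.
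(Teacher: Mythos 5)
Your reduction of both background geometries to a single statement about the invariant $\surf_g$ is in the spirit of the paper, but your proposal has two genuine gaps, one fixable and one fatal as written. The fixable one: your lifts ignore the decoration. You lift each funnel to the \emph{unit} spacelike vector polar to its core geodesic and each cusp to a light-cone point fixed by an \emph{auxiliary} (i.e., arbitrary) horocycle. But the weighted Delaunay tessellation of $(\dist_{\toposurf_g},r)$ corresponds to the canonical tessellation of $\surf_g$ for one specific weight vector, namely the one induced by the decoration through the heights of the pyramids/prisms (via \teqref{eq:heights_to_hyperbolic_radii_spherical}, resp.\ \teqref{eq:heights_to_hyperbolic_radii_hyperbolic}; e.g., in the spherical case $\omega_i\propto\cot r_i$ at hyperideal vertices, up to a common scale). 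Since canonical tessellations genuinely depend on the weights (\propref{prop:properties_of_weightings}), equal scalings produce the canonical tessellation for the \emph{wrong} weights whenever the $r_i$ are not all equal; the scalings of your Minkowski lifts must encode $r$. Relatedly, the equivalence you invoke is not \lemref{lemma:local_characterization_wDt} alone but \lemref{lemma:local_delaunay_convexity_equivalence}, whose proof is exactly what pins down which decoration of $\surf_g$ is the right one.

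The fatal gap is your last sentence: the identification of the quotient decomposition with the weighted Delaunay tessellation is \emph{not} immediate, and it is where the actual work of the lemma lies. Your convex hull lives over $\surf_g$ and produces a tessellation of $\surf_g$; the lemma demands a \emph{geodesic} tessellation of the cone surface $(\toposurf_g,\dist_{\toposurf_g})$. These two surfaces are identified only triangle-by-triangle through whatever initial triangulation you used to define $\surf_g$, and an edge of the canonical tessellation crossing old triangles does not automatically correspond to a geodesic of $\dist_{\toposurf_g}$: radial projection through the polyhedral cone/end bends such a curve at every old edge, because adjacent base faces of the pyramids/prisms meet at a dihedral angle. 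Realizing the canonical combinatorics geodesically upstairs is nontrivial precisely in the spherical case, where candidate triangles must in addition satisfy the triangle inequalities and the perimeter condition \teqref{eq:perimeter_condition} (compare condition \emph{(iii)} of \propref{prop:possible_heights_spherical}). The paper avoids this transfer problem altogether: for hyperbolic metrics it cites \cite{Lutz2023}, and for spherical metrics it runs a flip algorithm (\appref{sec:spherical_wDt}) that never leaves the class of geodesic triangulations of $(\dist_{\toposurf_g},r)$, using \lemref{lemma:isosceles_triangles} and \lemref{lemma:concave_quads} to guarantee that a non-Delaunay edge is flippable, and a support function that strictly increases under flips together with \lemref{lemma:bounded_geodesics} to force termination. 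Your deferred analytic issues (local finiteness and spacelikeness of the hull boundary in the presence of de Sitter lifts) are real but secondary; without the weight correction and, above all, a substitute for the transfer step --- which essentially amounts to reproving the lemma by flips or by the heights machinery of \secref{sec:convex_polyhedra} --- the proposal does not close.
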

\begin{proof}
    For hyperbolic metrics this was proved in \cite[Thm.~3.3; Prop.~3.21]{Lutz2023}.
    We discuss the spherical counterpart in appendix \ref{sec:spherical_wDt}.
\end{proof}

Consider a weighted Delaunay triangulation $\tri$ with respect to the decorated
piecewise spherical/hyperbolic metric $(\dist_{\toposurf_g}, r)$. Let
$(\tri, \len, r)$ be the corresponding discrete metric. We denote
by $\polyhedra_{\tri}^{\pm}(\dist_{\toposurf_g}, r)$ the subset of
$\polyhedra_{\tri}^{\pm}(\len, r)$ for which $\tri$ remains a weighted
Delaunay triangulation after discrete conformal change of metric.

\begin{proposition}[local concavity of $\HE_{\len, r, \Theta}^{-}$]
    \label{prop:local_concavity_hyperbolic_he}
    Let $(\dist_{\toposurf_g}, r)$ be a decorated piecewise
    hyperbolic metric on the marked surface $(\toposurf_g, \verts)$. Consider
    a corresponding discrete metric $(\tri, \len, r)$ where $\tri$ is
    a weighted Delaunay triangulation.
    For any choice of $\Theta\in\RR^{\verts}$ the dHE-functional
    $\HE_{\len, r, \Theta}^{-}$ is locally strictly concave over
    $\polyhedra_{\tri}^{-}(\dist_{\toposurf_g}, r)$.
\end{proposition}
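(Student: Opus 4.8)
The plan is to show that the Hessian of $\HE_{\len, r, \Theta}^{-}$, computed in \lemref{lemma:local_hessian_formula} to equal $-(\laplacian_{\len,r}^{-} + \Diff_{\len,r}^{-})$, is negative definite; equivalently, that the quadratic form $Q \coloneq \laplacian_{\len,r}^{-} + \Diff_{\len,r}^{-}$ is positive definite on $\RR^{\verts}$. The key idea is to regroup $Q$ edge by edge. Since each edge $ij$ contributes the coefficient $\cotw_{ij}^{-}(\cosh\len_{ij}-1)$ to both the $\diff{}{h_i^2}$- and the $\diff{}{h_j^2}$-term of $\Diff_{\len,r}^{-}$, one may rewrite the diagonal form as a sum over edges, $\Diff_{\len,r}^{-}(x) = \sum_{ij\in\edges_{\tri}}\cotw_{ij}^{-}(\cosh\len_{ij}-1)(x_i^2+x_j^2)$. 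Combining this with the graph-Laplacian part then yields
\begin{equation}
    Q(x) = \sum_{ij\in\edges_{\tri}}\cotw_{ij}^{-}\Big(\cosh\len_{ij}\,(x_i^2+x_j^2) - 2\,x_ix_j\Big).
\end{equation}

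Next I would examine each summand, viewed as a quadratic form in $(x_i, x_j)$, on its own. Its symmetric matrix has both diagonal entries equal to $\cosh\len_{ij}$ and both off-diagonal entries equal to $-1$, hence trace $2\cosh\len_{ij}>0$ and determinant $\cosh^2\len_{ij}-1=\sinh^2\len_{ij}>0$; it is therefore positive definite. Because $\tri$ is a weighted Delaunay triangulation, the weights satisfy $\cotw_{ij}^{-}\geq0$ by \eqref{eq:local_delaunay_condition_weights}, so $Q$ is a nonnegative linear combination of positive-definite forms. Consequently $Q(x)\geq0$, and $Q(x)=0$ forces $x_i=x_j=0$ for every edge $ij$ with $\cotw_{ij}^{-}>0$.

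The remaining and, I expect, decisive step is to exclude nontrivial null vectors. For this I would show that every vertex of $\verts$ lies on at least one edge of strictly positive weight. The edges with $\cotw_{ij}^{-}>0$ are precisely the edges of the weighted Delaunay tessellation, which by \lemref{lemma:uniqueness_wDt} is a genuine tessellation of $(\toposurf_g, \verts)$ with $0$-cells exactly $\verts$; thus no vertex can be hidden in the interior of a tessellation face, and each vertex is incident to a positive-weight edge. Therefore $Q(x)=0$ implies $x_i=0$ for all $i\in\verts$, so $Q$ is positive definite and $\HE_{\len,r,\Theta}^{-}$ is locally strictly concave on $\polyhedra_{\tri}^{-}(\dist_{\toposurf_g}, r)$. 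It is worth stressing that the argument relies essentially on the hyperbolic sign $\cosh\len_{ij}-1>0$, which supplies the positive diagonal reinforcement that promotes the otherwise constant-degenerate Laplacian to a definite form; this reinforcement is exactly the feature absent in the Euclidean theory, where the same grouping only gives semi-definiteness, reflecting the familiar scaling degeneracy.
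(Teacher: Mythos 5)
Your proof is correct and rests on the same two pillars as the paper's own (three-line) proof: the Hessian formula of \lemref{lemma:local_hessian_formula} and the sign condition $\cotw_{ij}^{-}\geq0$ supplied by the weighted Delaunay property. The difference lies in how positive definiteness of $\laplacian_{\len,r}^{-}+\Diff_{\len,r}^{-}$ is extracted. The paper splits the form into the positive semi-definite Laplacian part plus the diagonal part $\Diff_{\len,r}^{-}$, which it simply asserts to be positive definite; you instead regroup edge by edge into the forms $\cotw_{ij}^{-}\big(\cosh\len_{ij}\,(x_i^2+x_j^2)-2x_ix_j\big)$, each positive definite because its $2\times2$ matrix has trace $2\cosh\len_{ij}>0$ and determinant $\sinh^2\len_{ij}>0$. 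The two decompositions are algebraically equivalent, but yours buys something concrete: the paper's assertion about $\Diff_{\len,r}^{-}$ silently requires that every vertex be incident to at least one edge of \emph{strictly} positive weight, which is not automatic since the Delaunay condition only gives $\cotw_{ij}^{-}\geq0$, and points of $\polyhedra_{\tri}^{-}(\dist_{\toposurf_g},r)$ can carry vanishing weights on some edges whenever $\tri$ strictly refines the weighted Delaunay tessellation. You close this gap explicitly by invoking \lemref{lemma:uniqueness_wDt}: deleting the zero-weight edges still yields a genuine tessellation with $0$-cells exactly $\verts$, so no vertex has all incident weights zero; and since $\tri$ remains weighted Delaunay for the conformally changed metric at every $h$ in the domain, the argument applies pointwise, as needed for local strict concavity. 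Your closing observation that the hyperbolic reinforcement $\cosh\len_{ij}-1>0$ is precisely what is absent in the Euclidean theory is also accurate --- compare \secref{sec:geometric_transitions}, where the Euclidean Hessian is the bare Laplacian $-\sum\cotw_{ij}^{0}\,(\diff{}{h_i}-\diff{}{h_j})^2$ with its scaling degeneracy along $\bm{1}_{\verts}$.
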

\begin{proof}
    By definition of a weighted Delaunay triangulation, all cotan-weights
    $\cotw_{ij}^{-}$ are non-negative. Thus, $\Diff_{\len,r}^{-}$ is
    positive definite and $\laplacian_{\len, r}^{-}$ is positive semi-definite.
    So the assertion follows from \lemref{lemma:local_hessian_formula}.
\end{proof}

\section{Global analysis of decorated discrete conformal equivalence}
\label{sec:global_analysis}
\subsection{Canonical tessellations of complete hyperbolic surfaces}
\label{sec:canonical_tessellations}
In the next two sections we are going to relate properties of the triangulations
of piecewise spherical/hyperbolic surfaces to those of their fundamental
discrete conformal invariant, \ie, complete hyperbolic surfaces with ends.

To start, let us discuss how a hyperideal polyhedral cone induces a decoration
of the hyperbolic surface $\surf_g$ at its base. We can equip the distinguished
vertex of this cone with a sphere of radius
$R\geq0$. Now, to each hyperideal vertex we can associate the hypersphere which
is orthogonal to this sphere (see \figref{fig:induced_decorations}, left). Similarly,
we find a horosphere with this property for each ideal vertex. The radii
$\rho_i$ of these spheres are related to the heights $h_i$ by
\begin{equation}\label{eq:heights_to_hyperbolic_radii_spherical}
    \ee^{h_i}-\epsilon_i\ee^{-h_i}
    \;=\;
    \cosh(R)\,
    (\ee^{\rho_i}-\epsilon_i\ee^{-\rho_i}).
\end{equation}
Here, $\epsilon_i$ is defined as for
\eqref{eq:heights_lambda_lengths_relationship_spherical} and
the radius of a horosphere is the distance to the auxiliary horosphere
we used to define its height. These hyper- and horospheres define
hyper- and horocycles in $\surf_g$, that is, a \emph{decoration} of the hyperbolic
surface. The decoration is determined by the \emph{weights}
$\nicefrac{(\ee^{\rho_i}-\epsilon_i\ee^{-\rho_i})}{2}\eqcolon\omega_i$
on $\surf_g$. We define the function $\Omega_R^{+}\colon h_i\mapsto \omega_i$
via \eqref{eq:heights_to_hyperbolic_radii_spherical}. It naturally extends to
subsets of $\RR^{\verts}$.

We can make a similar construction for hyperideal polyhedral ends. This time
we equip the distinguished vertex with a hypersphere of radius $R>0$. The
radii $\rho_i$ of the spheres induced at the vertices (see
\figref{fig:induced_decorations}, right) are related to the heights $h_i$ by
\begin{equation}\label{eq:heights_to_hyperbolic_radii_hyperbolic}
    \ee^{h_i}+\epsilon_i\ee^{-h_i}
    \;=\;
    \sinh(R)\,
    (\ee^{\rho_i}-\epsilon_i\ee^{-\rho_i}).
\end{equation}
Again we obtain a decoration of $\surf_g$ and corresponding weights $\omega_i$.
The resulting function between heights and weights determined by
\eqref{eq:heights_to_hyperbolic_radii_hyperbolic} is denoted by
$\Omega_R^{-}$.

\begin{figure}[t]
    \centering
    \labellist
    \small\hair 2pt
    \pinlabel $R$ at 370 400
    \pinlabel $h_i$ at 380 250
    \pinlabel $\rho_i$ at 370 145
    \endlabellist
    \includegraphics[width=0.4\textwidth]{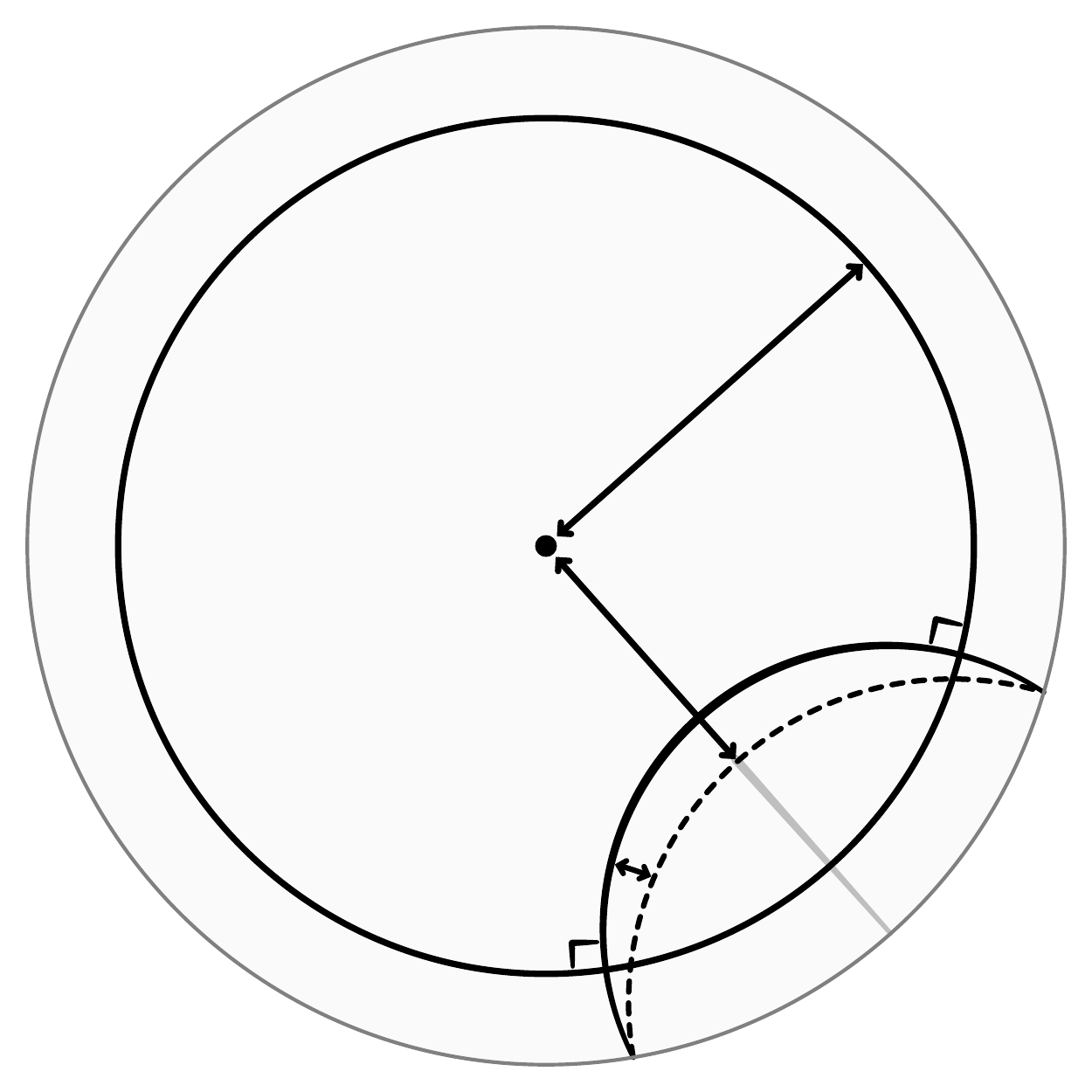}
    \qquad\qquad
    \labellist
    \small\hair 2pt
    \pinlabel $R$ at 160 180
    \pinlabel $h_i$ at 260 270
    \pinlabel $\rho_i$ at 450 115
    \endlabellist
    \includegraphics[width=0.4\textwidth]{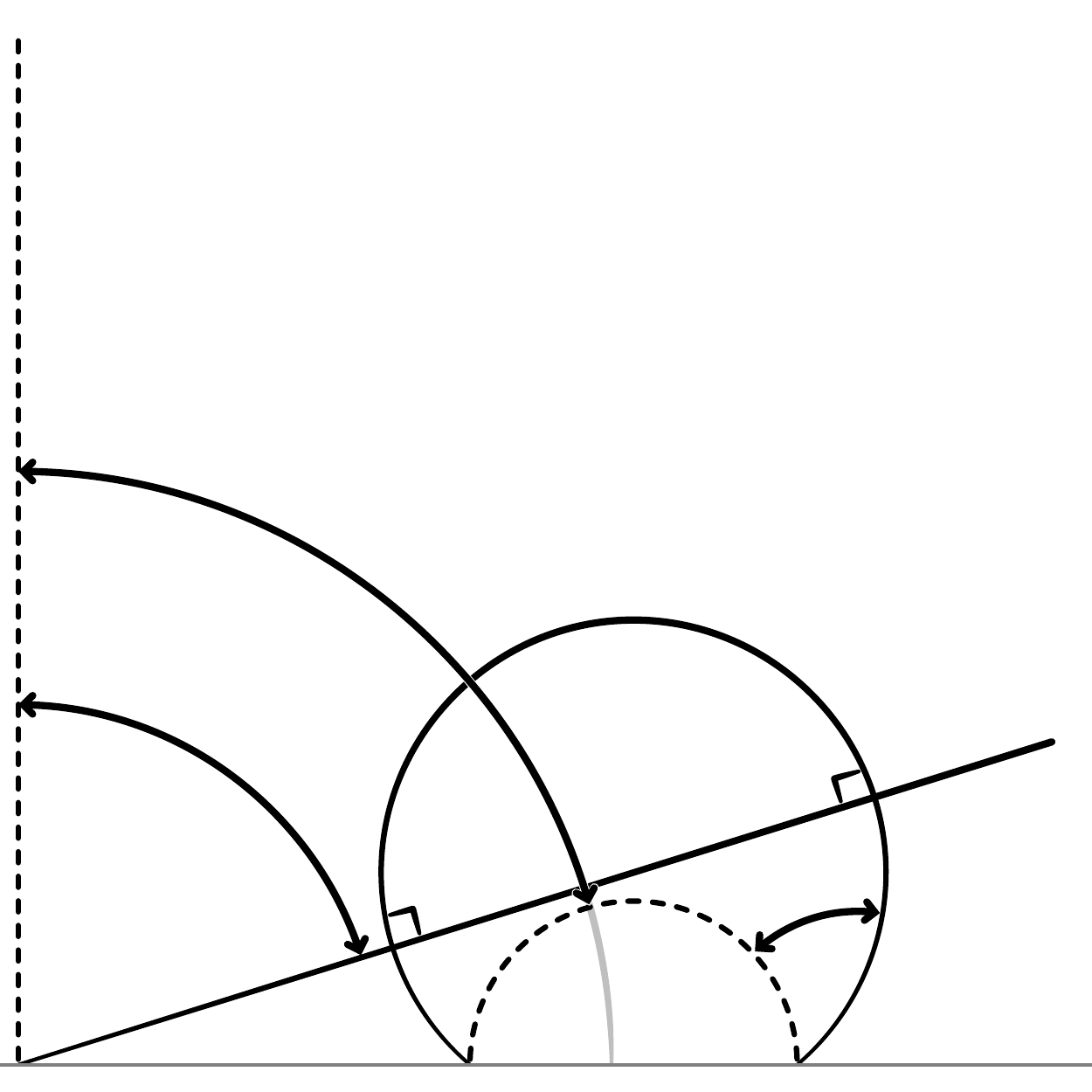}
    \caption{%
        Sketch depicting the relationship between the decoration of the distinguished
        and another vertex of of a hyperideal polyhedral cone (left) or end (right).
    }
    \label{fig:induced_decorations}
\end{figure}

We can associate a \emph{canonical (geodesic) tessellation} $\tri$
of $\surf_g$ to a decoration of $\surf_g$ with pairwise non-intersecting
vertex-cycles. It is the analog of a weighted Delaunay tessellation. Canonical
tessellations are defined via \emph{properly immersed disks}: for each cell of
$\tri$ there is
an isometric immersion of a hyperbolic disk into $\surf_g$ which is orthogonal to
the vertex-cycles of the cell and either does not, or at most with an angle of
$\nicefrac{\pi}{2}$, intersect any other vertex-cycle
(see \figref{fig:hyperbolic_proper_disks} and \cite[Sec.~3.2]{Lutz2023}).
Furthermore, this construction defines a geodesic embedding of the
$1$-skeleton of the combinatorial dual of the canonical tessellation, its
\emph{dual diagram}. The vertices of this dual diagram are given by the centers of
the properly immersed disks. Note that different decorations of $\surf_g$ can
induce the same canonical tessellation even though their dual diagrams do not
coincide (see \cite[Sec.~3.3]{Lutz2023}).

\begin{figure}[t]
   \centering
   \includegraphics[width=0.49\textwidth]{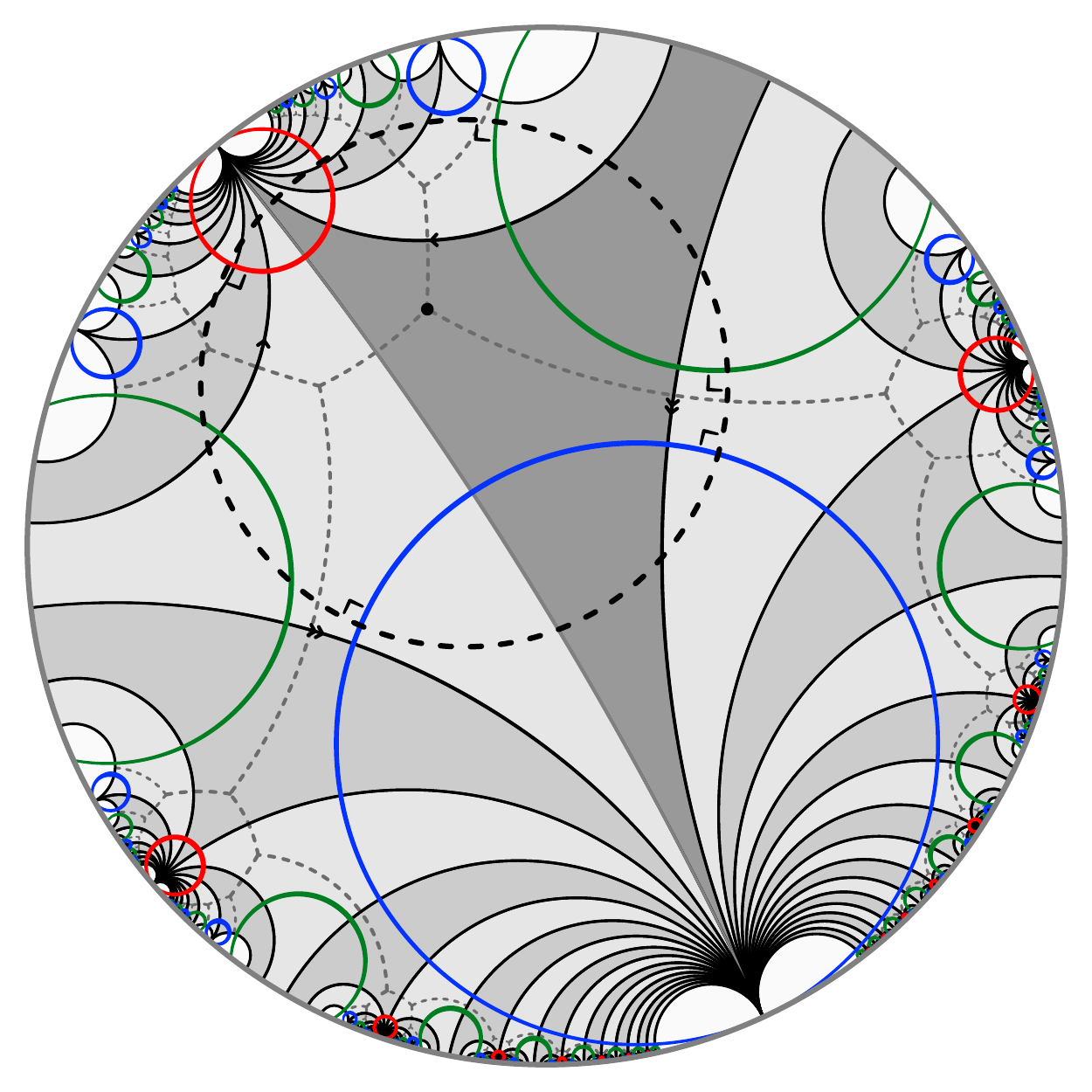}
   \caption{%
      Visualization of a proper hyperbolic disk (dashed black) of a decorated
      hyperbolic surface homeomorphic to the thrice punctured sphere in a lift
      to the hyperbolic plane. The associated canonical cell is depicted in dark gray.
      The checker pattern corresponds to the canonical tessellation of the surface.
      The dashed graph depicts its dual diagram. A fundamental polygon of the
      surface is bordered black. The identifications indicated by arrows
      correspond to the action of a Fuchsian group.
   }
   \label{fig:hyperbolic_proper_disks}
\end{figure}

More generally, a canonical tessellation and its dual diagram can be defined for
each choice of weights $\omega\in\RR_{>0}^{\verts}$. Denote the canonical tessellation
by $\tri_{\surf_g}^{\omega}$. For a given geodesic tessellation $\tri$ of
$\surf_g$ we define
\begin{equation}
   \mathcal{D}_{\tri}(\surf_g)
   \;\coloneqq\;
   \left\{\omega\in\RR_{>0}^{\verts}\,:\,
      \tri\text{ refines }\tri_{\surf_g}^{\omega}\right\}.
\end{equation}
Canonical tessellations and the sets $\mathcal{D}_{\tri}(\surf_g)$
have the following properties \cite[Sec.~4]{Lutz2023}:

\begin{proposition}[properties of the space of weights]
    \label{prop:properties_of_weightings}
    Let $\surf_g$ be a complete hyperbolic surface with ends which is homeomorphic to
    $\toposurf_g\setminus\verts$.
    \begin{enumerate}[label=(\roman*)]
        \item\label{item:tessellation_scale_invariance}
            Each choice of weights $\omega\in\RR_{>0}^{\verts}$ of $\surf_g$ induces
            a unique canonical tessellation. The induced tessellation and its dual
            diagram are invariant with respect to scaling of the weights, \ie, all
            $s\omega$, $s>0$, induce the same canonical tessellation and
            dual diagram.
        \item\label{item:canonical_cell_characterization}
            Each $\mathcal{D}_{\tri}(\surf_g)$ is either empty or the intersection of
            $\RR_{>0}^{\verts}$ with a closed polyhedral cone.
        \item\label{item:canonical_space_decomposition}
            For two geodesic tessellations $\tri_1$ and $\tri_2$ either
            $\mathcal{D}_{\tri_1}(\surf_g)\cap\mathcal{D}_{\tri_2}(\surf_g)=\emptyset$
            or there is another tessellation $\tri_3$ such that
            \(
                \mathcal{D}_{\tri_1}(\surf_g)\cap\mathcal{D}_{\tri_2}(\surf_g)
                =\mathcal{D}_{\tri_3}(\surf_g)
            \).
        \item\label{item:canonical_space_finite}
            There is only a finite number of geodesic tessellations
            $\tri_1,\dotsc,\tri_N$ of $\surf_g$ such that
            $\mathcal{D}_{\tri_n}(\surf_g)$
            is non-empty. In particular,
            $\RR_{>0}^{\verts}=\bigcup_{n=1}^N\mathcal{D}_{\tri_n}(\surf_g)$.
    \end{enumerate}
\end{proposition}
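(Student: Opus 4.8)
The plan is to realize the canonical tessellation through an Epstein--Penner type convex-hull construction and to read off all four assertions from the geometry of this hull. Fix a holonomy $\surf_g = \HH^2/\Gamma$ in the hyperboloid model $\HH^2\subset\RR^{2,1}$. A decoration lifts to a $\Gamma$-equivariant family of vectors $v_m\in\RR^{2,1}$, one for each lift of a vertex-cycle: light-like toward the corresponding cusp, space-like dual to the corresponding geodesic end. The directions of the $v_m$ are pinned down by $\surf_g$ alone, while their magnitudes depend linearly on the weights through \eqref{eq:heights_to_hyperbolic_radii_spherical}, respectively \eqref{eq:heights_to_hyperbolic_radii_hyperbolic}, so that $v_m(s\omega) = s\,v_m(\omega)$. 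The canonical tessellation $\tri_{\surf_g}^{\omega}$ is then the radial projection to $\HH^2/\Gamma$ of the space-like boundary faces of the closed convex hull of the orbit $\{\gamma v_m\}$, and the dual diagram is the projection of the associated support normals. This immediately gives assertion \ref{item:tessellation_scale_invariance}: the hull is canonical (hence the tessellation and its dual diagram are unique), discreteness of the orbit and existence of the space-like boundary follow from proper discontinuity of $\Gamma$ and completeness of $\surf_g$, and uniform scaling $\omega\mapsto s\omega$ merely rescales the hull, leaving its radial projection and support directions unchanged.

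For \ref{item:canonical_cell_characterization} I would translate ``$\tri$ refines $\tri_{\surf_g}^{\omega}$'' into local conditions: every triangle of $\tri$ must lie on a single space-like supporting hyperplane of the hull, and along every edge $ij$ shared by $ijk$ and $ilj$ the two lifted triangles must meet in a weakly convex dihedral. The crucial point is that this in-sphere/convexity predicate is the hyperbolic analogue of the Laguerre in-circle test and, in the weight coordinate $\omega$, it is an \emph{affine homogeneous} inequality rather than one of higher degree; concretely it is the linearization of the condition $d_{ij}^k + d_{ij}^l \ge 0$ from \lemref{lemma:local_characterization_wDt}. Since $\tri$ has only finitely many edges and faces, $\mathcal{D}_{\tri}(\surf_g)$ is cut out of $\RR_{>0}^{\verts}$ by finitely many such homogeneous linear inequalities, hence is either empty or the intersection of $\RR_{>0}^{\verts}$ with a closed polyhedral cone. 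I expect verifying this linearity to be the first technical obstacle; everything downstream rests on it.

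Assertion \ref{item:canonical_space_decomposition} follows by combining inequalities: on $\mathcal{D}_{\tri_1}(\surf_g)\cap\mathcal{D}_{\tri_2}(\surf_g)$ both $\tri_1$ and $\tri_2$ refine the same canonical $\tri_{\surf_g}^{\omega}$, so they possess a common coarsening $\tri_3$ whose cells are precisely the canonical cells; one checks that $\tri_3$ is itself a geodesic tessellation refining $\tri_{\surf_g}^{\omega}$ on exactly this intersection, whence the intersection equals $\mathcal{D}_{\tri_3}(\surf_g)$, the intersection of the two cones. For the covering part of \ref{item:canonical_space_finite}, every $\omega$ produces some canonical tessellation, and any triangular refinement $\tri$ of it satisfies $\omega\in\mathcal{D}_{\tri}(\surf_g)$, so the cones $\mathcal{D}_{\tri}(\surf_g)$ cover $\RR_{>0}^{\verts}$.

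The main obstacle is the finiteness in \ref{item:canonical_space_finite}. Here I would use scale invariance to pass to a compact transversal slice of $\RR_{>0}^{\verts}$ and argue that the cover by the cones $\mathcal{D}_{\tri}(\surf_g)$ is locally finite: near any $\omega_0$ only finitely many combinatorial types can be active, because the canonical tessellation of a fixed-topology surface has a number of cells bounded in terms of $g$ and $|\verts|$, and the possible degenerations as $\omega$ varies are governed by the finitely many facets of the local secondary/normal fan of the convex hull. Local finiteness on a compact slice yields finitely many nonempty cones $\mathcal{D}_{\tri_1}(\surf_g),\dots,\mathcal{D}_{\tri_N}(\surf_g)$, and together with the covering property this gives $\RR_{>0}^{\verts}=\bigcup_{n=1}^{N}\mathcal{D}_{\tri_n}(\surf_g)$. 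The delicate step is ruling out an accumulation of ever-finer combinatorial types, which the polyhedral-cone structure from \ref{item:canonical_cell_characterization} together with properness of the hull construction is designed to prevent.
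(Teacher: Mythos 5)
The paper itself does not prove this proposition: it is imported wholesale from \cite[Sec.~4]{Lutz2023}, whose treatment is intrinsic (proper immersed disks, radical lines), so your Epstein--Penner/Ushijima convex-hull reconstruction is necessarily a different route --- and it is the historically standard one (Penner for light-like lifts at cusps, Ushijima for space-like lifts at ends of infinite area). However, your setup contains an inverted scaling that breaks the pivotal linearity claim. With the paper's weights ($\omega_i=\sinh\rho_i$ at hyperideal, $\omega_i=\nicefrac{\ee^{\rho_i}}{2}$ at ideal vertices), the lift for which the decorating cycle is a fixed level set $\{\ip{x}{v_m}=\mathrm{const}\}$ has magnitude proportional to $\nicefrac{1}{\omega_m}$, not $\omega_m$: enlarging a horocycle or hypercycle \emph{shortens} its lift. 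If, as you write, $v_m(s\omega)=s\,v_m(\omega)$, then the support covector $n$ of a lifted triangle $ijk$, determined by $\ip{v_m}{n}=1$ for $m\in\{i,j,k\}$, depends linearly on $(\nicefrac{1}{\omega_i},\nicefrac{1}{\omega_j},\nicefrac{1}{\omega_k})$, and the edge condition $\ip{v_l}{n}\geq1$ is homogeneous linear in the \emph{reciprocals} --- the resulting region is not a polyhedral cone in $\omega$, contradicting your own assertion (ii). With the reciprocal normalization $v_m=\nicefrac{\hat{v}_m}{\omega_m}$ one instead gets $\ip{\hat{v}_m}{n}=\omega_m$, so $n$ is linear in $\omega$ and the condition reads $c_i\omega_i+c_j\omega_j+c_k\omega_k\geq\omega_l$ with coefficients depending only on $\surf_g$ and the isotopy classes of the edges; this is an \emph{exact} homogeneous linear reformulation of $d_{ij}^k+d_{ij}^l\geq0$ (not a \enquote{linearization}), and it repairs (i)--(iii). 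You should also verify, following Ushijima, that for mixed light-/space-like orbits the space-like part of the hull boundary is nonempty and projects onto the surface; this is true but not free.

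The more serious gap is (iv). Your compactness argument fails at the outset because the transversal slice $\{\omega\in\RR_{>0}^{\verts}:\|\omega\|=1\}$ is \emph{not} compact: the cones may a priori accumulate at the coordinate faces $\{\omega_i=0\}$, and polyhedrality together with the intersection property (ii)--(iii) does not exclude this --- the cones spanned by $(1,n)$ and $(1,n+1)$, $n\geq0$, are infinitely many polyhedral cones covering $\RR^2_{>0}$ and accumulating only at a boundary ray. Local finiteness at \emph{interior} points of the slice does follow from upper semicontinuity of the hull's face decomposition (for $\omega$ near $\omega_0$, every edge of $\tri_{\surf_g}^{\omega}$ is an edge or a diagonal of a cell of $\tri_{\surf_g}^{\omega_0}$), but your appeal to a \enquote{secondary/normal fan} is circular, since the global finiteness of that fan is exactly what assertion (iv) claims. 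The actual content is the control of degenerating weights $\omega_i\to0$; this is precisely what the proof of \cite[Thm.~4.3]{Lutz2023} supplies --- the paper extracts exactly this control as \lemref{lemma:boundary_of_config_space} (no canonical edge joins two vertices whose weights vanish at a boundary point of $\mathcal{D}_{\tri}(\surf_g)$) --- and in the cusped setting the statement is Akiyoshi's finiteness theorem for Epstein--Penner decompositions, known to be genuinely nontrivial. A workable repair along your lines: establish a uniform bound on the truncated lengths of canonical edges in terms of $\omega$, then invoke finiteness of geodesic arcs of bounded length between marked points, \ie, the hyperbolic analogue of \lemref{lemma:bounded_geodesics}.
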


The following technical lemma will be useful for the analysis of the variational
principle. It provides us with information about canonical tessellations for
nearly degenerate weights.

\begin{lemma}[\!{\cite[proof of Thm.~4.3]{Lutz2023}}]
   \label{lemma:boundary_of_config_space}
   Consider a canonical tessellation $\tri$ of $\surf_g$. Let $\verts_0$ be
   the subset of vertices $i\in\verts$ such that there is an
   \(
      \bar{\omega}
      \in\partial\mathcal{D}_{\tri}(\surf_g)\setminus\{0\}
   \)
   with $\bar{\omega}_i=0$. The tessellation $\tri$ contains no edge $ij$ with
   both $i,j\in\verts_{0}$.
\end{lemma}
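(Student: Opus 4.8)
The plan is to argue by contradiction and exploit the characterization of the sets $\mathcal{D}_{\tri}(\surf_g)$ as intersections of $\RR_{>0}^{\verts}$ with closed polyhedral cones (\propref{prop:properties_of_weightings}\ref{item:canonical_cell_characterization}), together with the scale-invariance of canonical tessellations (\propref{prop:properties_of_weightings}\ref{item:tessellation_scale_invariance}). Suppose for contradiction that the canonical tessellation $\tri$ contains an edge $ij$ with both endpoints $i,j\in\verts_0$. By definition of $\verts_0$ there are boundary weights $\bar{\omega}^{(i)}, \bar{\omega}^{(j)}\in\partial\mathcal{D}_{\tri}(\surf_g)\setminus\{0\}$ with $\bar{\omega}^{(i)}_i=0$ and $\bar{\omega}^{(j)}_j=0$. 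The heart of the argument is to understand what it means geometrically for a weight at a vertex to degenerate to $0$: as $\omega_k\to0$, by the relationship $\omega_k=\nicefrac{(\ee^{\rho_k}-\epsilon_k\ee^{-\rho_k})}{2}$ the radius $\rho_k$ of the vertex-cycle tends to $-\infty$, so the corresponding horo-/hypersphere shrinks toward (and the vertex-cycle retracts onto) the cusp/end, decoupling it from its neighbours in the proper-disk construction.

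First I would make precise, using the properly-immersed-disk definition of canonical tessellations (\cite[Sec.~3.2]{Lutz2023} and \figref{fig:hyperbolic_proper_disks}), the local condition for the edge $ij$ to survive in $\tri$ for weights $\omega$ in the (relatively open) cell $\mathcal{D}_{\tri}(\surf_g)$: the two faces adjacent to $ij$ have distinct proper disks whose centers lie on opposite sides, equivalently the dual edge has positive length, and this is governed by a strict version of the local-Delaunay inequality. Since $\mathcal{D}_{\tri}(\surf_g)$ is the intersection of $\RR_{>0}^{\verts}$ with a closed polyhedral cone, the edge $ij$ persists (possibly with the inequality becoming an equality) on the whole closed cone, in particular at the boundary points $\bar{\omega}^{(i)}$ and $\bar{\omega}^{(j)}$. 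The key step is then to derive a contradiction from the claim that the edge $ij$ can still be present in the canonical decomposition at a boundary weight where one of its own endpoints has degenerated: if $\bar{\omega}^{(i)}_i=0$, then the vertex-cycle at $i$ has receded into the end at $i$, and the proper disks certifying the edge $ij$ would have to be orthogonal to a vertex-cycle that has collapsed, which forces the dual edge length to $0$ or makes the defining orthogonality condition degenerate — so $ij$ cannot be an edge of the tessellation $\tri$, only (at most) of a refinement.

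To organize this cleanly I would first reduce to a single vertex: show that if $\bar\omega\in\partial\mathcal D_{\tri}(\surf_g)\setminus\{0\}$ has $\bar\omega_i=0$ then $i$ is incident in $\tri$ only to edges whose other endpoint has $\bar\omega_\cdot>0$. This already yields the lemma, because an edge $ij$ with both $i,j\in\verts_0$ would violate this at whichever boundary weight kills one of its endpoints (note $\verts_0$ is defined by the existence of some such boundary weight per vertex, and I would combine the two boundary weights by taking a convex combination or by working one endpoint at a time). The main obstacle I anticipate is the bookkeeping in the degeneration analysis: one must control how the proper immersed disk associated to a face containing $i$ behaves as $\rho_i\to-\infty$ and confirm that the face-circle/proper-disk center migrates so that the dual diagram edge across $ij$ pinches, rather than some other edge disappearing. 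This is essentially a continuity-and-limit argument at the boundary of the polyhedral cone, and the cleanest route is likely to quote the explicit dependence of the canonical decomposition on the weights from \cite[Sec.~4]{Lutz2023} (the same analysis underlying \propref{prop:properties_of_weightings}) and identify the defining facet of the cone $\mathcal{D}_{\tri}(\surf_g)$ that the degenerate vertex sits on; since the lemma is attributed to the proof of \cite[Thm.~4.3]{Lutz2023}, I would follow that argument and verify that the facet on which $\bar\omega_i=0$ cannot simultaneously keep an edge emanating from $i$ whose far endpoint also degenerates.
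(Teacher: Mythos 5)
There is a genuine gap, and in fact two. First, a logical one. Note that the paper gives no internal proof of this lemma --- it is imported wholesale from the proof of Thm.~4.3 in \cite{Lutz2023} --- so your attempt has to stand on its own, and its load-bearing reduction fails. The set $\verts_0$ is defined with a possibly \emph{different} witness per vertex: $i,j\in\verts_0$ only gives you two boundary weights $\bar{\omega}^{(i)},\bar{\omega}^{(j)}\in\partial\mathcal{D}_{\tri}(\surf_g)\setminus\{0\}$ with $\bar{\omega}^{(i)}_i=0$ and $\bar{\omega}^{(j)}_j=0$. Your ``single vertex'' claim --- if $\bar{\omega}_i=0$ then every $\tri$-neighbour $j$ of $i$ has $\bar{\omega}_j>0$ --- therefore does \emph{not} ``already yield the lemma'': applied to $\bar{\omega}^{(i)}$ it gives $\bar{\omega}^{(i)}_j>0$, which is perfectly compatible with $j\in\verts_0$ via the other witness. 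Your fallback of taking a convex combination fails concretely: $\bigl((1-t)\bar{\omega}^{(i)}+t\bar{\omega}^{(j)}\bigr)_i=t\,\bar{\omega}^{(j)}_i>0$ for $t\in(0,1)$ whenever $\bar{\omega}^{(j)}_i>0$, so convexity of the closed cone (\propref{prop:properties_of_weightings}, item \ref{item:canonical_cell_characterization}) destroys zero coordinates rather than creating them, and you never manufacture a single boundary weight killing both endpoints. Bridging the gap between the single-witness statement and the stated one (that the closed cone cannot meet $\{\omega_i=0\}$ and $\{\omega_j=0\}$ nontrivially at \emph{different} boundary points when $ij\in\edges_{\tri}$) is precisely the content you would have to supply; it is not a formal consequence of polyhedrality, as the example of the full orthant $\RR_{\geq0}^{\verts}$ shows.

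Second, a geometric one: your degeneration mechanism proves too much and omits the one hypothesis that makes the statement true, namely $\bar{\omega}\neq0$. You argue that if $\bar{\omega}_i=0$ then any proper disk ``certifying the edge $ij$'' must be orthogonal to a collapsed vertex-cycle, hence $ij$ cannot be an edge of $\tri$. If that were correct, \emph{no} edge incident to $i$ could survive, so a vertex of $\verts_0$ could not be a vertex of the tessellation at all --- absurd, and in contradiction with your own single-vertex claim, which permits edges from $i$ to vertices with positive limit weight. In fact a disk orthogonal to one receding cycle can simply recede into that end with it; nothing pinches. What actually forces a contradiction is that a proper disk of a face adjacent to $ij$ would have to be orthogonal to \emph{two} cycles receding into distinct ends across an edge of fixed truncated length $\lambda_{ij}$, so its radius must diverge; then, because $\bar{\omega}\neq0$ guarantees some vertex $m$ with $\bar{\omega}_m>0$ whose cycle stays at bounded depth, the immersed disk eventually meets a vertex-cycle at an angle exceeding $\nicefrac{\pi}{2}$, violating the defining property of canonical cells (\cf\ \lemref{lemma:local_characterization_canonical_tessellation}). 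Your sketch never uses $\bar{\omega}\neq0$ anywhere; without it the lemma is false, since by scale invariance (\propref{prop:properties_of_weightings}, item \ref{item:tessellation_scale_invariance}) all weights may tend to $0$ simultaneously along a ray while $\tri$ remains canonical. Finally, your concluding plan --- ``quote the analysis of \cite{Lutz2023} and follow that argument'' --- is an appeal to the cited source, not a proof; since the paper itself also only cites \cite{Lutz2023}, that deference is understandable, but it means the proposal establishes neither the single-witness version nor the stated one.
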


For weights $\omega\in\RR^{\verts}$ we call a geodesic triangulation $\tri$,
which refines the unique canonical tessellation $\tri_{\surf_g}^{\omega}$, a
\emph{canonical triangulation} of $\surf_g$ with respect to the weights $\omega$.
In general, the canonical tessellation $\tri_{\surf_g}^{\omega}$ is not a
triangulation. So canonical triangulations need not be unique. The next lemma
gives us a local characterization of canonical triangulations. It will allow us
to relate canonical triangulations to weighted Delaunay triangulations of
decorated piecewise spherical/hyperbolic surfaces in the next section.

\begin{lemma}[\!{\cite[Lem.~4.5]{BL2023}}]
   \label{lemma:local_characterization_canonical_tessellation}
   Let $\tri$ be a canonical triangulation of $\surf_g$ for the weights
   $\omega\in\RR^{\verts}$. Suppose that the decorating cycles corresponding to
   $\omega$ are pairwise non-intersecting. Denote by $t_{ij}^k$ the (oriented)
   distance of the center of this disk to the edge $ij$. The following
   properties are equivalent.

   \begin{enumerate}[label=(\roman*)]
      \item\label{item:hyperbolic_local_proper_angle}
         The proper hyperbolic disk of the decorated hyperideal triangle $ijk$
         intersects the vertex-cycle at $l$ either not at all or at an angle less
         than $\nicefrac{\pi}{2}$.
      \item\label{item:hyperbolic_local_proper_distance}
         The center of the proper hyperbolic disk of the hyperideal triangle $ijk$
         \enquote{lies to the left} of the center corresponding to $ilj$, \ie,
         \begin{equation}\label{eq:local_canonical_condition}
            t_{ij}^k \,+\, t_{ij}^l \;\geq\; 0.
         \end{equation}
   \end{enumerate}
\end{lemma}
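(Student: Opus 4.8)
The plan is to run a one–parameter ``pencil'' argument in the Möbius-geometric model of $\HH^2$, mirroring the local characterization of weighted Delaunay triangulations in \lemref{lemma:local_characterization_wDt}. First I would lift the two adjacent hyperideal triangles $ijk$ and $ilj$, together with the four decorating cycles at $i,j,k,l$, to the universal cover $\HH^2$ of $\surf_g$, and represent the cycles as vectors $V_i,V_j,V_k,V_l\in\RR^{3,1}$ via the Minkowski lift \eqref{eq:hyperbolic_moebius_lift}. In this model two cycles are orthogonal exactly when their representatives are $\ip{\cdot}{\cdot}_{3,1}$-orthogonal, and the cosine of their intersection angle is the normalized inner product, as recorded by the inversive distance \eqref{eq:hyperbolic_inversive_distance} and \lemref{lemma:hyperbolic_moebius_and_inversive_distance}. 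The proper hyperbolic disk of $ijk$ is then the unique cycle orthogonal to $V_i,V_j,V_k$; call its representative $W_k$, and likewise $W_l$ for $ilj$.

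Next I would set up the pencil. Both $W_k$ and $W_l$ are orthogonal to $V_i$ and $V_j$, hence both lie in the two–dimensional subspace $P\coloneqq\lin\{V_i,V_j\}^{\perp}\subset\RR^{3,1}$, which represents the coaxial pencil of all cycles orthogonal to the vertex-cycles at $i$ and $j$. By the radical-axis property for these two cycles, the centers of the members of this pencil lie on a common geodesic $\rho_{ij}$ of $\HH^2$ that meets the edge-geodesic $g_{ij}$ orthogonally, so that the oriented distance-to-$g_{ij}$ of a center coincides with its signed coordinate along $\rho_{ij}$. Thus $t_{ij}^k$ and $t_{ij}^l$ are, up to the orientation conventions, the coordinates of the centers of $W_k$ and $W_l$ along $\rho_{ij}$; since $ijk$ and $ilj$ lie on opposite sides of $g_{ij}$, the two conventions are opposite, and $t_{ij}^k+t_{ij}^l\geq 0$ says precisely that the center of $W_k$ is at least as far along $\rho_{ij}$, in the direction away from $l$, as the center of $W_l$.

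The core of the argument is a monotonicity statement along $P$. Parametrizing the pencil by the signed center-coordinate $s$ along $\rho_{ij}$, the quantity $\ip{W(s)}{V_l}_{3,1}$ is affine in the projective parameter, so the normalized product $\ip{W(s)}{V_l}_{3,1}/(\|W(s)\|\,\|V_l\|)$ — the cosine of the intersection angle of the disk $W(s)$ with the vertex-cycle at $l$ — is a strictly monotone function of $s$. It vanishes exactly at the center of $W_l$, because $W_l$ is orthogonal to $V_l$ by construction, and there the disk meets the cycle at $l$ at angle $\nicefrac{\pi}{2}$. I would fix the direction of monotonicity by a single explicit normalized computation (placing $g_{ij}$ as a diameter of the disk model) showing that moving the center toward the $ijk$-side lowers the intersection angle with the cycle at $l$ below $\nicefrac{\pi}{2}$ and eventually to disjointness. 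Combined with the previous paragraph, this gives that the proper disk of $ijk$ meets the cycle at $l$ at an angle $\leq\nicefrac{\pi}{2}$, or not at all, if and only if $t_{ij}^k+t_{ij}^l\geq 0$, which is the desired equivalence of \ref{item:hyperbolic_local_proper_angle} and \ref{item:hyperbolic_local_proper_distance}.

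The main obstacle is sign bookkeeping: fixing the positive side of $\rho_{ij}$, checking that the two triangles enter $t_{ij}^k+t_{ij}^l$ with opposite orientation conventions, and pinning the direction of monotonicity so that the inequality falls on the correct side of the threshold. A secondary point is uniformity over the three causal types of the cycle at $l$ (a genuine circle, a horocycle at a cusp, or a hypercycle at an infinite-area end); this is automatic in the $\RR^{3,1}$-model, since the inversive product and hence the monotonicity do not see the type of $V_l$. Finally, the degenerate case $W_k=W_l$, where the two proper disks coincide and the pencil collapses, corresponds exactly to equality $t_{ij}^k+t_{ij}^l=0$ and to the cycle at $l$ being orthogonal to the proper disk of $ijk$; it is consistent with both conditions and accounts for the boundary of the inequality.
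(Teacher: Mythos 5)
The paper itself gives no proof of this lemma --- it is imported wholesale from \cite[Lem.~4.5]{BL2023} --- so there is no in-paper argument to compare line by line. Your pencil/radical-line framework is, however, precisely the toolkit the paper deploys for the neighbouring \lemref{lemma:local_delaunay_convexity_equivalence} (\cf\ \figref{fig:decorated_hyperideal_tetrahedra}), and your reductions up to the last step are correct: $W_k$ and $W_l$ lie in the two-dimensional subspace $\lin\{V_i,V_j\}^{\perp}$, the real-centered members of this pencil have their centers on the radical geodesic $\rho_{ij}$ orthogonal to the edge geodesic $g_{ij}$, and the sign bookkeeping $s_k=t_{ij}^k$, $s_l=-t_{ij}^l$ faithfully converts \eqref{eq:local_canonical_condition} into $s_k\geq s_l$.

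The genuine flaw is the core monotonicity step: the normalized pairing (the cosine of the intersection angle) is \emph{not} in general strictly monotone along $s$. Transporting to the Euclidean M\"obius picture, with $g_{ij}$ the $x$-axis, $\rho_{ij}$ the $y$-axis, and the limit points of the coaxial system of $V_i,V_j$ at $(\pm c,0)$, the member with center $(0,s)$ has radius $\sqrt{c^2+s^2}$, and its inversive product with a circle of center $(x_l,y_l)$ and radius $r_l$ is $I(s)=\bigl(A-2y_l s\bigr)\big/\bigl(2r_l\sqrt{c^2+s^2}\bigr)$ with $A=x_l^2+y_l^2-c^2-r_l^2$; its derivative is proportional to $-2y_lc^2-As$ and changes sign at $s^{*}=-\nicefrac{2y_lc^2}{A}$ (exactly at the pencil member orthogonal to $W_l$), so $I$ is increasing-then-decreasing in general. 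Moreover, when $l$ is a cusp, $V_l$ is a horocycle with $\|V_l\|_{3,1}^2=0$, and the normalized quantity you propose to monotonize is not even defined. What saves your argument is that you only ever need the \emph{sign}, and the correct germ is already in your own previous sentence: the unnormalized pairing $\ip{W(s)}{V_l}_{3,1}$ is a nonconstant affine function of $s$ (nonconstant because $V_l$, being disjoint from $V_i$ and $V_j$, cannot be orthogonal to the whole pencil) whose unique zero sits at $s=s_l$ by $W_l\perp V_l$, while the normalizing factor, where defined, is positive; hence $I(s)\geq0$ if and only if $s\geq s_l$, and evaluating at $s=s_k$ yields the equivalence after a single computation fixing which side is positive. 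Two conventions then still need pinning down: $I(s)<-1$ means the proper disk and the cycle at $l$ are nested, which a literal reading of item \ref{item:hyperbolic_local_proper_angle} would call \enquote{intersects not at all} although it lies on the wrong side of the threshold, so you must exclude nesting geometrically or phrase (i) via the sign of the inversive product; and the boundary case $I=0$ (angle exactly $\nicefrac{\pi}{2}$) belongs to the $\geq$ in \eqref{eq:local_canonical_condition} --- a mismatch with the phrase \enquote{less than $\nicefrac{\pi}{2}$} that is already present in the paper's statement and not of your making.
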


The quantity $t_{ij}^k$ can be computed from the geometry of a decorated
hyperideal triangle only, \ie, it is determined by the lengths of the edges of
the triangle and the radii of the vertex-cycles. This gives us a way to test
whether a given geodesic triangulation of $\surf_g$ is canonical.

\begin{proposition}
    [local characterization of canonical tessellations, {\cite[Prop.~4.6]{BL2023}}]
    \label{prop:canonical_local_to_global}
    A given geodesic triangulation $\tri$ is a canonical tessellations for
    the complete hyperbolic surface $\surf_g$ with given weights
    $\omega\in\RR_{>0}^{\verts}$ if and only if all its edges satisfy the inequality
    \eqref{eq:local_canonical_condition}.
    Furthermore, two canonical triangulations for the same weights differ
    only on edges which satisfy \eqref{eq:local_canonical_condition} with equality.
\end{proposition}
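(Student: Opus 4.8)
The plan is to prove the two implications separately and to deduce the final uniqueness clause from the characterization together with \propref{prop:properties_of_weightings}. The guiding idea is that, after lifting to the universal cover $\HH^2$, the inequality \eqref{eq:local_canonical_condition} is precisely a \emph{local convexity} condition, so the proposition becomes a local-to-global convexity principle for an equivariant polyhedral surface. The \enquote{only if} direction is the short one: if $\tri$ is canonical for the weights $\omega$, then by definition every face of $\tri$ carries a properly immersed disk meeting each remaining vertex-cycle either not at all or at an angle at most $\nicefrac{\pi}{2}$, so property \ref{item:hyperbolic_local_proper_angle} of \lemref{lemma:local_characterization_canonical_tessellation} holds for each edge $ij$ and its two adjacent faces $ijk$, $ilj$. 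Since that lemma asserts the equivalence of \ref{item:hyperbolic_local_proper_angle} and \ref{item:hyperbolic_local_proper_distance}, the inequality \eqref{eq:local_canonical_condition} follows on every edge.

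For the converse I would realize the decorated surface in the hyperboloid model: uniformize $\surf_g = \HH^2/G$ for a Fuchsian group $G$ and lift the weighted vertex-cycles to a $G$-invariant family of representatives $\hat{v}_i \in \RR^{2,1}$, scaled by $\omega$, in the spirit of the lift \eqref{eq:hyperbolic_moebius_lift}. The triangulation $\tri$ then spans a $G$-invariant piecewise-linear surface $\widetilde{S}$, and a direct computation identifies the sign of $t_{ij}^k + t_{ij}^l$ with the sign of the dihedral bend of $\widetilde{S}$ across the edge lying over $ij$. Thus \eqref{eq:local_canonical_condition} holding on all edges says exactly that $\widetilde{S}$ is locally convex along every edge. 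Since the center of each properly immersed disk is the apex cut out by the supporting planes, once $\widetilde{S}$ is shown to bound the convex hull of $\{\hat{v}_i\}$, its radial projection is the canonical tessellation and $\tri$ refines it, i.e.\ $\tri$ is canonical.

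The main obstacle is therefore the passage from local to global convexity of $\widetilde{S}$. I would invoke an Alexandrov--van Heijenoort type theorem: a complete, radially embedded polyhedral surface that is locally convex along every edge bounds a convex set. Completeness and properness follow from $G$-invariance together with cocompactness of $G$ on the convex core, and the star-shapedness required by the theorem holds because $\widetilde{S}$ is a radial graph over $\HH^2$; verifying these hypotheses in the presence of hyperideal (space-like) representatives is the delicate point. An alternative, more combinatorial route is a flip argument: the triangulations satisfying \eqref{eq:local_canonical_condition} on all edges are connected under edge flips and contain the canonical triangulation, so if $\tri$ were not canonical a connecting sequence of flips would have to cross an edge violating the condition, a contradiction; this avoids the topological hypotheses but requires care at faces that are only weakly Delaunay.

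Finally, the uniqueness clause follows once the characterization is in place. Two canonical triangulations for the same $\omega$ both refine the unique canonical tessellation $\tri_{\surf_g}^{\omega}$ provided by item \ref{item:tessellation_scale_invariance} of \propref{prop:properties_of_weightings}, so they can differ only by the diagonals inserted into its non-triangular cells. Across such a diagonal the supporting planes meet in a single apex, the two disk centers coincide, and \eqref{eq:local_canonical_condition} holds with equality; conversely an edge with $t_{ij}^k + t_{ij}^l = 0$ lies in the interior of a cell of $\tri_{\surf_g}^{\omega}$ and may be flipped, which gives precisely the asserted freedom.
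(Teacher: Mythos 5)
The first thing to note is that the paper contains no proof of this proposition: it is imported verbatim from \cite[Prop.~4.6]{BL2023}, so there is no in-paper argument to compare against, and your proposal must stand on its own. Your \enquote{only if} direction is correct and is exactly how one would expect it to go: a face of a canonical triangulation is inscribed in the properly immersed disk of the canonical cell containing it (the disk orthogonal to three given vertex-cycles is unique), so item \ref{item:hyperbolic_local_proper_angle} of \lemref{lemma:local_characterization_canonical_tessellation} holds and \eqref{eq:local_canonical_condition} follows. One small caveat: that lemma is stated under the hypothesis that the decorating cycles are pairwise non-intersecting, while the proposition allows arbitrary $\omega\in\RR_{>0}^{\verts}$; you should first reduce to the non-intersecting case by scaling, using item \ref{item:tessellation_scale_invariance} of \propref{prop:properties_of_weightings} and the scale-invariance of the dual diagram (hence of the $t_{ij}^k$). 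Your treatment of the final uniqueness clause is also sound: equality in \eqref{eq:local_canonical_condition} forces the two disk centers, which project to the same point of the line through $ij$, to coincide, so the two disks agree and the edge is interior to a canonical cell, and conversely.

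The genuine gap is in the converse, which is the entire content of the proposition, and your proposal reduces it to an Alexandrov--van Heijenoort theorem whose hypotheses are precisely the hard part and are left unverified. Concretely: (a) the claim that $\widetilde{S}$ is a radial graph over $\HH^2$ is false in the presence of hyperideal decorations, since the representatives $\hat{v}_i$ of hypercycles are space-like, and affine triangles with space-like vertices project beyond the ideal boundary in the projective model; one must truncate along the polar planes of the space-like points before any graph or properness statement makes sense, and this is where the real work lies (it is carried out in \cite{Lutz2023}, on which \cite{BL2023} builds); (b) for ends of infinite area $G$ is only convex cocompact, so completeness of $\widetilde{S}$ at the funnels also needs an argument rather than the asserted \enquote{cocompactness}. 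Your fallback flip argument does not repair this: connectedness under flips of the set of triangulations satisfying \eqref{eq:local_canonical_condition} on all edges is itself unproven, and even granted it, the conclusion does not follow --- a flip sequence could connect a hypothetical non-canonical member to the canonical one while staying inside the set, unless you separately show the set contains no non-canonical members, which is the statement being proved. A correct elementary route, avoiding the global convexity theorem, is the classical \enquote{local Delaunay implies global Delaunay} path argument: compare the support functions of the faces along a geodesic segment joining a point of a face to a putative violating disk, each edge crossing being controlled by \eqref{eq:local_canonical_condition}; this is the mechanism behind the flip-algorithm analysis in Appendix~\ref{sec:spherical_wDt} and \cite[Sec.~3.4]{Lutz2023}, and it is what your sketch would need to be rebuilt around.
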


\subsection{Convexity of hyperbolic polyhedra and discrete conformal equivalence}
\label{sec:convex_polyhedra}
Consider a decorated discrete spherical metric $(\tri, \len, r)$ and denote
by $\surf_g$ its fundamental discrete conformal invariant. The dihedral angle
at the edge $ij$ in the base face of the hyperideal pyramid over
the triangle $ijk\in\faces_{\tri}$ is given by $\alpha_{ij}^k$ (see
\figref{fig:sketch_notation_decoration} and \figref{fig:convexity_and_wDt}, top).
Therefore, we call the edge $ij$ \emph{convex} if
$\alpha_{ij}^k+\alpha_{ij}^l\leq\pi$. Here, $ilj$ is the other face incident to
$ij$ in $\tri$. The corresponding polyhedral cone is \emph{convex} if all edges of
$\tri$ are convex. Convexity of polyhedral ends is defined similarly.
We denote the set of all heights of convex polyhedral cones/ends over the triangulated
hyperbolic surface $(\surf_g, \tri)$ by
$\polyhedra_{\tri}^{\pm}(\surf_g)\subset\RR^{\verts}$.

\begin{figure}[!h]
    \centering
    \includegraphics[width=0.77\textwidth]{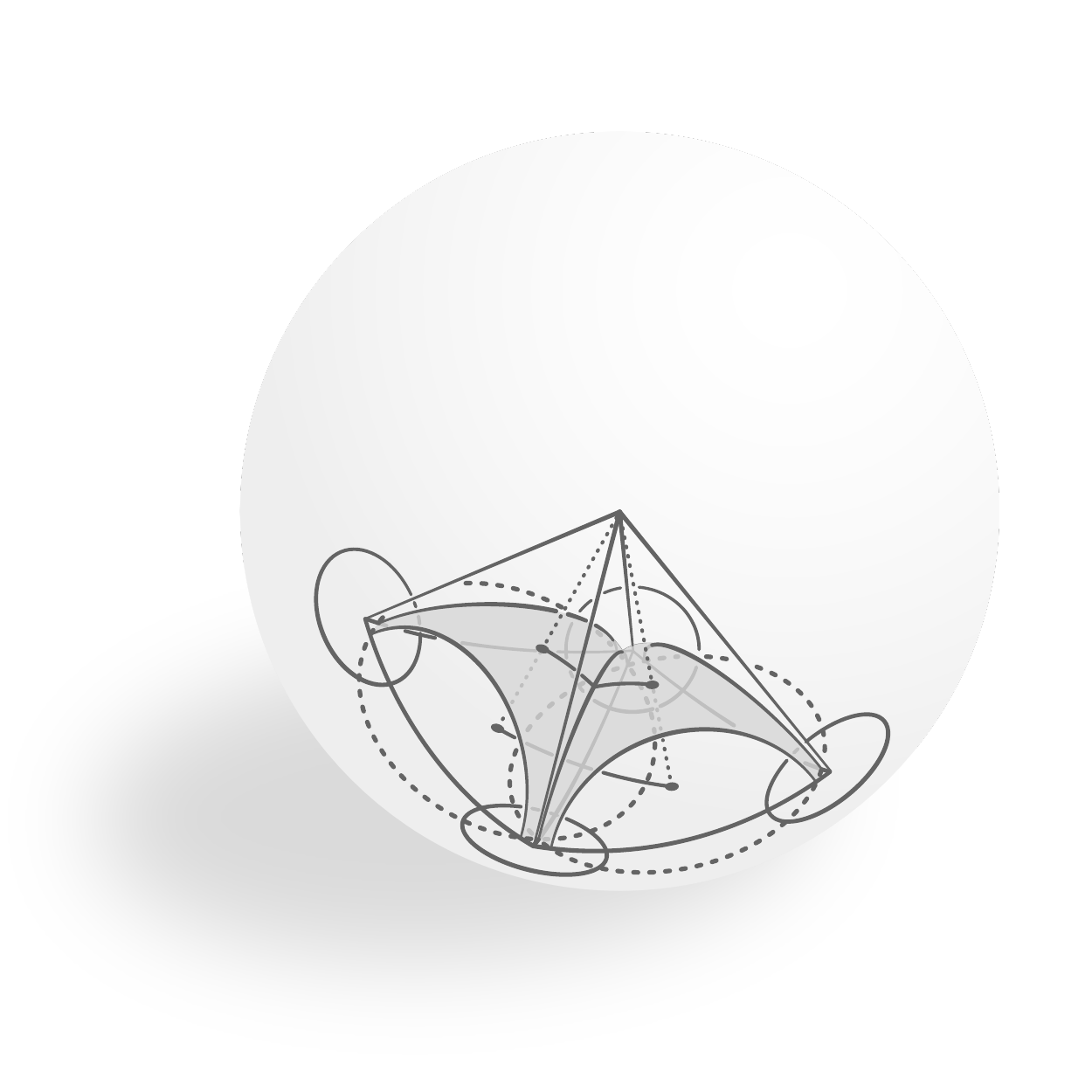}
    \includegraphics[width=0.67\textwidth]{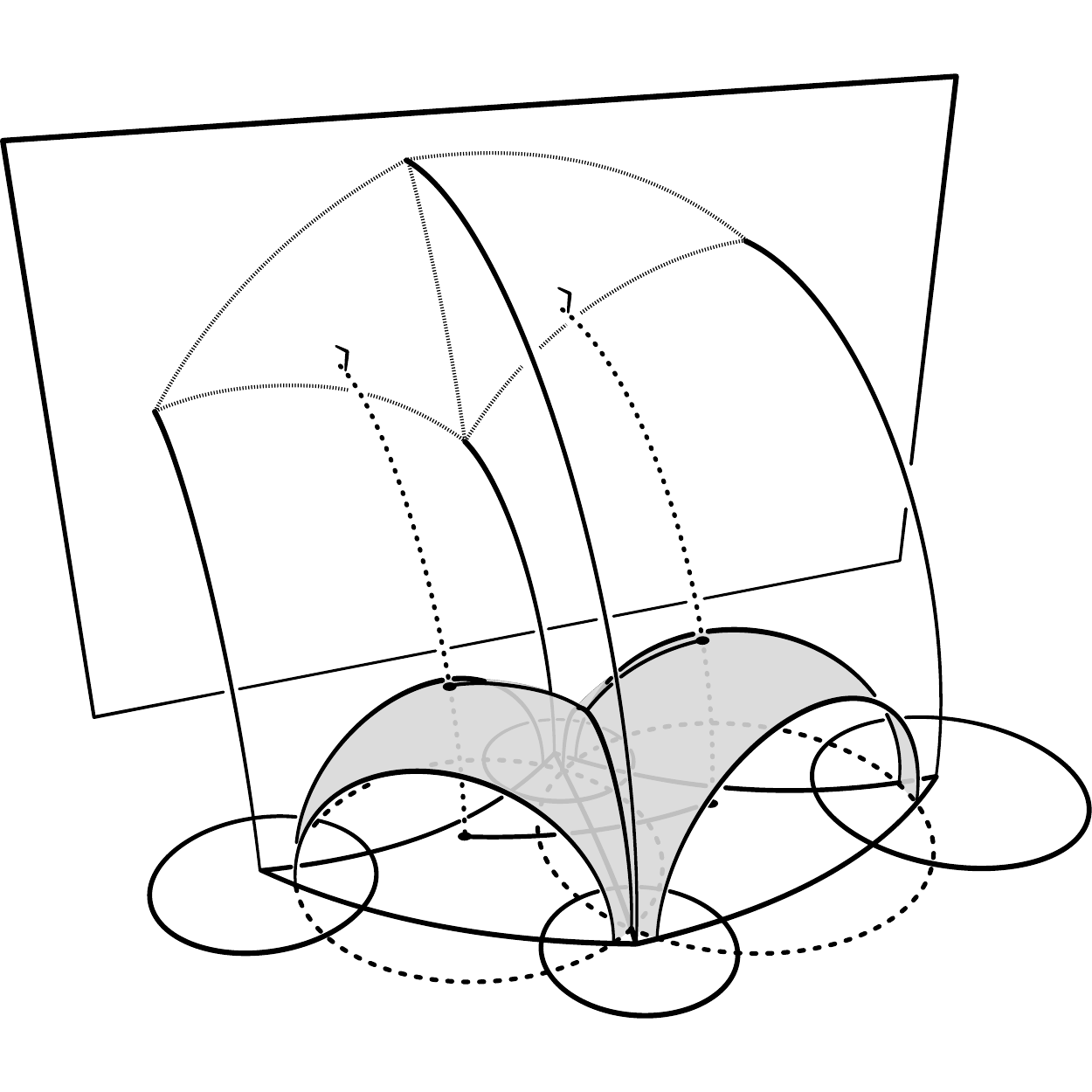}
    \caption{%
        Sketch of two adjacent hyperideal pyramids (top) and hyperideal prisms
        (bottom). Note that the centers of the proper hyperbolic disks project
        to the centers of the spherical/hyperbolic face-circles (dotted lines).
    }
    \label{fig:convexity_and_wDt}
\end{figure}

\begin{lemma}\label{lemma:local_delaunay_convexity_equivalence}
    Let $(\tri, \len, r)$ be a decorated discrete spherical/hyperbolic metric and
    denote by $\surf_g$ its fundamental discrete conformal invariant.
    Consider an edge $ij\in\edges_{\tri}$. The following statements are equivalent:
    \begin{enumerate}[label=(\roman*)]
        \item\label{item:delaunay_to_convexity_wDt}
            $ij$ is locally Delaunay with respect to
            $(\tri, \len, r)$, \ie, it satisfies one of the conditions given in
            \lemref{lemma:local_characterization_wDt},
        \item\label{item:delaunay_to_convexity_convex}
            $ij$ is convex, \ie, $\alpha_{ij}^k+\alpha_{ij}^l\leq\pi$,
        \item\label{item:delaunay_to_convexity_canonical}
            the diagonal $ij$ is locally canonical with respect to $\surf_g$, \ie,
            it satisfies one of the conditions given in
            \lemref{lemma:local_characterization_canonical_tessellation}.
    \end{enumerate}
\end{lemma}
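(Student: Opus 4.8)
The plan is to prove the two equivalences \ref{item:delaunay_to_convexity_wDt}$\Leftrightarrow$\ref{item:delaunay_to_convexity_convex} and \ref{item:delaunay_to_convexity_convex}$\Leftrightarrow$\ref{item:delaunay_to_convexity_canonical} separately. The first is immediate from \lemref{lemma:local_characterization_wDt}: its item~\ref{item:local_delaunay_angles} characterizes the local Delaunay property by $\alpha_{ij}^k+\alpha_{ij}^l\leq\pi$, which is verbatim the definition of $ij$ being convex from \secref{sec:convex_polyhedra}. So \ref{item:delaunay_to_convexity_wDt} and \ref{item:delaunay_to_convexity_convex} are literally the same inequality, and along the way I may freely use the equivalent form $d_{ij}^k+d_{ij}^l\geq0$ supplied by the same lemma.

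For \ref{item:delaunay_to_convexity_convex}$\Leftrightarrow$\ref{item:delaunay_to_convexity_canonical} I would pass through the ambient three-dimensional geometry. The pyramids (resp.\ prisms) over $ijk$ and $ilj$ are glued along their common side face, which lies in the totally geodesic plane over $ij$. By the description of the base-face dihedral angles in \secref{sec:convex_polyhedra}, each base face meets that side plane at the angle $\alpha_{ij}^k$, resp.\ $\alpha_{ij}^l$, so the two base faces meet along their common $\lambda$-length edge $ij$ at dihedral angle $\alpha_{ij}^k+\alpha_{ij}^l$; convexity of $ij$ is exactly the requirement that this angle not exceed $\pi$. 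Because these two base faces are the two hyperideal triangles of $\surf_g$ incident to $ij$, the proper hyperbolic disk of the triangle $ijk$ appearing in \lemref{lemma:local_characterization_canonical_tessellation} lies in the base face, and $t_{ij}^k$ is the oriented distance of its center to $ij$ measured intrinsically there.

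The link between the extrinsic angle $\alpha_{ij}^k+\alpha_{ij}^l$ and the intrinsic distances $t_{ij}^k,t_{ij}^l$ is the central projection from the distinguished vertex depicted in \figref{fig:convexity_and_wDt}. It maps each base face onto the spherical/hyperbolic disk bounded by the face-circle $C_{ijk}$, carries the geodesic $ij$ onto the edge $ij$ of the decorated triangle, and sends the center of the proper hyperbolic disk to the center of $C_{ijk}$. As an orientation-preserving homeomorphism respecting the two sides of $ij$, it matches the sign of $t_{ij}^k$ with that of $d_{ij}^k$ and identifies the degenerate configuration $\alpha_{ij}^k+\alpha_{ij}^l=\pi$—coplanar base faces, equivalently a single circle orthogonal to all four vertex-cycles—with $t_{ij}^k+t_{ij}^l=0$. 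I would then conclude that the canonical condition $t_{ij}^k+t_{ij}^l\geq0$ holds precisely when $d_{ij}^k+d_{ij}^l\geq0$, hence precisely when $ij$ is convex. The spherical and hyperbolic cases run in parallel, using the apex $0$ in the Poincar\'e ball model and the apex $\hat{\infty}$ in the upper half-space model; ideal vertices are absorbed by the auxiliary-horosphere and $\epsilon_i$ conventions already in place.

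The main obstacle I anticipate is making this projection step fully rigorous. Concretely, one must verify in the models that the proper-disk center is the foot of the perpendicular from the apex to the base plane, so that it projects radially to the center of $C_{ijk}$, and—more delicately—that the correspondence transports the \emph{sum} condition $t_{ij}^k+t_{ij}^l\geq0$ faithfully rather than merely matching the two oriented distances sign-by-sign. This reduces to the explicit comparison of the Poincar\'e disk and upper half-sphere models of the triangle underlying \teqref{eq:radii_height_relationship_spherical} and \teqref{eq:radii_height_relationship_hyperbolic}; the crux is to confirm that the vanishing loci of $\alpha_{ij}^k+\alpha_{ij}^l-\pi$ and of $t_{ij}^k+t_{ij}^l$ coincide and that the two quantities change sign together across them.
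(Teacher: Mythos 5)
Your outline reproduces the paper's strategy: the equivalence of \ref{item:delaunay_to_convexity_wDt} and \ref{item:delaunay_to_convexity_convex} is indeed definitional via \lemref{lemma:local_characterization_wDt}, and the equivalence with \ref{item:delaunay_to_convexity_canonical} is to come from relating the center of the proper disk in the base face to the center of the face-circle $C_{ijk}$ by projection through the apex. But exactly there your proposal has a genuine gap rather than a proof: the claim that the projection \enquote{sends the center of the proper hyperbolic disk to the center of $C_{ijk}$} --- equivalently, that the apex, the dual-diagram vertex, and the center of $C_{ijk}$ are collinear on a geodesic, with the dual-diagram vertex the foot of the perpendicular from the apex --- is the entire mathematical content of the paper's proof, and the route you propose for verifying it, namely the comparison of the Poincar\'e disk and upper half-sphere models behind \teqref{eq:radii_height_relationship_spherical} and \teqref{eq:radii_height_relationship_hyperbolic}, does not reach it: those identities relate a single vertex-radius to a single height and are blind to where the proper disk sits inside the base face. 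The paper closes this step with a pencil-of-spheres argument you would need to adopt (or replace by an equivalent computation): the Euclidean sphere carrying the base plane over $C_{ijk}$, the hyperbolic sphere with the same hyperbolic center and radius as the proper disk, and the decorating sphere at the apex are all orthogonal to the three spheres decorating the base vertices, so their Euclidean centers lie on the common radical (power) line of those three spheres; since this line contains the apex $0$ it is a hyperbolic geodesic, it meets the base plane orthogonally with foot the proper-disk center, and the collinearity with the correct sign behaviour relative to the side plane over $ij$ follows (see \figref{fig:decorated_hyperideal_tetrahedra}).

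Your second concern --- that matching $t_{ij}^k$ with $d_{ij}^k$ sign-by-sign does not by itself transport the sum conditions \teqref{eq:local_delaunay_condition_distances} and \teqref{eq:local_canonical_condition} into one another --- is well placed, but \enquote{the vanishing loci coincide and the two quantities change sign together} is not yet an argument: it presupposes a connected family and a monotonicity statement that you never formulate. A clean finish, consistent with the radical-line picture above and uniform in both background geometries (the ambient space is hyperbolic $3$-space in either case), is this: the apex $a$ and both perpendicular feet $P_k$, $P_l$ on the two base planes lie in the radical plane of the two decorating spheres at $i$ and $j$ (the apex because its sphere is orthogonal to both, the feet because the two radical lines lie in this plane); this plane meets the common edge over $ij$ orthogonally in a point $f$, where the two base directions make the dihedral angle $\alpha_{ij}^k+\alpha_{ij}^l$. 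Writing $\beta_k$, $\beta_l$ for the angles at $f$ between the segment $fa$ and the two base directions, the hyperbolic right-triangle relation $\tanh t_{ij}^k=\tanh\big(\dist(f,a)\big)\cos\beta_k$ (and likewise for $l$) yields $t_{ij}^k+t_{ij}^l\geq0$ if and only if $\cos\beta_k+\cos\beta_l=2\cos\frac{\beta_k+\beta_l}{2}\cos\frac{\beta_k-\beta_l}{2}\geq0$, that is, if and only if $\alpha_{ij}^k+\alpha_{ij}^l=\beta_k+\beta_l\leq\pi$, which is \ref{item:delaunay_to_convexity_convex}. With these two repairs --- the radical-line proof of the collinearity and the explicit transport of the sum condition --- your outline becomes a complete proof along the paper's lines.
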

\begin{proof}
    The equivalence of items \ref{item:delaunay_to_convexity_wDt}
    and \ref{item:delaunay_to_convexity_convex} was already considered in
    \lemref{lemma:local_characterization_wDt}. The other equivalence follows
    from an elementary geometric observation for the hyperideal pyramid/prism
    over a triangle $ijk\in\faces_{\tri}$: the distinguished vertex of the
    pyramid/prism, the vertex of the dual diagram corresponding to $ijk$, and the
    center of the face-circle $C_{ijk}$ are collinear, \ie, they lie on the same
    hyperbolic geodesic (see \figref{fig:convexity_and_wDt}).
    This observation, \eqref{eq:local_delaunay_condition_distances},
    and \eqref{eq:local_canonical_condition} imply the equivalence of
    items \ref{item:delaunay_to_convexity_wDt} and
    \ref{item:delaunay_to_convexity_canonical}.

    The proof of this observation proceeds similarly in the spherical and hyperbolic
    case. We will only elaborate the spherical case. Let us consider the hyperideal
    pyramid over $ijk$ in the Poincar\'{e} ball model of hyperbolic $3$-space
    normalized such that the distinguished vertex is $0$. Furthermore, by choosing the
    radius $R$ of the sphere at this vertex large enough we can assume that the
    decorating spheres of the base face are non-intersecting. Let $S$ be the hyperbolic
    sphere with the same hyperbolic center and hyperbolic radius as the hyperbolic
    proper disk defined by the corresponding decoration of the base face.
    By construction, the (Euclidean) sphere representing the hyperbolic plane over the
    face cycle $C_{ijk}$, $S$, and the sphere at $0$ intersect
    the spheres decorating the vertices of the base face orthogonally. Therefore,
    their Euclidean centers lie on a common Euclidean line
    (see \figref{fig:decorated_hyperideal_tetrahedra}): their \emph{radical line},
    also known as, \emph{power line}. In particular, it is the unique line mutually
    orthogonal to the three given spheres. Since this Euclidean line contains $0$ it is
    also a hyperbolic geodesic. Hence, the hyperbolic centers lie on this geodesic,
    too.
\end{proof}

\begin{remark}
   One can also define a radical line in hyperbolic geometry
   (see, \eg, \cite[Sec.~2]{Lutz2023}). This definition can be used to modify our
   proof of \lemref{lemma:local_delaunay_convexity_equivalence} to be completely
   intrinsic. In fact, the Euclidean and hyperbolic radical lines coincide
   for our choice of model and normalization.
\end{remark}

\begin{figure}[t]
    \centering
    \includegraphics[width=0.45\textwidth]{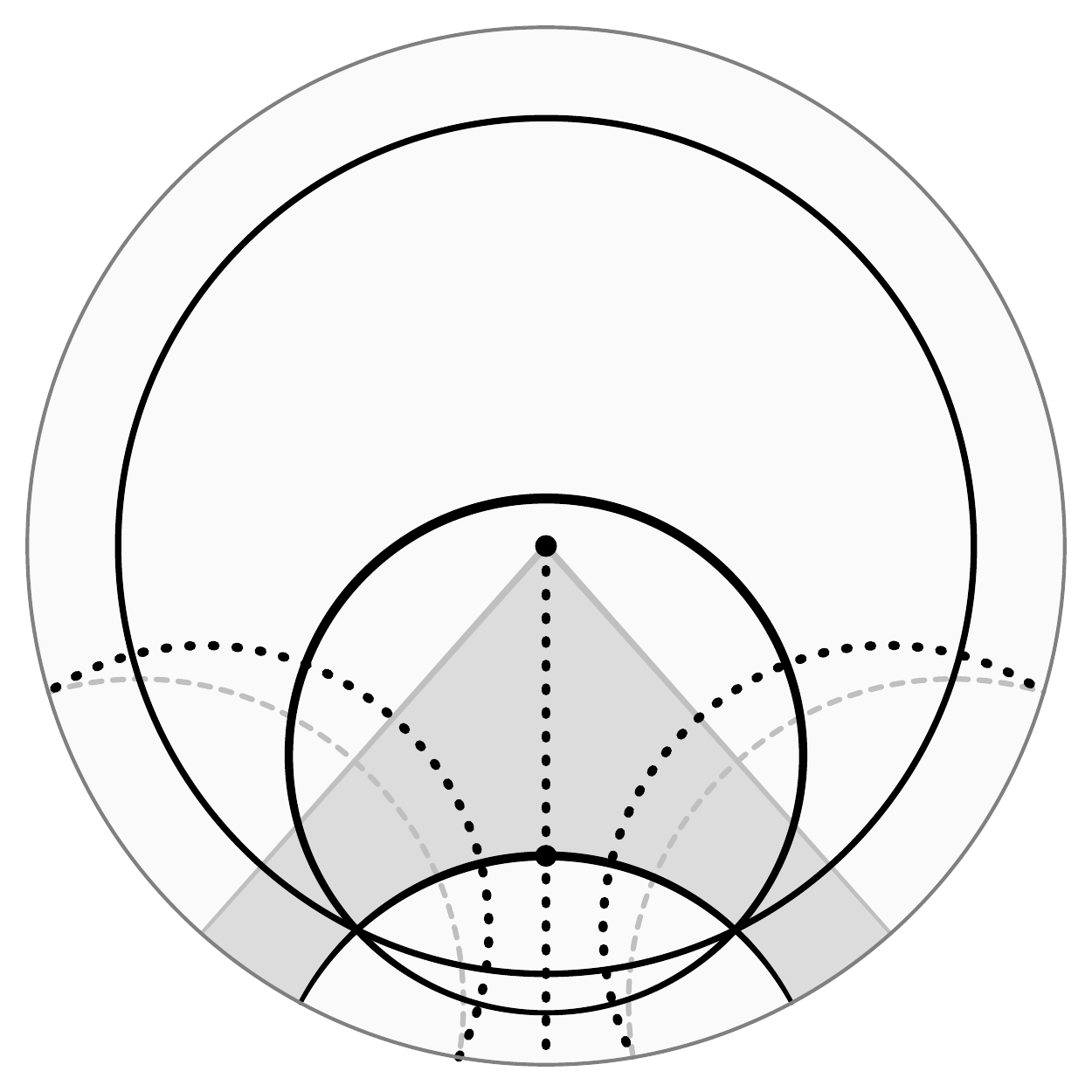}
    \quad\qquad
    \includegraphics[width=0.45\textwidth]{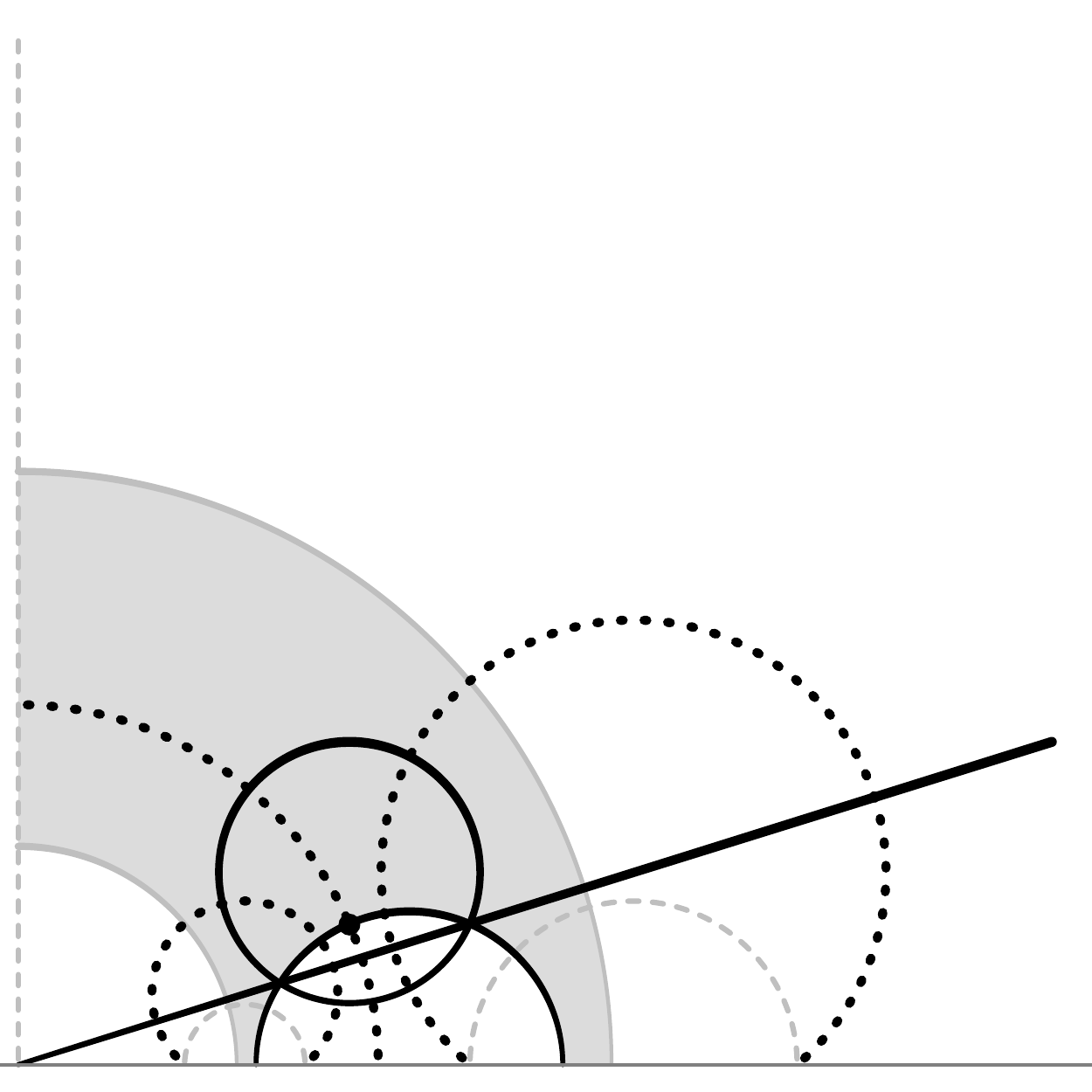}
    \caption{%
        Sketch of the 2-dimensional analogue of the main observation used to
        prove \lemref{lemma:local_delaunay_convexity_equivalence}
        (left: spherical case; right: hyperbolic case). The solid
        and dotted (sections of) circles are contained in \enquote{dual pencils},
        respectively. Thus, they are orthogonal. In particular,
        the center of the solid circle lies on the dotted geodesic.
    }
    \label{fig:decorated_hyperideal_tetrahedra}
\end{figure}

If $\tri$ and $\tilde{\tri}$ are both weighted Delaunay triangulations of
$(\dist_{\toposurf_g}, r)$ then \lemref{lemma:local_delaunay_convexity_equivalence}
shows that they induce the same complete hyperbolic surface $\surf_g$, \ie,
the same fundamental discrete conformal invariant. Thus, we call $\surf_g$ the
\emph{fundamental discrete conformal invariant} of the piecewise spherical/hyperbolic
metric $(\dist_{\toposurf_g}, r)$.

\begin{definition}[discrete conformal equivalence, variable combinatorics]
    \label{def:dce_decoration_hyperbolic_version}
    Let $(\dist_{\toposurf_g}, r)$ and
    $\big(\widetilde{\dist}_{\toposurf_g}, \tilde{r}\big)$ be
    two decorated piecewise spherical/hyperbolic metrics
    on the marked genus $g$ surface $(\toposurf_g, \verts)$. We say that they are
    \emph{discrete conformally equivalent} if they share the same fundamental discrete
    conformal invariant.
\end{definition}

\begin{proposition}
    \label{prop:space_polyhedra_fixed_triangulation}
    The spaces
    $\polyhedra_{\tri}^{\pm}(\dist_{\toposurf_g}, r)$,
    $\polyhedra_{\tri}^{\pm}(\surf_g)$, and
    $\mathcal{D}_{\tri}(\surf_g)$ are homeomorphic.
\end{proposition}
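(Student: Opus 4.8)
The plan is to treat the two identifications separately: first to show that $\polyhedra_{\tri}^{\pm}(\dist_{\toposurf_g}, r)$ and $\polyhedra_{\tri}^{\pm}(\surf_g)$ coincide as subsets of $\RR^{\verts}$, and then to realise the homeomorphism with $\mathcal{D}_{\tri}(\surf_g)$ through the map $\Omega_R^{\pm}$.

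For the first identification, both sets sit inside $\polyhedra_{\tri}^{\pm}(\len, r)$ and are cut out by conditions imposed edge by edge: $h\in\polyhedra_{\tri}^{\pm}(\dist_{\toposurf_g}, r)$ asks that every edge stay local Delaunay after the discrete conformal change corresponding to $h$, while $h\in\polyhedra_{\tri}^{\pm}(\surf_g)$ asks that every edge be convex. Applying \lemref{lemma:local_delaunay_convexity_equivalence} to the metric obtained from $(\tri,\len,r)$ by this conformal change (its fundamental invariant being still $\surf_g$), the equivalence \ref{item:delaunay_to_convexity_wDt}$\Leftrightarrow$\ref{item:delaunay_to_convexity_convex} shows that the two families of conditions coincide edge by edge. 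Hence the two sets are literally equal, and the identity map is the desired homeomorphism.

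For the second homeomorphism I would fix $R$ (with $R\geq0$ spherically and $R>0$ hyperbolically) and analyse $\Omega_R^{\pm}$ coordinatewise. Through \eqref{eq:heights_to_hyperbolic_radii_spherical} and \eqref{eq:heights_to_hyperbolic_radii_hyperbolic} each weight $\omega_i$ is a strictly monotone smooth function of the single height $h_i$, so $\Omega_R^{\pm}$ is a diffeomorphism of $\polyhedra_{\tri}^{\pm}(\len, r)$ onto an open subset of $\RR_{>0}^{\verts}$, hence a homeomorphism onto its image with continuous inverse. The content lies in matching the defining conditions: the $\omega=\Omega_R^{\pm}(h)$ are exactly the weights of the decoration that the polyhedral cone/end of height $h$ induces on $\surf_g$, so the equivalence \ref{item:delaunay_to_convexity_convex}$\Leftrightarrow$\ref{item:delaunay_to_convexity_canonical} of \lemref{lemma:local_delaunay_convexity_equivalence} together with \propref{prop:canonical_local_to_global} shows that an edge is convex for $h$ precisely when it is locally canonical for $\omega$. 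Therefore $h\in\polyhedra_{\tri}^{\pm}(\surf_g)$ if and only if $\Omega_R^{\pm}(h)\in\mathcal{D}_{\tri}(\surf_g)$, and $\Omega_R^{\pm}$ restricts to a homeomorphism of $\polyhedra_{\tri}^{\pm}(\surf_g)$ onto the intersection of $\mathcal{D}_{\tri}(\surf_g)$ with the image of $\Omega_R^{\pm}$.

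The main obstacle is to check that this intersection is all of $\mathcal{D}_{\tri}(\surf_g)$, i.e.\ surjectivity. In the spherical case the hyperideal coordinate is $\omega_i=\sinh h_i/\cosh R$, sweeping out all of $(0,\infty)$ as $h_i$ ranges over $(0,\infty)$, and likewise for ideal vertices; so the image is all of $\RR_{>0}^{\verts}$ and surjectivity is immediate. In the hyperbolic case, however, the hyperideal coordinate $\omega_i=\cosh h_i/\sinh R\geq 1/\sinh R$ is bounded away from the origin, so the image omits small weights and $\Omega_R^{-}$ alone is not onto the cone $\mathcal{D}_{\tri}(\surf_g)$. Here I would invoke the scale-invariance of canonical tessellations (\propref{prop:properties_of_weightings}, item~\ref{item:tessellation_scale_invariance}): $\mathcal{D}_{\tri}(\surf_g)$ is a scale-invariant cone, its intersection with the image meets every ray in an open half-ray, and a radial reparametrisation (continuous, since the per-ray threshold depends continuously on direction, and preserving $\mathcal{D}_{\tri}(\surf_g)$ by scale-invariance) carries this intersection homeomorphically onto the whole cone; composing with $\Omega_R^{-}$ gives the homeomorphism. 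A final routine check is that the remaining open constraints on the valid height region—the triangle inequalities, and in the spherical case the perimeter condition of \propref{prop:possible_heights_spherical}—transport correctly, which holds because $\Omega_R^{\pm}$ is a diffeomorphism onto an open set of genuine weights of the fixed surface $\surf_g$.
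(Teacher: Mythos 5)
Your overall architecture is the same as the paper's (first identify $\polyhedra_{\tri}^{\pm}(\dist_{\toposurf_g}, r)$ with $\polyhedra_{\tri}^{\pm}(\surf_g)$ via the Delaunay/convexity equivalence and invariance of $\surf_g$, then transport to $\mathcal{D}_{\tri}(\surf_g)$ by $\Omega_R^{\pm}$ with a radial rescaling to repair non-surjectivity), and the first identification and the coordinatewise-diffeomorphism observation are fine. But the spherical surjectivity claim is a genuine error. The formula $\omega_i=\nicefrac{\sinh h_i}{\cosh R}$ sweeps all of $(0,\infty)$ only on the full coordinate box; the relevant domain $\polyhedra_{\tri}^{+}(\surf_g)\subset\polyhedra_{\tri}^{+}(\len,r)$ is additionally cut out by the pyramid-existence conditions of \propref{prop:possible_heights_spherical}, which are \emph{not} coordinatewise. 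In particular condition (ii) there demands $\lambda_{ij}<h_i+h_j$ for every edge, where $\lambda_{ij}$ is a fixed datum of $\surf_g$ (and $\lambda_{ij}>0$ on hyperideal--hyperideal edges). Since $\mathcal{D}_{\tri}(\surf_g)$ is a scale-invariant cone, it contains weights $s\omega$ with $s\to0$; the corresponding heights tend to $0$ (hyperideal) or $-\infty$ (ideal), so condition (ii) fails and no hyperideal pyramid exists over such weights. Hence the spherical image also omits an initial segment of every ray, exactly as in the hyperbolic case, and the radial reparametrization you reserve for the hyperbolic case is needed for both signs --- which is precisely why the paper states its two image properties uniformly for $\pm$.

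Relatedly, the substantive step --- that every ray of $\mathcal{D}_{\tri}(\surf_g)$ meets the image $\Omega_R^{\pm}\big(\polyhedra_{\tri}^{\pm}(\surf_g)\big)$ in a nonempty set closed under scaling by $s\geq1$ --- is asserted in your proposal but never proved, and your closing \enquote{routine check} that the height constraints \enquote{transport correctly because $\Omega_R^{\pm}$ is a diffeomorphism onto an open set of genuine weights} is circular: whether the constraints transport is exactly what is at stake. The paper closes this gap geometrically, using the observation from the proof of \lemref{lemma:local_delaunay_convexity_equivalence}: the perpendicular from the distinguished vertex to a base face is the radical line of the decorating spheres and meets that face in the corresponding vertex of the dual diagram; since scaling the weights changes neither the canonical tessellation nor its dual diagram (\propref{prop:properties_of_weightings}), one can reconstruct the polyhedral cone/end realizing $s\omega$ from this data for $s$ large and verify both properties with the hyperbolic law of cosines. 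Some argument of this kind must be supplied before your radial reparametrization can be invoked.
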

\begin{proof}
    \lemref{lemma:local_characterization_wDt} shows that
    $\tri$ is a weighted Delaunay triangulation of $(\dist_{\toposurf_g}, r)$ if
    and only if the corresponding polyhedral cone/end is convex. Thus,
    \(
        \polyhedra_{\tri}^{\pm}(\dist_{\toposurf_g}, r)
        = \polyhedra_{\tri}^{\pm}(\surf_g)
    \)
    by \propref{prop:dce_via_invariant_spherical} and
    \propref{prop:dce_via_invariant_hyperbolic}. Moreover,
    from \lemref{lemma:local_delaunay_convexity_equivalence} follows that a polyhedral
    cone/end is convex if and only if its heights define weights on $\surf_g$ such
    that $\tri$ is a canonical tessellation. Hence,
    \(
        \Omega_R^{\pm}\big(\polyhedra_{\tri}^{\pm}(\surf_g)\big)
        \subset \mathcal{D}_{\tri}(\surf_g).
    \)

    Finally, the homeomorphism follows from the following two properties of
    $\Omega_R^{\pm}\big(\polyhedra_{\tri}^{\pm}(\surf_g)\big)$:
    first, for all $\omega\in\mathcal{D}_{\tri}(\surf_g)$ there is
    $h\in\polyhedra_{\tri}^{\pm}(\surf_g)$ and $s>0$ such that
    $\Omega_R^{\pm}(h) = s\omega$; and second, from
    $\omega\in\Omega_R^{\pm}\big(\polyhedra_{\tri}^{\pm}(\surf_g)\big)$
    follows that
    $s\omega\in\Omega_R^{\pm}\big(\polyhedra_{\tri}^{\pm}(\surf_g)\big)$
    for all $s\geq1$. These properties can be deduced from the same geometric
    observation discussed in the proof of
    \lemref{lemma:local_delaunay_convexity_equivalence}. Indeed, the
    hyperbolic geodesics corresponding to dropping the perpendicular from the
    distinguished vertex to the base faces are the radical lines of the
    spheres decorating the vertices. They intersect the base faces in the vertices
    of the dual diagram corresponding to that face. As scaling does not change the
    canonical tessellation nor its dual diagram we can construct all polyhedral
    cones/ends corresponding to $s\omega$ from this data. A simple application
    of the hyperbolic law of cosines gives the properties.
\end{proof}

\begin{proposition}
    \label{prop:space_polyhedra_characterization}
    The space of convex polyhedral cones/ends is given by
    \begin{equation}
        \polyhedra^{\pm}(\surf_g)
        \;\coloneq\;
        (\Omega_R^{\pm})^{-1}\big(\RR_{>0}^{\verts}\big)
        \;=\;
        \bigcup\polyhedra_{\tri}^{\pm}(\surf_g).
    \end{equation}
    It is homeomorphic to $\RR_{>0}^{\verts}$. Furthermore, its decomposition
    inherits the properties \ref{item:canonical_space_decomposition}
    and \ref{item:canonical_space_finite} from the space of weights
    (\propref{prop:properties_of_weightings}). In particular, each
    $h\in\polyhedra^{\pm}(\surf_g)$ corresponds to
    a unique convex polyhedral cone/end $P_{\surf_g}^{\pm}(h)$.
\end{proposition}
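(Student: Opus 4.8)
The plan is to transport the cell structure of the weight space $\RR_{>0}^{\verts}$ established in \propref{prop:properties_of_weightings} over to the height space through the map $\Omega_R^{\pm}$, upgrading the fixed-triangulation comparison of \propref{prop:space_polyhedra_fixed_triangulation} to a global statement. The whole argument rests on a single cell-wise identity: for every geodesic triangulation $\tri$ refining a canonical tessellation of $\surf_g$,
\[
    \polyhedra_{\tri}^{\pm}(\surf_g)
    \;=\;
    (\Omega_R^{\pm})^{-1}\big(\mathcal{D}_{\tri}(\surf_g)\big)
\]
on the admissible height domain. This is exactly the content distilled in the proof of \propref{prop:space_polyhedra_fixed_triangulation}: by \lemref{lemma:local_delaunay_convexity_equivalence} convexity of the polyhedral cone/end along an edge is, edge by edge, equivalent to that edge being locally canonical for the weights $\Omega_R^{\pm}(h)$, and \propref{prop:canonical_local_to_global} promotes the edge-wise statement to the assertion that $\tri$ is canonical for $\Omega_R^{\pm}(h)$, i.e. $\Omega_R^{\pm}(h)\in\mathcal{D}_{\tri}(\surf_g)$.

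Granting this identity, the two set equalities are immediate. On the one hand $\Omega_R^{\pm}$ takes values in $\RR_{>0}^{\verts}$, so each $\polyhedra_{\tri}^{\pm}(\surf_g)\subseteq(\Omega_R^{\pm})^{-1}(\RR_{>0}^{\verts})$. On the other hand, if $\Omega_R^{\pm}(h)=\omega\in\RR_{>0}^{\verts}$, then property \ref{item:canonical_space_finite} places $\omega$ in some $\mathcal{D}_{\tri}(\surf_g)$, and the displayed identity gives $h\in\polyhedra_{\tri}^{\pm}(\surf_g)$. Taking the union over $\tri$ and invoking $\RR_{>0}^{\verts}=\bigcup_n\mathcal{D}_{\tri_n}(\surf_g)$ yields $(\Omega_R^{\pm})^{-1}(\RR_{>0}^{\verts})=\bigcup_{\tri}\polyhedra_{\tri}^{\pm}(\surf_g)$, and identifies this common set with the convex polyhedral cones/ends.

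For the topology I would first note that $\Omega_R^{\pm}$ is injective, since each coordinate map $h_i\mapsto\omega_i$ coming from \eqref{eq:heights_to_hyperbolic_radii_spherical} and \eqref{eq:heights_to_hyperbolic_radii_hyperbolic} is strictly monotone; it is continuous with continuous inverse, hence a homeomorphism onto its image. In the spherical case the image is all of $\RR_{>0}^{\verts}$, while in the hyperbolic case it is only an upward-closed box $\prod_i(c_i,\infty)$ — still homeomorphic to $\RR_{>0}^{\verts}$ — so in either case $\polyhedra^{\pm}(\surf_g)$ is homeomorphic to $\RR_{>0}^{\verts}$. The decomposition then transports cleanly: because $\Omega_R^{\pm}$ is injective, $\polyhedra_{\tri_1}^{\pm}(\surf_g)\cap\polyhedra_{\tri_2}^{\pm}(\surf_g)=(\Omega_R^{\pm})^{-1}\big(\mathcal{D}_{\tri_1}(\surf_g)\cap\mathcal{D}_{\tri_2}(\surf_g)\big)$ inherits property \ref{item:canonical_space_decomposition}, and the finiteness and covering of property \ref{item:canonical_space_finite} pull back likewise, using that the cells $\mathcal{D}_{\tri}(\surf_g)$ are cones (scale invariance, \ref{item:tessellation_scale_invariance}) so the image of $\Omega_R^{\pm}$ meets each nonempty cell. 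Finally, uniqueness of $P_{\surf_g}^{\pm}(h)$ follows because $\omega=\Omega_R^{\pm}(h)$ determines a unique canonical tessellation by \ref{item:tessellation_scale_invariance}, whose faces fix the combinatorics of the cone/end; any triangular refinement only introduces edges with $\cotw_{ij}^{\pm}=0$, i.e. $\alpha_{ij}^k+\alpha_{ij}^l=\pi$ and coplanar adjacent faces, so the geometric cone/end is independent of the refinement.

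The step I expect to be the main obstacle is reconciling the fixed-$R$ image of $\Omega_R^{\pm}$ with the full positive weight space. In the hyperbolic case $\Omega_R^{-}$ is not surjective onto $\RR_{>0}^{\verts}$ (each hyperideal coordinate is bounded below by $c_i=\nicefrac{1}{\sinh R}$), so the matching of cells cannot be read off coordinatewise and must instead be routed through the scale invariance of canonical tessellations: every cone $\mathcal{D}_{\tri}(\surf_g)$ meets the scaling-saturated image $\prod_i(c_i,\infty)$ in a set homeomorphic, by radial rescaling, to the whole cone. Verifying that this radial identification is compatible across all cells simultaneously — so that the transported decomposition genuinely satisfies \ref{item:canonical_space_decomposition} and \ref{item:canonical_space_finite} — is the delicate point; everything else is formal transport through the homeomorphism.
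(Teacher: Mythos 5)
Your overall architecture --- transporting the decomposition of the weight space through $\Omega_R^{\pm}$ --- is the same as the paper's, but the single identity you rest everything on, $\polyhedra_{\tri}^{\pm}(\surf_g)=(\Omega_R^{\pm})^{-1}\big(\mathcal{D}_{\tri}(\surf_g)\big)$, is \emph{not} \enquote{exactly the content} of the proof of \propref{prop:space_polyhedra_fixed_triangulation}, and its reverse inclusion is false in the spherical case. That proof establishes only the inclusion $\Omega_R^{\pm}\big(\polyhedra_{\tri}^{\pm}(\surf_g)\big)\subseteq\mathcal{D}_{\tri}(\surf_g)$ together with two scaling properties: every ray of $\mathcal{D}_{\tri}(\surf_g)$ meets the image, and the image is closed under scaling by $s\geq1$. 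What you elide is that membership in $\polyhedra_{\tri}^{\pm}(\surf_g)$ requires more than canonicity of the weights: the cone/end with heights $h$ must actually \emph{exist}, \ie\ the nondegeneracy conditions of \propref{prop:possible_heights_spherical} and \propref{prop:possible_heights_hyperbolic} must hold. Concretely, in the spherical case \teqref{eq:heights_to_hyperbolic_radii_spherical} gives $\omega_i=\nicefrac{\sinh(h_i)}{\cosh(R)}$ at a hyperideal vertex; pick any $\omega\in\mathcal{D}_{\tri}(\surf_g)$ and use scale invariance (\propref{prop:properties_of_weightings}, item \ref{item:tessellation_scale_invariance}) to conclude $s\omega\in\mathcal{D}_{\tri}(\surf_g)$ for all $s>0$. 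As $s\to0$ the preimage heights $h^s$ tend to $0$ while the $\lambda$-lengths stay fixed, so condition \ref{item:heights_triangle_inequality} of \propref{prop:possible_heights_spherical}, $\lambda_{ij}<h_i+h_j$, fails: no pyramid with these heights exists, even though $\Omega_R^{+}(h^s)\in\mathcal{D}_{\tri}(\surf_g)$. In particular your claim that in the spherical case the image is all of $\RR_{>0}^{\verts}$ is wrong --- you computed the image of the coordinatewise formula on all positive heights, not the image of the set of heights of actual convex cones, which is only a scaling-saturated subset meeting each ray.

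You did sense the \enquote{fixed-$R$ image versus full weight space} tension, but you located it in the wrong geometry: in the hyperbolic case the paper's proof records that $\Omega_R^{-}\big(\polyhedra^{-}(\surf_g)\big)$ is exactly the box $\{\omega\in\RR_{>0}^{\verts}:\omega_i>\nicefrac{1}{\sinh(R)}\text{ if $i$ is hyperideal}\}$, so there the coordinatewise picture happens to be correct (though your citation of \propref{prop:space_polyhedra_fixed_triangulation} does not prove it); it is the spherical case where the per-ray threshold depends on the direction and the homeomorphism to $\RR_{>0}^{\verts}$ must be produced radially. What monotonicity of $h_i\mapsto\omega_i$ cannot replace is the existence half of the scaling properties --- that sufficiently far out on each ray the cone/end exists and is convex. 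The paper obtains this from the geometric observation in the proof of \lemref{lemma:local_delaunay_convexity_equivalence}: the perpendiculars dropped from the distinguished vertex are radical lines of the decorating spheres, they pass through the vertices of the dual diagram (scale-invariant data), and the hyperbolic law of cosines then yields both properties. With that ingredient supplied, your transport of properties \ref{item:canonical_space_decomposition} and \ref{item:canonical_space_finite} and your uniqueness argument go through; without it, the proposal establishes neither the set equality nor the homeomorphism.
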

\begin{proof}
    This is a direct consequence of the previous
    \propref{prop:space_polyhedra_fixed_triangulation} and its proof.
    In the case of convex polyhedral ends it even implies that
    \begin{equation}
        \Omega_R^{-}\big(\polyhedra^{-}(\surf_g)\big)
        \;=\;\big\{\omega\in\RR_{>0}^{\verts} \,:\,
        \omega_i>\nicefrac{1}{\sinh(R)} \text{, if $i$ is hyperideal}\big\}.
        \qedhere
    \end{equation}
\end{proof}

The definition of discrete conformal equivalence in terms of the fundamental
discrete conformal invariant can also be reformulated using sequences of weighted
Delaunay triangulations.

\begin{propdef}\label{prop:dce_surgery_to_hyperbolic}
    Let $(\dist_{\toposurf_g}, r)$ and $(\widetilde{\dist}_{\toposurf_g}, \tilde{r})$
    be two decorated piecewise spherical/hyperbolic metrics on
    the marked surface $(\toposurf_g, \verts)$. They are
    \emph{discrete conformally equivalent} in the sense of
    \defref{def:dce_decoration_hyperbolic_version} if and only if there is a
    sequence of decorated discrete spherical/hyperbolic metrics
    \begin{equation}
        (\tri^0,\len^0,r^0), \dotsc , (\tri^N,\len^N,r^N)
    \end{equation}
    such that:
    \begin{enumerate}[label=(\roman*)]
        \item
            the piecewise spherical/hyperbolic metrics corresponding to
            $(\tri^0,\len^0)$ and $(\tri^N,\len^N)$
            are $\dist_{\toposurf_g}$ and $\widetilde{\dist}_{\toposurf_g}$,
            respectively,
        \item
            each $\tri^n$ is a weighted Delaunay triangulation
            with respect to $(\tri^n, \len^n, r^n)$,
        \item
            if $\tri^n=\tri^{n+1}$, then there are logarithmic scale factors
            $u\in\RR^{\verts}$ such that $(\tri^n,\len^n, r^n)$ and
            $(\tri^{n+1}, \len^{n+1}, r^{n+1})$ are related via
            \eqref{eq:conformal_change_formulas_spherical}
            respectively \eqref{eq:conformal_change_formulas_hyperbolic},
        \item
            if $\tri^n\neq\tri^{n+1}$, then $\tri^n$ and $\tri^{n+1}$ are
            two different weighted Delaunay triangulations of the
            same decorated piecewise spherical/hyperbolic metric.
    \end{enumerate}
\end{propdef}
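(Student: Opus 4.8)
The plan is to prove both directions through the dictionary, established in \secref{sec:convex_polyhedra}, between decorated piecewise metrics with a fixed fundamental invariant $\surf_g$ and weights $\omega\in\RR_{>0}^{\verts}$ on $\surf_g$. For the easy \enquote{if} direction I would show that the fundamental discrete conformal invariant is preserved at every step of the sequence. If $\tri^n=\tri^{n+1}$, then property (iii) supplies scale factors $u$ relating the two discrete metrics via \eqref{eq:conformal_change_formulas_spherical} respectively \eqref{eq:conformal_change_formulas_hyperbolic}; hence they are discrete conformally equivalent on the fixed triangulation $\tri^n$ and, by \propref{prop:dce_via_invariant_spherical} and \propref{prop:dce_via_invariant_hyperbolic}, induce the same $\surf_g$. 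If instead $\tri^n\neq\tri^{n+1}$, property (iv) identifies them as two weighted Delaunay triangulations of a single decorated piecewise metric, and \lemref{lemma:local_delaunay_convexity_equivalence} (as used right after it) shows that any two such triangulations of $(\dist_{\toposurf_g},r)$ induce the same $\surf_g$. Thus the invariant is constant along the sequence, and by property (i) the endpoints $(\dist_{\toposurf_g},r)$ and $\big(\widetilde{\dist}_{\toposurf_g},\tilde{r}\big)$ share it, \ie, they are discrete conformally equivalent in the sense of \defref{def:dce_decoration_hyperbolic_version}.

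For the substantive \enquote{only if} direction, suppose both metrics have the common fundamental invariant $\surf_g$. After fixing a radius $R$, each decorated piecewise metric with invariant $\surf_g$ corresponds to a unique convex polyhedral cone/end, hence to a point of $\polyhedra^{\pm}(\surf_g)$ and, via $\Omega_R^{\pm}$, to a weight $\omega\in\RR_{>0}^{\verts}$ on $\surf_g$; write $\omega^{\mathrm{start}}$ and $\omega^{\mathrm{end}}$ for the weights of the two given metrics. By \propref{prop:space_polyhedra_characterization} the image $\Omega_R^{\pm}\big(\polyhedra^{\pm}(\surf_g)\big)$ is a convex subset of $\RR_{>0}^{\verts}$, so I would connect $\omega^{\mathrm{start}}$ to $\omega^{\mathrm{end}}$ by the straight segment $\gamma$ lying in it. The weight space is covered by finitely many convex closed cells $\mathcal{D}_{\tri}(\surf_g)$ (\propref{prop:properties_of_weightings}, inherited by $\polyhedra^{\pm}(\surf_g)$ through \propref{prop:space_polyhedra_characterization}), so the compact segment $\gamma$ meets only finitely many of them.

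I then read off the cells $\mathcal{D}_{\sigma_0},\dotsc,\mathcal{D}_{\sigma_M}$ in the order $\gamma$ enters them, with transition weights $p_1,\dotsc,p_M$ where $p_j\in\mathcal{D}_{\sigma_{j-1}}(\surf_g)\cap\mathcal{D}_{\sigma_j}(\surf_g)$, and set $p_0\coloneq\omega^{\mathrm{start}}$, $p_{M+1}\coloneq\omega^{\mathrm{end}}$. The required sequence alternates two moves. Moving from $p_j$ to $p_{j+1}$ inside the convex cell $\mathcal{D}_{\sigma_j}$ keeps $\sigma_j$ weighted Delaunay, so the two corresponding discrete metrics have invariant $\surf_g$ on the \emph{same} triangulation $\sigma_j$ and are therefore related by scale factors (\propref{prop:dce_via_invariant_spherical}/\propref{prop:dce_via_invariant_hyperbolic}), realizing (iii). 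At a transition weight $p_j$ the single decorated piecewise metric $(\Omega_R^{\pm})^{-1}(p_j)$ admits both $\sigma_{j-1}$ and $\sigma_j$ as weighted Delaunay triangulations, since both are canonical for $p_j$ and canonical is equivalent to weighted Delaunay by \lemref{lemma:local_delaunay_convexity_equivalence}; this realizes (iv). Choosing $\sigma_0$ and $\sigma_M$ to be weighted Delaunay triangulations whose cells contain $\omega^{\mathrm{start}}$ and $\omega^{\mathrm{end}}$ gives (i), while (ii) holds for every $\sigma_j$ by construction.

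The main subtlety I anticipate is purely combinatorial bookkeeping at the transition weights when the weighted Delaunay tessellation fails to be a triangulation: there $p_j$ lies on a boundary shared by several cells, so several refining triangulations are simultaneously weighted Delaunay. Crucially, property (iv) permits \emph{any} two weighted Delaunay triangulations of one metric rather than only single edge flips, so no transversality or general-position argument for $\gamma$ is needed — I may pass between consecutive visited cells directly at their common boundary point. Finiteness of the cell decomposition (\propref{prop:properties_of_weightings}\ref{item:canonical_space_finite}) then guarantees that the extracted sequence is finite.
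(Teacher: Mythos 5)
Your overall scheme is the right one, and it matches what the paper intends: the paper's own proof is literally the one-line combination of \propref{prop:dce_via_invariant_spherical}, \propref{prop:dce_via_invariant_hyperbolic}, and \lemref{lemma:local_delaunay_convexity_equivalence}, and your \enquote{if} direction fills in exactly the two cases (scale-factor moves preserve the invariant by the propositions; flip moves preserve it by the remark following \lemref{lemma:local_delaunay_convexity_equivalence}). The genuine gap sits in your \enquote{only if} direction, at the step \enquote{by \propref{prop:space_polyhedra_characterization} the image $\Omega_R^{\pm}\big(\polyhedra^{\pm}(\surf_g)\big)$ is convex}. That proposition does not say this. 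Its proof gives an explicit convex description only in the hyperbolic case, namely $\omega_i>\nicefrac{1}{\sinh(R)}$ at hyperideal vertices; in the spherical case the paper establishes only a star-shapedness under upward scaling: for every $\omega\in\mathcal{D}_{\tri}(\surf_g)$ some multiple $s\omega$ lies in the image, and the image is closed under scaling by $s\geq1$ (proof of \propref{prop:space_polyhedra_fixed_triangulation}). Weights that are too small are genuinely excluded there — small heights violate the triangle/perimeter constraints of \propref{prop:possible_heights_spherical} — and nothing shows the resulting region of $\RR_{>0}^{\verts}$ is convex. So in the spherical case your straight segment may leave the image, and then your transition weights $p_j$ correspond to no decorated piecewise spherical metric at all, which makes the (iii)-moves between them vacuous.

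The repair is cheap, and it shows the convexity claim was never needed. A (iii)-move carries no continuity requirement — it only needs its two endpoint weights to be realized by actual metrics. So use the straight segment in $\RR_{>0}^{\verts}$ (which is entirely covered by the finitely many cells, with no reference to the image of $\Omega_R^{\pm}$) purely combinatorially, to extract the finite chain of cells $\sigma_0,\dotsc,\sigma_M$ and waypoints $p_1,\dotsc,p_M$. Then rescale all waypoints by one sufficiently large common factor $s$: cell membership is preserved because each $\mathcal{D}_{\tri}(\surf_g)$ is a cone and canonical tessellations are scale-invariant (\propref{prop:properties_of_weightings}), while $s\,p_j$ lies in the image of $\Omega_R^{\pm}$ for $s$ large by the two scaling properties quoted above; finiteness of the waypoints gives a uniform $s$. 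Finally prepend and append radial (iii)-moves from $\omega^{\mathrm{start}}$ to $s\,\omega^{\mathrm{start}}$ and from $s\,\omega^{\mathrm{end}}$ to $\omega^{\mathrm{end}}$, which stay inside the endpoint cells and inside the image. With this substitution the rest of your argument — including the observation that condition (iv) tolerates arbitrary pairs of weighted Delaunay triangulations at non-triangulation tessellations, so no general-position argument for the path is needed — is correct and is a faithful elaboration of the machinery the paper builds in \propref{prop:properties_of_weightings} and \propref{prop:space_polyhedra_fixed_triangulation}.
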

\begin{proof}
    This is a combination of \propref{prop:dce_via_invariant_spherical},
    \propref{prop:dce_via_invariant_hyperbolic},
    and \lemref{lemma:local_delaunay_convexity_equivalence}.
\end{proof}

\subsection{Proof of the spherical discrete uniformization theorem}
Our proof of the spherical discrete uniformization theorem relies on the following
characterization of complete hyperbolic metrics with ends on the sphere.

\begin{proposition}
    \label{prop:metrics_on_hyperideal_polyhedra}
    Suppose $(\toposurf_0, \verts)$ is a marked genus $0$ surface. Each
    complete hyperbolic metric on $\toposurf_0\setminus\verts$ with cusps and ends
    of infinite area can be realized as a unique hyperideal polyhedron in
    hyperbolic $3$-space, up to hyperbolic isometries.
\end{proposition}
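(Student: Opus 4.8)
The plan is to reduce the statement to the realization and rigidity theory of convex hyperideal polyhedra due to \textsc{I.~Rivin} and \textsc{J.-M.~Schlenker}, after identifying $\surf_0$ with an induced boundary metric. First I would set up the dictionary: a convex hyperideal polyhedron $P\subset\HH^3$ with $|\verts|$ vertices has boundary homeomorphic to the $2$-sphere, and endowing $\partial P$ minus its vertices with the intrinsic path-metric yields a complete hyperbolic metric on the $|\verts|$-punctured sphere. Ideal vertices produce cusps and hyperideal vertices produce complete ends of infinite area, which are exactly the end-types appearing in $\surf_0$. Hence realizing $\surf_0$ as a hyperideal polyhedron is the same as prescribing it as the induced boundary metric of some convex $P$, with the marking $\verts$ matching cusps to ideal and infinite-area ends to hyperideal vertices.

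For existence I would invoke \textsc{Schlenker}'s realization theorem \cite{Schlenker1998}, which characterizes precisely those hyperbolic metrics arising as induced boundary metrics of convex hyperideal polyhedra. The verification reduces to checking that $\surf_0$ meets the hypotheses: it is complete, of finite topological type on a genus~$0$ surface, and each end is a cusp or an infinite-area funnel. Uniqueness up to hyperbolic isometry then follows from the Rivin--Schlenker rigidity of hyperideal polyhedra \cite{Rivin1994a, Schlenker1998}: such a polyhedron is determined by its induced metric.

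The step I expect to be the main obstacle is not the citation but the translation itself. One must confirm that the surface $\surf_0$ obtained in \secref{sec:connections_hyperbolic_polyhedra} by gluing hyperideal triangles genuinely carries ends of the precise geometric type demanded by Schlenker's hypotheses; in particular one should make explicit the standard correspondence between a complete surface with infinite-area funnels and the truncated surface with geodesic boundary obtained by cutting along the dual planes at hyperideal vertices, since the realization theorem is often phrased for the latter. A self-contained alternative would be variational, seeking the polyhedron as a critical point of the spherical discrete Hilbert--Einstein functional $\HE_{\surf_0,\Theta}^{+}$ at $\Theta_i\equiv2\pi$; but because $\HE^{+}$ is in general not concave (compare the non-rigidity examples of \cite{MS2012}), existence and uniqueness could no longer be read off from concavity and would instead require a separate properness or degree argument. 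For this reason the direct appeal to Rivin and Schlenker is the cleaner route.
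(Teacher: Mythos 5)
Your proposal matches the paper exactly: the paper gives no proof of \propref{prop:metrics_on_hyperideal_polyhedra} beyond attributing the cusped case to \textsc{I.~Rivin} \cite{Rivin1994a} and the extension to ends of infinite area to \textsc{J.-M.~Schlenker} \cite{Schlenker1998}, which is precisely your route. Your additional care about the dictionary between induced boundary metrics and the truncated surface with geodesic boundary, and your (correct) warning that a variational alternative via $\HE_{\surf_0,\Theta}^{+}$ would founder on non-concavity, go beyond what the paper records but are sound.
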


For hyperbolic surfaces with cusps this was proved by \textsc{I.~Rivin}
\cite{Rivin1994a}. Later, \textsc{J.-M.~Schlenker} extended this result to surfaces
with ends of infinite area \cite{Schlenker1998}. See also \cite{Fillastre2008} for
a discussion of related results.

\begin{proof}[Proof of \thmref{thm:discrete_uniformization_spherical}]
    Let $(\dist_{\toposurf_0}, r)$ be a hyperideally decorated piecewise spherical
    metric. By \lemref{lemma:uniqueness_wDt} and \lemref{lemma:existence_wDt},
    there exists a weighted Delaunay triangulation of $(\toposurf_0, \verts)$
    with respect to $(\dist_{\toposurf_0}, r)$. Therefore,
    $(\dist_{\toposurf_0}, r)$ possesses a fundamental discrete conformal invariant
    $\surf_0$. It is a complete hyperbolic surface with cusps and ends of
    infinite area. Hence, \propref{prop:metrics_on_hyperideal_polyhedra} grants that
    we can realize $\surf_0$ as the boundary of a hyperideal polyhedron. This
    polyhedron induces a hyperideally decorated piecewise spherical metric
    $(\widetilde{\dist}_{\toposurf_0}, \tilde{r})$ on the ideal
    boundary of hyperbolic $3$-space, \ie, $\SS^2$. Naturally, the cone angles
    with respect to this metric are all $2\pi$ and it is discrete conformally
    equivalent to $(\dist_{\toposurf_0}, r)$ by construction (see
    \defref{def:dce_decoration_hyperbolic_version}). Finally, the only ambiguity
    is given by isometries of hyperbolic $3$-space, which extend to M\"obius
    transformations of $\SS^2$.
\end{proof}

\subsection{Proof of the hyperbolic prescribed cone-angle theorem}
Let $\surf_g$ be a complete hyperbolic surface with ends.
For $\Theta\in\RR^{\verts}$ the discrete Hilbert--Einstein functional
$\HE_{\surf_g,\Theta}^{\pm}$ over $\polyhedra^{\pm}(\surf_g)$ is given by
\begin{equation}\label{eq:global_he_functional}
    \HE_{\surf_g,\Theta}^{\pm}(h)
    \;\coloneq\;
    \HE_{\surf_g,\Theta,\tri}^{\pm}(h)
    \;\coloneq\;
    \HE_{\len, r, \Theta}^{\pm}(h).
\end{equation}
Here, $\tri$ is a canonical triangulation determined by the weights
$\Omega_R^{\pm}(h)$ and $(\tri, \len, r)$ is the corresponding decorated discrete
metric.

\begin{proposition}[Properties of the dHE-functional]
    \label{prop:he_functional_properties}
    Let $\surf_g$ be a genus $g$ complete hyperbolic surface
    and $\Theta\in\RR_{>0}^{\verts}$.
    \begin{enumerate}[label=(\roman*)]
        \item\label{item:he_functional_differentiability}
            The dHE-functional $\HE_{\surf_g,\Theta}^{\pm}$ is twice continuously
            differentiable over $\polyhedra^{\pm}(\surf_g)$ and analytic in
            each $\polyhedra_{\tri}^{\pm}(\surf_g)$.
        \item\label{item:he_functional_gradient_flow}
            The decorated $\Theta$-flow \eqref{eq:decorated_flow} is the gradient
            flow of $\HE_{\surf_g,\Theta}^{\pm}$.
        \item\label{item:hyperbolic_he_functional_strictly_concave}
            The hyperbolic dHE-functional $\HE_{\surf_g,\Theta}^{-}$ is
            strictly concave.
    \end{enumerate}
\end{proposition}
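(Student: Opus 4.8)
The plan is to handle the three assertions in turn, the common thread being that $\HE_{\surf_g,\Theta}^{\pm}$ is glued from the cell-wise functionals $\HE_{\len,r,\Theta}^{\pm}$ of \eqref{eq:he_functional_local} over the decomposition of $\polyhedra^{\pm}(\surf_g)$ into the cells $\polyhedra_{\tri}^{\pm}(\surf_g)$, so essentially all the work sits on the walls where the canonical triangulation jumps. First I would check that \eqref{eq:global_he_functional} is well defined wherever the canonical tessellation $\tri_{\surf_g}^{\omega}$ fails to be a triangulation: two refinements $\tri_1,\tri_2$ must give the same value. I read this off term by term from \eqref{eq:he_functional_local}. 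The truncated volume $\vol(P_{\surf_g}^{\pm}(h))$ depends only on the convex polyhedral cone/end, which by \propref{prop:space_polyhedra_characterization} is intrinsic to $h$; the cone angles $\theta_i$ and heights $h_i$ are intrinsic; and by \propref{prop:canonical_local_to_global} together with \lemref{lemma:local_delaunay_convexity_equivalence} the edges on which $\tri_1$ and $\tri_2$ differ are flat, $\alpha_{ij}=\pi$, hence contribute nothing to $\sum_{ij}(\pi-\alpha_{ij})\lambda_{ij}$.

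For \ref{item:he_functional_differentiability} I argue stratum by stratum. Analyticity on the interior of each cell is \lemref{lemma:local_hessian_formula}. For global $C^{1}$ I use that \propref{prop:local_variational_principle} gives $\partial_{h_i}\HE_{\len,r,\Theta}^{\pm}=\Theta_i-\theta_i$ on each cell; since the $\theta_i$ are cone angles of the induced piecewise metric, they are intrinsic and vary continuously with $h$ across the walls, so the gradient is globally continuous. For $C^{2}$ I compare the Hessian $-(\laplacian_{\len,r}^{\pm}+\Diff_{\len,r}^{\pm})$ from the two sides of a wall: there the flipped diagonal carries vanishing weight $\cotw^{\pm}=0$, while at the flip the face-circles of the four triangles involved coincide into the single circle orthogonal to all four vertex-circles of the quadrilateral, so the angles entering the weights \eqref{eq:cotan_weights} of the surrounding edges agree from both sides. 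Hence the one-sided Hessians coincide on the wall and assemble into a continuous quadratic form (one does not expect $C^{3}$). Assertion \ref{item:he_functional_gradient_flow} is then immediate, since the globally continuous gradient just computed equals $\Theta-\theta$, which is precisely the decorated $\Theta$-flow \eqref{eq:decorated_flow}.

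For the strict concavity \ref{item:hyperbolic_he_functional_strictly_concave} I combine the $C^{2}$ regularity with negative-definiteness of the Hessian. On any canonical triangulation one has $\cotw_{ij}^{-}\geq0$ (weighted Delaunay $\Leftrightarrow$ canonical, \lemref{lemma:local_delaunay_convexity_equivalence}), so both $\laplacian_{\len,r}^{-}$ and $\Diff_{\len,r}^{-}$ are positive semi-definite and the Hessian is negative semi-definite throughout $\polyhedra^{-}(\surf_g)$. To promote this to definiteness I examine the kernel of $\laplacian_{\len,r}^{-}+\Diff_{\len,r}^{-}$: because $\cosh\len_{ij}-1>0$, any null vector must vanish at every vertex incident to a positive-weight edge and be constant on each positive-weight component; since the weighted Delaunay tessellation (\lemref{lemma:uniqueness_wDt}) is a genuine cell complex with $\verts$ as its set of $0$-cells, every vertex is incident to at least one surviving ($\cotw_{ij}^{-}>0$) edge, forcing the null vector to vanish. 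Being globally $C^{2}$ with an everywhere negative-definite Hessian, $\HE_{\surf_g,\Theta}^{-}$ restricts to a strictly concave function along every line segment contained in its domain, which is strict concavity.

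I expect the $C^{2}$-gluing across the walls to be the main obstacle: one must verify that exchanging the flat diagonal of $\tri_1$ for that of $\tri_2$ alters neither the gradient nor the Hessian, which rests on the precise fact that the flipped edge carries vanishing cotan-weight and that the surrounding weights are computed from the coinciding face-circles at the flip. A secondary delicate point is keeping the Hessian definite on the deeper strata, where several edges go flat simultaneously and one must ensure that no vertex loses all of its positive-weight incident edges.
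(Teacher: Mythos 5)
Your proposal is correct and takes essentially the same route as the paper, which likewise glues the cell-wise functionals over the decomposition into the $\polyhedra_{\tri}^{\pm}(\surf_g)$, proves well-definedness and $C^2$-regularity across walls by observing that flipped diagonals satisfy $\alpha_{ij}=\pi$ and hence $\cotw_{ij}^{\pm}=0$ (\lemref{lemma:well_definedness_he_functional}, \lemref{lemma:he_functional_total_diff}, \lemref{lemma:he_functional_hessian}), reads off the gradient flow from Schl\"afli's formula, and obtains strict concavity from the non-negativity of the cotan-weights on canonical triangulations (\propref{prop:local_concavity_hyperbolic_he}). Your explicit kernel analysis of $\laplacian_{\len,r}^{-}+\Diff_{\len,r}^{-}$ on deeper strata is a welcome elaboration of a point the paper compresses into the bare assertion that $\Diff_{\len,r}^{-}$ is positive definite, and your face-circle argument at a flip makes explicit the geometric reason the surrounding weights match, which the paper leaves implicit in its appeal to the canonical tessellation.
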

\begin{proof}
    In \lemref{lemma:well_definedness_he_functional} we will show that
    $\HE_{\surf_g,\Theta}^{\pm}$ is well-defined. The differentiability of
    $\HE_{\surf_g,\Theta}^{\pm}$ is analysed in
    \lemref{lemma:he_functional_total_diff} and
    \lemref{lemma:he_functional_hessian}. This proves item
    \ref{item:he_functional_differentiability}. In particular,
    item \ref{item:he_functional_gradient_flow} is a reformulation of
    \lemref{lemma:he_functional_total_diff}. Finally, the strict concavity of the
    hyperbolic dHE-functional $\HE_{\surf_g,\Theta}^{-}$
    follows from \lemref{lemma:he_functional_hessian} and
    \propref{prop:local_concavity_hyperbolic_he}.
\end{proof}

With these preparations in place we are ready to solve the prescribed cone-angle
problem for piecewise hyperbolic surfaces.

\begin{proof}[%
    Proof of \thmref{theorem:realisation_hyperbolic} (variational principle \&
    uniqueness).
    ]
    Item \ref{item:he_functional_gradient_flow} of
    \propref{prop:he_functional_properties} shows that realizations of
    $\Theta$ are given by critical points of the dHE-functional
    $\HE_{\surf_g,\Theta}^{-}$.
    Since  $\HE_{\surf_g,\Theta}^{-}$ is strictly concave
    (\propref{prop:he_functional_properties} item
    \ref{item:hyperbolic_he_functional_strictly_concave}), it possesses at most
    one critical point, \ie, a maximum point.
\end{proof}

\begin{proof}[%
    Proof of \thmref{theorem:realisation_hyperbolic} (existence)
    ]
    Its left to show that $\HE_{\surf_g,\Theta}^{-}$ has a maximum.
    First, it is clear that $\HE_{\surf_g,\Theta}^{-}$ can only have a maximum
    if $\Theta$ satisfies the hyperbolic Gau{\ss}--Bonnet condition
    \eqref{eq:hyperbolic_gauss_bonnet_condition} since each
    $h\in\polyhedra^{-}(\surf_g)$ induces a piecewise hyperbolic surface.

    So suppose $\Theta$ satisfies these conditions. We will show that there
    is an $M>0$ such that
    \begin{equation}
        \sup_{h\in\polyhedra^{-}(\surf_g)}\HE_{\surf_g,\Theta}^{-}(h)
        \;=\;
        \sup_{\substack{h\in\polyhedra^{-}(\surf_g)\\\|h\|\leq M}}
        \HE_{\surf_g,\Theta}^{-}(h),
    \end{equation}
    Since the ball $\{\|h\|\leq M\}$ is compact, the supremum is attained at some
    $\bar{h}\in\cl\{h\in\polyhedra^{-}(\surf_g):\|h\|\leq M\}$. Now,
    \lemref{lemma:hyperbolic_he_hight_bound} guarantees that
    $\bar{h}\in\polyhedra^{-}(\surf_g)$.

    To see that such an $M$ exists we will first prove in
    \lemref{lemma:hyperbolic_he_hight_bound} that the
    heights can be bounded from below. Finally, the existence of an upper bound
    for the heights follows from the structure of $\polyhedra^{-}(\surf_g)$
    (\propref{prop:space_polyhedra_characterization}) together with the
    \enquote{scaling-behavior} of $\HE_{\surf_g, \Theta}^{-}$
    (\lemref{lemma:weight_relationship} and \lemref{lemma:hyperbolic_he_fiber_limit}).
\end{proof}

\subsection{Properties of the discrete Hilbert--Einstein functional}
\begin{lemma}[well-definedness of $\HE_{\surf_g, \Theta}^{\pm}$]
    \label{lemma:well_definedness_he_functional}
    The dHE-functional is well-defined, that is, if $h\in\polyhedra^{\pm}(\surf_g)$ and
    $\tri$ and $\tilde{\tri}$ are triangulations refining the canonical tessellation
    corresponding to the weights $\Omega_R^{\pm}(h)$, then
    \begin{equation}
        \HE_{\surf_g,\Theta,\tri}^{\pm}(h)
        \;=\;
        \HE_{\surf_g,\Theta,\tilde{\tri}}^{\pm}(h).
    \end{equation}
\end{lemma}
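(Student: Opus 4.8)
The plan is to show that the value $\HE_{\surf_g,\Theta,\tri}^{\pm}(h)$ does not change when we pass from one triangular refinement $\tri$ of the canonical tessellation $\tri_{\surf_g}^{\omega}$, with $\omega=\Omega_R^{\pm}(h)$, to another refinement $\tilde{\tri}$. Since any two triangulations of a disk refining the same cell decomposition are connected by a finite sequence of diagonal flips supported inside the non-triangular cells, it suffices to prove invariance under a single such flip. Let $ij$ be the flipped edge, shared by the two triangles $ijk$ and $ilj$, and let $kl$ be the new diagonal. Because both $\tri$ and $\tilde{\tri}$ refine the same canonical tessellation, \propref{prop:canonical_local_to_global} together with \lemref{lemma:local_delaunay_convexity_equivalence} forces the flipped diagonal to satisfy the local canonical condition with equality, that is $\alpha_{ij}^k+\alpha_{ij}^l=\pi$. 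Exactly as in the classical Delaunay picture, this is the co-face-circular condition: the vertex-circles at $i,j,k,l$ are all orthogonal to a single common circle, so the two face-circles coincide, $C_{ijk}=C_{ilj}$, and their base faces lie in one common totally geodesic plane of hyperbolic $3$-space. The new diagonal $kl$ satisfies $\alpha_{kl}^i+\alpha_{kl}^j=\pi$ for the same reason.

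I would then treat the three summands of \eqref{eq:he_functional_local} separately. The curvature term $\sum_{i\in\verts}(\Theta_i-\theta_i)h_i$ is unaffected: both $\tri$ and $\tilde{\tri}$ are weighted Delaunay triangulations of the \emph{same} decorated piecewise spherical/hyperbolic metric determined by $h$, so the cone angles $\theta_i$ — being intrinsic to that metric — coincide, and the heights $h_i$ are fixed by $h$. For the edge term $\sum_{ij\in\edges_{\tri}}(\pi-\alpha_{ij})\lambda_{ij}$, the flipped diagonal contributes nothing in either refinement, since $\pi-\alpha_{ij}=0$ before the flip and $\pi-\alpha_{kl}=0$ after it. Every remaining edge is common to $\tri$ and $\tilde{\tri}$, and for a boundary edge of the quadrilateral, say $ik$, the face-circle of the adjacent interior triangle is the common circle $C_{ijk}=C_{ilj}$ irrespective of which diagonal is drawn; hence its intersection angle $\alpha$ and its $\lambda$-length are the same in both refinements. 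Thus the edge term agrees.

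The remaining, and genuinely geometric, point is invariance of the volume term $-2\vol\big(P_{\len,r}^{\pm}(h)\big)$; this is where I expect the main obstacle to lie. Here I use coplanarity of the base faces over $ijk$ and $ilj$: the two pyramids (spherical case) or prisms (hyperbolic case) erected over them share the totally geodesic plane over $C_{ijk}=C_{ilj}$, so their union is precisely the single polyhedral block over the hyperideal quadrilateral with vertices $i,j,k,l$. Subdividing this block by the other diagonal $kl$ decomposes the very same region, so the truncated volume is unchanged; equivalently, the interior edge of the polyhedral cone/end over the flipped diagonal has dihedral angle $\alpha_{ij}=\pi$, making it flat, so it may be inserted or deleted without altering the volume, which Schl\"afli's differential formula \propref{prop:schlaflis_differential_formula} also confirms. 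The truncating planes at hyperideal vertices and the auxiliary horospheres at ideal vertices depend only on the vertices, hence are compatible with both refinements and the truncation is consistent. Combining the three invariances yields $\HE_{\surf_g,\Theta,\tri}^{\pm}(h)=\HE_{\surf_g,\Theta,\tilde{\tri}}^{\pm}(h)$, completing the proof.
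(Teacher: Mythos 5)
Your proof is correct, but it is organized differently from the paper's. The paper's argument (proof of \lemref{lemma:well_definedness_he_functional}) compares the two refinements directly, with no flip machinery: the volume and the cone-angles $\theta_i$ are intrinsic quantities of the polyhedral cone/end $P_{\surf_g}^{\pm}(h)$, which is determined by $h$ alone and not by the chosen refinement (likewise the dihedral angles $\alpha_{ij}$ and $\lambda$-lengths at edges common to both refinements), so the only possible discrepancy comes from edges present in one refinement but not the other; any such edge lies inside a face of the canonical tessellation, whence $\alpha_{ij}=\pi$ by \lemref{lemma:local_delaunay_convexity_equivalence} and its term $(\pi-\alpha_{ij})\lambda_{ij}$ vanishes. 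You instead route through flip-connectivity of polygon triangulations and verify invariance under a single flip, re-deriving by hand exactly what the paper gets for free from intrinsicness: your coplanarity argument ($C_{ijk}=C_{ilj}$ at equality, so the two pyramids/prisms assemble into one block subdivided either way) is a correct local proof that the truncated volume is refinement-independent, and your common-circle argument for boundary edges of the flipped quadrilateral fills in the agreement of $\alpha$ at shared edges, which the paper subsumes under \enquote{intrinsic quantities of $P_{\surf_g}^{\pm}(h)$}. The trade-off: your version is more self-contained and makes the geometric mechanism explicit (useful if one distrusts the blanket intrinsicness claim), at the cost of invoking an extra combinatorial ingredient (flip-connectivity of triangulations refining a fixed cell decomposition) that the paper's direct comparison simply does not need, since nothing in the argument requires interpolating between the two refinements.
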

\begin{proof}
    The volume and cone-angles $\theta_i$ are intrinsic quantities of
    $P_{\surf_g}^{\pm}(h)$. Furthermore, if $ij$ is an edge of $\tri$
    but not $\tilde{\tri}$, then $ij$ is contained in a face of the canonical
    tessellation corresponding to $\Omega_R^{\pm}(h)$. It follows from
    \lemref{lemma:local_delaunay_convexity_equivalence}
    that $\alpha_{ij}=\pi$. So this edge is not contributing to
    $\HE_{\surf_g,\Theta,\tri}^{\pm}$.
\end{proof}

\begin{lemma}[first derivative of $\HE_{\surf_g,\Theta}^{\pm}$]
\label{lemma:he_functional_total_diff}
    The derivative of the dHE-functional $\HE_{\surf_g,\Theta}^{\pm}$
    is given by
    \begin{equation}\label{eq:he_functional_total_diff}
        \diff{}{\HE_{\surf_g,\Theta}^{\pm}}
        \;=\; \sum(\Theta_i-\theta_i)\,\diff{}{h_i}.
    \end{equation}
\end{lemma}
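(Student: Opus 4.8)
The plan is to establish the formula first in the interior of a single cell $\polyhedra_{\tri}^{\pm}(\surf_g)$, on which the canonical triangulation $\tri$ is fixed, by differentiating the explicit local expression \eqref{eq:he_functional_local}, and then to patch these computations across cells. The essential input is Schl\"afli's differential formula (\propref{prop:schlaflis_differential_formula}) applied to the polyhedral cone/end $P_{\len,r}^{\pm}(h)$. Its edges fall into two families: the edges $ij\in\edges_{\tri}$ lying on the base surface $\surf_g$, which carry length $\lambda_{ij}$ and dihedral angle $\alpha_{ij}=\alpha_{ij}^k+\alpha_{ij}^l$; and the edges joining the distinguished vertex to each $i\in\verts$, which carry length $h_i$ and total dihedral angle $\theta_i$, the latter obtained by summing the pyramid/prism contributions $\theta_{jk}^i$ around this edge (see \secref{sec:connections_hyperbolic_polyhedra}). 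For hyperideal and ideal vertices we use the truncated volume and the auxiliary horospheres; as recalled after \propref{prop:schlaflis_differential_formula}, the truncating planes stay orthogonal to their neighbours and hence contribute nothing. Thus Schl\"afli's formula reads
\[
    \diff{}{\vol}
    \;=\;
    -\frac{1}{2}\Big(
        \sum_{ij\in\edges_{\tri}}\lambda_{ij}\,\diff{}{\alpha_{ij}}
        \,+\,
        \sum_{i\in\verts}h_i\,\diff{}{\theta_i}
    \Big),
\]
where $\vol=\vol\big(P_{\len,r}^{\pm}(h)\big)$.

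Next I would differentiate \eqref{eq:he_functional_local}. The decisive point is that $\surf_g$, and hence each $\lambda$-length $\lambda_{ij}$, is invariant under discrete conformal change: it is precisely the reparametrization by heights that keeps the $\lambda_{ij}$ constant while varying the $h_i$. Since $\Theta_i$ and $\pi$ are also constant, \eqref{eq:he_functional_local} yields
\[
    \diff{}{\HE_{\len,r,\Theta}^{\pm}}
    \;=\;
    -2\,\diff{}{\vol}
    \,+\, \sum_{i\in\verts}\!\big((\Theta_i-\theta_i)\,\diff{}{h_i}-h_i\,\diff{}{\theta_i}\big)
    \,-\, \sum_{ij\in\edges_{\tri}}\lambda_{ij}\,\diff{}{\alpha_{ij}}.
\]
Substituting the Schl\"afli identity for $-2\,\diff{}{\vol}$ cancels both the $\sum_{ij}\lambda_{ij}\,\diff{}{\alpha_{ij}}$ terms and the $\sum_i h_i\,\diff{}{\theta_i}$ terms, leaving exactly $\sum_i(\Theta_i-\theta_i)\,\diff{}{h_i}$. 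This proves the claim on the interior of each cell.

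Finally, I would upgrade this to the global statement on $\polyhedra^{\pm}(\surf_g)$. The right-hand side $\sum_i(\Theta_i-\theta_i)\,\diff{}{h_i}$ involves only the cone-angles $\theta_i$, which are intrinsic to $P_{\surf_g}^{\pm}(h)$ and therefore independent of the refining triangulation; hence it defines a single continuous one-form on all of $\polyhedra^{\pm}(\surf_g)$. By \lemref{lemma:well_definedness_he_functional} the functional $\HE_{\surf_g,\Theta}^{\pm}$ is well-defined and continuous across the (finitely many, by \propref{prop:space_polyhedra_characterization}) cell walls, and on the interior of every top-dimensional cell its differential equals this common one-form. Integrating along paths then shows that $\HE_{\surf_g,\Theta}^{\pm}$ is continuously differentiable throughout, with the stated derivative.

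The main obstacle I anticipate is the Schl\"afli bookkeeping: correctly accounting for the two edge families and, in particular, verifying that the truncating planes at hyperideal vertices and the auxiliary horospheres at ideal vertices contribute no extra terms, so that the total angle around each distinguished edge is genuinely the cone-angle $\theta_i$. The patching across the cell walls, where $\tri$ changes, is comparatively routine once one observes that $\theta_i$, and hence the resulting one-form, is triangulation-independent.
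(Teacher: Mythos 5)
Your proposal is correct and follows essentially the same route as the paper: the paper derives \eqref{eq:he_functional_total_diff} from the local variational principle (\propref{prop:local_variational_principle}), which is itself a consequence of Schl\"afli's differential formula applied in the height parametrization with the $\lambda$-lengths held fixed, and then invokes the triangulation-independence of the cone-angles $\theta_i$ to pass between cells. You merely unpack the Schl\"afli bookkeeping (the cancellation of the $\lambda_{ij}\,\diff{}{\alpha_{ij}}$ and $h_i\,\diff{}{\theta_i}$ terms) that the paper leaves implicit, which is a faithful and correct elaboration.
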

\begin{proof}
    We obtain \eqref{eq:he_functional_total_diff} from
    \propref{prop:local_variational_principle} by considering any
    canonical triangulation corresponding to $\Omega_R^{\pm}(h)$.
    Since $\theta_i$ does not depend on the canonical triangulation we chose,
    so does not \eqref{eq:he_functional_total_diff}.
\end{proof}

\begin{lemma}[second derivative of $\HE_{\surf_g,\Theta}^{\pm}$]
   \label{lemma:he_functional_hessian}
   Consider $h\in\polyhedra^{\pm}(\surf_g)$. Choose any canonical
   triangulation $\tri$ determined by $\Omega_R^{\pm}(h)$ and denote the
   corresponding decorated discrete metric by $(\tri, \len, r)$.
   The second derivative of the dHE-functional $\HE_{\surf_g,\Theta}^{\pm}$
   at $h$ is given by \eqref{eq:explicit_formula_jacobian} with respect to
   $(\tri, \len, r)$.
\end{lemma}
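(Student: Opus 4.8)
The plan is to reduce the global assertion to the local Hessian computation in \lemref{lemma:local_hessian_formula} and then to control the behaviour on the walls separating the cells $\polyhedra_{\tri}^{\pm}(\surf_g)$. First I would dispose of the interior case: if $h$ lies in the interior of a single cell $\polyhedra_{\tri}^{\pm}(\surf_g)$, then $\tri$ is the unique canonical triangulation for all weights near $\Omega_R^{\pm}(h)$, so by the definition \eqref{eq:global_he_functional} the global functional coincides with the local functional $\HE_{\len,r,\Theta}^{\pm}$ in a neighbourhood of $h$, and \lemref{lemma:local_hessian_formula} yields \eqref{eq:explicit_formula_jacobian} immediately. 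The entire difficulty is thus concentrated at boundary points, where several canonical triangulations compete: there I must show both that the right-hand side of \eqref{eq:explicit_formula_jacobian} does not depend on the chosen triangulation and that this common value is the genuine second derivative.

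For the triangulation-independence I would invoke \propref{prop:canonical_local_to_global}: two canonical triangulations for the same weights differ only along edges $ij$ for which \eqref{eq:local_canonical_condition} holds with equality, and by \lemref{lemma:local_delaunay_convexity_equivalence} this means exactly $\alpha_{ij}^k+\alpha_{ij}^l=\pi$. For such an edge $\cot\alpha_{ij}^l=\cot(\pi-\alpha_{ij}^k)=-\cot\alpha_{ij}^k$, so the factor $(\cot\alpha_{ij}^k+\cot\alpha_{ij}^l)$ in the numerator of $\cotw_{ij}^{\pm}$ in \eqref{eq:cotan_weights} vanishes, whence $\cotw_{ij}^{\pm}=0$. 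Since this weight enters as a common factor of every term attached to $ij$ in both $\laplacian_{\len,r}^{\pm}$ and $\Diff_{\len,r}^{\pm}$, the differing edges contribute nothing, so the two triangulations give the same quadratic form. Moreover, because $\cotw_{ij}^{\pm}$ depends continuously on the geometry and vanishes along the wall, the formula computed inside each adjacent cell converges to this common value, and hence \eqref{eq:explicit_formula_jacobian} defines a continuous function on all of $\polyhedra^{\pm}(\surf_g)$.

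It remains to identify this continuous expression with the actual second derivative. By \lemref{lemma:he_functional_total_diff} the gradient $\diff{}{\HE_{\surf_g,\Theta}^{\pm}}=\sum(\Theta_i-\theta_i)\diff{}{h_i}$ is continuous, since the cone-angles $\theta_i$ are intrinsic to $P_{\surf_g}^{\pm}(h)$ and hence independent of the triangulation. On the union of the cell interiors — which is open and dense, the complementary walls forming a finite union of lower-dimensional sets by \propref{prop:properties_of_weightings} and \propref{prop:space_polyhedra_characterization} — this gradient is analytic, with Hessian given by \eqref{eq:explicit_formula_jacobian} via \lemref{lemma:local_hessian_formula}, and we have just seen that this formula extends continuously everywhere. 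A segment-wise application of the fundamental theorem of calculus, crossing the walls at finitely many points and using the continuity of the candidate Hessian, then shows that the gradient is continuously differentiable with derivative \eqref{eq:explicit_formula_jacobian} on all of $\polyhedra^{\pm}(\surf_g)$, yielding both the stated formula and the $C^2$-regularity claimed in \propref{prop:he_functional_properties}. I expect this last passage from a continuous candidate Hessian to genuine twice-differentiability to be the main obstacle; the geometric fact that makes it go through is precisely the vanishing of $\cotw_{ij}^{\pm}$ on the flipped edges, which rules out any jump of the Hessian across a wall.
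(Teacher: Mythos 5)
Your proposal is correct and takes essentially the same route as the paper: the paper's proof likewise reduces everything to triangulation-independence of \eqref{eq:explicit_formula_jacobian}, observing that $\cotw_{ij}^{\pm}=0$ exactly when $\alpha_{ij}=\pi$, which by \lemref{lemma:local_delaunay_convexity_equivalence} and \propref{prop:canonical_local_to_global} holds on precisely the edges where two canonical triangulations differ (the argument of \lemref{lemma:well_definedness_he_functional}). Your closing passage from the continuous candidate Hessian to genuine twice-differentiability across the walls is just a more explicit rendering of what the paper leaves implicit in \propref{prop:he_functional_properties}, not a different method.
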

\begin{proof}
    We only have to show that formula \eqref{eq:explicit_formula_jacobian} does
    not depend on the particular choice of canonical triangulation.
    We observe that $\cotw_{ij}=0$ if and only if $\alpha_{ij}=\pi$. So we can
    argue as in the proof of \lemref{lemma:well_definedness_he_functional}.
\end{proof}

\begin{lemma}[lower bound for the heights]
    \label{lemma:hyperbolic_he_hight_bound}
    Denote by $\verts_0$ and $\verts_1$ the sets of ideal and hyperideal vertices of
    $\surf_g$, respectively.
    There are $\varepsilon_0, \varepsilon_1>0$ such that
    \begin{equation}
       \sup_{h\in\polyhedra^{-}(\surf_g)}\HE_{\surf_g,\Theta}^{-}(h)
        \;\leq\;
        \sup_{h\in\polyhedra_{\varepsilon_0, \varepsilon_1}^{-}(\surf_g)}
        \HE_{\surf_g,\Theta}^{-}(h),
    \end{equation}
    Here,
    \(
        h
        \in
        \polyhedra_{\varepsilon_0,\varepsilon_1}^{-}(\surf_g)
        \subset
        \polyhedra^{-}(\surf_g)
    \)
    if
    $h_i\geq-\varepsilon_0$ for all $i\in\verts_0$ and
    $h_i\geq\varepsilon_1$ for all $i\in\verts_1$.
\end{lemma}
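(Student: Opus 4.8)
The plan is to exploit the simple form of the gradient, $\diff{}{\HE_{\surf_g,\Theta}^{-}}=\sum_i(\Theta_i-\theta_i)\,\diff{}{h_i}$ from \lemref{lemma:he_functional_total_diff}, together with the product structure of the domain. By \propref{prop:space_polyhedra_characterization} the coordinate-wise monotone map $\Omega_R^{-}$ identifies $\polyhedra^{-}(\surf_g)$ with $\{\omega\in\RR_{>0}^{\verts}:\omega_i>\nicefrac{1}{\sinh R}\text{ for hyperideal }i\}$, so in the height coordinates $\polyhedra^{-}(\surf_g)$ is literally a product of intervals: $h_i\in(0,\infty)$ for hyperideal $i\in\verts_1$ and $h_i\in\RR$ for ideal $i\in\verts_0$. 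The ``lower boundary'' of the lemma is the lower end of each interval, namely $h_i\to0^{+}$ for hyperideal vertices and $h_i\to-\infty$ for ideal vertices. I would reduce the claimed inequality to the monotonicity statement that there are $\varepsilon_0,\varepsilon_1>0$ with $\Theta_i-\theta_i>0$ whenever $h_i\leq\varepsilon_1$ (hyperideal) or $h_i\leq-\varepsilon_0$ (ideal), \emph{uniformly} in the remaining coordinates; this is where $\Theta_i>0$ enters.

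Granting this, the proof concludes by a coordinate-wise push-up. Given $h\in\polyhedra^{-}(\surf_g)$ with some coordinates below their threshold, I raise each such coordinate to its threshold one at a time, keeping the others fixed. Each move stays inside the product domain, and since $\partial_{h_i}\HE_{\surf_g,\Theta}^{-}=\Theta_i-\theta_i>0$ along the whole segment, it does not decrease $\HE_{\surf_g,\Theta}^{-}$. The endpoint lies in $\polyhedra_{\varepsilon_0,\varepsilon_1}^{-}(\surf_g)$, so $\HE_{\surf_g,\Theta}^{-}(h)$ is dominated by a value attained there; taking the supremum over $h$ yields the assertion. The single-coordinate monotonicity is reinforced by \lemref{lemma:local_hessian_formula}: since $\tri$ is weighted Delaunay the weights $\cotw_{ij}^{-}$ are non-negative, and the diagonal of the Hessian gives $\partial_{h_i}\theta_i=(\laplacian_{\len,r}^{-}+\Diff_{\len,r}^{-})_{ii}=\sum_{j}\cotw_{ij}^{-}\cosh\len_{ij}\geq0$, so $\theta_i$ is non-decreasing in $h_i$ and it suffices to control its limit at the lower end.

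The heart of the matter is therefore the uniform angle estimate $\theta_i\to0$ as $h_i$ tends to the lower end of its interval, which I would extract from the relation \eqref{eq:heights_lambda_lengths_relationship_hyperbolic} between the heights, the \emph{fixed} $\lambda$-lengths of $\surf_g$, and the edge lengths $\len$ of the induced discrete metric. For an edge $ij$ with both endpoints hyperideal this reads $\cosh\len_{ij}=(\cosh\lambda_{ij}+\cosh h_i\cosh h_j)/(\sinh h_i\sinh h_j)$, so $h_i\to0^{+}$ forces $\len_{ij}\to\infty$ with $\len_{ij}=-\log h_i+O(1)$; the analogous blow-up for ideal endpoints as $h_i\to-\infty$ follows from \eqref{eq:heights_lambda_lengths_relationship_hyperbolic} with $\epsilon_i=0$ (and is in any case compatible with the hyperideality condition \eqref{eq:hyperideal_decoration_condition}). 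Since the $\lambda$-lengths and the other heights enter only through the $O(1)$ term, the two edges $\len_{ij},\len_{ik}$ at a corner grow like $-\log h_i$ while $\len_{jk}$ is independent of $h_i$, and the hyperbolic law of cosines gives $\len_{ij}+\len_{ik}-\len_{jk}\to+\infty$, hence $\theta_{jk}^i\to0$.

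I expect the main obstacle to be making this uniform in the other coordinates, which may themselves run to their boundaries. The point to verify is that in the combination $\len_{ij}+\len_{ik}-\len_{jk}$ the divergences produced by $h_j,h_k$ cancel: a small $h_j$ enlarges $\len_{ij}$ and $\len_{jk}$ by the same $-\log h_j$, while for large $h_j$ the relevant constant stays bounded below. This leaves the uniform lower bound $\len_{ij}+\len_{ik}-\len_{jk}\sim-2\log h_i$, and hence a uniform bound $\theta_{jk}^i\to0$. Summing the finitely many corner angles at $i$---with bounded vertex degree over the finitely many canonical triangulations (\propref{prop:properties_of_weightings}), and with the joint-degeneration combinatorics controlled by \lemref{lemma:boundary_of_config_space}, which forbids an edge between two simultaneously degenerating vertices---then yields $\theta_i\to0$ uniformly, which is exactly the threshold condition required in the first step.
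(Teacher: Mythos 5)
Your proposal is correct, and its outer skeleton coincides with the paper's proof: the paper likewise reduces the lemma to showing that for each vertex $i$ and each canonical triangulation $\tri$ there is a threshold below which $\theta_i(h)<\Theta_i$, then uses $\diff{}{\HE_{\surf_g,\Theta}^{-}}=\sum(\Theta_i-\theta_i)\,\diff{}{h_i}$ (\lemref{lemma:he_functional_total_diff}) to increase the functional by raising $h_i$, and concludes with the finiteness of canonical triangulations from \propref{prop:properties_of_weightings}; your coordinate-wise push-up through the product domain --- which is indeed $(0,\infty)^{\verts_1}\times\RR^{\verts_0}$ in height coordinates, as confirmed by the image of $\Omega_R^{-}$ displayed in the proof of \propref{prop:space_polyhedra_characterization} --- merely makes that step explicit. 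Where you genuinely differ is in the core angle estimate. The paper (\lemref{lemma:bound_on_hyperideal_heights} and \lemref{lemma:bound_on_ideal_heights}) works in the link triangle of the prism at $i$: the side opposite $\theta_{jk}^i$ is fixed by the $\lambda$-lengths, and the adjacent sides $\varphi_i^j$ of \eqref{eq:link_length_hyperideal}--\eqref{eq:link_length_ideal} blow up; in the ideal case this blow-up holds only because $\nicefrac{\omega_j}{\omega_i}\to\infty$, which is precisely where \lemref{lemma:boundary_of_config_space} is indispensable, since an adjacent ideal weight degenerating at a comparable rate would keep $\varphi_i^j$ bounded. Your route through the base-triangle lengths is more robust at exactly this point: rewriting \eqref{eq:heights_lambda_lengths_relationship_hyperbolic} as $\cosh\len_{ij}=\big(\tau_{\epsilon_i\epsilon_j}(\lambda_{ij})+2\tau_{\epsilon_i}(h_i)\tau_{\epsilon_j}(h_j)\big)/\big(2\tau'_{\epsilon_i}(h_i)\tau'_{\epsilon_j}(h_j)\big)$ (notation of \lemref{lemma:bound_on_hyperideal_heights}), each edge length splits asymptotically into separate contributions of its two endpoint heights plus a $\lambda$-term, so a degenerating neighbour $h_j$ inflates $\len_{ij}$ and $\len_{jk}$ by the same amount and cancels exactly in $\len_{ij}+\len_{ik}-\len_{jk}$, which diverges like $-2\log\big(2\tau'_{\epsilon_i}(h_i)\big)$ uniformly in the remaining coordinates and irrespective of joint degenerations; consequently your citation of \lemref{lemma:boundary_of_config_space} is actually superfluous in your argument --- harmless, but worth noting that your cancellation handles adjacent simultaneous degenerations for free and yields an explicit quantitative rate, whereas the paper excludes that configuration combinatorially. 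One small remark: your diagonal reading $\partial_{h_i}\theta_i=\sum_j\cotw_{ij}^{-}\cosh\len_{ij}\geq0$ is the geometrically correct interpretation of \eqref{eq:explicit_formula_jacobian} (it is the one compatible with the strict concavity of $\HE_{\surf_g,\Theta}^{-}$, whose Hessian is $-(\laplacian_{\len,r}^{-}+\Diff_{\len,r}^{-})$), though, as you say, it only reinforces the argument and is not needed once the threshold estimate is uniform.
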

\begin{proof}
    We are going to show in \lemref{lemma:bound_on_hyperideal_heights} and
    \lemref{lemma:bound_on_ideal_heights} that for each $i\in\verts$ and each
    canonical triangulation
    $\tri$ there is an $\varepsilon^i_{\Theta, \tri}$ such that the cone-angle
    $\theta_i(h)<\Theta_i$ for all $h\in\polyhedra_{\tri}^{-}(\surf_g)$ with
    $h_i<\varepsilon^i_{\Theta,\tri}$. Then \lemref{lemma:he_functional_total_diff}
    grants that we can increase $\HE_{\surf_g,\Theta,\tri}^{-}$ at each of these $h$
    by increasing $h_i$. This completes the proof since
    there are only finitely many canonical triangulations of $\surf_g$ by
    \propref{prop:properties_of_weightings}.
\end{proof}

\begin{lemma}\label{lemma:bound_on_hyperideal_heights}
    Let $\tri$ be a canonical triangulation of $\surf_g$ and
    $(h^n)_{n\geq0}\in\polyhedra_{\tri}^{-}(\surf_g)$. If $i\in\verts$ is
    hyperideal and  $\lim_{n\to\infty}h_i^n=0$, then
    $\lim_{n\to\infty}\theta_i(h^n)=0$.
\end{lemma}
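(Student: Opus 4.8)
The plan is to reduce the claim to the decay of a single corner angle and then estimate that angle by the hyperbolic law of cosines. Since the triangulation $\tri$ is fixed along the whole sequence, the cone-angle splits as a finite sum $\theta_i(h^n)=\sum_{jk}\theta_{jk}^i(h^n)$ over the triangles $ijk\in\faces_{\tri}$ incident to $i$, so it suffices to show $\theta_{jk}^i(h^n)\to0$ for each such triangle. Writing $\len$ for the edge lengths determined by $h^n$ and the (fixed) $\lambda$-lengths of $\surf_g$ via \eqref{eq:heights_lambda_lengths_relationship_hyperbolic}, the hyperbolic law of cosines (see, \eg, \cite[Thm.~3.5.3]{Ratcliffe1994}) gives
\begin{equation}
    1-\cos\theta_{jk}^i
    \;=\;
    \big(1-\coth\len_{ij}\coth\len_{ik}\big)
    \,+\,\frac{\cosh\len_{jk}}{\sinh\len_{ij}\sinh\len_{ik}}.
\end{equation}

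First I would show that the two edges at $i$ become arbitrarily long. For a hyperideal vertex $i$, \eqref{eq:heights_lambda_lengths_relationship_hyperbolic} reads $\cosh\lambda_{ij}=\cosh\len_{ij}\sinh h_i\sinh h_j-\cosh h_i\cosh h_j$, whence $\cosh\len_{ij}\geq\coth h_i\coth h_j\geq\coth h_i$ (and the analogous bound holds when the neighbour is ideal). Thus $h_i^n\to0$ forces $\len_{ij},\len_{ik}\to\infty$, so $\coth\len_{ij}\coth\len_{ik}\to1$ and the parenthesised term above tends to $0$.

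The main obstacle is the last term: the remaining heights $h_j^n,h_k^n$ are \emph{not} controlled and may themselves degenerate, so $\cosh\len_{jk}$ need not stay bounded. The key observation is that the $\lambda$-lengths are an invariant of $\surf_g$ and hence constant along the sequence, which yields a bound uniform in $h_j,h_k$. Substituting \eqref{eq:heights_lambda_lengths_relationship_hyperbolic} and bounding the denominator from below by discarding the non-negative terms $\cosh\lambda_{ij},\cosh\lambda_{ik}$ gives
\begin{equation}
    \frac{\cosh\len_{jk}}{\cosh\len_{ij}\cosh\len_{ik}}
    \;\leq\;
    \tanh^2 h_i\,
    \frac{\cosh\lambda_{jk}+\cosh h_j\cosh h_k}{\cosh h_j\cosh h_k}
    \;\leq\;
    \tanh^2 h_i\,\big(1+\cosh\lambda_{jk}\big),
\end{equation}
which tends to $0$ since $\cosh\lambda_{jk}$ is a fixed constant. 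As $\len_{ij},\len_{ik}\to\infty$ we eventually have $\sinh\len_{ij}\sinh\len_{ik}\geq\tfrac12\cosh\len_{ij}\cosh\len_{ik}$, so the last term is also forced to $0$. Hence $\cos\theta_{jk}^i\to1$, \ie\ $\theta_{jk}^i\to0$, and summing over the finitely many incident triangles yields $\theta_i(h^n)\to0$. The cases in which $j$ or $k$ is ideal are handled identically, using the $\epsilon$-adjusted form of \eqref{eq:heights_lambda_lengths_relationship_hyperbolic}, and lead to the same conclusion.
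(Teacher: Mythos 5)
Your overall route is sound and genuinely different from the paper's: you work directly with the two-dimensional hyperbolic law of cosines in the surface triangle, using the lower bound $\cosh\len_{ij}\geq\coth h_i$ to force the two edges at $i$ to blow up, whereas the paper passes to the link triangle of the truncated vertex $i$ in the hyperideal prism, whose side opposite $\theta_{jk}^i$ has length fixed by the $\lambda$-lengths, and lets the adjacent sides $\varphi_i^j,\varphi_i^k$ diverge via \eqref{eq:link_length_hyperideal}. However, your closing sentence --- that the ideal cases are \enquote{handled identically} --- is where the written argument breaks, and it is precisely the delicate case. For an ideal neighbour $j$ the $\epsilon$-adjusted form of \eqref{eq:heights_lambda_lengths_relationship_hyperbolic} replaces $\cosh h_j$ by $\nicefrac{\ee^{h_j}}{2}$, and ideal heights are \emph{not} bounded below on $\polyhedra_{\tri}^{-}(\surf_g)$: the sequence may have $h_j^n\to-\infty$ simultaneously with $h_i^n\to0$. (This degeneration is exactly what \lemref{lemma:bound_on_ideal_heights} is about; \lemref{lemma:boundary_of_config_space} only forbids the weights of two \emph{adjacent} vertices vanishing together, and here $\omega_i^n\to\nicefrac{1}{\sinh R}>0$ while $\omega_j^n\to0$, which is allowed.) In that regime $\cosh\len_{jk}\to\infty$ like $\ee^{-h_j^n}$, the step \enquote{$\cosh h_j\cosh h_k\geq1$} has no analogue, and your claimed bound $\tanh^2 h_i\,(1+\cosh\lambda_{jk})$ is simply false: you discarded from $\cosh\len_{ij}$ exactly the terms that carry the compensating factor $\ee^{-h_j}$.

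The gap is local and repairable inside your own scheme. Writing $\tau_y(x)\coloneq\nicefrac{(\ee^{x}+y\ee^{-x})}{2}$, equation \eqref{eq:heights_lambda_lengths_relationship_hyperbolic} reads
\begin{equation}
    \cosh\len_{ij}
    \;=\;
    \frac{\tau_{\epsilon_i\epsilon_j}(\lambda_{ij})
        \,+\,\tau_{\epsilon_i}(h_i)\,\tau_{\epsilon_j}(h_j)}
        {\tau'_{\epsilon_i}(h_i)\,\tau'_{\epsilon_j}(h_j)},
\end{equation}
with every term positive on $\polyhedra_{\tri}^{-}(\surf_g)$. Instead of dropping the $\lambda$-terms wholesale, keep them and use $(a+b)(c+d)\geq ac+bd$ on $\cosh\len_{ij}\cosh\len_{ik}$ followed by $\nicefrac{(a+b)}{(c+d)}\leq\nicefrac{a}{c}+\nicefrac{b}{d}$; the products $\tau'_{\epsilon_j}(h_j)\,\tau'_{\epsilon_k}(h_k)$ cancel exactly between numerator and denominator, and with $\tau_{\epsilon_i}(h_i)=\cosh h_i$, $\tau'_{\epsilon_i}(h_i)=\sinh h_i$ one gets
\begin{equation}
    \frac{\cosh\len_{jk}}{\cosh\len_{ij}\cosh\len_{ik}}
    \;\leq\;
    \tanh^2 h_i
    \,+\,
    \sinh^2 h_i\,
    \frac{\tau_{\epsilon_j\epsilon_k}(\lambda_{jk})}
        {\tau_{\epsilon_i\epsilon_j}(\lambda_{ij})\,\tau_{\epsilon_i\epsilon_k}(\lambda_{ik})}
    \;\longrightarrow\;0,
\end{equation}
uniformly in $h_j,h_k$, since the $\lambda$-quotient is a fixed constant of $\surf_g$; this subsumes your original bound and closes all cases, and the uniformity is what the application in \lemref{lemma:hyperbolic_he_hight_bound} actually needs. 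It is instructive to note why the paper's link argument never meets this issue: in \eqref{eq:link_length_hyperideal} the uncontrolled quantity $\tau_{\epsilon_j}(h_j^n)$ enters only additively, with positive sign, in a numerator, so it can be ignored and $\varphi_i^j(h^n)\to\infty$ follows from $\sinh(h_i^n)\to0$ alone, after which the fixed opposite side in the link triangle forces the angle to zero.
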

\begin{proof}
    It is enough to prove this for a single triangle $ijk\in\faces_{\tri}$.
    The angle $\theta_{jk}^i(h^n)$
    is the dihedral angle of the hyperideal tetrahedron over $ijk$ at the edge
    connecting the vertex $i$ with the distinguished hyperideal vertex
    (see \secref{sec:hyperbolic_polyhedra_hyperbolic}). We can compute it
    by considering the link at the vertex $i$. It is a hyperbolic triangle.
    The opposite of $\theta_{jk}^i(h^n)$ in this triangle is completely determined by
    the geometry of the base face, \ie, the $\lambda$-lengths. Thus, its length
    is fixed. The length $\varphi_i^j(h^n)$ of the adjacent corresponding to the
    edge $ij$ can be computed from the law of cosines for hyperideal triangles.
    In particular,
    \begin{equation}\label{eq:link_length_hyperideal}
        \cosh\varphi_i^j(h^n)
        \;=\;
        \frac{
            \tau_{\epsilon_j}(h_j^n)
            \,+\,\cosh(h_i^n)\,\tau_{\epsilon_i\epsilon_j}(\lambda_{ij})
        }{
            \sinh(h_i^n)\,\tau'_{\epsilon_i\epsilon_j}(\lambda_{ij})
        }.
    \end{equation}
    Here, $\tau_{y}(x)\coloneq\nicefrac{(\ee^{x}+y\ee^{-x})}{2}$ and
    $\tau'_{y}$ is its derivative. It follows that
    $\lim_{h_i^n\to0}\varphi_i^j(h^n)=\infty$.
    The length $\varphi_i^k(h^n)$ of the other adjacent exhibits the same
    limiting behavior. Hence, again by using the law of cosines, we
    get $\lim_{h_i\to0}\theta_{jk}^i(h^n) = 0$.
\end{proof}

\begin{lemma}\label{lemma:bound_on_ideal_heights}
    Let $\tri$ be a canonical triangulation of $\surf_g$ and
    $(h^n)_{n\geq0}\in\polyhedra_{\tri}^{-}(\surf_g)$.
    Define the subset $\verts_{\infty}$ of
    vertices $i\in\verts$ with $\lim_{n\to\infty}h_i^n=-\infty$.
    For each $i\in\verts_{\infty}$ follows $\lim_{n\to\infty}\theta_i(h^n)=0$.
\end{lemma}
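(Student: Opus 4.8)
The plan is to reduce the statement to a single corner and then control that corner with the hyperbolic law of cosines. Since $\theta_i=\sum_{ijk\in\faces_{\tri}}\theta_{jk}^i$, it suffices to prove $\theta_{jk}^i\to0$ for every triangle $ijk\in\faces_{\tri}$ incident to a fixed $i\in\verts_\infty$; passing to a subsequence I may moreover assume that each height $h_x^n$ converges in $[-\infty,+\infty]$ (this is harmless, as it is enough to show that every subsequence has a further subsequence along which $\theta_i\to0$). Note first that $i$ is necessarily \emph{ideal}: a hyperideal vertex has $h_i>0$ by \eqref{eq:radii_height_relationship_hyperbolic}, so it cannot satisfy $h_i^n\to-\infty$. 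Writing $\len=\len(h^n)$ for the edge lengths of the discrete hyperbolic metric induced by $h^n$ through \eqref{eq:heights_lambda_lengths_relationship_hyperbolic} (which satisfy the triangle inequalities because $h^n\in\polyhedra_{\tri}^{-}(\surf_g)$), and abbreviating $a=\len_{ij}$, $b=\len_{ik}$, $c=\len_{jk}$, the identity $\cosh a\cosh b-\sinh a\sinh b=\cosh(a-b)\geq1$ together with $\sinh a\geq2\sinh^2(a/2)$ turns the law of cosines into the elementary bound
\[
   1-\cos\theta_{jk}^i=\frac{\cosh c-\cosh(a-b)}{\sinh a\,\sinh b}\leq\frac{\cosh c-1}{\sinh a\,\sinh b}\leq\frac{\sinh^2(c/2)}{2\,\sinh^2(a/2)\,\sinh^2(b/2)}.
\]
Thus everything reduces to showing that this last ratio tends to $0$.

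The decisive observation is that the ratio carries a factor $\ee^{2h_i^n}\to0$. Since $i$ is ideal, \eqref{eq:heights_lambda_lengths_relationship_hyperbolic} gives, for any neighbour $x\in\{j,k\}$,
\[
   \sinh^2(\len_{ix}/2)=\tfrac12\big(\cosh\len_{ix}-1\big)\geq\frac{\ee^{\lambda_{ix}-h_i}}{\ee^{h_x}-\epsilon_x\ee^{-h_x}},
\]
whereas $\sinh^2(\len_{jk}/2)$ does \emph{not} depend on $h_i$. Substituting the two lower bounds, the factors $\ee^{-h_i}$ from the denominators combine to an overall $\ee^{2h_i}$, so that for each $n$
\[
   \frac{\sinh^2(\len_{jk}/2)}{2\,\sinh^2(\len_{ij}/2)\,\sinh^2(\len_{ik}/2)}\;\leq\;F\big(h_j^n,h_k^n\big)\,\ee^{2h_i^n},
\]
where $F$ is the explicit expression in the neighbouring heights and the fixed $\lambda$-lengths produced by $\sinh^2(\len_{jk}/2)$ and the two factors $\ee^{h_x}-\epsilon_x\ee^{-h_x}$. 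When $j$ and $k$ are ideal, $F$ is bounded (indeed constant up to the $\lambda$-lengths), and the estimate closes at once, giving $\theta_{jk}^i\to0$.

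The hard part is the case of hyperideal neighbours, where $F$ grows like $\cosh(h_j-h_k)$ and the heights $h_j,h_k$ need not be bounded. Here I would use membership in $\polyhedra_{\tri}^{-}(\surf_g)$ to rule out a neighbour degenerating faster than $i$: if, say, $h_j^n\to+\infty$ outpaced $h_i^n\to-\infty$ (so that $h_i^n+h_j^n\to+\infty$), then $\len_{ij}$ would collapse and, together with a correspondingly short $\len_{jk}$, it would contradict the triangle inequality $\len_{ik}<\len_{ij}+\len_{jk}$, which holds throughout $\polyhedra_{\tri}^{-}(\surf_g)$, since $\len_{ik}\to\infty$. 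Hence along the sequence the neighbouring heights stay controlled relative to $h_i^n$, forcing $F(h_j^n,h_k^n)\,\ee^{2h_i^n}\to0$ in all cases---equivalently $\len_{ij},\len_{ik}\to\infty$, exactly the mechanism underlying \lemref{lemma:bound_on_hyperideal_heights}. Summing the vanishing corner contributions yields $\theta_i(h^n)\to0$ for every $i\in\verts_\infty$, which is the assertion. The main technical work, and the step I expect to require the most care, is making the triangle-inequality argument quantitative enough to absorb the possibly unbounded hyperideal neighbours into the $\ee^{2h_i^n}$ decay.
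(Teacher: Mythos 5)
Your reduction to a single corner and the elementary estimate $1-\cos\theta_{jk}^i\leq\sinh^2(\len_{jk}/2)\big/\big(2\sinh^2(\len_{ij}/2)\sinh^2(\len_{ik}/2)\big)$ are correct, and so is the extraction of the factor $\ee^{2h_i^n}$ from \eqref{eq:heights_lambda_lengths_relationship_hyperbolic}; explicitly your
$F(h_j,h_k)=\bigl(\ee^{\lambda_{jk}}+\epsilon_j\epsilon_k\ee^{-\lambda_{jk}}+\epsilon_k\ee^{h_j-h_k}+\epsilon_j\ee^{h_k-h_j}\bigr)\big/\big(2\ee^{\lambda_{ij}+\lambda_{ik}}\big)$.
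This is a genuinely different route from the paper's, which stays in the three-dimensional prism picture: it computes the link lengths $\varphi_i^j$ via \eqref{eq:link_length_ideal}, identifies $2\tau_{\epsilon_j}(h_j^n)/\ee^{h_i^n}$ with the weight ratio $\omega_j^n/\omega_i^n$ for $\omega^n=\Omega_R^{-}(h^n)$, and deduces $\varphi_i^j\to\infty$ from \lemref{lemma:boundary_of_config_space}. It is a nice feature of your bound that the configuration for which the paper needs that lemma --- an edge whose two ideal endpoints degenerate simultaneously --- is automatically harmless for you ($F$ is then a constant), whereas a neighbour with exploding weight, harmless in the paper's formulation, becomes your hard case.

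That hard case is where your proposal has a genuine gap, not just a missing quantification: the contradiction you sketch is false in one regime. The dangerous hypothesis is $2h_i^n+h_j^n-h_k^n\geq-C$ with $\epsilon_k=1$ (or its mirror image), and since hyperideality of $k$ only forces $h_k^n>0$, it is compatible with $h_k^n\to+\infty$ and $h_i^n+h_k^n\to+\infty$. In that regime all three edge lengths collapse, in particular $\sinh^2(\len_{ik}/2)\approx\ee^{\lambda_{ik}-h_i^n-h_k^n}\to0$, so your assertion that $\len_{ik}\to\infty$ fails and no coarse comparison of magnitudes yields a contradiction. (Also, when $h_k^n$ stays bounded away from $0$ and $\infty$, $\len_{jk}$ is merely bounded, not \enquote{correspondingly short}, and it even diverges if $h_k^n\to0$.) The triangle inequality is still violated in this regime, but only at the level of decay rates: the dangerous hypothesis gives $\sinh(\len_{ij}/2)\lesssim\ee^{(\lambda_{ij}+C+h_i^n-h_k^n)/2}$ and $\sinh(\len_{jk}/2)\approx\ee^{-h_k^n}$, both exponentially smaller than $\sinh(\len_{ik}/2)\approx\ee^{(\lambda_{ik}-h_i^n-h_k^n)/2}$ because $h_i^n\to-\infty$; analogous rate comparisons close the remaining sub-cases ($h_k^n$ bounded, $h_k^n\to0$). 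So your strategy is salvageable, but the step you defer breaks as written precisely where the constraint $h^n\in\polyhedra_{\tri}^{-}(\surf_g)$ has to do real work; a full case analysis along these lines (or simply the paper's weight-ratio mechanism, which treats all neighbours uniformly) is needed to complete the proof.
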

\begin{proof}
    The idea of this proof is similar to the proof of
    \lemref{lemma:bound_on_hyperideal_heights}. We observe that $i\in\verts_{\infty}$
    if and only if $i$ is ideal. So this time the law of cosines reads
    \begin{equation}\label{eq:link_length_ideal}
        \cosh\varphi_i^j(h^n)
        \;=\;
        \frac{
            2\tau_{\epsilon_j}(h_j^n)
        }{
            \ee^{h_i^n}\,\tau'_{\epsilon_i\epsilon_j}(\lambda_{ij})
        }
        \;+\;
        \frac{
            \tau_{\epsilon_i\epsilon_j}(\lambda_{ij})
        }{
            \tau'_{\epsilon_i\epsilon_j}(\lambda_{ij})
        }.
    \end{equation}
    Let $\omega^n\coloneq\Omega_R^{-}(h^n)$. Then
    \(
        \nicefrac{2\tau_{\epsilon_j}(h_j^n)}{\ee^{h_i^n}}
        =
        \nicefrac{\omega_j^n}{\omega_i^n}
    \).
    Thus, \lemref{lemma:boundary_of_config_space} grants that
    $\lim_{n\to\infty}\varphi_i^j(h^n)=\infty$. It follows
    $\lim_{h_i\to0}\theta_{jk}^i(h^n) = 0$ as in
    \lemref{lemma:bound_on_hyperideal_heights}.
\end{proof}

\begin{lemma}\label{lemma:weight_relationship}
    Let $\tri$ be a canonical triangulation of $\surf_g$ and write
    $\omega \coloneq \Omega_R^{-}(h)$ for $h\in\polyhedra_{\tri}^{-}(\surf_g)$.
    For every $i\in\verts$ and $\varepsilon>0$ there is an $M_{i, \varepsilon}>0$
    such that $\theta_i(h)<\varepsilon$ whenever
    $\nicefrac{\omega_j}{\omega_i}>M_{i, \varepsilon}$ for all $j\in\verts$ adjacent
    to $i$ with respect to $\tri$.
\end{lemma}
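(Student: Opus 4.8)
The plan is to reduce the statement to a single corner and then to reuse the link‑triangle formulas already recorded in \lemref{lemma:bound_on_hyperideal_heights} and \lemref{lemma:bound_on_ideal_heights}. Since $\theta_i(h)=\sum_{ijk}\theta_{jk}^i(h)$, where the sum ranges over the finitely many triangles of $\tri$ incident to $i$, it suffices to control each corner angle individually: writing $N$ for the number of such triangles, if I can force $\theta_{jk}^i<\varepsilon/N$ in every incident triangle, then $\theta_i<\varepsilon$. So I would fix a single triangle $ijk$ and analyse its hyperideal prism.

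First I would translate the hypothesis on the weight ratios into information about the edge‑lengths $\varphi_i^j$ of the link triangle at $i$. From the definition of the weights together with \eqref{eq:heights_to_hyperbolic_radii_hyperbolic} one gets $\omega_i=\tau_{\epsilon_i}(h_i)/\sinh R$, hence $\omega_j/\omega_i=\tau_{\epsilon_j}(h_j)/\tau_{\epsilon_i}(h_i)$ for $\omega=\Omega_R^{-}(h)$. Substituting this into \eqref{eq:link_length_hyperideal} when $i$ is hyperideal, and into \eqref{eq:link_length_ideal} when $i$ is ideal, and using $\coth h_i\geq1$ in the former case (valid since $h_i>0$ for hyperideal $i$), collapses both formulas to the single lower bound
\[
    \cosh\varphi_i^j\;\geq\;\frac{\omega_j/\omega_i}{\tau'_{\epsilon_i\epsilon_j}(\lambda_{ij})}.
\]
The denominator is a fixed positive constant: it is positive because $\lambda_{ij}>0$ on hyperideal–hyperideal edges (where $\epsilon_i\epsilon_j=1$) and equals $\ee^{\lambda_{ij}}/2>0$ otherwise, and it is constant because the $\lambda$-lengths are intrinsic to the fixed invariant $\surf_g$. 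Consequently $\varphi_i^j\to\infty$ as $\omega_j/\omega_i\to\infty$, and the same bound holds for $\varphi_i^k$ with $\omega_k/\omega_i$.

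Next I would feed this back into the link triangle. The angle $\theta_{jk}^i$ is the angle of this hyperbolic triangle whose two adjacent sides have lengths $\varphi_i^j,\varphi_i^k$ and whose opposite side has a length $\psi_{jk}$ determined solely by the $\lambda$-lengths of the base face, hence fixed. The hyperbolic law of cosines then gives
\[
    \cos\theta_{jk}^i
    =\frac{\cosh\varphi_i^j\cosh\varphi_i^k-\cosh\psi_{jk}}{\sinh\varphi_i^j\sinh\varphi_i^k}
    \;\geq\;1-\frac{\cosh\psi_{jk}}{\cosh\varphi_i^j\cosh\varphi_i^k},
\]
so that $\theta_{jk}^i$ becomes arbitrarily small once $\cosh\varphi_i^j\cosh\varphi_i^k$ is large. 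Combining with the previous lower bound, there is a threshold — depending only on $\psi_{jk},\lambda_{ij},\lambda_{ik}$ and the target $\varepsilon/N$ — beyond which the ratios $\omega_j/\omega_i$ and $\omega_k/\omega_i$ force $\theta_{jk}^i<\varepsilon/N$. Taking $M_{i,\varepsilon}$ to be the maximum of these finitely many thresholds over the triangles incident to $i$ gives the claim.

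The computations are routine; the one point requiring care is \emph{uniformity}. The thresholds must not depend on the particular $h\in\polyhedra_{\tri}^{-}(\surf_g)$, and this is exactly what the preceding discussion secures: the $\lambda$-lengths and the opposite side‑length $\psi_{jk}$ are fixed data of $\surf_g$ and $\tri$ rather than functions of $h$, so all of the $h$-dependence enters only through the ratios $\omega_j/\omega_i$, which is precisely the quantity the hypothesis controls. The innocuous inequality $\coth h_i\geq1$ is what allows the ideal and hyperideal cases to be subsumed under the single bound above, so no separate estimate is needed.
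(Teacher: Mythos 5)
Your proposal is correct and takes essentially the same route as the paper: the paper's own proof of this lemma simply defers to the arguments of \lemref{lemma:bound_on_hyperideal_heights} and \lemref{lemma:bound_on_ideal_heights}, whose link-length formulas \eqref{eq:link_length_hyperideal} and \eqref{eq:link_length_ideal} and subsequent law-of-cosines estimate are exactly what you reproduce, after correctly identifying $\omega_i=\nicefrac{\tau_{\epsilon_i}(h_i)}{\sinh R}$ so that the weight ratios $\nicefrac{\omega_j}{\omega_i}$ appear in those formulas. Your explicit attention to uniformity --- the thresholds depending only on the fixed $\lambda$-lengths and the fixed opposite side of the link triangle, with $\coth h_i\geq1$ merging the ideal and hyperideal cases into one bound --- is precisely the detail the paper declines to repeat.
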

\begin{proof}
    The assertions of this lemma lie at the heart of the proofs of
    \lemref{lemma:bound_on_hyperideal_heights} and
    \lemref{lemma:bound_on_ideal_heights}. We will not repeat the details.
\end{proof}

\begin{lemma}\label{lemma:hyperbolic_he_fiber_limit}
    Let $\Theta\in\RR_{>0}^{\verts}$ satisfy the hyperbolic
    Gau{\ss}--Bonnet condition \eqref{eq:hyperbolic_gauss_bonnet_condition}.
    Define $h^t \coloneq (\Omega_R^{-})^{-1}\big(t\,\Omega_R^{-}(h^1)\big)$ for
    $h^1\in\polyhedra^{-}(\surf_g)$ and all $t\geq1$.
    Then $\lim_{t\to\infty}\HE_{\surf_g,\Theta}^{-}(h^t)=-\infty$.
\end{lemma}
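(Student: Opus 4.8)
The plan is to reduce to a single canonical triangulation and then isolate the dominant term of $\HE_{\surf_g,\Theta}^{-}(h^t)$ as $t\to\infty$. First I would fix the combinatorics: since $\Omega_R^{-}(h^t)=t\,\Omega_R^{-}(h^1)$ and the canonical tessellation is invariant under scaling of the weights (\propref{prop:properties_of_weightings}, item \ref{item:tessellation_scale_invariance}), the whole ray $\{h^t:t\geq1\}$ lies in a single cell $\polyhedra_{\tri}^{-}(\surf_g)$ for one fixed canonical triangulation $\tri$; moreover $h^t$ is well-defined for every $t\geq1$ by the explicit description of $\Omega_R^{-}(\polyhedra^{-}(\surf_g))$ in \propref{prop:space_polyhedra_characterization}, because $t\omega_i\geq\omega_i>\nicefrac{1}{\sinh R}$ at hyperideal vertices. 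By \lemref{lemma:well_definedness_he_functional} the functional is then computed by \eqref{eq:he_functional_local} with respect to this $\tri$, whose $\lambda$-lengths are fixed along the ray. From \eqref{eq:heights_to_hyperbolic_radii_hyperbolic} one reads off the height asymptotics: for every $i\in\verts$, whether ideal or hyperideal, $h_i^t=\log t+c_i+\mathrm{o}(1)$ with a constant $c_i$ depending only on $h^1$. The essential point is that all heights diverge at the \emph{same} leading rate $\log t$.

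The key step is to control the cone-angle sum. Feeding the divergence $h_i^t\to\infty$ into \eqref{eq:heights_lambda_lengths_relationship_hyperbolic} with the $\lambda_{ij}$ held fixed, a short asymptotic balance of the two products shows $\cosh\len_{ij}^t-1=O\big(\ee^{-(h_i^t+h_j^t)}\big)\to0$, so every edge length $\len_{ij}^t$ of the induced piecewise hyperbolic metric on $\toposurf_g$ tends to $0$. Consequently the area of each hyperbolic triangle, and hence the total area $A(h^t)$ of the induced cone-metric, tends to $0$. Gauß--Bonnet for the piecewise hyperbolic metric gives $\sum_i\theta_i(h^t)=2\pi\big(2g-2+|\verts|\big)-A(h^t)$, whence $\sum_i\theta_i(h^t)\to2\pi\big(2g-2+|\verts|\big)$. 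This is the heart of the argument, and it is exactly here that the hyperbolic Gauß--Bonnet condition \eqref{eq:hyperbolic_gauss_bonnet_condition} enters: combining the two yields $\sum_i\big(\Theta_i-\theta_i(h^t)\big)\to\sum_i\Theta_i-2\pi\big(2g-2+|\verts|\big)=:-C$ with $C>0$.

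Finally I would assemble the three terms of \eqref{eq:he_functional_local}. The middle term is $\sum_i(\Theta_i-\theta_i(h^t))\,h_i^t=(\log t)\sum_i(\Theta_i-\theta_i(h^t))+O(1)$, using the uniform asymptotics $h_i^t=\log t+O(1)$ together with the (uniform) boundedness of the $\theta_i$; by the previous paragraph its coefficient of $\log t$ converges to $-C<0$, so this term behaves like $-C\log t\to-\infty$. The edge term $\sum_{ij}(\pi-\alpha_{ij})\lambda_{ij}$ stays bounded, since $\alpha_{ij}\in(0,\pi]$ by convexity and the finitely many $\lambda_{ij}$ are fixed. The volume term $-2\vol\big(P_{\len,r}^{-}(h^t)\big)$ is nonpositive because truncated volumes are nonnegative, so it can only reinforce the divergence. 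Adding up gives $\HE_{\surf_g,\Theta}^{-}(h^t)\leq-C\log t+O(1)\to-\infty$, as claimed. The one subtlety requiring care, which I expect to be the main obstacle, is that the individual coefficients $\Theta_i-\theta_i(h^t)$ need not be negative; it is the common leading rate $\log t$ of all heights that lets one factor out $\log t$ and rely only on the sign of the summed coefficient rather than on the individual signs.
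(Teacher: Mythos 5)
Your proposal is correct and takes essentially the same route as the paper: fix one canonical triangulation along the ray by scale invariance of the weights, observe that all heights diverge at a common rate so the middle term of \eqref{eq:he_functional_local} factors as $\big(\sum_i(\Theta_i-\theta_i^t)\big)\log t + O(1)$, bound the edge term and use nonpositivity of $-2\vol$, and conclude from the strict hyperbolic Gau{\ss}--Bonnet inequality against the limiting angle sum. The only cosmetic differences are that your expansion $h_i^t=\log t+c_i+\mathrm{o}(1)$ is the explicit form of the paper's bound $|H^t-h_i^t|<c_1$, and that where the paper invokes \lemref{lemma:he_angle_limit} (edge lengths tend to $0$ via \eqref{eq:heights_lambda_lengths_relationship_hyperbolic}, hence the Euclidean Gau{\ss}--Bonnet condition in the limit) you inline the same computation and phrase it through the surface-level Gau{\ss}--Bonnet identity $\sum_i\theta_i(h^t)=2\pi(2g-2+|\verts|)-A(h^t)$ with $A(h^t)\to0$.
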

\begin{proof}
    We begin by observing that $|h_i^t-h_j^t|$ is bounded from above by
    \(
        |\ln\big(\nicefrac{\tau_{\epsilon_i}(h_i^1)}{\tau_{\epsilon_j}(h_j^1)}\big)|
    \)
    for all $t\geq1$. This follows from a direct computation and the identity
    \begin{equation}
        \exp\big((\Omega_R^{-})^{-1}(\omega_i)\big)
        \;=\;
        \omega_i\sinh(R)\,+\, \sqrt{\omega_i^2\sinh^2(R)-\epsilon_i}.
    \end{equation}
    Thus, we find a $c_1>0$ and for each $t\geq1$ an $H^t\in\RR$ such that
    $|H^t-h_i^t|<c_1$ for all $i\in\verts$ and $t\geq1$. Note that
    necessarily $\lim_{t\to\infty}H^t=\infty$.

    Furthermore, all $\theta(h^t)$ satisfy the hyperbolic Gau{\ss}--Bonnet
    condition \eqref{eq:hyperbolic_gauss_bonnet_condition} by construction.
    So there is $c_2>0$ such that $|\Theta_i-\theta_i(h^t)|<c_2$ for all
    $i\in\verts$ and $t\geq1$.
    We compute
    \begin{align}
        \HE_{\surf_g,\Theta}^{-}(h^t)
        \;&=\;
        \HE_{\surf_g,\Theta,\tri}^{-}(h^t)\\
        \;&=\;
        -2\vol(P_{\surf_g}^{-}(h^t))
        \,+\, \sum_{i\in\verts}\;\,(\Theta_i-\theta_i^t)h_i^t
        \,+\, \sum_{ij\in\edges_\tri}(\pi-\alpha_{ij}^t)\lambda_{ij}\\
        \;&\leq\; -2\vol(P_{\surf_g}^{-}(h^t))
        \,+\, \Big(\sum_{i\in\verts}(\Theta_i-\theta_{i}^t)\Big)\,H^t
        \,+\, c_1c_2\,|\verts|
        \,+\, \pi|\edges_\tri|\max_{ij\in\edges_\tri}\lambda_{ij}.
    \end{align}
    Here, we abbreviated the notation of the cone-angles and
    intersection angles of face-circles determined by $h^t$ to
    $\theta_i^t$ and $\alpha_{ij}^t$, respectively.
    Moreover, \propref{prop:properties_of_weightings} allowed us
    to consider a fix canonical triangulation $\tri$.
    Consequently, we only have to show that
    \[
        \lim_{t\to\infty}\Big(\sum(\Theta_i-\theta_{i}^t)\Big)\,H^t
        \;=\;-\infty
    \]
    to prove this lemma. This follows from \lemref{lemma:he_angle_limit}:
    on one hand $\Theta$ satisfies the hyperbolic Gau{\ss}--Bonnet
    condition, by assumption. On the other hand $\theta(h^t)$ satisfies
    \eqref{eq:euclidean_gauss_bonnet_condition} in the limit. It follows
    that $\lim_{t\to\infty}\sum(\Theta_i-\theta_{i}^t) < 0$.
\end{proof}

\begin{lemma}\label{lemma:he_angle_limit}
    Define $h^t \coloneq (\Omega_R^{\pm})^{-1}\big(t\,\Omega_R^{\pm}(h^1)\big)$ for
    $h^1\in\polyhedra^{\pm}(\surf_g)$ and all $t\geq1$.
    \begin{enumerate}[label=(\roman*)]
        \item\label{item:limit_non_degenerate}
            $\lim_{t\to\infty}\theta_{jk}^i(h^t)\eqcolon\vartheta_{jk}^i>0$
            for all corners $\corner{i}{jk}$ in triangles $ijk\in\faces_{\tri}$.
        \item\label{item:limit_euclidean_triangles}
            For each triangle $ijk\in\faces_{\tri}$
            the limits $(\vartheta_{jk}^i, \vartheta_{ki}^j, \vartheta_{ij}^k)$ are
            the angles of a euclidean triangle, \ie,
            $\vartheta_{jk}^i+\vartheta_{ki}^j+\vartheta_{ij}^k=\pi$.
        \item\label{item:limit_to_gauss_bonnet}
            The limit of the cone angles
            $\lim_{t\to\infty}\theta(h^t)\eqcolon\vartheta\in\RR_{>0}^{\verts}$
            satisfies the \emph{euclidean Gau{\ss}--Bonnet condition}
            \begin{equation}\label{eq:euclidean_gauss_bonnet_condition}
                \frac{1}{2\pi}\sum\vartheta_i
                \;=\;
                2g-2\,+\,|\verts|.
            \end{equation}
    \end{enumerate}
\end{lemma}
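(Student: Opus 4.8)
The plan is to read off the $t\to\infty$ asymptotics of the decorated discrete metric induced by $h^t$ and then pass to the limit in the spherical/hyperbolic law of cosines. First I would record the asymptotics of the heights. The explicit inverse of $\Omega_R^{\pm}$ (the hyperbolic expression is displayed in the proof of \lemref{lemma:hyperbolic_he_fiber_limit}; the spherical one is obtained in the same way, replacing $\sinh R$ by $\cosh R$), together with $\omega_i^t = t\,\Omega_R^{\pm}(h^1)_i$, gives $\ee^{h_i^t} = 2\,c(R)\,\omega_i^1\,t + O(1)$ and hence $h_i^t = \ln t + c_i + o(1)$ with $c_i \coloneq \ln\!\big(2\,c(R)\,\omega_i^1\big)$, where $c(R)=\sinh R$ in the hyperbolic and $c(R)=\cosh R$ in the spherical case. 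In particular every difference $h_i^t - h_j^t$ converges.

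Next I would substitute these into the length relations \eqref{eq:heights_lambda_lengths_relationship_spherical} and \eqref{eq:heights_lambda_lengths_relationship_hyperbolic}. Since $\tau_{\epsilon_i}(h_i^t)$ and $\tau_{-\epsilon_i}(h_i^t)$ both behave like $\ee^{h_i^t}/2$, both relations yield $\cos\len_{ij}^t\to1$ (respectively $\cosh\len_{ij}^t\to1$), so $\len_{ij}^t\to0$, and more precisely $t\,\len_{ij}^t\to L_{ij}$ with $L_{ij}^2 = \kappa\,\tau_{\epsilon_i\epsilon_j}(\lambda_{ij})/(\omega_i^1\omega_j^1)>0$ for a positive constant $\kappa$ depending only on $R$. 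Crucially, this limit is the \emph{same} in the spherical and hyperbolic cases; this common rescaled limit is the analytic reflection of the shared ideal boundary mentioned in the introduction.

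Because all three sides of a face $ijk$ shrink at the common rate $1/t$, I would expand the spherical/hyperbolic law of cosines for $\theta_{jk}^i$ to second order: the leading terms cancel and $\cos\theta_{jk}^i(h^t)\to \tfrac{L_{ij}^2+L_{ki}^2-L_{jk}^2}{2L_{ij}L_{ki}}$, the euclidean law of cosines for the triangle with sides $(L_{ij},L_{jk},L_{ki})$. Passing to the limit in the triangle inequalities $\len_{ij}^t+\len_{jk}^t>\len_{ki}^t$ gives the weak inequalities $L_{ij}+L_{jk}\ge L_{ki}$, so the limiting values are genuine cosines of angles $\vartheta_{jk}^i$ which, being the angles of a (possibly degenerate) euclidean triangle with these side lengths, satisfy $\vartheta_{jk}^i+\vartheta_{ki}^j+\vartheta_{ij}^k=\pi$. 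This already establishes item \ref{item:limit_euclidean_triangles}.

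The remaining point, item \ref{item:limit_non_degenerate}, is to upgrade the weak to strict triangle inequalities, equivalently $\vartheta_{jk}^i\in(0,\pi)$, and I expect this to be the main obstacle. A clean reduction is available: after clearing the factor $(\omega_i^1\omega_j^1\omega_k^1)^2$, the Heron/Cayley--Menger quantity $2\!\sum_{\mathrm{cyc}}L_{ij}^2L_{jk}^2-\sum L_{ij}^4$ turns into the identical quadratic form $2xy+2yz+2zx-x^2-y^2-z^2$ in the variables $x=\omega_i^1\tau_{\epsilon_j\epsilon_k}(\lambda_{jk})$, $y=\omega_j^1\tau_{\epsilon_k\epsilon_i}(\lambda_{ki})$, $z=\omega_k^1\tau_{\epsilon_i\epsilon_j}(\lambda_{ij})$, so strict positivity (non-degeneracy) is equivalent to the triangle inequality for $\sqrt{x},\sqrt{y},\sqrt{z}$. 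I would establish the latter from the geometry of $\surf_g$: since $h^1\in\polyhedra^{\pm}(\surf_g)$ the triangulation $\tri$ is canonical for the weights $\omega^1$, so each $L_{ij}$ is exactly an edge length of the decorated piecewise euclidean triangle that $ijk$ determines at the common ideal boundary, and the required strictness is the statement that $\tri$ refines a genuine euclidean Delaunay tessellation of this euclidean decorated structure — the canonical condition \eqref{eq:local_canonical_condition} surviving the euclidean limit. Proving (or importing from the parallel euclidean theory) this non-degeneracy is the hard part; everything else is bookkeeping. Finally, item \ref{item:limit_to_gauss_bonnet} follows by summing item \ref{item:limit_euclidean_triangles} over all faces, $\sum_i\vartheta_i=\pi|\faces_{\tri}|$, and combining Euler's relation $|\verts|-|\edges_{\tri}|+|\faces_{\tri}|=2-2g$ with $3|\faces_{\tri}|=2|\edges_{\tri}|$ to get $|\faces_{\tri}|=2\big(|\verts|+2g-2\big)$, whence $\tfrac{1}{2\pi}\sum\vartheta_i=2g-2+|\verts|$, which is precisely \eqref{eq:euclidean_gauss_bonnet_condition}.
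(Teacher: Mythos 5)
Your strategy---rescale, pass to a Euclidean limit, read the angles off the Euclidean law of cosines---is a legitimate outline, but the proposal has a genuine gap exactly where you flag it, plus a computational error that undermines your proposed fix. The error first: from \eqref{eq:heights_lambda_lengths_relationship_hyperbolic} one gets the exact identity
\(
(\cosh\len_{ij}-1)\,\tau'_{\epsilon_i}(h_i)\,\tau'_{\epsilon_j}(h_j)
=\tau_{\epsilon_i\epsilon_j}(\lambda_{ij})
+\tfrac12\big(\epsilon_j\ee^{h_i-h_j}+\epsilon_i\ee^{h_j-h_i}\big),
\)
and since $h_i^t-h_j^t$ converges to the finite value $\ln(\omega_i^1/\omega_j^1)$, the cross terms do \emph{not} vanish in the limit. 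The correct rescaled limit is
\(
L_{ij}^2=\epsilon_i\mathfrak{r}_i^2+\epsilon_j\mathfrak{r}_j^2
+2\mathfrak{r}_i\mathfrak{r}_j\,\tau_{\epsilon_i\epsilon_j}(\lambda_{ij})
\)
with $\mathfrak{r}_i=\nicefrac{1}{(\sinh(R)\,\omega_i^1)}$, consistent with \eqref{eq:lambda_length_to_inversive_distance_euclidean}, and not your $\kappa\,\tau_{\epsilon_i\epsilon_j}(\lambda_{ij})/(\omega_i^1\omega_j^1)$, which drops the $\mathfrak{r}^2$-terms. Consequently your Cayley--Menger computation, turning the Heron quantity into $2xy+2yz+2zx-x^2-y^2-z^2$ in the variables you list, is false for the true $L$'s. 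Worse, no purely algebraic rescue is possible: three pairwise disjoint circles with \emph{collinear} centers have all $\lambda_{ij}$ real, so strict positivity of the Heron quantity cannot follow from the data $(\lambda,\omega^1)$ alone---it must use that the triangle actually comes from $\surf_g$. That is precisely the non-degeneracy you defer with \enquote{proving (or importing \dots) is the hard part}, which is an admission, not an argument; since item \ref{item:limit_non_degenerate} is the substance of the lemma, the proposal as written establishes only item \ref{item:limit_euclidean_triangles} (up to degeneracy) and item \ref{item:limit_to_gauss_bonnet}.

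The paper's proof avoids the rescaled limit altogether. For item \ref{item:limit_non_degenerate} it works in the link of the vertex $i$ in the prism/pyramid: this is a hyperbolic triangle whose side opposite $\theta_{jk}^i$ has fixed positive length determined by the $\lambda$-lengths alone, while the adjacent side lengths $\varphi_i^j(h^t)$, given explicitly by \eqref{eq:link_length_hyperideal} and \eqref{eq:link_length_ideal}, are monotonically decreasing in $t$ and bounded from below, hence converge to finite limits; the hyperbolic law of cosines then yields $\vartheta_{jk}^i>0$ directly, with no triangle-inequality analysis of limit lengths. For item \ref{item:limit_euclidean_triangles} no second-order expansion is needed: $\len_{ij}^t\to0$ follows from \eqref{eq:heights_lambda_lengths_relationship_hyperbolic} because the $\lambda$-lengths are fixed and $h_i^t\to\infty$, so the triangle areas tend to $0$ and the angle sums tend to $\pi$, the area of a hyperbolic triangle being $\pi$ minus its angle sum (the spherical case is analogous); your item \ref{item:limit_to_gauss_bonnet} agrees with the paper's. (Both you and the paper tacitly fix one triangulation along the ray; the paper justifies this via the scale-invariance in \propref{prop:properties_of_weightings}.) If you want to salvage your route, the strict triangle inequality for $(L_{ij})$ can be obtained geometrically: the limit triangle is the link at the apex of the hyperideal horoprism over the same non-degenerate base face as the prisms, as in the construction behind \propref{prop:infty_bdy_polyhedral_cusps}---but that proposition appears after the present lemma in the paper's logical order, so you would have to check that the inputs you import (essentially the elementary convergence of $h_i^t-h_j^t$) do not create a circular dependence.
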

\begin{proof}
    We will only elaborate the hyperbolic case. Its spherical counterpart
    can be proved using exactly the same ideas. Furthermore, note that
    we can fix a canonical triangulation $\tri$ for the rest of the proof
    because of \propref{prop:properties_of_weightings}.

    Let us begin by consider the lengths $\varphi_i^j(h^t)$
    of the link at $i\in\verts$ in the polyhedral end $P_{\surf_g}^{-}(h^t)$.
    Using \eqref{eq:link_length_hyperideal} and \eqref{eq:link_length_ideal}
    we see that $t\mapsto\varphi_i^j(h^t)$ is monotonically decreasing and bounded
    from below. Thus, the law of cosines implies item
    \ref{item:limit_non_degenerate}.

    Next, we remember that the $\lambda$-lengths are fixed and note that
    $\lim_{t\to\infty}h_i^t=\infty$ for all $i\in\verts$. Hence,
    $\lim_{t\to0}\len_{ij}=0$ follows from
    \eqref{eq:heights_lambda_lengths_relationship_hyperbolic} for all edges
    $ij\in\edges_{\tri}$. This proves item \ref{item:limit_euclidean_triangles}
    since the area of a hyperbolic triangle is given by $\pi$ minus the sum
    of its interior angles.

    Finally, item \ref{item:limit_to_gauss_bonnet} is a simple implication of
    item \ref{item:limit_euclidean_triangles}.
\end{proof}

\section{Connections to the Euclidean theory and geometric transitions}
\label{sec:geometric_transitions}
So fare we only considered piecewise non-Euclidean metrics. Yet, decorations can
naturally be considered in the case of piecewise Euclidean metrics. Their
discrete conformal equivalence can be defined in a similar way, both via
M\"obius transformations and discrete logarithmic scale factors
\cite{BL2023}. Moreover, there is also a Euclidean version of the discrete
Hilbert--Einstein functional. Again it is the Legendre-transformation of the
volume of certain piecewise hyperbolic $3$-manifolds, so-called
\emph{convex polyhedral cusps}. This time, they can be constructed from hyperideal
tetrahedra with one distinguished ideal vertex, \ie, \emph{hyperideal horoprisms}
(see \figref{fig:hyperideal_horoprisms}, left).

The constructions presented in the previous section suggest a close
connection between the Euclidean and non-Euclidean theory of decorated
discrete conformal equivalence. Indeed, we can obtain the Euclidean theory by
taking certain limits of its non-Euclidean counterpart. Two key observations lead to
this claim: the angle defect of spherical/hyperbolic triangles decreases with their
edge-lengths and hyperideal pyramids/prisms approximate hyperideal horoprisms if we
let their heights increase. These observations correspond to the intuition that
infinitesimally the sphere and hyperbolic space \enquote{look like} Euclidean space.

To make this precise let us introduce the \emph{ideal boundary}
$\partial_{\infty}\polyhedra^{\pm}(\surf_g)$ of the space of convex
hyperideal cones/ends $\polyhedra^{\pm}(\surf_g)$. It consists of equivalence classes
of strictly diverging sequences $(h^n)_{n\geq0}\in\polyhedra^{\pm}(\surf_g)$, \ie,
sequences with $\lim_{n\to\infty}h_i^n=\infty$ for all $i\in\verts$. Two such
sequences are said to be equivalent if their associated sequences
$\nicefrac{\Omega_R^{\pm}(h^n)}{\|\Omega_R^{\pm}(h^n)\|}$ have the same limit
in the unit sphere in $\RR^{\verts}$.

\begin{theorem}[geometric transitions]
    \label{theorem:geometric_transitions}
    Let $\surf_g$ be a complete hyperbolic surface with ends and let
    $\Theta\in\RR_{>0}^{\verts}$ satisfy the Euclidean Gau{\ss}--Bonnet condition
    \eqref{eq:euclidean_gauss_bonnet_condition}.

    There exists a unique convex polyhedral cusp over $\surf_g$ with
    cone-angles $\Theta$. It corresponds to the unique maximizer
    $\mathfrak{h}\in\partial_{\infty}\polyhedra^{-}(\surf_g)$ of
    $\HE_{\surf_g,\Theta}^{-}$ over
    $\polyhedra^{-}(\surf_g)\cup\partial_{\infty}\polyhedra^{-}(\surf_g)$.
    In particular, $\HE_{\surf_g, \Theta}^{-}$ extends to
    $\partial_{\infty}\polyhedra^{-}(\surf_g)$ (at least to second order). There
    it coincides with the Euclidean dHE-functional $\HE_{\surf_g,\Theta}^0$.
\end{theorem}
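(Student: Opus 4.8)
The plan is to realize the convex polyhedral cusp as the geometric limit of the convex polyhedral ends $P_{\surf_g}^{-}(h)$ as the heights diverge along a fixed weight-direction, and to match this degeneration with a limit of the functional $\HE_{\surf_g,\Theta}^{-}$. By its definition, the ideal boundary $\partial_{\infty}\polyhedra^{-}(\surf_g)$ is parametrized by the projectivized weight space: an equivalence class is determined by a ray $\RR_{>0}\cdot\omega$ with $\omega\in\RR_{>0}^{\verts}$, and along such a ray I would set $h^t\coloneqq(\Omega_R^{-})^{-1}(t\,\omega)$ for $t\geq1$, exactly the family studied in \lemref{lemma:he_angle_limit} and \lemref{lemma:hyperbolic_he_fiber_limit}. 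The structural key is that the Euclidean Gau{\ss}--Bonnet condition \eqref{eq:euclidean_gauss_bonnet_condition} on $\Theta$ is precisely the borderline case of the hyperbolic condition \eqref{eq:hyperbolic_gauss_bonnet_condition}; hence \thmref{theorem:realisation_hyperbolic} furnishes no interior maximizer of $\HE_{\surf_g,\Theta}^{-}$, so the supremum must be sought on the ideal boundary.

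First I would make the geometric degeneration precise. \lemref{lemma:he_angle_limit} already shows that along $h^t$ the $\lambda$-lengths are fixed, the link-lengths $\varphi_i^j(h^t)$ converge monotonically, the edge-lengths $\len_{ij}$ tend to $0$ through \eqref{eq:heights_lambda_lengths_relationship_hyperbolic}, and the corner angles $\theta_{jk}^i(h^t)$ converge to the angles $\vartheta_{jk}^i$ of a Euclidean triangle whose cone-angle sum satisfies \eqref{eq:euclidean_gauss_bonnet_condition}. I would interpret this as the distinguished hyperideal vertex $\hat{\infty}$ of each hyperideal prism becoming ideal in the limit, so that the prisms converge to hyperideal horoprisms and the collection $P_{\surf_g}^{-}(h^t)$ converges to the convex polyhedral cusp $P^0$ over $\surf_g$ determined by $\omega$.

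The hard part will be the second step: showing that $\HE_{\surf_g,\Theta}^{-}(h^t)$ converges and that its limit equals the Euclidean functional $\HE_{\surf_g,\Theta}^0(\omega)$. The obstruction is the indeterminate competition in \eqref{eq:he_functional_local} between the divergent truncated volume $-2\vol(P_{\surf_g}^{-}(h^t))$ and the term $\sum_i(\Theta_i-\theta_i^t)h_i^t$: writing $h_i^t=H^t+(h_i^t-H^t)$ with $H^t\to\infty$ and bounded offsets as in \lemref{lemma:hyperbolic_he_fiber_limit}, the factor $H^t\sum_i(\Theta_i-\theta_i^t)$ becomes a genuine $\infty\cdot 0$ form once $\Theta$ saturates Euclidean Gau{\ss}--Bonnet. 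I would resolve this either by differentiating along the ray via Schl\"afli's formula (\propref{prop:schlaflis_differential_formula}) to show that $t\mapsto\HE_{\surf_g,\Theta}^{-}(h^t)$ has an integrable derivative with a finite limit, or, more explicitly, by passing to the dilogarithmic volume expressions of \remref{remark:formula_for_hyperbolic_volume}, in which the horoprism volume and the residual angle terms combine into the Euclidean volume. The claimed extension to second order would then follow from the Hessian formula \lemref{lemma:local_hessian_formula}: as $t\to\infty$ the cotan-weights $\cotw_{ij}^{-}$, the graph Laplacian $\laplacian_{\len,r}^{-}$, and the diffusion term $\Diff_{\len,r}^{-}$ all converge to their Euclidean analogues, yielding convergence of the second derivative.

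Finally I would combine strict concavity with this boundary analysis. By the Euclidean theory developed in \cite{BL2023}, the functional $\HE_{\surf_g,\Theta}^0$ is strictly concave and, under \eqref{eq:euclidean_gauss_bonnet_condition}, possesses a unique maximizer $\omega^{\ast}$ corresponding to a unique convex polyhedral cusp over $\surf_g$ with cone-angles $\Theta$. Since $\HE_{\surf_g,\Theta}^{-}$ is strictly concave on $\polyhedra^{-}(\surf_g)$ (\propref{prop:he_functional_properties}), has no interior critical point, and extends continuously to $\partial_{\infty}\polyhedra^{-}(\surf_g)$ where it agrees with $\HE_{\surf_g,\Theta}^0$, its supremum over $\polyhedra^{-}(\surf_g)\cup\partial_{\infty}\polyhedra^{-}(\surf_g)$ is attained uniquely at the ideal point $\mathfrak{h}$ represented by $\omega^{\ast}$, which is exactly the sought cusp. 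Here I would contrast the present situation with \lemref{lemma:hyperbolic_he_fiber_limit}, where the strictly stronger hyperbolic Gau{\ss}--Bonnet condition forces divergence to $-\infty$ along every ray and hence no boundary maximizer, confirming that in the Euclidean-borderline case the maximizer has genuinely migrated onto $\partial_{\infty}\polyhedra^{-}(\surf_g)$.
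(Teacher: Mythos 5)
Your overall architecture matches the paper's (identify $\partial_{\infty}\polyhedra^{-}(\surf_g)$ with convex polyhedral cusps, extend $\HE_{\surf_g,\Theta}^{-}$ continuously so that it agrees with $\HE_{\surf_g,\Theta}^{0}$ there, rule out interior critical points via the strict/borderline Gau{\ss}--Bonnet dichotomy), but your final step has a genuine gap: you conclude that the supremum over $\polyhedra^{-}(\surf_g)\cup\partial_{\infty}\polyhedra^{-}(\surf_g)$ is attained at an ideal point from \enquote{strict concavity + no interior critical point + continuous boundary extension} alone. This set is \emph{not} compact: in weight coordinates the domain is $\big\{\omega\in\RR_{>0}^{\verts}\,:\,\omega_i>\nicefrac{1}{\sinh(R)}\text{ if }i\text{ hyperideal}\big\}$, and $\partial_{\infty}\polyhedra^{-}(\surf_g)$ only captures strictly diverging sequences whose normalized weights converge to a direction with \emph{all} components positive. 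A maximizing sequence could a priori escape to the finite boundary ($h_i\to0$ for hyperideal $i$, $h_i\to-\infty$ for ideal $i$) or to a degenerate direction at infinity where some $\nicefrac{\omega_i}{\|\omega\|}\to0$, and your ray analysis $h^t=(\Omega_R^{-})^{-1}(t\,\omega)$ sees neither regime (general maximizing sequences need not follow rays). The paper closes exactly this hole by rerunning the arguments of \lemref{lemma:hyperbolic_he_hight_bound} and \lemref{lemma:weight_relationship}: near such degenerations $\theta_i\to0<\Theta_i$, so by \lemref{lemma:he_functional_total_diff} the functional increases when $h_i$ is pushed back inward, and no supremum can be approached there. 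Relatedly, your \enquote{hard part} is left as two unexecuted sketches (Schl\"afli along the ray, or dilogarithms); the paper's actual resolution of the $\infty\cdot0$ competition in \lemref{lemma:he_extension_continuous} is the quantitative Gau{\ss}--Bonnet estimate $\pi-\theta_{jk}^i-\theta_{ki}^j-\theta_{ij}^k=\area(ijk)\leq2\pi(L^n-1)$ with $L^n-1=O(\ee^{-2H^n})$, which forces $\big(\sum_i(\Theta_i-\theta_i^n)\big)\min_i h_i^n\to0$; if you take the Schl\"afli route you must still show the limit is exactly $\HE_{\surf_g,\Theta}^{0}$, not merely that it exists.

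Your uniqueness step also takes a different route that deserves a flag. You import existence and uniqueness of the Euclidean maximizer from \cite[Thm.~A]{BL2023}; the paper instead stays self-contained: the extended first derivative (\lemref{lemma:he_extension_hessian}) vanishes at $\mathfrak{h}$ if and only if the associated cusp has angles $\Theta$, and uniqueness follows from continuous dependence of $\HE_{\surf_g,\Theta}^{-}$ on $\Theta$ together with uniqueness of maximizers for $\Theta$ satisfying the strict hyperbolic condition \eqref{eq:hyperbolic_gauss_bonnet_condition}. Citing \cite{BL2023} is logically admissible (it is independent prior work), but it makes the paper's subsequent corollary --- that \thmref{theorem:geometric_transitions} provides \enquote{yet another} variational solution of the Euclidean prescribed cone-angle problem --- circular, which is presumably why the authors avoid it; note also that strict concavity of $\HE_{\surf_g,\Theta}^{0}$ holds only on the quotient $\RR^{\verts}/\lin(\bm{1}_{\verts})$, since the Hessian $-\sum_{ij}\cotw_{ij}^{0}(\diff{}{h_i}-\diff{}{h_j})^2$ kills constants, so you would at minimum need to phrase your appeal to \cite{BL2023} on that quotient.
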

\begin{proof}
    In \propref{prop:infty_bdy_polyhedral_cusps} we will explain the correspondence of
    $\partial_{\infty}\polyhedra^{-}(\surf_g)$ to convex polyhedral cusps.
    Afterwards we are going to show that
    $\HE_{\surf_g, \Theta}^{-}$ extends to $\partial_{\infty}\polyhedra^{-}(\surf_g)$
    in \lemref{lemma:he_extension_continuous}. The differential properties
    of this extension are discussed in \lemref{lemma:he_extension_hessian}.

    Now note that $\HE_{\surf_g, \Theta}^{-}$ can not attain its supremum
    in $\polyhedra^{-}(\surf_g)$ because each point of $\polyhedra^{-}(\surf_g)$
    corresponds to a convex polyhedral end, \ie, its associated cone-angles satisfy the
    hyperbolic Gau{\ss}--Bonnet condition \eqref{eq:hyperbolic_gauss_bonnet_condition}.
    Neither can it correspond to a
    boundary-point of $\Omega_R^{-}\big(\polyhedra^{-}(\surf_g)\big)$. This can
    be seen by repeating the argument given in
    \lemref{lemma:hyperbolic_he_hight_bound} and
    \lemref{lemma:weight_relationship}. So we obtain that there is some
    $\mathfrak{h}\in\partial_{\infty}\polyhedra^{-}(\surf_g)$
    with $\sup\HE_{\surf_g, \Theta}^{-}(h) = \HE_{\surf_g, \Theta}^{-}(\mathfrak{h})$.
    Finally, this $\mathfrak{h}$ is uniquely determined because
    $\HE_{\surf_g, \Theta}^{-}$ depends continuously on $\Theta$ and has unique
    maximizers in the case of $\Theta$'s satisfying the hyperbolic
    Gau{\ss}--Bonnet condition.
\end{proof}

\begin{remark}
    The strict concavity of $\HE_{\surf_g, \Theta}^{-}$ is essential for proving
    the uniqueness above. Nonetheless, the spherical dHE-functional exhibits a
    limiting-behavior similar to its hyperbolic counterpart (see subsequent
    discussion). Actually, this allows us to analyse the signature of the
    Hessian of $\HE_{\surf_g, \Theta}^{+}$ in a neighborhood of the ideal boundary
    $\partial_{\infty}\polyhedra^{+}(\surf_g)$.
    Using \lemref{lemma:local_hessian_formula} and \lemref{lemma:he_extension_hessian}
    we see that it is $(1, |\verts|-1)$ close to the Euclidean limit.
    In particular, the Hessian is not degenerate.
\end{remark}

In \cite[Thm.~A]{BL2023} the authors gave an explicit variational solution to
the Euclidean prescribed cone-angle problem via the Euclidean dHE-functional
$\HE_{\surf_g, \Theta}^0$. Because of the relationship between convex polyhedral cusps
and decorated piecewise Euclidean metrics, our previous theorem provides yet
another variational solution to this problem, this time via geometric transitions.

\begin{corollary}[Euclidean prescribed cone-angle problem]
    Let $(\dist_{\toposurf_g}, \mathfrak{r})$ be a hyperideally decorated piecewise
    Euclidean metric on the marked genus $g\geq0$ surface $(\toposurf_g, \verts)$
    and let $\Theta\in\RR_{>0}^{\verts}$ satisfy the Euclidean Gau{\ss}--Bonnet
    condition \eqref{eq:euclidean_gauss_bonnet_condition}.

    There exists a unique decorated piecewise Euclidean metric, up to scale,
    discrete conformally equivalent to $(\dist_{\toposurf_g}, \mathfrak{r})$ realizing
    $\Theta\in\RR_{>0}^{\verts}$. The logarithmic scale factors, which give
    to the change of metric, correspond to the maximum point of a strictly
    concave functional.
\end{corollary}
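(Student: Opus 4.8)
The plan is to deduce the corollary directly from the geometric-transition theorem \thmref{theorem:geometric_transitions} by translating between convex polyhedral cusps and decorated piecewise Euclidean metrics. First I would construct the fundamental discrete conformal invariant $\surf_g$ of the given decorated piecewise Euclidean metric $(\dist_{\toposurf_g}, \mathfrak{r})$: exactly as in the spherical and hyperbolic cases (\secref{sec:connections_hyperbolic_polyhedra}), each hyperideally decorated Euclidean triangle carries a face-circle and hence an associated hyperideal triangle, and these glue to a complete hyperbolic surface with cusps and infinite-area ends homeomorphic to $\toposurf_g\setminus\verts$. As in \defref{def:dce_decoration_hyperbolic_version}, two decorated piecewise Euclidean metrics are discrete conformally equivalent precisely when they induce the same $\surf_g$. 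Thus solving the prescribed cone-angle problem within the discrete conformal class of $(\dist_{\toposurf_g}, \mathfrak{r})$ amounts to finding a convex polyhedral cusp over $\surf_g$ whose induced Euclidean link metric has cone-angles $\Theta$.

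Next I would invoke \thmref{theorem:geometric_transitions}. Since $\Theta$ satisfies the Euclidean Gau{\ss}--Bonnet condition \eqref{eq:euclidean_gauss_bonnet_condition}, that theorem produces a unique convex polyhedral cusp over $\surf_g$ with cone-angles $\Theta$, realized as the unique maximizer $\mathfrak{h}\in\partial_{\infty}\polyhedra^{-}(\surf_g)$ of the extended hyperbolic dHE-functional $\HE_{\surf_g,\Theta}^{-}$, which on the ideal boundary agrees with the Euclidean dHE-functional $\HE_{\surf_g,\Theta}^{0}$. The boundary of this cusp carries the sought decorated piecewise Euclidean metric $(\widetilde{\dist}_{\toposurf_g}, \tilde{\mathfrak{r}})$; by construction its fundamental invariant is $\surf_g$, so it is discrete conformally equivalent to $(\dist_{\toposurf_g}, \mathfrak{r})$, and its cone-angles are $\Theta$. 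The logarithmic scale factors effecting this change of metric are read off from $\mathfrak{h}$ through the Euclidean analogue of the height/scale-factor relation \eqref{eq:log_scale_factor_to_heights_hyperbolic}, and they maximize the strictly concave functional $\HE_{\surf_g,\Theta}^{0}$.

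Finally I would address the phrase \enquote{up to scale}. The scaling freedom is intrinsic to the Euclidean setting: adding a common constant to all $u_i$ rescales the entire Euclidean metric without changing $\surf_g$ or the cone-angles, which is exactly the equivalence defining the ideal boundary $\partial_{\infty}\polyhedra^{-}(\surf_g)$, where sequences are identified whenever the normalized weights $\nicefrac{\Omega_R^{-}(h^n)}{\|\Omega_R^{-}(h^n)\|}$ share a limit. Hence $\mathfrak{h}$, and therefore the realizing metric, is unique only modulo this one-parameter rescaling, and $\HE_{\surf_g,\Theta}^{0}$ is understood as strictly concave on the corresponding slice. The main obstacle I expect lies not in the variational analysis, which \thmref{theorem:geometric_transitions} already supplies, but in the bookkeeping of the translation: verifying that a convex polyhedral cusp built from hyperideal horoprisms induces a genuine decorated piecewise Euclidean metric on its link, that the cone-angles of this Euclidean metric coincide with the cone-angles $\theta_i$ of the cusp, and that its fundamental discrete conformal invariant is again $\surf_g$. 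This is the Euclidean counterpart of the pyramid/prism correspondences of \secref{sec:connections_hyperbolic_polyhedra} and is precisely the link with \cite{BL2023}; once it is in place, existence, uniqueness up to scale, and the variational characterization all follow from \thmref{theorem:geometric_transitions}.
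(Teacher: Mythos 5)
Your proposal is correct and follows essentially the same route as the paper, which deduces the corollary directly from \thmref{theorem:geometric_transitions} together with the correspondence between convex polyhedral cusps and hyperideally decorated piecewise Euclidean metrics from \cite{BL2023} (summarized in \teqref{eq:lambda_length_to_inversive_distance_euclidean}, \teqref{eq:euclidean_radii_to_heights}, and \propref{prop:infty_bdy_polyhedral_cusps}). Your identification of the \enquote{up to scale} ambiguity with the shift of heights by $\bm{1}_{\verts}$, \ie, with the quotient $\RR^{\verts}/\lin(\bm{1}_{\verts})$ describing both the space of convex polyhedral cusps and the ideal boundary $\partial_{\infty}\polyhedra^{\pm}(\surf_g)$, matches the paper's reasoning exactly.
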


\begin{figure}[t]
    \centering
    \includegraphics[width=0.575\textwidth]{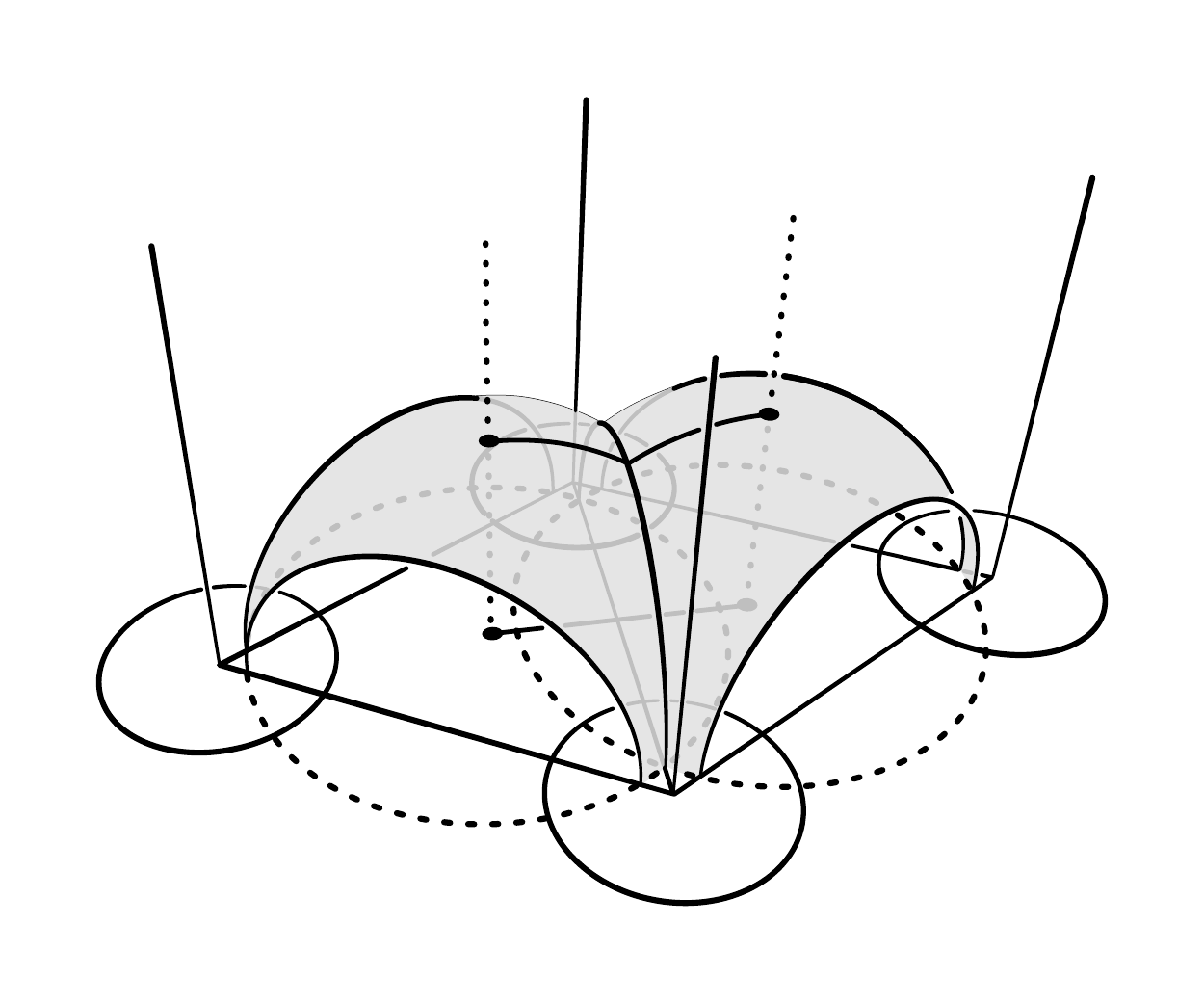}
    \quad
    \labellist
    \small\hair 2pt
    \pinlabel $i$ at 160 50
    \pinlabel $j$ at 440 50
    \pinlabel $\infty$ at 300 545
    \pinlabel $\mathfrak{h}_i$ at 105 250
    \pinlabel $\mathfrak{h}_j$ at 495 250
    \pinlabel $\mathfrak{h}_{ij}$ at 335 250
    \pinlabel $\lambda_{ij}$ at 305 110
    \pinlabel $\mathfrak{l}_{ij}$ at 305 390
    \endlabellist
    \includegraphics[width=0.39\textwidth]{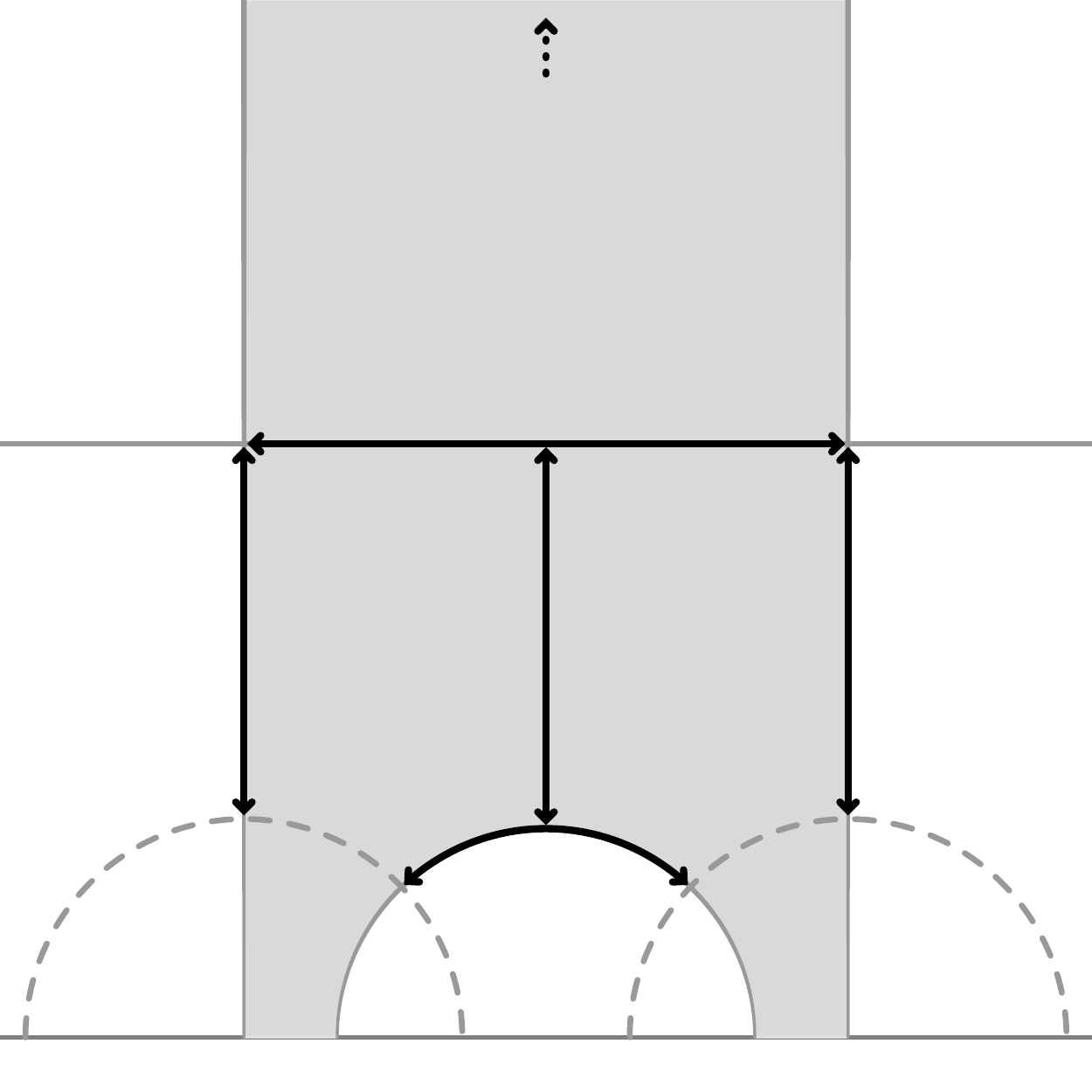}
    \caption{%
        \textsc{Left}: two adjacent hyperideal horoprisms in the upper half-plane
        of hyperbolic $3$-space. The common distinguished vertex is given by $\infty$.
        As in the case hyperideal prisms the angle at the common edge
        $\leq\pi$ if and only if the edge is locally canonical.
        \textsc{Right}: notation for a hyperideal horoprism.
    }
    \label{fig:hyperideal_horoprisms}
\end{figure}

Let us review some facts about hyperideal polyhedral cusps and decorated piecewise
Euclidean surfaces (see \cite[Sec.~4]{BL2023} for more details). Similar to the
non-Euclidean case we can associate a complete hyperbolic surface with ends $\surf_g$
to a hyperideally decorated piecewise Euclidean metric
$(\dist_{\toposurf_g}, \mathfrak{r})$ on the marked surface $(\toposurf_g, \verts)$,
\ie, its fundamental discrete conformal invariant. For a geodesic triangulation
$\tri$ of the surface the corresponding discrete Euclidean metric
$(\tri, \mathfrak{l}, \mathfrak{r})$ is related to the $\lambda$-lengths of $\surf_g$
via
\begin{equation}\label{eq:lambda_length_to_inversive_distance_euclidean}
    \cosh\lambda_{ij}
    \;=\;
    \frac{\mathfrak{l}_{ij}^2\,-\,\mathfrak{r}_i^2\,-\,\mathfrak{r}_j^2}
        {2\mathfrak{r}_i\mathfrak{r}_j}.
\end{equation}
Furthermore, the heights $\mathfrak{h}\in\RR^{\verts}$ of the polyhedral cusp
over $\surf_g$ determined by $(\tri, \mathfrak{l}, \mathfrak{r})$ are given by
\begin{equation}\label{eq:euclidean_radii_to_heights}
    \mathfrak{r}_i
    \;=\;
    \ee^{-\mathfrak{h}_i}.
\end{equation}
These heights are measured with respect to an auxiliary horosphere at the
distinguished vertex of the polyhedral cusp (see \figref{fig:hyperideal_horoprisms},
right). Thus, they determine the polyhedral cusp only up to shifts by
$\bm{1}_{\verts}$. Here, $\bm{1}_{\verts}\in\RR^{\verts}$ is the constant vector.
As in the non-Euclidean case, the polyhedral cusp is convex if and only if $\tri$
is a canonical triangulation of $\surf_g$ (with respect to the weights
$\ee^{\mathfrak{h}_i}$). Therefore, one can show that the space of convex
polyhedral cusps is given by the quotient space $\RR^{\verts}/\lin(\bm{1}_{\verts})$.

\begin{proposition}
    \label{prop:infty_bdy_polyhedral_cusps}
    To each element of $\partial_{\infty}\polyhedra^{\pm}(\surf_g)$
    corresponds a convex polyhedral cusp. Conversely, each convex polyhedral cusp
    determines a point in $\partial_{\infty}\polyhedra^{\pm}(\surf_g)$. This
    correspondence gives a homeomorphism between
    $\partial_{\infty}\polyhedra^{\pm}(\surf_g)$ and the quotient space
    $\RR^{\verts}/\lin(\bm{1}_{\verts})$.
\end{proposition}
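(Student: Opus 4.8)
The plan is to pass to the weight-coordinates furnished by \(\Omega_R^{\pm}\) and to read off the ideal boundary as the set of limiting \emph{directions} of weights. By \propref{prop:space_polyhedra_characterization} the map \(h\mapsto\omega\coloneq\Omega_R^{\pm}(h)\) identifies \(\polyhedra^{\pm}(\surf_g)\) with a subset of \(\RR_{>0}^{\verts}\), and by definition an element of \(\partial_{\infty}\polyhedra^{\pm}(\surf_g)\) is recorded precisely by the limit \(\bar{\omega}\coloneq\lim_{n\to\infty}\omega^n/\|\omega^n\|\) on the unit sphere. On the other hand, the discussion preceding this proposition identifies convex polyhedral cusps with weights \(\ee^{\mathfrak{h}}\) taken up to scaling (via \eqref{eq:euclidean_radii_to_heights}), that is, with projective classes of weights in \(\RR_{>0}^{\verts}\), hence with \(\RR^{\verts}/\lin(\bm{1}_{\verts})\). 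First I would therefore define the forward map by sending \([\,(h^n)\,]\mapsto\bar{\omega}\mapsto\) the convex polyhedral cusp whose weights are \(\bar{\omega}\); since the induced canonical tessellation and hence the cusp depend only on the projective class of \(\bar{\omega}\) (scale-invariance in \propref{prop:properties_of_weightings}), this is well defined as soon as \(\bar{\omega}\in\RR_{>0}^{\verts}\).

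The key analytic input is an asymptotic expansion of \(\Omega_R^{\pm}\). From \eqref{eq:heights_to_hyperbolic_radii_spherical} and \eqref{eq:heights_to_hyperbolic_radii_hyperbolic} one reads off \(2\cosh(R)\,\omega_i=\ee^{h_i}-\epsilon_i\ee^{-h_i}\) in the cone case and \(2\sinh(R)\,\omega_i=\ee^{h_i}+\epsilon_i\ee^{-h_i}\) in the end case, so that \(\log\omega_i=h_i-\log(2c_{\pm})+\mathrm{o}(1)\) as \(h_i\to\infty\), with \(c_{+}=\cosh R\) and \(c_{-}=\sinh R\). Thus \(\log\omega-h\) converges to the constant vector \(-\log(2c_{\pm})\,\bm{1}_{\verts}\); in particular the projective direction of the weights is asymptotically governed by the relative growth of the \(\ee^{h_i}\), which is exactly the cusp data \(\ee^{\mathfrak{h}_i}\). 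I would use this twice: to verify that the scaling sequences \(h^n\coloneq(\Omega_R^{\pm})^{-1}(n\omega)\) already considered in \lemref{lemma:he_angle_limit} are strictly diverging with the constant direction \(\omega/\|\omega\|\) (so that they realise every projective class and supply the inverse map cusp \(\mapsto[\,(h^n)\,]\)), and to confirm that convexity of the limit cusp is automatic, since the limiting weights still determine a canonical tessellation of \(\surf_g\) by \propref{prop:properties_of_weightings}.

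With both maps in hand I would check that they are mutually inverse. Starting from a cusp with weights \([\omega]\), the scaling sequence has the constant direction \(\omega/\|\omega\|\), whose projective class returns the same cusp; conversely, starting from \([\,(h^n)\,]\) with limit \(\bar{\omega}\), the associated cusp has weights \(\bar{\omega}\) and its scaling sequence again has limit \(\bar{\omega}\), hence lies in the same equivalence class because equivalence is defined by equality of limit directions. Continuity of the forward map follows from continuity of the projection to the unit sphere together with the continuous dependence of the canonical tessellation, and thus of the cusp, on the weights; continuity of the inverse is clear from the scaling construction. As the two maps are continuous and mutually inverse, the correspondence is a homeomorphism onto \(\RR^{\verts}/\lin(\bm{1}_{\verts})\).

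The hard part is the interplay between the literal definition of \(\partial_{\infty}\) and the target space. A strictly diverging sequence may have a limit direction \(\bar{\omega}\) on the \emph{boundary} of the positive orthant (some \(\bar{\omega}_i=0\)), for instance when the \(h_i^n\) diverge at vastly different rates; such a \(\bar{\omega}\) is not a valid system of weights and yields no cusp. The correspondence therefore matches convex polyhedral cusps with exactly those points of \(\partial_{\infty}\) whose limit direction lies in the open orthant, and the substantive point is to show that this non-degenerate part is open, exhausts all cusps, and carries the quotient topology of \(\RR^{\verts}/\lin(\bm{1}_{\verts})\). Controlling this, together with the uniformity of the \(\mathrm{o}(1)\) term above so that divergence of \(h\) and of \(\omega\) are genuinely equivalent, is where the real work lies; the remaining identifications are routine bookkeeping.
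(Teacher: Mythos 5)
Your proposal is correct in substance and arrives at the same correspondence as the paper, but by a partly different route. The paper's proof is synthetic: to each hyperideal prism it associates the unique hyperideal horoprism with the same base face and the same foot of the perpendicular from the apex, thereby attaching a convex polyhedral cusp to every convex polyhedral end; the hyperbolic law of cosines then yields the identity \eqref{eq:ends_to_cusps} linking the heights $h$ of the end to the heights $\mathfrak{h}$ of the cusp, and the limit statement is extracted from the convergence $h_i^t-h_j^t\to\ln\big(\nicefrac{\tau_{\epsilon_i}(h_i^1)}{\tau_{\epsilon_j}(h_j^1)}\big)$ already established in \lemref{lemma:hyperbolic_he_fiber_limit}. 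You instead take as input the parametrization of convex polyhedral cusps by $\RR^{\verts}/\lin(\bm{1}_{\verts})$ (stated in the discussion preceding the proposition, resting on \cite{BL2023}) and work entirely in weight coordinates; your asymptotic $\log\omega_i=h_i-\log(2c_{\pm})+\mathrm{o}(1)$ is exactly the computation underlying \lemref{lemma:hyperbolic_he_fiber_limit}, so the analytic core is identical, and your forward map agrees with the paper's since \eqref{eq:ends_to_cusps} gives $\ee^{\mathfrak{h}_i}\propto\tau_{\epsilon_i}(h_i)\propto\omega_i$ asymptotically. What the synthetic construction buys beyond the proposition itself is the explicit identity \eqref{eq:ends_to_cusps}, which is reused in \lemref{lemma:he_extension_continuous} and \lemref{lemma:he_extension_hessian} to control convergence of the dihedral angles and cotan-weights; your version is leaner for the proposition at hand and makes transparent that only the projective class of the limiting weight direction matters.

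Your closing caveat is a genuine observation, but it exposes an imprecision in the definition rather than a defect of your argument: as literally defined, $\partial_{\infty}\polyhedra^{\pm}(\surf_g)$ admits strictly diverging sequences whose normalized weights converge to a point on the boundary of the positive orthant (take, say, $\omega^n_1=n$ and $\omega^n_j=n^2$ otherwise; the preimages under $\Omega_R^{\pm}$ are strictly diverging for large $n$). Such classes correspond to no cusp, and the paper's proof is equally silent on them --- it only treats the constant-direction scaling rays $h^t$, which realize every direction in the open orthant. The intended reading, under which both your argument and the paper's go through, is that $\partial_{\infty}$ consists of the classes whose limit direction lies in the open orthant, topologized as a subset of the unit sphere; with that convention the map $[\mathfrak{h}]\mapsto\nicefrac{\ee^{\mathfrak{h}}}{\|\ee^{\mathfrak{h}}\|}$ is the desired homeomorphism and your \enquote{hard part} collapses to the routine bookkeeping you describe. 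The uniformity of the $\mathrm{o}(1)$ term is not an issue, since $\verts$ is finite and the estimate is coordinatewise.
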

\begin{proof}
    This follows from a simple geometric observation. It is similar in the
    spherical and hyperbolic case. We only elaborate the hyperbolic one.

    For each hyperideal prism we can construct a unique hyperideal horoprism
    with the same base face and orthogonal projection of the apex to that face.
    So we can associate to each convex polyhedral end $h\in\polyhedra^{-}(\surf_g)$
    a convex polyhedral cusp with heights
    $\mathfrak{h}\in\RR^{\verts}/\lin(\bm{1}_{\verts})$.
    Using the hyperbolic law of cosines we see that
    \begin{equation}\label{eq:ends_to_cusps}
        \frac{\tau_{\epsilon_i}(h_i)}{\sinh(h_{ij})}
        \;=\;
        \tau_{\epsilon_i}'(d_i)
        \;=\;
        \frac{\ee^{\mathfrak{h}_i}}{\ee^{\mathfrak{h}_{ij}}}.
    \end{equation}
    Here $d_i$ is the distance between the vertex $i$ and the foot of the
    projection of the apex to the edge $ij$. Furthermore, $h_{ij}$ and
    $\mathfrak{h}_{ij}$ are the distances between the apex and edge $ij$ in the
    prism and horoprism, respectively (see \figref{fig:hyperideal_horoprisms},
    right). Now consider
    $h^t \coloneq (\Omega_R^{-})^{-1}\big(t\,\Omega_R^{-}(h^1)\big)$ for
    $h^1\in\polyhedra^{-}(\surf_g)$ and all $t\geq1$.
    In the proof of \lemref{lemma:hyperbolic_he_fiber_limit} we showed that
    $h_i^t - h_j^t$ converges to
    \(
        \ln\big(\nicefrac{\tau_{\epsilon_i}(h_i^1)}{\tau_{\epsilon_j}(h_j^1)}\big)
    \).
    In light of \eqref{eq:ends_to_cusps} and the properties of the
    map $\Omega_R^{-}$ this yields the claim.
\end{proof}

The Euclidean discrete Hilbert--Einstein functional
$\HE_{\surf_g,\Theta}^0\colon\RR^{\verts}\to\RR$ can be defined by the same formula
as its non-Euclidean counterparts \eqref{eq:he_functional_local}. It is a twice
continuously differentiable function which descends to the quotient space
$\RR^{\verts}/\lin(\bm{1}_{\verts})$ if $\Theta\in\RR_{>0}^{\verts}$ satisfies the
Euclidean Gau{\ss}--Bonnet condition.

\begin{lemma}
    \label{lemma:he_extension_continuous}
    Given $\Theta\in\RR_{>0}^{\verts}$ satisfying the Euclidean Gau{\ss}--Bonnet
    condition \eqref{eq:euclidean_gauss_bonnet_condition}. The dHE-functional
    $\HE_{\surf_g, \Theta}^{\pm}$ extends continuously
    to $\partial_{\infty}\polyhedra^{\pm}(\surf_g)$. Indeed,
    \(
        \HE_{\surf_g, \Theta}^{\pm}(\mathfrak{h})
        =
        \HE_{\surf_g, \Theta}^{0}(\mathfrak{h})
    \)
    for $\mathfrak{h}\in\partial_{\infty}\polyhedra^{\pm}(\surf_g)$.
\end{lemma}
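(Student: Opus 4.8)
The plan is to show that along any strictly diverging sequence $(h^n)_{n\geq0}\subset\polyhedra^{\pm}(\surf_g)$ representing a boundary point $\mathfrak h\in\partial_{\infty}\polyhedra^{\pm}(\surf_g)$ the value $\HE_{\surf_g,\Theta}^{\pm}(h^n)$ converges to $\HE_{\surf_g,\Theta}^0(\mathfrak h)$; continuity of the extension then follows because $\HE_{\surf_g,\Theta}^0$ is itself continuous on $\RR^{\verts}/\lin(\bm{1}_{\verts})\cong\partial_{\infty}\polyhedra^{\pm}(\surf_g)$ (\propref{prop:infty_bdy_polyhedral_cusps}). First I would reduce to a single canonical triangulation $\tri$: by item \ref{item:tessellation_scale_invariance} of \propref{prop:properties_of_weightings} the canonical tessellation is constant along the projective ray of $\Omega_R^{\pm}(h^n)$, there are only finitely many canonical triangulations (item \ref{item:canonical_space_finite}), and \lemref{lemma:well_definedness_he_functional} makes the value of the functional independent of the refining triangulation chosen; so for the tail of the sequence I may evaluate \eqref{eq:global_he_functional} on one fixed $\tri$. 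Convergence in $\partial_{\infty}$ forces the ratios $\Omega_R^{\pm}(h_i^n)/\Omega_R^{\pm}(h_j^n)$ to stay bounded, whence the differences $h_i^n-h_j^n$ stay bounded; fixing a representative of $\mathfrak h$ (equivalently an auxiliary horosphere at the distinguished cusp), the geometry recorded in the proof of \propref{prop:infty_bdy_polyhedral_cusps} gives $h_i^n=\mathfrak h_i+s^n+\mathrm{o}(1)$ with a common shift $s^n\to\infty$.

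Next I would treat the three summands of \eqref{eq:he_functional_local} separately. For each face the truncated hyperideal prism (resp.\ pyramid) over $ijk$ converges, as its apex recedes to the ideal point, to the hyperideal horoprism over $ijk$ built with the chosen horosphere (\figref{fig:hyperideal_horoprisms}); since truncated volumes and dihedral angles of hyperbolic polyhedra depend continuously on their ideal and hyperideal vertices, the volume term $-2\vol(P_{\surf_g}^{\pm}(h^n))$ converges to $-2\vol$ of the limiting convex polyhedral cusp, and, as the $\lambda$-lengths are fixed while the face-circle intersection angles $\alpha_{ij}^n$ tend to their Euclidean counterparts $\alpha_{ij}^0$, the edge term $\sum_{ij}(\pi-\alpha_{ij}^n)\lambda_{ij}$ converges. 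These are precisely the volume and edge terms of $\HE_{\surf_g,\Theta}^0(\mathfrak h)$.

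The only delicate summand is the angle term $\sum_i(\Theta_i-\theta_i^n)h_i^n$, and this is where I expect the main obstacle. Writing $h_i^n=\mathfrak h_i+s^n+\mathrm{o}(1)$ splits it as $\sum_i(\Theta_i-\theta_i^n)(\mathfrak h_i+\mathrm{o}(1))+s^n\sum_i(\Theta_i-\theta_i^n)$. As in \lemref{lemma:he_angle_limit} one has $\theta_i^n\to\vartheta_i$ with $\vartheta$ satisfying the Euclidean Gau{\ss}--Bonnet condition \eqref{eq:euclidean_gauss_bonnet_condition}, so the first part converges to the Euclidean angle term $\sum_i(\Theta_i-\vartheta_i)\mathfrak h_i$. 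The second part measures the failure of shift-invariance of the functional, which must vanish in the limit exactly because both $\Theta$ and $\vartheta$ satisfy \eqref{eq:euclidean_gauss_bonnet_condition}; concretely, using that $\Theta$ satisfies \eqref{eq:euclidean_gauss_bonnet_condition} together with Euler's formula, $\sum_i(\Theta_i-\theta_i^n)=\sum_{ijk\in\faces_{\tri}}\area(ijk)$ equals the total area of the hyperbolic base triangles of the links, which by \eqref{eq:heights_lambda_lengths_relationship_hyperbolic} have edge lengths $\len_{ij}^n=\mathrm{O}(\ee^{-(h_i^n+h_j^n)/2})=\mathrm{O}(\ee^{-s^n})$ and hence area $\mathrm{O}(\ee^{-2s^n})$. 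Thus $s^n\sum_i(\Theta_i-\theta_i^n)=\mathrm{O}(s^n\ee^{-2s^n})\to0$. The hard part is organising these asymptotics uniformly along an arbitrary diverging sequence rather than the radial ray, for which I would use the boundedness of $h_i^n-h_j^n$ so that all heights grow comparably and the link-length estimates of \lemref{lemma:bound_on_hyperideal_heights} and \lemref{lemma:bound_on_ideal_heights} apply with uniform constants. Combining the three limits identifies $\lim_n\HE_{\surf_g,\Theta}^{\pm}(h^n)$ with $\HE_{\surf_g,\Theta}^0(\mathfrak h)$, and the spherical case is handled by the same argument with hyperbolic links replaced by spherical ones (where $\sum_i(\Theta_i-\theta_i^n)$ is, up to sign, again the total base area and decays at the same rate).
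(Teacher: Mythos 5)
Your proposal is correct and takes essentially the same route as the paper's proof: both reduce the claim to showing that the shift-weighted total angle defect vanishes, both identify $\sum_i(\Theta_i-\theta_i^n)$ with the total area of the triangles via the Euclidean Gau{\ss}--Bonnet condition for $\Theta$ (plus Euler's formula), and both extract the decay $\cosh\len_{ij}^n-1=\mathrm{O}(\ee^{-2H^n})$ from \eqref{eq:heights_lambda_lengths_relationship_hyperbolic} so that the product with the diverging common shift tends to zero, while the volume and edge terms are handled through the cusp correspondence of \propref{prop:infty_bdy_polyhedral_cusps} and the argument of \lemref{lemma:hyperbolic_he_fiber_limit}. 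The only cosmetic differences are that the paper uses $\min_i h_i^n$ where you use $s^n$, and bounds the triangle areas by comparison with a hyperbolic disk of radius $\max_{ij}\len_{ij}^n$ rather than your direct $\mathrm{O}(\ee^{-2s^n})$ estimate.
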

\begin{proof}
    Again we only discuss the hyperbolic case.
    Let $(h^n)_{n\geq0}\in\polyhedra^{-}(\surf_g)$ with
    \(
        \lim_{n\to\infty}h^n
        \eqcolon
        \mathfrak{h}
        \in
        \partial_{\infty}\polyhedra^{-}(\surf_g)
    \).
    We wish to show that
    \(
        \lim_{n\to\infty}\HE_{\surf_g, \Theta}^{-}(h^n)
        =
        \HE_{\surf_g, \Theta}^0(\mathfrak{h})
    \).
    \propref{prop:infty_bdy_polyhedral_cusps} grants that
    $(h^n-\min_ih_i^n)_{n\geq0}$ converges to the heights of a convex polyhedral
    cusp which is associated to $\mathfrak{h}$. Hence, following the argument in
    \lemref{lemma:hyperbolic_he_fiber_limit}, we only have to show that
    \begin{equation}
        \label{eq:angle_defect_vs_heights}
        \lim_{n\to\infty}\Big(\sum(\Theta_i-\theta_{i}^n)\Big)\,\min_ih_i^n
        \;=\;
        0.
    \end{equation}
    As always, we can assume that the triangulation is fixed.
    Write $H^n\coloneq\min_ih_i^n$ and $L^n\coloneq\max_{ij}\cosh\len_{ij}^n$.
    Using \eqref{eq:heights_lambda_lengths_relationship_hyperbolic}, we see that
    $L^n-1$ is asymptotically dominated by
    $\nicefrac{1}{\ee^{2H^n}}$, up to a constant factor. Furthermore,
    \begin{equation}
        \pi \,-\, \sum\theta_{jk}^i(h^n)
        \;=\;
        \area(ijk)(h^n)
        \;\leq\;
        2\pi(L^n-1)
    \end{equation}
    for all triangles $ijk\in\faces$. The equality is given by the
    Gau{\ss}--Bonnet theorem. The estimate follows by comparison to the area
    of a hyperbolic disk of radius $\acosh(L^n)=\max_{ij}\len_{ij}$. This implies
    \eqref{eq:angle_defect_vs_heights} since $\Theta$ satisfies the
    Euclidean Gau{\ss}--Bonnet condition by assumption.
\end{proof}

Schl\"afli's differential formula (\propref{prop:schlaflis_differential_formula})
shows that the first derivative of the Euclidean dHE-functional is given by
$\diff{}{\HE_{\surf_g,\Theta}^0}=\sum(\Theta_i-\theta_i)$. The Hessian of
$\HE_{\surf_g,\Theta}^0$ can also be computed explicitly. Its associated
quadratic form is $-\sum\cotw_{ij}^0(\diff{}{h_i}-\diff{}{h_j})^2$. The
Euclidean decorated cotan-weights are
\begin{equation}\label{eq:euclidean_cotan_weights}
    \cotw_{ij}^0
    \;\coloneq\;
    \big(\cot\alpha_{ij}^k+\cot\alpha_{ij}^l\big)\,
    \frac{\mathfrak{r}_{ij}}{\mathfrak{l}_{ij}}.
\end{equation}

\begin{lemma}
    \label{lemma:he_extension_hessian}
    Given $\Theta\in\RR_{>0}^{\verts}$ satisfying the Euclidean Gau{\ss}--Bonnet
    condition \eqref{eq:euclidean_gauss_bonnet_condition}. The first and
    second derivative of the dHE-functional $\HE_{\surf_g, \Theta}^{\pm}$ extend
    continuously to $\partial_{\infty}\polyhedra^{\pm}(\surf_g)$. In particular,
    they coincide with the derivatives of $\HE_{\surf_g, \Theta}^{0}$ over
    $\partial_{\infty}\polyhedra^{-}(\surf_g)$.
\end{lemma}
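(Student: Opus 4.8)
The plan is to read off the first and second derivatives of $\HE_{\surf_g,\Theta}^{\pm}$ from their explicit formulas and to show that every geometric quantity appearing in them converges, as $h\to\partial_\infty$, to its Euclidean counterpart. As in \lemref{lemma:he_extension_continuous}, I first fix a canonical triangulation $\tri$ (legitimate by \propref{prop:properties_of_weightings}) and recall from \propref{prop:infty_bdy_polyhedral_cusps} that a sequence $h^n\to\mathfrak{h}\in\partial_\infty\polyhedra^{\pm}(\surf_g)$ corresponds, after subtracting $\min_i h_i^n$, to the heights of a convex polyhedral cusp. Throughout, the $\lambda$-lengths stay fixed while, exactly as in the proof of \lemref{lemma:he_angle_limit}, the edge-lengths $\len_{ij}^n\to0$, so the decorated base triangles degenerate to well-defined Euclidean shapes pinned by the $\lambda$-lengths.

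For the first derivative, by \lemref{lemma:he_functional_total_diff} it suffices to show that the cone-angles $\theta_i(h^n)$ converge to the cone-angles of the associated Euclidean cusp. This is precisely \lemref{lemma:he_angle_limit}: the corner angles $\theta_{jk}^i$ converge to the angles $\vartheta_{jk}^i$ of a Euclidean triangle, and these limiting angles are by construction the dihedral angles of the hyperideal horoprisms constituting the polyhedral cusp. Hence $\sum(\Theta_i-\theta_i)\,\diff{}{h_i}$ converges to $\sum(\Theta_i-\theta_i^0)\,\diff{}{h_i}$, the first derivative of $\HE_{\surf_g,\Theta}^{0}$.

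For the second derivative I invoke \lemref{lemma:local_hessian_formula}, which expresses the Hessian as $-(\laplacian_{\len,r}^{\pm}+\Diff_{\len,r}^{\pm})$, and verify two claims. First, the decorated cotan-weights converge, $\cotw_{ij}^{\pm}\to\cotw_{ij}^{0}$: the factor $\cot\alpha_{ij}^k+\cot\alpha_{ij}^l$ converges because the dihedral angles of the fixed-shape base faces converge to those of the cusp, and since the triangles shrink uniformly with fixed $\lambda$-length shape every length-ratio tends to its Euclidean value, so by first-order Taylor expansion $\tanh r_{ij}/\sinh\len_{ij}\to\mathfrak{r}_{ij}/\mathfrak{l}_{ij}$ (and $\tan r_{ij}/\sin\len_{ij}\to\mathfrak{r}_{ij}/\mathfrak{l}_{ij}$ in the spherical case); comparison with \eqref{eq:euclidean_cotan_weights} gives $\laplacian_{\len,r}^{\pm}\to\sum\cotw_{ij}^{0}(\diff{}{h_i}-\diff{}{h_j})^2$. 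Second, the diagonal term vanishes: since $\cosh\len_{ij}^n-1\to0$ (resp.\ $1-\cos\len_{ij}^n\to0$) while $\cotw_{ij}^{\pm}$ stays bounded, $\Diff_{\len,r}^{\pm}\to0$. Therefore the Hessian of $\HE_{\surf_g,\Theta}^{\pm}$ converges to $-\sum\cotw_{ij}^{0}(\diff{}{h_i}-\diff{}{h_j})^2$, the Hessian of $\HE_{\surf_g,\Theta}^{0}$.

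The main obstacle is the precise limit $\tanh r_{ij}/\sinh\len_{ij}\to\mathfrak{r}_{ij}/\mathfrak{l}_{ij}$: one must check that $r_{ij}$ and $\len_{ij}$ shrink at the \emph{same} rate, so that their ratio—rather than either quantity separately—carries the correct Euclidean value. This rests on the base-face shapes being fixed by the $\lambda$-lengths together with the height-to-cusp compatibility of \propref{prop:infty_bdy_polyhedral_cusps}. Since the Euclidean derivatives are continuous functions on $\partial_\infty\polyhedra^{\pm}(\surf_g)\cong\RR^{\verts}/\lin(\bm{1}_{\verts})$ and there are only finitely many canonical triangulations of $\surf_g$ by \propref{prop:properties_of_weightings}, the convergences above are uniform across triangulations, whence the extended first and second derivatives are continuous on all of $\partial_\infty\polyhedra^{\pm}(\surf_g)$ and agree there with those of $\HE_{\surf_g,\Theta}^{0}$.
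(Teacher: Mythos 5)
Your proof is correct and matches the paper's own argument in structure: the first derivative is handled exactly as in the paper via the cone-angle convergence of \lemref{lemma:he_angle_limit}, and the Hessian via convergence of the decorated cotan-weights \eqref{eq:cotan_weights} to the Euclidean weights \eqref{eq:euclidean_cotan_weights}, with \propref{prop:infty_bdy_polyhedral_cusps} supplying the convergence of the dihedral angles $\alpha_{ij}^k$. The only differences are cosmetic: where the paper establishes the crucial ratio limit $\nicefrac{\tanh r_{ij}^n}{\sinh\len_{ij}^n}\to\nicefrac{\mathfrak{r}_{ij}}{\mathfrak{l}_{ij}}$ through the identity $\tanh r_{ij}^n/\sinh\len_{ij}^n=1/(\sinh\len_{ij}^n\cosh h_{ij}^n)$ expanded via \eqref{eq:heights_lambda_lengths_relationship_hyperbolic}, \eqref{eq:lambda_length_to_inversive_distance_euclidean}, \eqref{eq:euclidean_radii_to_heights}, and \eqref{eq:ends_to_cusps}, you obtain it by a first-order rescaling of the shrinking decorated triangles (an equivalent computation, correctly flagged as the technical crux), and you additionally spell out that $\Diff_{\len,r}^{\pm}\to0$, a point the paper leaves implicit when asserting that cotan-weight convergence implies convergence of the Hessian.
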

\begin{proof}
    The convergence of the first derivative follows from the convergence of the
    cone-angles (\lemref{lemma:he_angle_limit}).
    We are going to argue that the decorated cotan-weights
    (see \eqref{eq:cotan_weights}) corresponding to a sequence
    $(h^n)_{n\geq0}\in\polyhedra^{\pm}(\surf_g)$ with
    \(
        \lim_{n\to\infty}h^n
        \eqcolon
        \mathfrak{h}
        \in
        \partial_{\infty}\polyhedra^{\pm}(\surf_g)
    \)
    converge to the decorated cotan-weights \eqref{eq:euclidean_cotan_weights}
    corresponding to $\mathfrak{h}$. This implies the convergence of the Hessian.

    It is clear from the construction presented in
    \propref{prop:infty_bdy_polyhedral_cusps} that the dihedral
    angles $\alpha_{ij}^k$ of the hyperideal prisms associated to $h^n$ converge to
    the dihedral angles of the polyhedral cusp determined by $\mathfrak{h}$.
    So we only have to show that $\nicefrac{\tanh r_{ij}^n}{\sinh \len_{ij}^n}$
    converges to $\nicefrac{\mathfrak{r}_{ij}}{\mathfrak{l}_{ij}}$. A
    computation similar to the one in
    \lemref{lemma:lambda_length_to_inversive_distance_hyperbolic} shows that
    \begin{equation}
        \frac{\tanh r_{ij}^n}{\sinh \len_{ij}^n}
        \;=\;
        \frac{1}{\sinh\len_{ij}^n\cosh h_{ij}^n}.
    \end{equation}
    Expanding the right hand side of this equation using
    \eqref{eq:heights_lambda_lengths_relationship_hyperbolic},
    \eqref{eq:lambda_length_to_inversive_distance_euclidean},
    \eqref{eq:euclidean_radii_to_heights}, and \eqref{eq:ends_to_cusps},
    the result follows from a direct, but tedious, computation.
\end{proof}

\begin{remark}
    The discussion in this section shows that we can identify
    the ideal boundaries of $\polyhedra^{+}(\surf_g)$ and $\polyhedra^{-}(\surf_g)$.
    So the space
    \begin{equation}
        \polyhedra(\surf_g)
        \;\coloneq\;
        \polyhedra^{+}(\surf_g)
        \cup
        \partial_{\infty}\polyhedra^{\pm}(\surf_g)
        \cup
        \polyhedra^{-}(\surf_g).
    \end{equation}
    is connected. Actually, \propref{prop:space_polyhedra_characterization}
    shows that $\polyhedra(\surf_g)$ is homeomorphic to $\RR^{\verts}$ and
    supports a \enquote{cell-decomposition} corresponding to canonical tessellations
    of $\surf_g$ (see, \propref{prop:properties_of_weightings}).
    Furthermore, the discrete Hilbert--Einstein functional can be interpreted
    as a twice continuously differentiable function over the space
    $\polyhedra(\surf_g)$.
\end{remark}


\appendix
\section{Spherical weighted Delaunay tessellations}
\label{sec:spherical_wDt}
This appendix is dedicated to proving the following

\begin{proposition}[spherical weighted Delaunay tessellations]
    \label{prop:spherical_weighted_Delaunay_tessellations}
    Let $(\dist_{\toposurf_g}, r)$ be a hyperideally decorated piecewise spherical
    metric on the marked surface $(\toposurf_g, \verts)$. The weighted
    Delaunay tessellation of $(\toposurf_g, \verts, \dist_{\toposurf_g}, r)$ exists and
    is unique.
\end{proposition}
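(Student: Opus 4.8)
The plan is to reduce the statement to the existence and uniqueness of canonical tessellations of a complete hyperbolic surface, which are already available from the hyperbolic theory through \propref{prop:properties_of_weightings}, by means of the edge-by-edge equivalence of the weighted Delaunay and canonical conditions recorded in \lemref{lemma:local_delaunay_convexity_equivalence}. First I would use that a hyperideally decorated piecewise spherical metric $(\dist_{\toposurf_g}, r)$ is, by definition, induced by some decorated discrete spherical metric $(\tri_0, \len, r)$. Forming the hyperideal pyramids over its faces and gluing them along edges (\secref{sec:hyperbolic_polyhedra_spherical}) produces the complete hyperbolic surface with ends $\surf_g$, homeomorphic to $\toposurf_g\setminus\verts$, carrying a geodesic triangulation combinatorially equivalent to $\tri_0$. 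Choosing the radius $R$ of the sphere at the distinguished vertex large enough that the induced hyper- and horocycles are pairwise non-intersecting, the weights $\omega\coloneq\Omega_R^{+}(h)\in\RR_{>0}^{\verts}$ are strictly positive at every vertex, ideal or hyperideal.

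For existence I would invoke \propref{prop:properties_of_weightings}: the weights $\omega$ determine a unique canonical tessellation $\tri_{\surf_g}^{\omega}$ of $\surf_g$, and any of its triangular refinements $\tri_1$ is a canonical triangulation. Now \propref{prop:canonical_local_to_global} characterises such a triangulation edge-by-edge through \eqref{eq:local_canonical_condition}, and \lemref{lemma:local_delaunay_convexity_equivalence} identifies this local canonical condition with the local Delaunay condition \eqref{eq:local_delaunay_condition_weights} for the decorated discrete spherical metric supported on $\tri_1$ that is reconstructed from the base faces of the pyramids over $\tri_1$. Because this reconstruction returns exactly the piecewise spherical metric $(\dist_{\toposurf_g}, r)$, the triangulation $\tri_1$ is a weighted Delaunay triangulation of $(\dist_{\toposurf_g}, r)$, and removing all edges with $\cotw_{ij}^{+}=0$ yields a weighted Delaunay tessellation.

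For uniqueness I would observe that this correspondence is intrinsic to $(\dist_{\toposurf_g}, r)$: the tessellation produced above is precisely the pull-back of the canonical tessellation $\tri_{\surf_g}^{\omega}$, which is unique by \propref{prop:properties_of_weightings} and unchanged under scaling of $\omega$ (\propref{prop:properties_of_weightings}, item~\ref{item:tessellation_scale_invariance}). Indeed, any weighted Delaunay triangulation of $(\dist_{\toposurf_g}, r)$ is, by \lemref{lemma:local_delaunay_convexity_equivalence}, a canonical triangulation of $\surf_g$ for these weights, and two canonical triangulations for the same weights differ only on edges satisfying \eqref{eq:local_canonical_condition} with equality, that is, exactly the edges with $\cotw_{ij}^{+}=0$ (\propref{prop:canonical_local_to_global}); deleting these edges therefore gives one and the same tessellation. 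The hard part will be the bookkeeping at the vertices of $\surf_g$ lying at infinity: one has to check that the induced weights stay strictly positive and that the equivalence of \lemref{lemma:local_delaunay_convexity_equivalence} persists uniformly for cusps (ideal vertices) and complete ends (hyperideal vertices), including a consistent choice of the auxiliary horocycles used to define heights and weights. Once the geometry of a single pair of adjacent hyperideal pyramids is pinned down, the global existence and uniqueness follow formally from the hyperbolic results, exactly as in the hyperbolic case treated in \cite{Lutz2023}.
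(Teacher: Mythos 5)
Your uniqueness argument is essentially the paper's own (\lemref{lemma:uniqueness_wDt} is proved exactly by combining \propref{prop:canonical_local_to_global} with \lemref{lemma:local_delaunay_convexity_equivalence}), and that half is fine. The existence half, however, has a genuine gap at the step you state as \enquote{Because this reconstruction returns exactly the piecewise spherical metric $(\dist_{\toposurf_g}, r)$}. You build $\surf_g$ from an \emph{arbitrary} starting triangulation $\tri_0$, but the induced hyperbolic surface is not triangulation-independent: the paper only establishes well-definedness of the fundamental invariant via \lemref{lemma:local_delaunay_convexity_equivalence} \emph{for weighted Delaunay triangulations} (\secref{sec:convex_polyhedra}). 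If $\tri_0$ is not weighted Delaunay, the hyperideal pyramids over a canonical refinement $\tri_1$ of $\tri_{\surf_g}^{\omega}$, with the $\lambda$-lengths of $\tri_1$ in $\surf_g$ and the heights $h$ determined by $r$ via \eqref{eq:radii_height_relationship_spherical}, glue to a \emph{different} decorated spherical metric $(\dist', r)$ --- same radii, different edge-lengths --- whose invariant is $\surf_g(\tri_0)$; your construction then produces a weighted Delaunay triangulation of the wrong metric. Assuming instead that $\tri_0$ is already weighted Delaunay makes the argument circular, since that is precisely the existence statement being proved. Even setting this aside, transferring $\tri_1$ back to the spherical surface at the correct scale is the nontrivial content of \propref{prop:space_polyhedra_fixed_triangulation} (which only yields heights with $\Omega_R^{+}(h)=s\omega$ for some $s>0$, not $s=1$), and the ambient spaces $\polyhedra_{\tri}^{\pm}(\dist_{\toposurf_g},r)$ appearing there are themselves defined in terms of an existing weighted Delaunay triangulation of $(\dist_{\toposurf_g},r)$ --- another circularity. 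Your closing remark locates the \enquote{hard part} in bookkeeping at ideal versus hyperideal vertices, but that is not where the difficulty lies; the backward transfer of triangulations from $\surf_g$ to the spherical surface is the real obstruction.

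The paper avoids all of this by proving existence entirely within spherical geometry, via a flip algorithm: edges violating the strict local Delaunay condition are always flippable (\lemref{lemma:isosceles_triangles} and \lemref{lemma:concave_quads}), a flip replaces the lifted configuration by the outer convex hull of four points in $\RR^3$ and hence strictly increases the piecewise support function $S_{\tri,\len,r}$, the support function together with \lemref{lemma:center_distance_estimate} yields a uniform upper bound on edge-lengths, and \lemref{lemma:bounded_geodesics} then gives finiteness of admissible triangulations, so the algorithm terminates. If you want a canonical-tessellation-based existence proof along your lines, you would first have to prove (not cite) that every canonical triangulation of the invariant of a weighted Delaunay triangulation is realized by geodesics of the original decorated spherical surface --- a statement of comparable difficulty to the flip-algorithm argument itself.
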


The uniqueness of weighted Delaunay tessellations was already discussed in
\lemref{lemma:uniqueness_wDt}. So we are going to focus on their existence. Indeed,
we are describing a method to compute weighted Delaunay triangulations from an
arbitrary starting geodesic triangulation. In the next
section we will collect some necessary facts from spherical geometry. Afterwards we
will construct weighted Delaunay triangulations with the \emph{flip algorithm}.
Our approach follows closely the exposition in \cite[Sec.~3.4]{Lutz2023}.
We refer the interested reader to this paper for more information on the related
literature.

\subsection{Geometric preliminaries}
\begin{lemma}\label{lemma:isosceles_triangles}
    Let $(\tri, \len, r)$ be a hyperideally decorated discrete spherical metric.
    If the triangle $ijk\in\faces_{\tri}$ is glued to itself along the edge $ij$,
    then $ij$ is local Delaunay.
\end{lemma}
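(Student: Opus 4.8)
The plan is to unwind what the self-gluing means geometrically, reduce the Delaunay inequality to a single triangle by symmetry, and then settle the sign of one distance by a short computation in the Minkowski model that is closed off by the hyperideality hypothesis.

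First I would identify the combinatorial picture. A triangle glued to itself along $ij$ means that two of its sides are identified to form the single surface edge $ij$; these two sides necessarily share a vertex (the apex) and have equal length, while their other endpoints are identified and hence carry the same vertex-radius. Thus, after relabelling, $ijk$ is isosceles with $\len_{ij}=\len_{ik}$ and $r_j=r_k$, and the self-gluing is realised by the reflection $\rho$ of the triangle fixing $i$ and swapping $j$ and $k$. Since $\rho$ permutes the three vertex-circles it preserves the face-circle $C_{ijk}$, and it carries the side $ij$ to the side $ik$. Consequently the two faces incident to the surface edge $ij$ are this one triangle seen through its two equal sides, so by \lemref{lemma:local_characterization_wDt} the local Delaunay condition $d_{ij}^k+d_{ij}^l\ge0$ has both summands equal to the (signed) distance from the centre of $C_{ijk}$ to an equal side. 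It therefore reduces to $d_{ij}^k\ge0$, i.e.\ $\alpha_{ij}^k\le\nicefrac{\pi}{2}$ by \eqref{eq:distances_by_cotan_spherical}, and it suffices to show that the centre of the face-circle lies weakly on the same side of $ij$ as the triangle.

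The hard part is pinning down this sign, which is where hyperideality enters. I would work in the Minkowski model, representing the vertex-circles by $C_i=(p_i,\cos r_i)$ as in \eqref{eq:spherical_moebius_lift} and the face-circle by the orthogonal vector $C_{ijk}=(q,c)$. Choosing coordinates so that $\rho$ acts by $x_2\mapsto-x_2$ forces $q_2=0$, and, after solving the orthogonality relations $\ip{C_{ijk}}{C_i}_{3,1}=\ip{C_{ijk}}{C_j}_{3,1}=0$ for $q$, the sign of $d_{ij}^k$ reduces to that of $c\,\big(\cos r_j-\cos\len_{ij}\cos r_i\big)$. Normalising the representative so that $c>0$ (the centre of the cap containing the triangle), positivity of $d_{ij}^k$ becomes exactly the inequality $\cos r_j>\cos\len_{ij}\cos r_i$. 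This I would deduce from the hyperideal condition $\len_{ij}>r_i+r_j$: since then $\cos\len_{ij}<\cos(r_i+r_j)$, one computes
\[
   \cos r_j-\cos\len_{ij}\cos r_i
   \;>\;
   \cos r_j-\cos(r_i+r_j)\cos r_i
   \;=\;
   \sin r_i\,\sin(r_i+r_j)
   \;\ge\;0,
\]
so $\alpha_{ij}^k<\nicefrac{\pi}{2}$ and the edge is (strictly) local Delaunay.

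The spherical and hyperbolic cases run in parallel, the only change being the replacement of the circular functions by their hyperbolic counterparts together with $\len_{ij}>r_i+r_j\iff\cosh\len_{ij}>\cosh(r_i+r_j)$; I would record the hyperbolic computation as entirely analogous. I expect the genuine obstacle to be bookkeeping the orientation conventions: making sure that the correct (near) centre of the face-circle is used so that $c>0$, and that the $\rho$-image of the side $ij$ really realises the second face incident to the surface edge, so that $d_{ij}^l=d_{ij}^k$ rather than $-d_{ij}^k$. Conceptually the same fact is visible without coordinates --- the centre of $C_{ijk}$ is the radical centre of the three vertex-circles, which by symmetry lies on the axis through the apex, and hyperideality forces it onto the base side of that axis --- but the Minkowski computation above is what I would write down to make the sign rigorous.
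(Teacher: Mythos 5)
Your proof is correct, and its skeleton coincides with the paper's: recognize the self-gluing as the identification of the two legs of an isosceles triangle with $r_j=r_k$ (on an oriented surface only the reflection-type gluing fixing the apex can occur, as you assert), note that the centre of the face-circle $C_{ijk}$ lies on the symmetry axis, \ie\ the perpendicular bisector of the base $jk$, and use \lemref{lemma:local_characterization_wDt} to reduce the Delaunay condition $d_{ij}^k+d_{ij}^l\geq0$ to the sign of the single signed distance $d_{ij}^k$. Where you go beyond the paper is precisely the sign: the paper's proof simply writes \enquote{hence $d_{ij}^k>0$}, as if positivity followed from symmetry alone, whereas the axis is a full great circle meeting the line through $ij$ at the apex and its antipode, so symmetry only yields $d_{ij}^k=d_{ik}^j$, not on which side of the leg the centre sits. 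Your Minkowski computation settles this: the orthogonality relations do give $\operatorname{sign}\big(d_{ij}^k\big)=\operatorname{sign}\big(\cos r_j-\cos\len_{ij}\cos r_i\big)$ for the near centre ($c>0$, radius $<\nicefrac{\pi}{2}$), and your chain $\cos r_j-\cos\len_{ij}\cos r_i>\cos r_j-\cos(r_i+r_j)\cos r_i=\sin r_i\sin(r_i+r_j)\geq0$ is correct, with the first inequality strict because $\cos r_i>0$ (this also covers $r_i=0$). Hyperideality is genuinely needed here --- for strongly overlapping circles, say $r_j=r_k$ close to $\nicefrac{\pi}{2}$ with $\len_{ij}$ small, the centre really does land on the wrong side of the leg --- so your computation supplies the step that the paper's one-line \enquote{hence} leaves implicit; this is what your longer argument buys. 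Two minor remarks: the lemma is stated and used only in the spherical setting (the hyperbolic case is handled by the reference to \cite{Lutz2023}), so your closing hyperbolic paragraph is superfluous; and your worry about $d_{ij}^l=-d_{ij}^k$ is harmless even beyond the orientability argument, since the identity $d_{ij}^k=d_{ik}^j$ uses only the reflection symmetry of the decorated triangle itself, not that the gluing map coincides with $\rho$.
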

\begin{proof}
    By assumption $ijk$ has to be an isosceles triangle and $ij$ one of its legs.
    Furthermore, the vertex cycles at the base have the same radius.
    By symmetry, the center of a circle which intersects the vertex-circles at
    $j$ and $k$ orthogonally lies on the perpendicular bisector of the edge $jk$.
    Hence, $d_{ij}^k>0$ and $d_{ki}^j>0$. The result follows from
    \lemref{lemma:local_characterization_wDt}.
\end{proof}

\begin{lemma}\label{lemma:concave_quads}
    Let $(\tri, \len, r)$ be a hyperideally decorated discrete spherical metric.
    If $ijk, ilj\in\faces_{\tri}$ are adjacent triangles with the common edge
    $ij$ such that $\theta_{jk}^i+\theta_{jl}^i\geq\pi$, then $ij$ is local Delaunay.
\end{lemma}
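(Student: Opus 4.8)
The plan is to verify the distance form of the local Delaunay condition: by the equivalence of items in \lemref{lemma:local_characterization_wDt} it suffices to establish the inequality \eqref{eq:local_delaunay_condition_distances}, namely $d_{ij}^k + d_{ij}^l \ge 0$. The entire argument lives at the shared vertex $i$, and the hypothesis $\theta_{jk}^i + \theta_{jl}^i \ge \pi$ will enter only through the elementary fact that two angles, each of modulus strictly less than $\nicefrac{\pi}{2}$, cannot sum to $\pi$.

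First I would fix the geometry of the two face-circles. Since the decoration is hyperideal, the vertex-circles at $i$ and $j$ are disjoint, and both $C_{ijk}$ and $C_{ilj}$ are orthogonal to them. As in the proof of \lemref{lemma:local_delaunay_convexity_equivalence}, the radical-line argument on $\SS^2$ then shows that the centers $O_k$ of $C_{ijk}$ and $O_l$ of $C_{ilj}$ both lie on the radical axis of the two vertex-circles, i.e.\ on the common great circle meeting the great circle $g$ through the edge $ij$ orthogonally in a single foot point $P$. In particular the perpendicular from either center to $g$ lands on the same $P$, so $iP = d_{ij}$ is shared by both triangles, while $d_{ij}^k$ and $d_{ij}^l$ are the signed distances of $O_k,O_l$ from $g$, measured toward the $k$- and $l$-side respectively.

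Next I would decompose the angle at $i$ along the segment to each center. Measuring signed angles from $\vec{ij}$ (counterclockwise positive, with $k$ on the CCW side and $l$ on the CW side), write $\theta_{jk}^i = \phi_k + \phi_k'$ and $-\theta_{jl}^i = \phi_l + \phi_l'$, where $\phi_\bullet = \angle(\vec{ij},\vec{iO_\bullet})$ and $\phi_\bullet'$ is the signed angle from $\vec{iO_\bullet}$ to the opposite leg ($\vec{ik}$, resp.\ $\vec{il}$); this is just signed-angle addition, valid even when a center falls outside its triangle. The spherical right-triangle relation in $iPO_k$ gives $\tan\phi_k = \nicefrac{\tan(d_{ij}^k)}{\sin d_{ij}}$, and the analogous right triangle with foot on the leg $ik$ forces $\phi_k' \in (-\nicefrac{\pi}{2},\nicefrac{\pi}{2})$, since that foot lies on the leg at distance $<\nicefrac{\pi}{2}$ by \eqref{eq:radii_relationship_spherical} (as $\cos d_{ik} = \cos r_i\cos r_{ik} > 0$); symmetrically for $\phi_l,\phi_l'$. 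Because $f(x) \coloneq \arctan\!\big(\nicefrac{\tan x}{\sin d_{ij}}\big)$ is odd and strictly increasing, with $\phi_k = f(d_{ij}^k)$ and $\phi_l = f(-d_{ij}^l)$, the target inequality $d_{ij}^k + d_{ij}^l \ge 0$ is equivalent to $\phi_k - \phi_l = f(d_{ij}^k) + f(d_{ij}^l) \ge 0$.

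Finally I would combine the pieces: $\phi_k - \phi_l = (\theta_{jk}^i + \theta_{jl}^i) - (\phi_k' - \phi_l')$. As $\phi_k',\phi_l' \in (-\nicefrac{\pi}{2},\nicefrac{\pi}{2})$ we have $\phi_k' - \phi_l' < \pi$, so $\phi_k - \phi_l > (\theta_{jk}^i + \theta_{jl}^i) - \pi \ge 0$ by hypothesis, giving $d_{ij}^k + d_{ij}^l > 0$ and hence local Delaunayness of $ij$. The main obstacle is not the final inequality, which is immediate, but the careful setup feeding into it: justifying that both centers sit on one great circle perpendicular to $g$ at a common foot, that the decomposition $\theta_{jk}^i = \phi_k + \phi_k'$ holds with the stated orientations, and that the outer pieces $\phi_k',\phi_l'$ really are bounded by $\nicefrac{\pi}{2}$. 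As a sanity check, in the undecorated Euclidean limit the statement degenerates to the familiar fact that the diagonal of a quadrilateral that is reflex at $i$ cannot be flipped and is therefore automatically Delaunay.
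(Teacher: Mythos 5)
Your proof is correct, but it takes a genuinely different route from the paper's. The paper's proof is a two-line conformal reduction: realize the two decorated triangles in $\SS^2$ and stereographically project from the antipodal point of $i$; since this projection preserves angles and circles, sends $i$ to $0$, and sends spherical geodesics through $i$ to Euclidean straight lines, both the hypothesis $\theta_{jk}^i+\theta_{jl}^i\geq\pi$ and the local Delaunay condition --- in its M\"obius-invariant form \ref{item:local_delaunay_angles} of \lemref{lemma:local_characterization_wDt}, an intersection-angle condition on the face-circles --- transfer verbatim, so the lemma follows from its Euclidean counterpart. You instead verify the distance form \eqref{eq:local_delaunay_condition_distances} intrinsically by spherical trigonometry, and the setup claims you flag do all check out: in the affine picture $C_i=p_i/\cos r_i$ the centers of all circles orthogonal to the vertex-circles at $i$ and $j$ lie on the great circle cut out by $\ip{x}{C_i-C_j}=0$, whose pole lies on $g$, so it meets $g$ orthogonally, and the common foot is the nearer of the two antipodal intersection points because $\cos d_{ij}=\cos r_i\cos r_{ij}>0$; the signed decompositions $\theta_{jk}^i=\phi_k+\phi_k'$ and $-\theta_{jl}^i=\phi_l+\phi_l'$ hold exactly (not merely mod $2\pi$) since all summands lie in $(-\pi,\pi)$; and the bounds $|\phi_k'|,|\phi_l'|<\nicefrac{\pi}{2}$ follow from Napier's rules because both legs of the relevant right triangles are $<\nicefrac{\pi}{2}$ --- here you should additionally invoke the strict form of \lemref{lemma:center_distance_estimate} (radii $<\nicefrac{\pi}{2}$ give $\dist(i,O_k)<\nicefrac{\pi}{2}$) to rule out the degenerate case where a face-circle center is a pole of an edge's great circle, so that the perpendicular foot is well defined. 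As for what each approach buys: the paper's argument is short and modular but rests on the Euclidean lemma established elsewhere, while yours is self-contained within spherical geometry, makes the geometric mechanism (radical great circle, common foot, acute half-angles) explicit, and even yields the strict inequality $d_{ij}^k+d_{ij}^l>0$. One correction of emphasis: your closing sanity check treats the Euclidean statement as a degenerate limit, whereas in the paper it is the exact ingredient --- stereographic projection gives a conformal equivalence, not an approximation.
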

\begin{proof}
    Realize the two decorated triangles in $\SS^2$. Now, stereographically project
    from the antipodal point of $i$ to $\RR^2$. This projection preserves
    angles and circles, sends $i$ to $0$, and spherical geodesics through $i$
    to euclidean straight lines. Hence, this assertion follows from its euclidean
    counterpart.
\end{proof}

\begin{lemma}\label{lemma:center_distance_estimate}
    Let $C_1$ and $C_2$ be two orthogonally intersecting circles in $\SS^2$.
    The distance between their centers is $\leq\nicefrac{\pi}{2}$.
\end{lemma}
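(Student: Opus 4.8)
The plan is to reduce the statement to the spherical law of cosines applied to the triangle spanned by the two centers and a point of intersection. First I would fix an intersection point $x\in\SS^2$ of $C_1$ and $C_2$, write $p_1,p_2$ for their centers and $r_1,r_2\in(0,\nicefrac{\pi}{2}]$ for their radii; recall that every circle in $\SS^2$ admits such a representation once we put its center on the nearer side, so that the radius does not exceed $\nicefrac{\pi}{2}$ (with radius exactly $\nicefrac{\pi}{2}$ precisely for great circles). The key elementary observation is that the tangent line of a spherical circle at a point is orthogonal to the geodesic joining that point to the center. Since the tangent plane $T_x\SS^2$ is two-dimensional, rotating both tangent directions by $\nicefrac{\pi}{2}$ preserves the angle between them, so the intersection angle of $C_1$ and $C_2$ at $x$ equals the angle at $x$ in the geodesic triangle $p_1\,x\,p_2$. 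Hence the hypothesis that $C_1$ and $C_2$ meet orthogonally says exactly that this triangle has a right angle at $x$.

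Next I would apply the spherical law of cosines (see, \eg, \cite[Thm.~2.5.3]{Ratcliffe1994}) to the triangle $p_1\,x\,p_2$, whose side opposite $p_2$ has length $r_1$, whose side opposite $p_1$ has length $r_2$, whose side opposite $x$ has length $\ell\coloneqq\dist_{\SS^2}(p_1,p_2)$, and whose angle at $x$ is $\nicefrac{\pi}{2}$. Since $\cos(\nicefrac{\pi}{2})=0$, this collapses to $\cos\ell=\cos r_1\cos r_2$. Because $r_1,r_2\in(0,\nicefrac{\pi}{2}]$ we have $\cos r_1,\cos r_2\geq0$, whence $\cos\ell\geq0$ and therefore $\ell\leq\nicefrac{\pi}{2}$, as claimed. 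Along the way one reads off that equality holds precisely when one of the two circles is a great circle.

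As an alternative I could argue entirely within the Minkowski model of M\"obius geometry used above. There the orthogonality of $C_1$ and $C_2$ is equivalent to the vanishing of the spherical inversive distance $I_{12}^{+}$ of \eqref{eq:spherical_inversive_distance}, i.e. to $\ip{C_1}{C_2}_{3,1}=0$; combined with the identity $\ip{C_i}{C_j}_{3,1}=\cos\ell-\cos r_i\cos r_j$ established in the proof of \lemref{lemma:spherical_moebius_and_inversive_distance}, this again gives $\cos\ell=\cos r_1\cos r_2$, and the conclusion follows as before. The only genuine subtlety here — hardly an obstacle — is the bookkeeping around the choice of center and radius, namely that normalizing each circle to radius at most $\nicefrac{\pi}{2}$ is exactly what makes the factors $\cos r_i$ nonnegative and thereby forces $\cos\ell\geq0$.
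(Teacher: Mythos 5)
Your proof is correct and follows essentially the same route as the paper, which likewise invokes the spherical law of cosines to obtain $\cos(d_{12})=\cos(r_1)\cos(r_2)$ and concludes from the nonnegativity of the cosines of the radii; you merely supply the details the paper leaves implicit (the right angle at the intersection point and the normalization $r_i\leq\nicefrac{\pi}{2}$), plus an equivalent Minkowski-lift reformulation. No gaps.
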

\begin{proof}
    This is a consequence of the spherical law of cosines. If $d_{12}$ denotes
    the distance between the centers and $r_1$ and $r_2$ the radii, respectively,
    then $\cos(d_{12}) = \cos(r_1)\cos(r_2)$. By definition
    $r_1,r_2\in[0,\nicefrac{\pi}{2})$. So the result follows.
\end{proof}

\begin{lemma}\label{lemma:bounded_geodesics}
    Let $(\dist_{\toposurf_g}, r)$ be a hyperideally decorated piecewise spherical
    metric on the marked surface $(\toposurf_g, \verts)$. For each pair
    $(i,j)\in\verts^2$ and $L>0$ there is only a finite number of geodesic arcs which
    start in $i$, end in $j$, and have a length not exceeding $L$.
\end{lemma}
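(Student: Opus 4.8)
The plan is to argue first that the decoration is irrelevant here---the statement concerns only the path metric $\dist_{\toposurf_g}$---and then to count geodesic arcs through their combinatorial \emph{itinerary} with respect to the finite triangulation $\tri$. To a geodesic arc $\gamma$ from $i$ to $j$ I would associate, in the order in which they occur, the faces of $\faces_{\tri}$ that $\gamma$ traverses, the edges of $\edges_{\tri}$ that it crosses transversally, and the vertices of $\verts$ through which it passes in its interior. The argument then splits into two parts: part~(A), that only finitely many itineraries can arise from arcs of length at most $L$; and part~(B), that a fixed itinerary is realized by at most finitely many geodesic arcs. Together these give the claim.

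For part~(B) I would use the developing map. A fixed itinerary determines a chain of spherical triangles, which I develop into $\SS^2$ by realizing the first triangle isometrically and unfolding the successive triangles across their shared edges (rotating about the developed image of an interior vertex whenever the itinerary passes through one). Under this map $\gamma$ unfolds to a concatenation of great-circle arcs whose breakpoints, together with the developed images $D(i)$ and $D(j)$, are completely determined by the itinerary. Since two non-antipodal points of $\SS^2$ lie on a unique great circle, each sub-arc is determined up to the choice of minor or major arc, and the requirement that the developed curve remain inside the developed chain and cross the prescribed edges in the prescribed order pins it down; hence at most finitely many (generically one) geodesics share a given itinerary. The only degeneracy, developed endpoints that are antipodal, forces the length of the relevant sub-arc to be exactly $\pi$ and leaves only finitely many admissible initial directions compatible with the fixed combinatorics, so it too contributes finitely many arcs.

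Part~(A) is the main obstacle. Two ingredients are needed. First, every geodesic arc of finite length has a \emph{finite} itinerary: crossing points cannot accumulate, because near an interior point of a face or of an edge a local geodesic is a great-circle arc meeting the one-skeleton only finitely often, while near a cone point $v$ the arc develops to a straight chord of a sector whose bearing about the apex sweeps an interval of length $<\pi$, so it crosses each of the finitely many edges incident to $v$ at most once. Second---and this is the delicate point---the length of the itinerary must be bounded \emph{uniformly} over all arcs with $\len(\gamma)\le L$, since otherwise infinitely many distinct itineraries could occur. Here I would use that $\tri$ is finite: non-adjacent edges are at positive distance, and edges sharing a vertex meet only there, so there is a $\rho>0$ such that every set of diameter $\le\rho$ either meets at most one edge or is contained in $B(v,2\rho)$ for a unique $v\in\verts$, in which case it meets only edges incident to $v$. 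Combined with the winding estimate above, any geodesic sub-arc of length $\le\rho$ then crosses at most $\max_{v}\deg(v)$ edges and passes through at most one vertex. Subdividing $[0,\len(\gamma)]$ into $\lceil L/\rho\rceil$ pieces yields a bound $N=N(L)$ on the number of symbols in the itinerary, so only finitely many itineraries occur. Establishing this uniform $\rho$ together with the cone-point winding estimate is the technical heart; I would carry it out exactly as in the hyperbolic case treated in \cite[Sec.~3.4]{Lutz2023}, with spherical trigonometry replacing the hyperbolic formulas.
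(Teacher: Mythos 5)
The paper itself offers no argument here: it dismisses the statement as a ``classical fact'' and cites \cite[Lem.~3.2]{Lutz2023}, which is a result about \emph{hyperbolic} surfaces. So your attempt at an actual spherical proof goes beyond the paper, and its skeleton --- finitely many itineraries for arcs of length $\leq L$, then finitely many geodesics per itinerary via developing into $\SS^2$ --- is the standard and correct one. In part~(A) there is one slip worth flagging: near a cone point of angle $\theta_v$ the bearing about the apex lives in $\RR/\theta_v\RR$, so although a developed chord sweeps an interval of length $<\pi$, that interval can wrap around the cone several times when $\theta_v<\pi$ (spherical cone angles here are sums of triangle angles and can be small). Hence a short arc can cross an edge incident to $v$ on the order of $\pi/\theta_v$ times, not ``at most once''; this is harmless, since it only inflates your bound $N(L)$ by the uniform factor $\max_{v}\lceil\pi/\theta_v\rceil$, but the claim as written is false.

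The genuine gap is your treatment of the antipodal degeneracy in part~(B). When two consecutive developed breakpoints are antipodal, there is a one-parameter family of great semicircles joining them, and the condition of crossing the prescribed developed edges in the prescribed order is an \emph{open} condition on this family --- so fixed combinatorics can admit a continuum of directions, not finitely many. Concretely, take the round sphere triangulated by the regular octahedron (all edge lengths $\nicefrac{\pi}{2}$, all cone angles $2\pi$, hyperideally decorated by $r\equiv0$ or by small equal radii): every meridian from $e_1$ to $-e_1$ whose initial bearing lies strictly between the directions toward $e_2$ and $e_3$ is a geodesic arc of length $\pi$ with the identical itinerary (face $e_1e_2e_3$, edge $e_2e_3$, face $(-e_1)e_2e_3$). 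This is a continuum of same-itinerary geodesics, so your assertion that the fixed combinatorics ``pins it down'' to finitely many initial directions fails; note the example even satisfies the hypotheses of the lemma once $L\geq\pi$, so the degeneracy is a real obstruction to the statement in this generality, not a technicality. It also cannot be repaired by your closing move of carrying out the argument ``exactly as in the hyperbolic case with spherical trigonometry replacing the hyperbolic formulas'': the absence of conjugate points is precisely what the hyperbolic argument of \cite{Lutz2023} exploits and precisely what fails on the sphere. A correct proof must either exclude the antipodal configuration (\eg\ by restricting to the sub-antipodal lengths actually needed in the flip algorithm) or treat it by a separate argument; as written, your proof has a hole at the one point where the spherical statement genuinely differs from the classical hyperbolic one.
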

\begin{proof}
    This is a classical fact about geodesics of a compact surface
    (see, \eg, \cite[Lem.\,3.2]{Lutz2023} for a proof).
\end{proof}

\subsection{The flip algorithm}
Consider a hyperideally decorated discrete spherical metric $(\tri, \len, r)$.
A triangle $ijk\in\faces_{\tri}$ can be realized in $\SS^2$.
The ambiguity of this realization is given by spherical isometries, \ie, the action
of $\SO(3)$. To the M\"obius-lift \eqref{eq:spherical_moebius_lift} corresponds
the affine representation
\begin{equation}
    C_i \,=\, \frac{p_i}{\cos r_i}.
\end{equation}
Similarly we can represent the face-circle $C_{ijk}$ as an element of $\RR^3$.
The \emph{support function}
$S_{ijk}\colon\SS^2\setminus\{\ip{x}{C_{ijk}}=0\}\to\RR$ of the
decorated triangle $ijk$ is given by
\begin{equation}
    1
    \;=\;
    \ip{S_{ijk}(x)\,x}{C_{ijk}}.
\end{equation}
A small computation shows that $\min|S_{ijk}| = \cos r_{ijk}$.
Moreover, by \lemref{lemma:center_distance_estimate},
$ijk\subset\SS^2\setminus\{\ip{x}{C_{ijk}}\leq0\}$. Hence, $S_{ijk}$
is a well-defined positive function on $ijk\subset\toposurf_g$.
For two adjacent triangles the support functions agree on their common edge.
It follows that $(\tri, \len, r)$ induces a continuous \emph{support function}
$S_{\tri, \len, r}\colon\toposurf_g\to\RR_{>0}$ restricting to
$S_{ijk}$ on each triangle $ijk\in\faces_{\tri}$.

\begin{proposition}[flip algorithm]
    Let $(\dist_{\toposurf_g}, r)$ be a hyperideally decorated piecewise spherical
    metric on the marked surface $(\toposurf_g, \verts)$. Start with any
    geodesic triangulation. Consecutively flipping edges violating
    the strict local Delaunay condition (see \lemref{lemma:local_characterization_wDt})
    terminates after a finite number of steps. The computed triangulation is a
    weighted Delaunay triangulation with
    respect to $(\dist_{\toposurf_g}, r)$.
\end{proposition}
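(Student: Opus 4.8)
The plan is to verify the two assertions separately: first, that the algorithm is well-posed and terminates, and second, that its output is weighted Delaunay. The second point is essentially the stopping criterion: when the algorithm halts, no edge strictly violates the local Delaunay condition, so $\cotw_{ij}^{+}\geq 0$ holds for every $ij\in\edges_{\tri}$, which is precisely \eqref{eq:local_delaunay_condition_weights}. Hence the bulk of the work is termination, and I would organize it around a monotone quantity built from the support function $S_{\tri,\len,r}$.

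The first task is to show that each flip is geometrically admissible. Suppose $ij$ is shared by $ijk$ and $ilj$ and strictly violates the local Delaunay condition, so $\alpha_{ij}^k+\alpha_{ij}^l>\pi$ (equivalently $\cotw_{ij}^{+}<0$). By the contrapositive of \lemref{lemma:concave_quads}, the quadrilateral $ikjl$ then has all four interior angles $<\pi$, so it is convex; combined with \lemref{lemma:center_distance_estimate} and the positivity of the support function this forces the quadrilateral to develop into an open hemisphere of $\SS^2$, so that replacing the diagonal $ij$ by $kl$ yields two convex spherical triangles whose edge-lengths lie in $(0,\pi)$ and satisfy \eqref{eq:triangle_inequalities} and \eqref{eq:perimeter_condition}. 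In particular the new decorated discrete spherical metric is valid and $\len_{kl}<\pi$. Moreover, \lemref{lemma:isosceles_triangles} guarantees that an edge along which a triangle is glued to itself is already local Delaunay, so the algorithm never attempts an ill-defined flip on such an edge.

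The heart of the proof is the monotonicity lemma: a Delaunay-improving flip decreases the support function pointwise, that is $S_{\tri',\len',r}\leq S_{\tri,\len,r}$ everywhere, with strict inequality on the interior of the flipped quadrilateral and equality off it. Geometrically, on each face the support function equals $x\mapsto 1/\ip{x}{C_{ijk}}$, and I would prove that over the quadrilateral the Delaunay diagonal realizes the lower envelope of the two affine reciprocals, the spherical analogue of the lift-to-paraboloid/lower-convex-hull characterization of Delaunay triangulations. The convexity established in the previous step is exactly what places the relevant face-circle vectors in the configuration for which the envelope comparison is valid and strict. Integrating $S_{\tri,\len,r}$ over $\toposurf_g$ then yields a strictly decreasing real-valued quantity along the sequence of flips.

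Termination follows by combining monotonicity with finiteness. Since a triangulation determines its support function, the strict decrease of $\int_{\toposurf_g}S_{\tri,\len,r}$ forbids any triangulation from recurring. On the other hand, every edge produced by the algorithm has length $<\pi$ by the admissibility step, so \lemref{lemma:bounded_geodesics}, applied with $L=\pi$ to each pair of vertices, bounds the number of geodesic arcs that can ever appear as edges; hence only finitely many geodesic triangulations can occur during the run. A sequence that never repeats among finitely many configurations must be finite, so the flip algorithm stops, and by the stopping criterion the final triangulation is weighted Delaunay. The main obstacle I anticipate is the monotonicity lemma: setting up the correct envelope/convexity comparison directly in $\SS^2$, rather than in the flat model where the paraboloid picture is classical, and ensuring the decrease is genuinely strict on the interior of the flipped quadrilateral. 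By contrast, checking admissibility of each flip (that the two new triangles remain valid convex spherical triangles with perimeter $<2\pi$) is more routine, though it leans delicately on \lemref{lemma:concave_quads} and \lemref{lemma:center_distance_estimate}.
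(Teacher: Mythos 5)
Your proposal follows the paper's proof almost step for step: flippability from \lemref{lemma:isosceles_triangles} and \lemref{lemma:concave_quads}, a monotone support function to forbid recurrence, and finiteness of the admissible triangulations via \lemref{lemma:bounded_geodesics}. However, the direction of your central monotonicity lemma is reversed, and as stated it is false in the paper's conventions. With the affine lifts $C_i=p_i/\cos r_i$ lying outside the closed unit ball, orthogonality of circles is conjugacy, $\ip{C_i}{C_{ijk}}=1$, so the plane $\{x:\ip{x}{C_{ijk}}=1\}$ passes through $C_i, C_j, C_k$ and $S_{\tri,\len,r}$ is the radial graph over $\SS^2$ of the piecewise-linear surface spanned by the lifted vertex-circles. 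The local Delaunay diagonal is the one whose two faces lie on the side of the tetrahedron on $C_i,C_j,C_k,C_l$ \emph{away} from the origin --- the \emph{outer} convex hull, as the paper says --- so a flip correcting a violating edge \emph{increases} the support function, $S_{\tilde{\tri},\tilde{\len},r}\geq S_{\tri,\len,r}$ with strict inequality on the flipped edge. Equivalently, the Delaunay diagonal realizes the lower envelope of the linear forms $x\mapsto\ip{x}{C_{ijk}}$ and hence the \emph{upper}, not lower, envelope of their reciprocals. If you attempt your pointwise-decrease statement, the envelope comparison comes out with the opposite sign and the lemma fails; the repair is only a sign flip, after which your strictly monotone quantity $\int_{\toposurf_g}S_{\tri,\len,r}$ and the no-recurrence argument go through unchanged.

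Two smaller points of comparison. Your finiteness step is actually simpler than the paper's: you apply \lemref{lemma:bounded_geodesics} directly with $L=\pi$, using that every edge of a valid discrete spherical metric has length $<\pi$, whereas the paper bounds edge lengths by $2\arccos(\min S_{\tri,\len,r})+2\max_i r_i$, a bound that remains uniform along the run precisely because $S$ increases --- another place where the direction of monotonicity matters in the paper but becomes irrelevant in your variant. Your shortcut is legitimate provided each flip really produces a valid decorated discrete spherical metric, which is your admissibility step; there, the claim that the quadrilateral develops into an open hemisphere does not follow from \lemref{lemma:center_distance_estimate} alone and needs an argument (e.g., that a compact geodesically convex spherical polygon, which the quadrilateral is by the contrapositive of \lemref{lemma:concave_quads}, is contained in an open hemisphere, whence the new diagonal has length $<\pi$ and the new triangles satisfy \eqref{eq:triangle_inequalities} and \eqref{eq:perimeter_condition}). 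The paper glosses this admissibility step at essentially the same level of detail, so this is a point to flesh out rather than a defect relative to the original.
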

\begin{proof}
    We only have to prove that the algorithm stops. Using
    \lemref{lemma:isosceles_triangles} and \lemref{lemma:concave_quads} show
    that an edge violating the strict local Delaunay
    condition can always be flipped. Let $\tri$ and $\tilde{\tri}$ be geodesic
    triangulations. Suppose $\tilde{\tri}$ can be obtained from $\tri$ by flipping
    an edge $ij\in\edges_{\tri}$ which violates the strict local Delaunay
    condition. Locally this is equivalent to changing to the outer convex hull
    of four points in $\RR^3$, \ie, the four representatives of the corresponding
    vertex-circles (see also the angle condition in
    \lemref{lemma:local_characterization_wDt}). It follows that
    $S_{\tilde{\tri}, \tilde{\len}, r}\geq S_{\tri, \len, r}$. In particular,
    $S_{\tilde{\tri}, \tilde{\len}, r}(x)>S_{\tri, \len, r}(x)$ for all
    $x\in ij$.

    Our previous discussion of the support function shows that
    \begin{equation}
        2\arccos(\min S_{\tri, \len, r})\,+\,2\max r_i
    \end{equation}
    is an upper bound for the lengths of the edges of $\tri$ only determined by.
    Therefore, there is only a finite number of geodesic triangulations of
    $(\toposurf_g, \dist_{\toposurf_g})$ whose edges satisfy
    this length-constraint, by \lemref{lemma:bounded_geodesics}. This implies that
    the number of geodesic
    triangulations $\tilde{\tri}$ with
    $S_{\tilde{\tri}, \tilde{\len}, r}\geq S_{\tri, \len, r}$
    is finite.
\end{proof}

\section*{Acknowledgements}
This work was funded by the \emph{Deutsche Forschungsgemeinschaft}
(\emph{DFG -- German Research Foundation}) -- Project-ID 195170736 -- SFB/TRR109
\enquote{Discretization in Geometry and Dynamics}.

\bibliographystyle{plain}
\bibliography{references.bib}

~\\
\begin{flushright}
   \noindent
   \textit{%
      Technische Universit\"at Berlin\\
      Institut f\"ur Mathematik\\
      Str.\ des 17.\ Juni 136\\
      10623 Berlin\\
      Germany
   }\\
   ~\\
   \noindent
   \texttt{bobenko@math.tu-berlin.de}\\
   \url{https://page.math.tu-berlin.de/~bobenko},\\
   ~\\
   \noindent
   \texttt{clutz@math.tu-berlin.de}\\
   \url{https://page.math.tu-berlin.de/~clutz}
\end{flushright}

\end{document}